\numberwithin{equation}{section}
\numberwithin{equation}{section}
\theoremstyle{plain}
\newtheorem{theorem}{Theorem}[section]
\newtheorem{proposition}[theorem]{Proposition}
\newtheorem{lemma}[theorem]{Lemma}
\newtheorem{corollary}[theorem]{Corollary}
\theoremstyle{definition}
\newtheorem{definition}{Definition}[section]
\newtheorem{example}[definition]{Example}
\newtheorem{remark}[definition]{Remark}
\def\^#1{\ifmmode {\mathaccent"705E #1} \else {\accent94 #1} \fi}
\def\~#1{\ifmmode {\mathaccent"707E #1} \else {\accent"7E #1} \fi}
\def\*#1{#1^\ast}
\edef\-#1{\noexpand\ifmmode {\noexpand\bar{#1}} \noexpand\else \-#1\noexpand\fi}
\def\>#1{\vec{#1}}
\def\.#1{\dot{#1}}
\def\atop{\@@atop}
\def\%#1{\mathcal{#1}}
\renewcommand{\leq}{\leqslant}
\renewcommand{\geq}{\geqslant}
\renewcommand{\phi}{\varphi}
\newcommand{\eps}{\varepsilon}
\newcommand{\D}{\Delta}
\newcommand{\eq}{\eqref}
\newcommand{\dtv}{\mathop{d_{\mathrm{TV}}}}
\newcommand{\dw}{\mathop{d_{\mathrm{W}}}}
\newcommand{\dk}{\mathop{d_{\mathrm{K}}}}
\newcommand{\bigo}{\mathrm{O}}
\newcommand{\lito}{\mathrm{o}}
\def\ER{Erd\H{o}s-R\'enyi}
\newcommand{\I}{\mathbb{I}}
\newcommand{\Po}{\mathop{\mathrm{Po}}}
\newcommand{\Bi}{\mathop{\mathrm{Bi}}}
\newcommand{\Exp}{\mathop{\mathrm{Exp}}}
\newcommand{\Ge}{\mathop{\mathrm{Ge}}\nolimits}
\newcommand{\IE}{\mathbbm{E}}
\newcommand{\IP}{\mathbbm{P}}
\newcommand{\Var}{\mathop{\mathrm{Var}}\nolimits}
\newcommand{\Cov}{\mathop{\mathrm{Cov}}}
\newcommand{\sgn}{\mathop{\mathrm{sgn}}}
\newcommand{\IN}{\mathbbm{N}}
\newcommand{\IR}{\mathbbm{R}}
\def\be#1\ee{\begin{equation*}#1\end{equation*}}
\def\ben#1\ee{\begin{equation}#1\end{equation}}
\def\bes#1\ee{\begin{equation*}\begin{split}#1\end{split}\end{equation*}}
\def\besn#1\ee{\begin{equation}\begin{split}#1\end{split}\end{equation}}
\def\bg#1\ee{\begin{gather*}#1\end{gather*}}
\def\bgn#1\ee{\begin{gather}#1\end{gather}}
\def\bm#1\ee{\begin{multline*}#1\end{multline*}}
\def\bmn#1\ee{\begin{multline}#1\end{multline}}
\def\ba#1\ee{\begin{align*}#1\end{align*}}
\def\ban#1\ee{\begin{align}#1\end{align}}
\def\bbbklr#1{\biggl(#1\biggr)}
\def\bbbkle#1{\biggl[#1\biggr]}
\def\bbbklg#1{\biggl\{#1\biggr\}}
\def\norm#1{\Vert#1\Vert}
\def\abs#1{\vert#1\vert}
\def\now{%
\minute=\time%
\hour=\time \divide \hour by 60%
\hourMins=\hour \multiply\hourMins by 60%
\advance\minute by -\hourMins%
\zeroPadTwo{\the\hour}:\zeroPadTwo{\the\minute}%
}
\def\zeroPadTwo#1{\ifnum #1<10 0\fi#1}
\def\blfootnote{\xdef\@thefnmark{}\@footnotetext}
\def\blfootnote{\xdef\@thefnmark{}\@footnotetext}
\def\var{\Var}
\newcommand{\ed}{\stackrel{d}{=}}
\def\cov{\Cov}
\def\dh{d_\%H}
\def\P{\%P_\lambda}
\def\s{\sigma}
\def\Geo{\Ge^0}
\begin{document}

\title{Fundamentals of Stein's method}
\author{Nathan Ross}
\date{\it University of California,
Berkeley}
\maketitle

\begin{abstract} 
This survey article 
discusses the main concepts and techniques of Stein's method
for distributional approximation by the
normal, Poisson, exponential, and geometric distributions, and also 
its relation to
concentration inequalities.
The material is 
presented at a level accessible to beginning graduate students
studying probability 
with the main emphasis
on the themes that are common to these topics and also to
much of the Stein's method literature.
\end{abstract}
\maketitle

\tableofcontents

\section{Introduction}

The fundamental example of the type of result we will deal with in this article is
the following version of the classical Berry-Esseen bound for the central limit theorem.
\begin{theorem}\label{thm1}\cite{ber41}
Let $X_1, X_2,\ldots$ be i.i.d. random variables with $\IE\abs{X_1}^3<\infty$, $\IE[X_1]=0$,
and $\var(X_1)=1$.  If $\Phi$ denotes the c.d.f. of a standard normal distribution and $W_n=\sum_{i=1}^nX_i/\sqrt{n}$, then
\ba
\left|\IP(W_n\leq x)-\Phi(x)\right|\leq  1.88 \frac{\IE\abs{X_1}^3}{\sqrt{n}}.
\ee
\end{theorem}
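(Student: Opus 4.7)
The plan is to deploy Stein's method for the normal distribution, which I expect this survey to develop immediately after the statement. The starting point is Stein's characterization: $Z\sim\N(0,1)$ if and only if $\IE[f'(Z)-Zf(Z)]=0$ for a sufficiently rich class of test functions $f$. For each $x\in\IR$, I would solve the Stein equation
\be
 f'(w)-wf(w)=\I\{w\leq x\}-\Phi(x),
\ee
whose bounded solution $f_x$ is given explicitly in terms of $\Phi$ and $\phi$. Standard estimates give $\supnorm{f_x}\leq\sqrt{2\pi}/4$, $\supnorm{f_x'}\leq 1$, and the key Lipschitz-type bound $|(wf_x(w))'-(w'f_x(w'))'|\leq 1$ uniformly in $x$. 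Then the left-hand side of the theorem equals $\IE[f_x'(W_n)-W_nf_x(W_n)]$, and the whole problem becomes estimating this quantity for $W_n=n^{-1/2}\sum_i X_i$.

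Next I would exploit independence by introducing $W_n^{(i)}:=W_n-X_i/\sqrt n$, independent of $X_i$. Writing $\IE[W_nf_x(W_n)]=n^{-1/2}\sum_i\IE[X_if_x(W_n)]$ and Taylor-expanding $f_x(W_n)$ around $W_n^{(i)}$ lets me use $\IE X_i=0$ to kill the constant term and $\IE X_i^2=1$ to produce $\IE f_x'(W_n^{(i)})$, so after rearranging
\be
 \IE[f_x'(W_n)-W_nf_x(W_n)]=\sum_{i=1}^n\IE\bbklel f_x'(W_n)-f_x'(W_n^{(i)})-\tfrac{X_i}{\sqrt n}\,(Wf_x)'\text{-type remainder}\bbklrl\kler.
\ee
The familiar Wasserstein-distance version of the argument now finishes by Taylor's theorem with a bound on $f_x''$, producing $\bigo(\IE|X_1|^3/\sqrt n)$.

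The main obstacle is that for the Kolmogorov distance the test function is the indicator $\I\{\cdot\leq x\}$, so $f_x'$ has a jump at $x$ and no usable bound on $f_x''$ exists. To handle this I would use the sharper structural identity satisfied by $f_x$, namely that the map $w\mapsto wf_x(w)+\I\{w\leq x\}$ is Lipschitz with constant $1$, which allows a second-order-like expansion without $f_x''$. Concretely, I would rewrite the remainder above in the form
\be
 \IE\bbkle\int_{-X_i/\sqrt n}^{0}\bklel\I\{W_n^{(i)}+t\leq x\}-\I\{W_n^{(i)}\leq x\}\bklrl\,dt\bbkler
\ee
plus easily controlled pieces, and then bound the probability that $W_n^{(i)}$ lies in a small window around $x$. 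This is the ``concentration inequality'' step that is characteristic of Stein's method for Kolmogorov distance: one proves, by a Stein-type argument applied to a smooth surrogate, that $\IP(a\leq W_n^{(i)}\leq b)\leq (b-a)+C\IE|X_1|^3/\sqrt n$, and substituting this in yields the advertised $\bigo(\IE|X_1|^3/\sqrt n)$ rate. Tracking constants carefully through the bounds on $f_x$, the Taylor remainder, and the concentration inequality produces the explicit $1.88$.
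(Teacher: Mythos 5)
First, a point of reference: the paper never proves Theorem \ref{thm1}. It is quoted from \cite{ber41} as motivation, and the surrounding text notes that such results are classically obtained by characteristic-function (Fourier) analysis; the survey then develops Stein's method for other purposes and never returns to establish this particular bound. So your Stein's-method route is necessarily different from anything in the paper. As an outline of how Stein's method handles the Kolmogorov metric for sums of independent variables under only a third-moment assumption, your sketch is essentially the Chen--Shao concentration-inequality argument, and its skeleton is sound: the bounds $\norm{f_x}\leq\sqrt{2\pi}/4$, $\norm{f_x'}\leq 1$, together with the two-point estimate $\abs{(w+u)f_x(w+u)-(w+v)f_x(w+v)}\leq(\abs{w}+\sqrt{2\pi}/4)(\abs{u}+\abs{v})$ of the paper's Lemma \ref{lem91}, substitute for the unavailable bound on $f_x''$, and the leftover indicator term is controlled by a concentration inequality for $W_n^{(i)}$ proved by a separate Stein-type argument with a piecewise-linear test function. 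Carried out carefully this yields $\dk(W_n,Z)\leq C\,\IE\abs{X_1}^3/\sqrt{n}$ for an explicit $C$. Note that the paper's own Kolmogorov-metric machinery (its zero-bias and exchangeable-pair theorems in that section) requires almost-sure boundedness of the coupling increment and therefore would not prove the theorem as stated; your route avoids that restriction, which is the right call.

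The genuine gap is the constant. The closing claim that tracking constants ``produces the explicit $1.88$'' is unsupported, and it is the one step that would fail: the concentration-inequality route yields constants substantially larger than $1.88$ (Chen and Shao's version gives roughly $4.1$ for independent summands, with later refinements still well above $2$), whereas $1.88$ is Berry's original Fourier-analytic constant. Since the theorem asserts that specific constant, your argument as proposed proves a strictly weaker statement --- the correct rate $\IE\abs{X_1}^3/\sqrt{n}$ with a larger multiplicative constant. You would need either to cite the Fourier-analytic literature for the constant or to restate the theorem with the constant your method actually delivers. A secondary, fixable omission: the concentration inequality $\IP(a\leq W_n^{(i)}\leq b)\leq C_1(b-a)+C_2\IE\abs{X_1}^3/\sqrt{n}$ must be proved uniformly in $i$ and in $a<b$, and its own error term must again be of the target order; making this rigorous requires a short bootstrap or induction that your sketch elides.
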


The theorem quantifies the error in the central limit theorem and has many
related
embellishments such as assuming independent, but not identically distributed 
variables, or allowing a specified dependence structure.  
The proofs of such results typically rely on characteristic function (Fourier) analysis
whereby showing convergence is significantly easier
than obtaining error bounds.

More generally, a central theme of probability theory is proving distributional limit theorems,
and for the purpose of approximation it is of interest to
quantify the rate of convergence in such results.
However, many of the methods commonly employed to show distributional convergence (e.g. Fourier analyisis and method of moments)
only possibly yield an error rate after serious added effort.  Stein's method
is a technique that can quantify the error in the approximation of
one distribution by another in a variety of metrics; note that this last remark has
a wider scope than the discussion above.

Stein's method was initially conceived by Charles Stein in the seminal
paper \cite{stn72} to provide errors in the approximation by the normal distribution of the distribution of
the sum of dependent random variables
of a certain structure.  However, the ideas presented in \cite{stn72} are sufficiently abstract and powerful
to be able to work well beyond that intended purpose, applying to approximation of
more general random variables by distributions other than the normal (such as the Poisson, exponential, etc).

Broadly speaking,
Stein's method has two components:
the first is a framework to convert the problem of bounding the error 
in the approximation of one distribution of interest by another, well understood distribution (e.g. the normal)
into a problem
of bounding the expectation of a certain functional of the
random variable of interest (see \eq{31} for the
normal distribution and \eq{1019} for the Poisson). 
The second component of Stein's method
are techniques to bound the expectation 
appearing in the first component; Stein 
appropriately refers to this step as ``auxiliary randomization."   
With this in mind, it is no surprise that Stein's monograph \cite{stn86}, which reformulates
the method in a more coherent form than \cite{stn72}, is titled
``Approximate Computation of Expectations."

There are now hundreds of papers expanding and applying this 
basic framework above.  For the first component,
converting to a problem of bounding
a certain expectation involving the distribution of interest
has been achieved for many well-known distributions.
Moreover, canonical methods have been established for achieving
this conversion for new distributions \cite{chsh11,rei05} (although by no means is
this process easy or guaranteed to be fruitful). 

For the second component, there is now an array of coupling techniques available
to bound these functionals for various distributions.
Moreover, these coupling techniques can be used in other
types of problems which can be distilled into bounding expectations
of a function of a distribution of interest.
Two examples of the types of
problems where this program has succeeded are
concentration inequalities \cite{cha07a,ghgo11a,ghgo11} (using the well known
Proposition \ref{prop211} below), and
local limit theorems \cite{rr10}.
We will cover the former example in this article.

The purpose of this document is to attempt to elucidate
the workings of these two components at a basic level 
in order to help make Stein's method more 
accessible to the uninitiated.
There are numerous other introductions to Stein's method which this document
draws from,
mainly \cite{cha07d,cgs11} for Normal approximation, \cite{bhj92,cdm05} for
Poisson approximation, and an amalgamation of related topics in the collections \cite{bc05,diho04}.
Most of these references focus on one distribution or
variation of Stein's method in order to achieve depth,
so there are themes and ideas that appear
throughout the method which can be
difficult to glean from these
references. 
We hope to capture these fundamental concepts in uniform language
to give easier entrance to the vast literature
on Stein's method and applications.
A similar undertaking but with smaller scope can be found
in Chapter 2 of \cite{rope07}, which also serves as a nice introduction to
the basics of Stein's method.

Of course 
the purpose of Stein's method is to prove approximation results, so
we will illustrate concepts in examples and applications,
many of which are combinatorial in nature.
In order to facilitate exposition,
we will typically work out 
examples and applications only in the most straightforward way 
and 
provide pointers to the literature where variations of the arguments
produce more thorough results.

The layout of this document is as follows.  In Section \ref{stnorm},
we discuss the basic framework of the first component above
in the context of Stein's method for normal approximation, 
since this setting is the
most studied and contains many of the concepts we will need later.
In Section \ref{norm} we discuss the commonly employed couplings 
used in normal approximation to achieve the second component above.  
We follow the paradigm of these two sections in discussing Stein's method 
for Poisson approximation
in Section \ref{poi}, exponential approximation in Section \ref{exp}, and geometric approximation
in Section \ref{geo}.  In the final Section \ref{con} we discuss how
to use some of the coupling constructions of Section \ref{norm} to prove 
concentration inequalities.

We conclude this section with a discussion of necessary background and notation. 

\subsection{Background and notation}

This is a document based on a graduate course given at U.C. Berkeley in the Spring semester of 2011
and is aimed at an audience having seen probability theory at the level of \cite{grst01}.
That is, we do not rely heavily on measure theoretic concepts, but 
exposure at a heuristic level to concepts such as sigma-fields will be useful. 
Also, basic Markov chain theory concepts such as reversibility are assumed along
with the notion of coupling random variables which will be used frequently in the sequel.

Many of our applications will concern various statistics of 
\ER\ random graphs.  We say
$G=G(n,p)$ is an \ER\ random graph on $n$ vertices with edge probability $p$ 
if for each pair of $\binom{n}{2}$ vertices, there is an edge connecting the vertices with probability
$p$ (and no edge connecting them with probability $1-p$), independent of all
other connections between other pairs of vertices.  These objects are
a simple and classical model of networks that 
are well studied; see \cite{bol01,jlr00} for book length treatments.

For a set $A$, we write $\I[\cdot\in A]$ to denote the function which is one on $A$ and $0$ otherwise.  We
write $g(n)\asymp f(n)$ if $g(n)/f(n)$ tends to a positive constant as $n\to\infty$,
and $g(n)=\bigo(f(n))$ if $g(n)/f(n)$ is bounded as $n\to\infty$.

Since Stein's method is mainly concerned with bounding
the distance between probability distributions in a given metric,
we now discuss the metrics we will use.

\subsubsection{Probability Metrics}\label{probmet}
For two probability measures $\mu$ and $\nu$, the probability metrics we will use have the form 
\ban
d_{\%H}(\mu,\nu)=\sup_{h\in\%H}\left|\int h(x)d\mu(x)-\int h(x)d\nu(x)\right|, \label{23}
\ee
where $\%H$ is some family of ``test" functions. For random variables $X$ and $Y$ with
respective laws $\mu$ and $\nu$, we will abuse notation and write
$d_{\%H}(X,Y)$ in place of $d_{\%H}(\mu,\nu)$.  

We now detail examples of metrics of this form along with some useful properties and relations.
\begin{enumerate}
\item By taking $\%H=\{\I[\cdot \leq x]: x\in\IR\}$ in \eqref{23}, we obtain the \emph{Kolmogorov} metric, which we denote $\dk$.
The Kolmogorov metric is the maximum distance between distribution functions,
so a sequence of distributions converging to a fixed distribution in this metric implies weak convergence. 
\item By taking $\%H=\{h:\IR\rightarrow\IR: \abs{h(x)-h(y)}\leq\abs{x-y}\}$ in \eqref{23}, we obtain the \emph{Wasserstein} metric,
which we denote $\dw$.
The Wasserstein metric is a common metric occurring in many contexts and will be the main metric we use for 
approximation by continuous distributions. 
\item By taking $\%H=\{\I[A\in\IR]: A \in\mbox{Borel}(\IR)\}$ in \eqref{23}, we obtain the \emph{total variation} metric, which we denote $\dtv$.
We will use the total variation metric for approximation by Œdiscrete distributions. 
\end{enumerate}
\begin{proposition}\label{prop22}
Retaining the notation for the metrics above, we have the following.
\begin{enumerate}
\item For random variables $W$ and $Z$, $\dk(W,Z)\leq\dtv(W,Z)$.
\item If the random variable $Z$ has Lebesgue density bounded by $C$,
then for any random variable $W$,
\ba
\dk(W,Z)\leq \sqrt{2C \dw(W,Z)}.
\ee
\item For $W$ and $Z$ random variables taking values in a discrete space $\Omega$, 
\ba
\dtv(W,Z)=\frac{1}{2}\sum_{\omega\in\Omega}\abs{\IP(W=\omega)-\IP(Z=\omega)}.
\ee
\end{enumerate}
\end{proposition}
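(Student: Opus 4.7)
The three parts are essentially independent, and I would handle them in the order given.

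For (1), observe that every half-line $(-\infty,x]$ is a Borel set, so the test-function class defining $\dk$ is a subset of the class defining $\dtv$, and the supremum in \eqref{23} can only get larger when taken over a bigger class. This is purely set-theoretic; no probabilistic input is needed.

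For (2), the plan is to smooth the indicator $\I[\cdot \leq x]$ by a Lipschitz function and pay two errors: a Wasserstein error from the smoothing being Lipschitz (not bounded), and a density error from the indicator being discontinuous (while $Z$ has bounded density $C$). Concretely, for each $\alpha>0$ I would set
\be
h_{x,\alpha}(t) = \max\bklg{0,\min\klg{1,(x+\alpha-t)/\alpha}},
\ee
which is $(1/\alpha)$-Lipschitz, equals $1$ on $(-\infty,x]$, and decreases linearly to $0$ on $[x,x+\alpha]$. Then $\IP(W\leq x) \leq \IE h_{x,\alpha}(W) \leq \IE h_{x,\alpha}(Z) + \dw(W,Z)/\alpha$, while
\be
\IE h_{x,\alpha}(Z) - \IP(Z\leq x) = \int_x^{x+\alpha} h_{x,\alpha}(t)\, f_Z(t)\, dt \leq C\int_x^{x+\alpha} h_{x,\alpha}(t)\,dt = C\alpha/2,
\ee
where the key point is that $h_{x,\alpha}$ integrates to $\alpha/2$ (triangular area) rather than $\alpha$; this is what produces the $\sqrt{2}$ rather than $2$ in the final constant. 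An analogous lower bound, using a shifted triangular function supported on $[x-\alpha,x]$, yields $\babs{\IP(W\leq x) - \IP(Z\leq x)} \leq C\alpha/2 + \dw(W,Z)/\alpha$. Optimizing in $\alpha$ (setting $\alpha = \sqrt{2\dw(W,Z)/C}$) gives the claimed bound. The main obstacle is getting the sharp constant, and the triangular-area observation above is the critical trick.

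For (3), let $A_+ = \klg{\omega:\IP(W=\omega) \geq \IP(Z=\omega)}$ and $A_- = \Omega\setminus A_+$. For any Borel $A$,
\be
\IP(W\in A) - \IP(Z\in A) = \sum_{\omega\in A}\bklr{\IP(W=\omega)-\IP(Z=\omega)},
\ee
and this is maximized in absolute value by taking $A = A_+$ (or $A=A_-$). Since $\sum_{\omega\in\Omega}\bklr{\IP(W=\omega)-\IP(Z=\omega)} = 0$, the two choices produce equal values, and adding them gives $2\dtv(W,Z) = \sum_{\omega\in\Omega}\babs{\IP(W=\omega)-\IP(Z=\omega)}$, as required. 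The only delicate step is recognizing that the supremum over Borel sets in a discrete space reduces to a supremum over subsets of $\Omega$, which is routine.
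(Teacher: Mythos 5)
Your proposal is correct and follows essentially the same route as the paper: item 1 by monotonicity of the supremum in the test class, and item 2 by the same linear-ramp smoothing of the indicator, with the same triangular-area observation giving the factor $C\eps/2$ and the same optimal choice $\eps=\sqrt{2\dw(W,Z)/C}$. Item 3 is left as an exercise in the paper, and your argument via the set $A_+=\{\omega:\IP(W=\omega)\geq\IP(Z=\omega)\}$ together with the fact that the signed differences sum to zero is the standard correct solution.
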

\begin{proof}
The first item follows from the fact that the supremum on the right side of the inequality is over a larger set, and the third item is
left as an exercise.   For the second item, consider the functions 
$h_x(w)=\I[w\leq x]$, and the `smoothed' $h_{x,\eps}(w)$ defined to be one for $w\leq x$, zero for $w>x+\eps$, and linear between.  
Then we have
\ba
\IE h_x(W)- \IE h_x(Z)& = \IE h_x(W)- \IE h_{x,\eps}(Z) +\IE h_{x,\eps}(Z)- \IE h_x(Z) \\
	&\leq \IE h_{x,\eps}(W)- \IE h_{x,\eps}(Z) + C \eps/2 \\
	&\leq \dw(W,Z)/\eps +C \eps/2.
\ee
Taking $\eps=\sqrt{2\dw(W,Z)/C}$ shows half of the desired inequality and a similar argument yields the other half.
\end{proof}

Due to its importance in our framework, we reiterate the implication
of Item 2 of the proposition that a bound on the Wasserstein metric between a given distribution
and the normal or exponential distribution immediately yields a bound
on the Kolmogorov metric.

\section{Normal Approximation}\label{stnorm}

The main idea behind Stein's method of distributional approximation is to replace the
characteristic function typically used to show distributional convergence with a \emph{characterizing operator}.
\begin{lemma}[Stein's Lemma]\label{lem1}
Define the functional operator $\mathcal{A}$ by
\ba
\mathcal{A}f(x)=f'(x)-xf(x).
\ee
\begin{enumerate}
\item If the random variable $Z$ has the standard normal distribution, then 
$\IE\%{A}f(Z)=0$ for all absolutely continuous $f$ with $\IE \abs{f'(Z)}<\infty$.
\item If for some random variable $W$, $\IE\%{A}f(W)=0$ for all absolutely continuous functions
$f$ with $\IE \abs{f'(Z)}<\infty$, then $W$ has the standard normal distribution.
\end{enumerate}
The operator $\%A$ is referred to as a characterizing operator of the standard normal distribution.
\end{lemma}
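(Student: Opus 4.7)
For the forward implication (Item 1), the plan is a one-line integration by parts against the standard normal density $\varphi(x)=(2\pi)^{-1/2}e^{-x^2/2}$, using the key identity $\varphi'(x)=-x\varphi(x)$. Writing $\IE[Zf(Z)]=\int x f(x)\varphi(x)\,dx=-\int f(x)\varphi'(x)\,dx$ and integrating by parts gives $\int f'(x)\varphi(x)\,dx=\IE[f'(Z)]$, provided the boundary terms $f(x)\varphi(x)\big|_{-\infty}^{\infty}$ vanish. The integrability hypothesis $\IE|f'(Z)|<\infty$ combined with absolute continuity of $f$ and Gaussian tail decay of $\varphi$ ensures this: one shows $|f(x)|=o(\e^{x^2/2})$ from $f(x)=f(0)+\int_0^x f'(t)\,dt$ and the Cauchy-Schwarz estimate against $\varphi$, so that $|f(x)|\varphi(x)\to 0$ as $|x|\to\infty$. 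This is routine.

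For the converse (Item 2), the plan is the classical Stein equation trick. For each bounded measurable $h:\IR\to\IR$, I would solve the ODE
\ben\label{steineq}
f_h'(x)-xf_h(x)=h(x)-\IE h(Z)
\ee
by multiplying by the integrating factor $\e^{-x^2/2}$ to obtain the explicit solution
\be
f_h(x)=\e^{x^2/2}\int_{-\infty}^x\bklr{h(t)-\IE h(Z)}\e^{-t^2/2}\,dt
        =-\e^{x^2/2}\int_x^{\infty}\bklr{h(t)-\IE h(Z)}\e^{-t^2/2}\,dt,
\ee
the two representations agreeing because $\IE[h(Z)-\IE h(Z)]=0$. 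The main work is to verify the regularity of $f_h$: namely that $f_h$ is absolutely continuous, that $f_h'$ is bounded (so $\IE|f_h'(Z)|<\infty$ and $\IE|f_h'(W)|<\infty$ trivially), and that $xf_h(x)$ is bounded (which, given \eqref{steineq}, follows from boundedness of $f_h'$ and of $h$). I would then apply the hypothesis to $f_h$, take expectations in \eqref{steineq} under the law of $W$, and obtain
\be
\IE h(W)-\IE h(Z)=\IE\bklr{f_h'(W)-Wf_h(W)}=\IE\%A f_h(W)=0.
\ee
Specializing to $h(x)=\I[x\leq z]$ for each $z\in\IR$ yields $\IP(W\leq z)=\Phi(z)$, so $W$ has the standard normal distribution.

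\textbf{Main obstacle.} The substantive step is not the algebra but the regularity check for the solution $f_h$ in the converse, especially when $h$ is an indicator and hence discontinuous. One needs the two integral representations of $f_h$ to control it on the two tails simultaneously: the bound $|f_h(x)|\leq c/(|x|\vee 1)$ and $\|f_h'\|_\infty\leq C$ are obtained by the standard Mills-ratio style estimates, namely $\e^{x^2/2}\int_x^\infty \e^{-t^2/2}dt\leq 1/x$ for $x>0$ (and the symmetric bound for $x<0$). Once these bounds are in hand, applying the hypothesis is immediate, and the remainder of the argument is formal.
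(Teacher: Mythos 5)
Your proposal is correct and follows essentially the same route as the paper: Item 1 by integration by parts against the Gaussian density (the paper phrases this as a Fubini interchange but explicitly calls it "essentially integration by parts"), and Item 2 by applying the hypothesis to the bounded solution $f_x$ of the Stein equation with $h=\I[\cdot\leq x]$, whose boundedness is exactly what the paper's Lemma \ref{lem2} supplies via the Mills-ratio estimate you cite. No gaps.
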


Before proving Lemma \ref{lem1}, we record the following lemma and then observe a consequence.
\begin{lemma}\label{lem2}
If $\Phi(x)$ is the c.d.f. of the standard normal distribution, then the unique bounded solution $f_x$ of the differential equation
\ban
f'_x(w)-wf_x(w)=\I[w\leq x] - \Phi(x) \label{1a}
\ee
is given by 
\ba 
f_x(w)&=e^{w^2/2}\int_w^{\infty} e^{-t^2/2}\left(\Phi(x)-\I[t\leq x]\right) dt \\
 &=-e^{w^2/2}\int_{-\infty}^w e^{-t^2/2}\left(\Phi(x)-\I[t\leq x]\right) dt.
\ee
\end{lemma}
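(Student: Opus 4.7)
The plan is to treat \eqref{1a} as a first-order linear ODE and solve it explicitly via an integrating factor. Multiplying \eqref{1a} by $e^{-w^2/2}$ turns the left-hand side into the exact derivative $\bigl(e^{-w^2/2} f_x(w)\bigr)'$, so the equation becomes
\be
\bigl(e^{-w^2/2} f_x(w)\bigr)' = e^{-w^2/2}\bigl(\I[w\leq x]-\Phi(x)\bigr).
\ee
Integrating from $-\infty$ to $w$ (respectively from $w$ to $\infty$) and rearranging produces the two candidate formulas in the statement, up to an additive constant multiple of $e^{w^2/2}$.

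Next I would identify the correct constant by imposing boundedness. The homogeneous equation $f' - wf = 0$ has general solution $Ce^{w^2/2}$, which is unbounded unless $C=0$. So boundedness of $f_x$ forces a specific choice of constant of integration; concretely, the two integrals in the statement already incorporate the right choice because the integrands have zero total mass, by the computation
\be
\int_{-\infty}^\infty e^{-t^2/2}\bigl(\Phi(x)-\I[t\leq x]\bigr)dt = \sqrt{2\pi}\,\Phi(x)-\sqrt{2\pi}\,\Phi(x) = 0.
\ee
This same observation immediately gives equality of the two displayed expressions for $f_x$, since $\int_w^\infty = -\int_{-\infty}^w$ whenever the total integral vanishes. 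Uniqueness of the bounded solution then follows: if $f$ and $\tilde f$ were two bounded solutions, their difference would solve the homogeneous equation and hence be a constant multiple of $e^{w^2/2}$, which can only be bounded if the constant is zero.

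The remaining step is to confirm that the $f_x$ defined by either integral formula is indeed bounded, which is the only step with any real content. For $w\geq x$, the factor $\Phi(x)-\I[t\leq x]=\Phi(x)$ is a constant on $[w,\infty)$, so
\be
\abs{f_x(w)} \leq \Phi(x)\,e^{w^2/2}\int_w^\infty e^{-t^2/2}\,dt,
\ee
and the standard Mills ratio estimate $\int_w^\infty e^{-t^2/2}dt \leq e^{-w^2/2}/w$ (for $w>0$) keeps this bounded; the analogous estimate using the second formula handles $w<x$, and one should also check the transition region near $w=x$ by splitting the integral and using the vanishing-mass identity above. This uniform boundedness control is the only place where the specific Gaussian tail behaviour enters, and is the main technical obstacle in the argument — the rest is essentially bookkeeping about a linear ODE.
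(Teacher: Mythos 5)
Your proposal is correct and follows essentially the same route as the paper: integrating factor, elimination of the unbounded homogeneous solution $Ce^{w^2/2}$, and a Gaussian tail estimate to verify boundedness of the particular solution. The only difference is cosmetic — the paper additionally rewrites $f_x$ in the form $\sqrt{2\pi}e^{w^2/2}\Phi(w)(1-\Phi(x))$ (for $w\leq x$, and symmetrically for $w>x$) to extract the explicit bound $\norm{f_x}\leq\sqrt{\pi/2}$, whereas your Mills-ratio-plus-compactness argument establishes boundedness without producing that constant, which the lemma as stated does not require.
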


Lemmas \ref{lem1} and \ref{lem2} are at the heart of Stein's method; observe the following corollary.
\begin{corollary}\label{cor1}
If $f_x$ is as defined in Lemma \ref{lem2}, then for any random variable $W$, 
\ban
\abs{\IP(W\leq x) - \Phi(x)}= \abs{\IE[f'_x(W)-Wf_x(W)]}. \label{1}
\ee 
\end{corollary}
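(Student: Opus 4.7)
The plan is to observe that this corollary is an immediate consequence of Lemma \ref{lem2}, obtained simply by evaluating the Stein equation \eqref{1a} at the random variable $W$ and taking expectations. Lemma \ref{lem2} asserts that $f_x$ satisfies the pointwise identity
\ba
f'_x(w) - w f_x(w) = \I[w \leq x] - \Phi(x)
\ee
for every real $w$. Substituting the random variable $W$ for $w$ yields the random-variable identity
\ba
f'_x(W) - W f_x(W) = \I[W \leq x] - \Phi(x).
\ee

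Next, I would take expectations on both sides. The right-hand side becomes $\IP(W \leq x) - \Phi(x)$ by definition of expectation of an indicator. Hence
\ba
\IE\bkle{f'_x(W) - W f_x(W)} = \IP(W \leq x) - \Phi(x),
\ee
and taking absolute values gives \eqref{1}. Implicit here is that the expectations exist; since $f_x$ and $f'_x$ can be shown to be bounded (and $W$ is at worst $W f_x(W)$ with $f_x$ bounded, so one may need mild integrability), this is a non-issue in the settings of interest, but it would be worth noting.

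There is essentially no obstacle in this argument itself — the corollary is a direct reading of Lemma \ref{lem2}. The conceptual point, however, is crucial and deserves emphasis in the write-up: the intractable probabilistic quantity $|\IP(W \leq x) - \Phi(x)|$, which determines the Kolmogorov distance to normality, has been traded for the expectation $|\IE[f'_x(W) - W f_x(W)]|$, where $f_x$ is an explicit function given by the integral formula in Lemma \ref{lem2}. The real work of Stein's method then lies downstream: deriving bounds on $f_x$, $f'_x$ (and typically $f''_x$) from the explicit representation, and then bounding $\IE[f'_x(W) - W f_x(W)]$ via couplings tailored to the dependence structure of $W$. Those are the hard steps; the present corollary is merely the pivot that makes them possible.
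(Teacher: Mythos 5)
Your argument is exactly the one the paper intends: the corollary is stated as following "directly from Lemma \ref{lem2}," and the paper gives no separate proof precisely because it amounts to substituting $W$ into the Stein equation and taking expectations. Your remarks about boundedness of $f_x$ (so the expectations exist) and about the conceptual role of the identity are correct and in the spirit of the surrounding discussion.
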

Although Corollary \ref{cor1} follows directly from Lemma \ref{lem2}, it is important to note that Lemma \ref{lem1}
suggests that \eqref{1} may be a fruitful equality.  That is, the left hand side of \eqref{1} is zero for all $x\in \IR$ if and only if
$W$ has the standard normal distribution.  Lemma \ref{lem1} indicates that the right hand side of \eqref{1} also has this property. 

\begin{proof}[Proof of Lemma \ref{lem2}]
The method of integrating factors shows that
\ba
\frac{d}{dw}\left(e^{-w^2/2}f_x(w)\right)=e^{-w^2/2}\left(\I[w\leq x] - \Phi(x)\right),
\ee
which after integrating and considering the homogeneous solution implies that
\ban
f_x(w)&=e^{w^2/2}\int_w^{\infty} e^{-t^2/2}\left(\Phi(x)-\I[t\leq x]\right) dt + Ce^{w^2/2} \label{1aa}
\ee
is the general solution of \eqref{1a} for any constant $C$.  To show that \eqref{1aa}
is bounded for $C=0$ (and then clearly unbounded for other values of $C$) we use 
\ba
1-\Phi(w)\leq \min\left\{\frac{1}{2}, \frac{1}{w\sqrt{2\pi}}\right\} e^{-w^2/2}, \hspace{4mm} w>0,
\ee
which follows by considering derivatives.  From this point we use the representation
\begin{displaymath}
   f_x(w) = \left\{
     \begin{array}{lr}
       \sqrt{2\pi}e^{w^2/2}\Phi(w)(1-\Phi(x)), & w\leq x \\
        \sqrt{2\pi}e^{w^2/2}\Phi(x)(1-\Phi(w)), & w>x 
     \end{array}
   \right.
\end{displaymath} 
to obtain that $\norm{f_x}\leq\sqrt{\frac{\pi}{2}}$.
\end{proof}

\begin{proof}[Proof of Lemma \ref{lem1}]
We first prove Item $1$ of the lemma.  Let $Z$ be a standard normal random variable and let $f$ be absolutely continuous such that $\IE \abs{f'(Z)}<\infty$.  Then we have
the following formal calculation (justified by Fubini's Theorem) which is essentially integration by parts.
\ba
\IE f'(Z) &=\frac{1}{\sqrt{2\pi}} \int_\IR e^{-t^2/2} f'(t) dt \\
	&=\frac{1}{\sqrt{2\pi}} \int_0^\infty f'(t)\int_t^\infty w e^{-w^2/2} dw dt +\frac{1}{\sqrt{2\pi}}  \int_{-\infty}^0 f'(t)\int_{-\infty}^t w e^{-w^2/2} dw dt \\
	&=\frac{1}{\sqrt{2\pi}} \int_0^\infty w e^{-w^2/2} \left[ \int_0^w f'(t) dt \right]dw +\frac{1}{\sqrt{2\pi}}  \int_{-\infty}^0 w e^{-w^2/2} \left[ \int_w^0 f'(t) dt \right]dw \\
	&=\IE[Zf(Z)].
\ee

For the second item of the Lemma, assume that $W$ is a random variable such that
$\IE [f'(W)-Wf(W)]=0$ for all bounded, continuous, and piecewise continuously differentiable functions $f$ with
$\IE\abs{f'(Z)}<\infty$.  The function $f_x$ satisfying \eqref{1a} is such a function, so that for all $x \in \IR$,
\ba
0=\IE [f_x'(W)-Wf_x(W)]=\IP(W\leq x) - \Phi(x),
\ee
which implies that $W$ has a standard normal distribution. 
\end{proof}

Our strategy for bounding the maximum distance between the distribution function of a random variable $W$ and that
of the standard normal is now fairly obvious: we want to bound
$\IE[f_x(W)-Wf_x(W)]$ for $f_x$ solving \eqref{1a}.  This setup can work, but it turns out that it is easier to work in
the Wasserstein metric.  Since the critical property of the Kolmogorov metric that we use in the discussion above is
the representation \eq{23}, which the Wasserstein metric shares, extending in this direction comes without great effort.

\subsection{The general setup}
For two random variables $X$ and $Y$ and some family of functions $\%H$, recall the metric
\ban
\dh(X,Y)=\sup_{h\in\%H}\abs{\IE h(X) - \IE h(Y)}, \label{32}
\ee  
and note that such a metric only depends on the law of $X$ and $Y$.  For $h\in\%H$, let $f_h$ solve
\ba
f'_h(w)-wf_h(w)=h(w) - \Phi(h)
\ee
where $\Phi(h)$ is the expectation of $h$ with respect to a standard normal distribution.  
We have the following result which easily follows from the discussion above.
\begin{proposition}\label{prop31}
If $W$ is a random variable and $Z$ has the standard normal distribution, then
\ban
\dh(W,Z)=\sup_{h\in\%H}\abs{\IE[f'_h(W)-Wf_h(W)]}. \label{31}
\ee
\end{proposition}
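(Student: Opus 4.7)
The plan is to observe that Proposition \ref{prop31} is essentially a restatement of the Stein equation after taking expectations, so the work lies in (i) producing a solution $f_h$ to the Stein equation for each test function $h\in\%H$, and (ii) justifying that the expectation of both sides of the equation exists and can be interchanged with the supremum.

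First, for fixed $h\in\%H$, I would construct $f_h$ by the same integrating-factor argument used in the proof of Lemma \ref{lem2}, only now with the inhomogeneous term $h(w)-\Phi(h)$ in place of $\I[w\leq x]-\Phi(x)$. Solving $\frac{d}{dw}(e^{-w^2/2} f_h(w)) = e^{-w^2/2}(h(w)-\Phi(h))$ yields the explicit representation
\be
f_h(w) = e^{w^2/2}\int_w^\infty e^{-t^2/2}\bklr{\Phi(h)-h(t)} dt,
\ee
with the constant of integration pinned down by requiring $f_h$ to be bounded. (One has to know $h$ is nice enough that this integral converges; for the metrics of interest, $h$ is bounded or Lipschitz, so the Gaussian tail dominates.) By construction, $f_h$ satisfies
\ben
f'_h(w) - w f_h(w) = h(w) - \Phi(h). \label{steinh}
\ee

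Next I would substitute $W$ into \eq{steinh} and take expectations. Since the right-hand side equals $h(W)-\Phi(h)$, and since $\Phi(h) = \IE h(Z)$ by definition when $Z$ is standard normal, this gives
\be
\IE\bkle{f'_h(W) - W f_h(W)} = \IE h(W) - \IE h(Z).
\ee
Taking absolute values and then the supremum over $h\in\%H$ produces the identity claimed in \eq{31}, by virtue of the defining formula \eq{32} for $d_{\%H}$.

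The main obstacle is a technical one rather than conceptual: one must argue that both $\IE f'_h(W)$ and $\IE[Wf_h(W)]$ are finite, so that the left-hand side of \eq{steinh} has a well-defined expectation for each $h\in\%H$. This requires verifying standard regularity of $f_h$ and $f_h'$ from the explicit representation (for instance, that $\norm{f_h}_\infty$ and $\norm{f_h'}_\infty$ are controlled by the relevant norms of $h$, as will be needed more quantitatively later on). Once this is in hand, the proposition follows immediately, and no other step is delicate; unlike Corollary \ref{cor1}, no further smoothing argument or density bound on $W$ is required because the supremum is an equality by construction.
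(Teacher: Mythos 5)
Your argument is correct and is exactly the one the paper intends: Proposition \ref{prop31} is stated as following "easily from the discussion above," namely by substituting $W$ into the Stein equation $f'_h(w)-wf_h(w)=h(w)-\Phi(h)$, taking expectations, and then taking the supremum over $h\in\%H$. Your additional remarks on constructing $f_h$ via the integrating factor and checking integrability of $\IE f'_h(W)$ and $\IE[Wf_h(W)]$ are sensible elaborations of the same route (the quantitative bounds are deferred to Lemma \ref{lem31} in the paper), not a different approach.
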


The main idea at this point is to bound the right side of \eqref{31} by using the structure of $W$ and properties of the solutions $f_h$.
The latter issue is handled by the following lemma.
\begin{lemma}\label{lem31}
Let $f_h$ be the solution of the differential equation
\ban
f'_h(w)-wf_h(w)=h(w) - \Phi(h) \label{33}
\ee
which
is given by 
\ba 
f_h(w)&=e^{w^2/2}\int_w^{\infty} e^{-t^2/2}\left(\Phi(h)-h(t)\right) dt \\
 &=-e^{w^2/2}\int_{-\infty}^w e^{-t^2/2}\left(\Phi(h)-h(t)\right) dt.
\ee
\begin{enumerate}
\item\label{i31} If $h$ is bounded, then 
\ba
\norm{f_h}\leq \sqrt{\frac{\pi}{2}}\norm{h(\cdot)-\Phi(h)}, \mbox{\,\, and \,\,}  \norm{f_h'}\leq 2\norm{h(\cdot)-\Phi(h)}.
\ee
\item\label{i32} If $h$ is absolutely continuous, then 
\ba
\norm{f_h}\leq 2\norm{h'}, \hspace{5mm} \norm{f_h'}\leq \sqrt{\frac{2}{\pi}}\norm{h'}, \mbox{\,\, and \,\,}  \norm{f_h''}\leq 2\norm{h'}.
\ee
\end{enumerate}
\end{lemma}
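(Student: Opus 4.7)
\medskip

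The plan is to bound each quantity by direct estimation of the integral representation for $f_h$ given in the statement, invoking the Stein equation \eqref{33}, namely $f_h'(w)=wf_h(w)+h(w)-\Phi(h)$, to control $f_h'$ and $f_h''$. A useful preliminary observation is that $f_h$ depends on $h$ only through $h-\Phi(h)$, so for Item \ref{i31} I may assume $\Phi(h)=0$, and for Item \ref{i32} I may instead translate $h$ so that $h(0)=0$, in which case $|h(t)|\leq\norm{h'}|t|$ and $|\Phi(h)|=|\IE h(Z)|\leq\norm{h'}\IE|Z|=\sqrt{2/\pi}\,\norm{h'}$.

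For Item \ref{i31}, the bound on $\norm{f_h}$ is essentially Lemma \ref{lem2}: pull $\norm{h-\Phi(h)}$ out of the integral, then use the Mills-type estimate $\int_w^\infty e^{-t^2/2}\,dt\leq\sqrt{\pi/2}\,e^{-w^2/2}$ for $w\geq0$ (treating $w<0$ by the second representation). For $\norm{f_h'}$ I apply the Stein equation: $|h(w)-\Phi(h)|\leq\norm{h-\Phi(h)}$ trivially, while the sharper Mills ratio $\int_w^\infty e^{-t^2/2}\,dt\leq e^{-w^2/2}/w$ (for $w>0$) cancels the leading $w$ in $|wf_h(w)|$, giving $|wf_h(w)|\leq\norm{h-\Phi(h)}$; the case $w<0$ is symmetric, producing the factor of $2$.

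For Item \ref{i32}, $\norm{f_h}\leq 2\norm{h'}$ follows analogously: plug $|\Phi(h)-h(t)|\leq\norm{h'}(|t|+\sqrt{2/\pi})$ into the representation and evaluate using $\int_w^\infty|t|e^{-t^2/2}\,dt=e^{-w^2/2}$ for $w\geq0$ together with the $\sqrt{\pi/2}$ Mills bound, which combine to yield exactly the constant $2$. The bound $\norm{f_h'}\leq\sqrt{2/\pi}\,\norm{h'}$ is the delicate part; my approach is to substitute the identity
\be
\Phi(h)-h(t)=\int_t^\infty h'(u)(1-\Phi(u))\,du-\int_{-\infty}^t h'(u)\Phi(u)\,du
\ee
into the integral representation and interchange orders of integration. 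This produces a kernel representation $f_h(w)=-\sqrt{2\pi}\,e^{w^2/2}\int h'(u)K(w,u)\,du$ with the Gaussian covariance kernel $K(w,u)=\min(\Phi(u),\Phi(w))-\Phi(u)\Phi(w)$; differentiating gives $f_h'(w)=\int h'(u)M(w,u)\,du$ for an explicit piecewise-defined $M$, and a direct computation shows $\sup_w\int|M(w,u)|\,du=\sqrt{2/\pi}$, attained at $w=0$. Finally, $\norm{f_h''}\leq2\norm{h'}$ follows by differentiating the kernel representation of $f_h'$ once more, accounting for the jump of $M(w,\cdot)$ at $u=w$, and combining the resulting bounds.

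The main obstacle will be the sharp constant $\sqrt{2/\pi}$ in Item \ref{i32}, which does not come from any triangle inequality and requires the explicit kernel computation above; the $\norm{f_h''}$ bound is similarly delicate, since a naive use of the Stein equation combined with the $2\norm{h'}$ estimate for $f_h$ leaves an uncancelled $(1+w^2)$ factor that can only be absorbed by the sharper pointwise bound $|f_h(w)|\leq\sqrt{2\pi}\,e^{w^2/2}\phi(w)\norm{h'}=\norm{h'}$ obtained from the kernel representation.
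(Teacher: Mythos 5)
The paper does not actually prove this lemma; it defers to Chen--Goldstein--Shao, Lemma~2.4, and your proposal reconstructs essentially that standard argument, correctly. Item~\ref{i31} via the Mills-ratio estimates and the Stein equation is exactly right, as is the reduction to $h(0)=0$ and the computation $\int_w^\infty(|t|+\sqrt{2/\pi})e^{-t^2/2}\,dt\leq 2e^{-w^2/2}$ for the first bound of Item~\ref{i32}. The kernel representation $f_h(w)=-\sqrt{2\pi}\,e^{w^2/2}\int h'(u)\bigl[\Phi(w\wedge u)-\Phi(w)\Phi(u)\bigr]du$ is the key identity, and your claims that $\sup_w\int|M(w,u)|\,du=\sqrt{2/\pi}$ at $w=0$ (here $M(w,u)=(1-wg(w))\Phi(u)$ for $u<w$ and $-(1+wG(w))(1-\Phi(u))$ for $u>w$, with $g(w)=\sqrt{2\pi}e^{w^2/2}(1-\Phi(w))$, $G(w)=\sqrt{2\pi}e^{w^2/2}\Phi(w)$) and that a second differentiation yields $\norm{f_h''}\leq 2\norm{h'}$ are both correct and are precisely how the cited proof goes.

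One small inaccuracy in your closing remark: the pointwise bound $|f_h(w)|\leq\sqrt{2\pi}\,e^{w^2/2}\phi(w)\norm{h'}$ is identically $\norm{h'}$ (a constant), so it cannot ``absorb'' the $(1+w^2)$ factor arising from $f_h''=(1+w^2)f_h+w(h-\Phi(h))+h'$; a constant bound leaves that expression unbounded in $w$. The cancellation that actually saves the day in the $f_h''$ estimate is between $w$ and $(1+w^2)g(w)$ (and similarly for $G$), i.e.\ it comes from the Mills-ratio asymptotics $g(w)=w^{-1}-w^{-3}+\cdots$, not from any sup-norm bound on $f_h$. This does not damage your proof, since your primary route --- differentiating the kernel representation and bounding the resulting coefficients against $\int_{-\infty}^w\Phi=w\Phi(w)+\phi(w)$ and $\int_w^\infty(1-\Phi)=\phi(w)-w(1-\Phi(w))$ --- is the correct one; just drop or repair that final heuristic.
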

The proof of Lemma \ref{lem31} is similar to 
but more technical than that of Lemma \ref{lem2}.
We refer to \cite{cgs11} (Lemma~2.4) for the proof.

\section{Bounding the error}\label{norm}
We will focus mainly on the Wasserstein metric when approximating by continuous distributions.  This is not a terrible concession as firstly the
Wasserstein metric is a commonly used metric, and also 
by Proposition \ref{prop22}, 
for $Z$ a standard normal random variable and $W$ any random variable we have
\ba
\dk(W,Z)\leq(2/\pi)^{1/4}\sqrt{\dw(W,Z)},
\ee
where $\dk$ is the maximum difference between distribution functions (the Kolmogorov metric); $\dk$ is an intuitive and standard metric
to work with.  

The reason for using the Wasserstein metric is that it has the form \eqref{32} for $\%H$ the set of functions with Lipschitz constant equal to one.
In particular, if $h$ is a test function for the Wasserstein metric, then $\norm{h'}\leq1$ so that we know the solution $f_h$ of equation
\eqref{33} is bounded with two bounded derivatives by Item \ref{i32} of Proposition \ref{lem31}.  Contrast this to the set of test functions for the Kolmogorov metric where the solution $f_h$ of equation
\eqref{33} is bounded with
one bounded derivative (by Item \ref{i31} of Proposition \ref{lem31}) but is not twice differentiable.   

To summarize our progress to this point, we state the following result which is a corollary of Proposition \ref{prop31} and Lemma \ref{lem31}.  
The theorem is at the kernel of Stein's method.
\begin{theorem}\label{thm31}
If $W$ is a random variable and $Z$ has the standard normal distribution, and we define the family of functions $\%F=\{f: \norm{f},\norm{f''}\leq2, \norm{f'}\leq \sqrt{2/\pi}\}$, then
\ban
\dw(W,Z)\leq \sup_{f\in\%F}\abs{\IE[f'(W)-Wf(W)]}. \label{35}
\ee
\end{theorem}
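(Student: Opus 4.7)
The proof is essentially a packaging exercise combining two results already on the table, so the plan is short. First, I will specialize Proposition~\ref{prop31} to the Wasserstein setting. The class $\%H$ appearing in the definition \eqref{32} of $\dw$ is precisely the set of $1$-Lipschitz functions $h:\IR\to\IR$; any such $h$ is absolutely continuous with a.e.\ derivative satisfying $\norm{h'}\leq 1$, so Lemma~\ref{lem31} applies to the corresponding Stein solution $f_h$. Proposition~\ref{prop31} then gives
\[
\dw(W,Z) = \sup_{h:\,\norm{h'}\leq 1}\abs{\IE[f_h'(W) - W f_h(W)]},
\]
where $f_h$ is the solution of the Stein equation \eqref{33} associated with $h$.

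Next, for each such $h$ I will invoke Item~\ref{i32} of Lemma~\ref{lem31} to bound $f_h$ and its first two derivatives: since $\norm{h'}\leq 1$, one obtains
\[
\norm{f_h}\leq 2\norm{h'}\leq 2,\qquad \norm{f_h'}\leq \sqrt{2/\pi}\,\norm{h'}\leq \sqrt{2/\pi},\qquad \norm{f_h''}\leq 2\norm{h'}\leq 2.
\]
In other words, $f_h\in\%F$ for every admissible test function $h$.

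The final step is purely set-theoretic: the collection $\{f_h : \norm{h'}\leq 1\}$ is a subset of $\%F$, so enlarging the indexing set can only increase the supremum. Replacing the sup in the displayed identity above by a sup over all $f\in\%F$ therefore yields \eqref{35}.

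Since the serious analytic work has been carried out in Proposition~\ref{prop31} (the integration-by-parts argument built on Lemma~\ref{lem1}) and in Lemma~\ref{lem31} (the bounds on the Stein solutions), there is no substantial obstacle at this stage; the only subtlety to flag is the passage from ``Lipschitz'' to ``absolutely continuous with bounded a.e.\ derivative'' needed to legitimately apply Item~\ref{i32}. The payoff of Theorem~\ref{thm31} is conceptual rather than technical: in later applications one may choose to work with \emph{any} $f$ in the explicitly described class $\%F$ instead of being tied to the particular $f_h$, which will prove convenient when coupling arguments are introduced.
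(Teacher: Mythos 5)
Your proposal is correct and is exactly the argument the paper intends: the paper simply states Theorem \ref{thm31} as a corollary of Proposition \ref{prop31} and Lemma \ref{lem31} without writing out the details, and your write-up supplies precisely those details (specializing $\%H$ to the $1$-Lipschitz class, checking via Item \ref{i32} of Lemma \ref{lem31} that each $f_h$ lies in $\%F$, and enlarging the supremum). No gaps.
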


In the remainder of this section, we discuss methods to bound $\abs{\IE[f'(W)-Wf(W)]}$ using the structure of $W$.
We will
identify general structures that are amenable to this task (for other structures in greater generality
see \cite{roe10}), but first we illustrate the type
of result we are looking for in the following standard example.

\subsection{Sum of independent random variables}

We will show the following result which follows from
Theorem \ref{thm31} and Lemma \ref{lem33aa} below.
\begin{theorem}\label{thm32}
Let $X_1, \ldots, X_n$ be independent random variables with $\IE \abs{X_i}^4 <\infty$, $\IE X_i =0$, and $\IE X_i^2 =1$. 
If $W=(\sum_{i=1}^n X_i)/\sqrt{n}$ and $Z$ has the standard normal distribution, then
\ba
\dw(W,Z)\leq \frac{1}{n^{3/2}}\sum_{i=1}^n \IE\abs{X_i}^3 + \frac{\sqrt{2}}{\sqrt{\pi}n}\sqrt{\sum_{i=1}^n\IE[X_i^4]}.
\ee
\end{theorem}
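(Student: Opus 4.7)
The plan is to apply Theorem \ref{thm31}, which reduces the problem to bounding $\abs{\IE[f'(W) - Wf(W)]}$ uniformly over $f \in \%F$, and then to exploit the independence of the $X_i$ via a Taylor expansion centered so that the leading term becomes quadratic in each $X_i$. Concretely, set $W_i = W - X_i/\sqrt{n}$, which is independent of $X_i$; since $\IE[X_i] = 0$ we have $\IE[X_i f(W_i)] = 0$, and hence
\ba
\IE[Wf(W)] = \frac{1}{\sqrt{n}} \sum_{i=1}^n \IE[X_i (f(W) - f(W_i))].
\ee

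First I would expand $f(W) - f(W_i)$ using the fundamental theorem of calculus twice, writing
\ba
f(W) - f(W_i) = \frac{X_i}{\sqrt{n}} f'(W) - \int_0^{X_i/\sqrt{n}} \int_u^{X_i/\sqrt{n}} f''(W_i + s) \, ds \, du.
\ee
Multiplying by $X_i$, taking expectation, and summing yields $\IE[Wf(W)] = n^{-1} \sum_{i} \IE[X_i^2 f'(W)] + R$, where the remainder satisfies $\abs{R} \leq (\norm{f''}/(2n^{3/2})) \sum_i \IE\abs{X_i}^3$ by the crude bound $\norm{f''}$ on the inner integrand together with the area $X_i^2/(2n)$ of the region of integration. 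Using $\norm{f''} \leq 2$ for $f \in \%F$, this produces exactly the first term in the claimed bound.

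Second, since $\sum_i \IE[X_i^2] = n$, the leading contribution can be rewritten as
\ba
\IE[f'(W)] - \frac{1}{n} \sum_{i=1}^n \IE[X_i^2 f'(W)] = \IE\bklel f'(W) \bklrl 1 - \frac{1}{n} \sum_{i=1}^n X_i^2 \bklrr \bkler,
\ee
which I would bound by $\norm{f'} \cdot \IE\babs{1 - n^{-1}\sum_i X_i^2}$. Since $1 - n^{-1}\sum_i X_i^2$ has mean zero, Jensen's inequality gives $\IE\babs{1 - n^{-1}\sum_i X_i^2} \leq \sqrt{\Var(n^{-1}\sum_i X_i^2)}$; using independence of the $X_i$ and $\Var(X_i^2) \leq \IE[X_i^4]$ then produces the bound $n^{-1}\sqrt{\sum_i \IE[X_i^4]}$. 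Substituting $\norm{f'} \leq \sqrt{2/\pi}$ yields the second term, and combining with the remainder estimate completes the proof.

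The main obstacle is choosing the right Taylor expansion: one wants the first-order term in $f(W) - f(W_i)$ to involve $f'(W)$ rather than $f'(W_i)$, so that after summation the leading contribution can be recognized as $f'(W) \cdot n^{-1}\sum_i X_i^2$ and controlled by the concentration of $n^{-1}\sum_i X_i^2$ around its mean. A more naive expansion centered at $W_i$ would entangle this $f'$ factor with an additional $X_i$ and lose the clean split between a $\norm{f''}$-type remainder (giving the $\IE\abs{X_i}^3$ term) and a $\norm{f'}$-type bound (giving the $\IE[X_i^4]$ term).
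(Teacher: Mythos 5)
Your proof is correct and follows essentially the same route as the paper's: after using $\IE[X_i f(W_i)]=0$ to write $\IE[Wf(W)]=n^{-1/2}\sum_i \IE[X_i(f(W)-f(W_i))]$, you Taylor-expand so that the linear term is $f'(W)$ rather than $f'(W_i)$, bound the remainder by $\norm{f''}\IE|X_i|^3/(2n^{3/2})$, and compare the leading piece $n^{-1}\sum_i X_i^2 f'(W)$ to $f'(W)$ via the variance of $n^{-1}\sum_i X_i^2$. The only cosmetic difference is that you use the integral form of the Taylor remainder while the paper uses the Lagrange form, and you invoke Jensen where the paper cites Cauchy-Schwarz; these are interchangeable here.
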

Before the proof we remark that if the $X_i$ of the theorem also have common distribution, then the rate of convergence
is order $n^{-1/2}$, which is the best possible.  It is also useful to compare this result to Theorem \ref{thm1}
which is in a different metric (neither result is recoverable from the other in full strength) and only assumes third moments.  
A small modification in the argument below yields a similar theorem assuming 
only third moments, but the structure of proof for the theorem as stated is one that we shall copy in the sequel.

In order to prepare for arguments to come, we will break the proof into a series of lemmas.  
Since our strategy is to apply Theorem \ref{thm31}
by estimating the right side of \eqref{35} for bounded $f$ with bounded first and second derivative, the first lemma shows 
an expansion of the right side of \eqref{35} using the structure of $W$ as defined in Theorem \ref{thm32}.
\begin{lemma}\label{lem33}
In the notation of Theorem \ref{thm32}, if $W_i=(\sum_{j\not=i} X_i)/\sqrt{n}$ then
\ban
\IE [Wf(W)]&=\IE\left[\frac{1}{\sqrt{n}}\sum_{i=1}^nX_i\left( f(W)- f(W_i)-(W-W_i)f'(W)\right)\right] \label{345}\\
	& \quad +\IE\left[\frac{1}{\sqrt{n}}\sum_{i=1}^nX_i(W-W_i)f'(W)\right]. \label{345a}
\ee
\end{lemma}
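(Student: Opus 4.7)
The statement is essentially an algebraic identity once one notices that the two bracketed expressions sum to $\frac{1}{\sqrt{n}}\sum_{i=1}^n X_i\bigl(f(W)-f(W_i)\bigr)$, since the $(W-W_i)f'(W)$ terms cancel. So the content of the lemma is the equality
\[
\IE[Wf(W)] \;=\; \IE\!\left[\frac{1}{\sqrt{n}}\sum_{i=1}^n X_i\bigl(f(W)-f(W_i)\bigr)\right],
\]
together with a purely cosmetic rewriting designed to set up the bounds in the sequel.

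My plan is therefore to proceed in two short steps. First, I would write $W=\frac{1}{\sqrt{n}}\sum_{i=1}^n X_i$ and expand $\IE[Wf(W)]$ by linearity as $\frac{1}{\sqrt{n}}\sum_{i=1}^n \IE[X_i f(W)]$. Next, I would observe that $W_i$ is a function of $\{X_j:j\neq i\}$ only, so $X_i$ is independent of $W_i$; combined with $\IE X_i=0$, this gives $\IE[X_i f(W_i)]=\IE[X_i]\,\IE[f(W_i)]=0$ for each $i$. Subtracting these zero terms yields
\[
\IE[Wf(W)] \;=\; \IE\!\left[\frac{1}{\sqrt{n}}\sum_{i=1}^n X_i\bigl(f(W)-f(W_i)\bigr)\right].
\]

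Finally, I would add and subtract $\frac{1}{\sqrt{n}}\sum_i X_i(W-W_i)f'(W)$ inside the expectation to split the right-hand side into the two displayed pieces \eqref{345} and \eqref{345a}. Integrability of all terms (needed to justify linearity of expectation and the splitting) follows from the assumption $\IE|X_i|^4<\infty$ together with the boundedness of $f$ and $f'$ that will be available when this lemma is applied via Theorem \ref{thm31}.

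There is essentially no obstacle here; the only mild care is to argue that $X_i$ and $W_i$ are indeed independent (immediate from the product structure of independence of the $X_j$'s) and to check integrability so that the rearrangement is legal. The conceptual point, worth flagging rather than proving, is the purpose of the decomposition: the term \eqref{345} is a second-order Taylor remainder that will later be controlled using $\|f''\|$ and $\IE|X_i|^3$, while the term \eqref{345a} will be paired against $\IE[f'(W)]$ in the Stein identity and contribute the $\IE[X_i^4]$ piece via a variance-type estimate. That structural foreshadowing is why the author presents the identity in this seemingly redundant form rather than as the one-line statement above.
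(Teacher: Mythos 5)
Your proof is correct and matches the paper's argument exactly: both reduce the identity to $\IE[Wf(W)]=\IE\bigl[\tfrac{1}{\sqrt{n}}\sum_i X_i(f(W)-f(W_i))\bigr]$ by cancelling the $(W-W_i)f'(W)$ terms, and then use the independence of $X_i$ and $W_i$ together with $\IE X_i=0$ to kill the $\IE[X_i f(W_i)]$ terms. Your additional remarks on integrability and on the purpose of the decomposition are accurate but not needed beyond what the paper does.
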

\begin{proof}
After noting that the negative of \eqref{345a} is contained in \eqref{345} and removing these terms from consideration,
the lemma is equivalent to
\ban
\IE [Wf(W)]=\IE\left[\frac{1}{\sqrt{n}}\sum_{i=1}^n\left(X_i f(W)- X_i f(W_i)\right)\right]. \label{346}
\ee
Equation \eqref{346} follows easily from the fact that $W_i$ is independent of $X_i$ so that
$\IE[X_i f(W_i)]=0$.  
\end{proof}
The proof of the theorem will follow after we show that \eqref{345} is small and that \eqref{345a} compares favorably to $f'(W)$; we will see
similar strategies frequently in the sequel.
\begin{lemma}\label{lem33aa}
If $f$ is a bounded function with bounded first and second derivative, then in the notation of Theorem \ref{thm32},
\ban
\abs{\IE[f'(W)-Wf(W)]}\leq  \frac{\norm{f''}}{2n^{3/2}}\sum_{i=1}^n \IE\abs{X_i}^3 + \frac{\norm{f'}}{n}\sqrt{\sum_{i=1}^n\IE[X_i^4]}. \label{337}
\ee
\end{lemma}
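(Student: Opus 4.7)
The plan is to start from the decomposition supplied by Lemma \ref{lem33} and write
\ba
\IE[f'(W) - Wf(W)] = \IE\left[\left(1 - \tfrac{1}{\sqrt{n}}\sum_{i=1}^n X_i(W-W_i)\right)f'(W)\right] - \IE\left[\tfrac{1}{\sqrt{n}}\sum_{i=1}^n X_i\bigl(f(W) - f(W_i) - (W-W_i)f'(W)\bigr)\right],
\ee
so that the job reduces to bounding each of these two pieces separately in a way that brings in the different norms of $f$.

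First I would dispatch the Taylor-remainder piece. Since $W - W_i = X_i/\sqrt{n}$, a second-order Taylor expansion of $f$ about $W_i$ gives the pointwise bound $|f(W) - f(W_i) - (W-W_i)f'(W)| \leq \tfrac{1}{2}\|f''\|(W-W_i)^2 = \tfrac{\|f''\|}{2n}X_i^2$. Multiplying by $|X_i|/\sqrt{n}$, summing, and taking expectations yields the first summand $\frac{\|f''\|}{2n^{3/2}}\sum_{i=1}^n \IE|X_i|^3$ in \eqref{337}.

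For the other piece I would exploit the identity $\tfrac{1}{\sqrt{n}}\sum_i X_i(W-W_i) = \tfrac{1}{n}\sum_i X_i^2$, so that by Cauchy–Schwarz
\ba
\left|\IE\left[\left(1 - \tfrac{1}{n}\sum_{i=1}^n X_i^2\right)f'(W)\right]\right|
\leq \|f'\|\,\IE\left|1 - \tfrac{1}{n}\sum_{i=1}^n X_i^2\right|
\leq \|f'\|\sqrt{\Var\left(\tfrac{1}{n}\sum_{i=1}^n X_i^2\right)}.
\ee
Since the $X_i$ are independent with $\IE X_i^2 = 1$, the variance splits as $\tfrac{1}{n^2}\sum_i \Var(X_i^2) \leq \tfrac{1}{n^2}\sum_i \IE[X_i^4]$, producing the second summand $\frac{\|f'\|}{n}\sqrt{\sum_{i=1}^n \IE[X_i^4]}$ in \eqref{337}. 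Combining the two bounds finishes the argument.

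The step I would expect to require the most care is the second one: the naive bound $\IE[f'(W)] \approx \IE[(\tfrac{1}{n}\sum X_i^2)f'(W)]$ must be made quantitative, and the natural move (Cauchy–Schwarz followed by variance computation) is what produces the $X_i^4$ moment and the factor $1/n$ rather than $1/\sqrt{n}$, which in turn is what forces the fourth-moment hypothesis in Theorem \ref{thm32}. Everything else is a routine Taylor estimate plus independence.
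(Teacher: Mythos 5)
Your proof is correct and follows the paper's argument essentially verbatim: the same decomposition from Lemma \ref{lem33}, the same pointwise Taylor bound producing the third-moment term, and the same Cauchy--Schwarz/variance computation (using independence and $\Var(X_i^2)\leq \IE[X_i^4]$) producing the fourth-moment term. One small point of care: to get the constant $\tfrac{1}{2}$ directly you should expand $f(W_i)$ about $W$ (the point at which $f'$ is evaluated in the remainder $f(W)-f(W_i)-(W-W_i)f'(W)$); expanding about $W_i$ as written leaves an extra term $(W-W_i)(f'(W_i)-f'(W))$ and would give the constant $\tfrac{3}{2}$ instead.
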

\begin{proof}
Using the notation and results of Lemma \ref{lem33}, we obtain
\ban
\abs{\IE[f'(W)-Wf(W)]}&\leq \left|\IE\left[\frac{1}{\sqrt{n}}\sum_{i=1}^nX_i\left( f(W)- f(W_i)-(W-W_i)f'(W)\right)\right]\right| \label{347}  \\
	& \quad +\left|\IE\left[f'(W)\left(1-\frac{1}{\sqrt{n}}\sum_{i=1}^nX_i(W-W_i)\right)\right]\right|. \label{347a}
\ee
By Taylor expansion, the triangle inequality, and after
pushing the absolute value inside the expectation, we obtain that \eqref{347} is bounded above by
\ba
\frac{\norm{f''}}{2\sqrt{n}}\sum_{i=1}^n\IE\left[\abs{X_i(W-W_i)^2}\right].
\ee
Since $(W-W_i)=X_i/\sqrt{n}$, we obtain the first term in the bound \eqref{337}.  We find that \eqref{347a} is bounded above by
\ba
\frac{\norm{f'}}{n}\IE\left|\sum_{i=1}^n(1-X_i^2)\right|\leq \frac{\norm{f'}}{n}\sqrt{\var\left(\sum_{i=1}^nX_i^2\right)},
\ee
where we have used the Cauchy-Schwarz inequality.  By independence and the fact that $\var(X_i^2)\leq\IE[X_i^4]$, we obtain
the second term in the bound \eqref{337}.
\end{proof}

We can see from the work above that the strategy to bound $\IE[f'(W)-Wf(W)]$ is to 
use the structure of $W$ to rewrite $\IE[Wf(W)]$ in a way that compares favorably to $\IE[f'(W)]$.  Rather than
attempt this program anew in each application that arises, we will develop out-the-door theorems that provide 
error terms for various canonical structures which arise in many applications.  

\subsection{Dependency Neighborhoods}
We now generalize Theorem \ref{thm32} to
sums of random variables with local dependence.
\begin{definition}
We say that a collection of random variables $(X_1, \ldots, X_n)$ has dependency neighborhoods $N_i\subseteq \{1, \ldots, n\}$,
$i=1, \ldots, n$,
if $X_i$ is independent of $\{X_j\}_{j\not\in N_i}$.
\end{definition}
If we think of constructing a graph with vertices $\{1, \ldots, n\}$ where if there is no edge between $i$ and $j$
then $X_i$ and $X_j$ are independent, then we can define $N_i/\{i\}$ as the neighbors of vertex $i$ in the graph.
For this reason, dependency neighborhoods are frequently referred to as dependency graphs.  Using the Stein's method
framework and a modification of the argument for sums of independent random variables we can prove the following theorem,
some version of which can be read from the main result of \cite{brs89}.
\begin{theorem}\label{thm41}
Let $X_1, \ldots, X_n$  be random variables with $\IE[X_i^4]<\infty$, $\IE[X_i]=0$, $\sigma^2=\var\left(\sum_i X_i\right)$, and
define $W=\sum_i X_i/\sigma$.  Let the collection $(X_1, \ldots, X_n)$ have dependency neighborhoods $N_i$, $i=1, \ldots, n$, with
$D:=\max_{1\leq i \leq n} \abs{N_i}$.  Then for $Z$ a standard normal random variable,
\ban
\dw(W,Z)\leq \frac{D^2}{\sigma^3}\sum_{i=1}^n\IE\abs{X_i}^3+\frac{\sqrt{26}D^{3/2}}{\sqrt{\pi}\sigma^2}\sqrt{\sum_{i=1}^n\IE[X_i^4]}. \label{41}
\ee
\end{theorem}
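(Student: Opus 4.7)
The plan is to follow the exact paradigm established in the proof of Theorem \ref{thm32}, replacing the ``leave-one-out'' variable $W_i$ by a ``leave-the-neighborhood-out'' variable and keeping careful track of the dependency graph bookkeeping. Specifically, set $T_i = \sum_{j\in N_i}X_j/\sigma$ and $W_i = W - T_i$. The crucial observation is that $W_i$ is a function of $\{X_j\}_{j\notin N_i}$ and is therefore independent of $X_i$, so $\IE[X_i f(W_i)]=0$ for any bounded measurable $f$. Mimicking Lemma \ref{lem33}, this independence gives the decomposition
\be
\IE[Wf(W)] = \frac{1}{\sigma}\sum_{i=1}^n \IE\bklel X_i\bklrl f(W)-f(W_i)-T_i f'(W)\bklrr\bkler + \sum_{i=1}^n\IE\bklel \tfrac{X_i T_i}{\sigma}f'(W)\bkler,
\ee
and since $\IE\bklel \sigma^{-1}\sum_i X_i T_i\bkler = \sigma^{-2}\sum_i\sum_{j\in N_i}\IE[X_iX_j]=\sigma^{-2}\IE[(\sum_i X_i)^2]=1$ (the terms with $j\notin N_i$ vanish by independence), subtracting from $\IE[f'(W)]$ yields
\ba
\IE[f'(W)-Wf(W)] = \IE\bbklel f'(W)\bbklrl 1-\tfrac{1}{\sigma}\sum_i X_i T_i\bbklrr\bbkler - \tfrac{1}{\sigma}\sum_i\IE\bklel X_i(f(W)-f(W_i)-T_i f'(W))\bkler.
\ee

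The first term on the right I would bound by $\|f'\|\cdot\sqrt{\Var(\sigma^{-1}\sum_i X_i T_i)}$ via Cauchy--Schwarz, and the second term by $\tfrac{\|f''\|}{2\sigma}\sum_i\IE[|X_i|T_i^2]$ via Taylor expansion with integral remainder. For the Taylor piece, expanding $T_i^2$ and invoking $|X_iX_jX_k|\leq \tfrac13(|X_i|^3+|X_j|^3+|X_k|^3)$ gives
\be
\sum_i \IE[|X_i|T_i^2] = \tfrac{1}{\sigma^2}\sum_i\sum_{j,k\in N_i}\IE|X_iX_jX_k| \leq \tfrac{D^2}{\sigma^2}\sum_i\IE|X_i|^3,
\ee
after changing order of summation and using that each index appears in at most $D^2$ triples $(i,j,k)$ with $j,k\in N_i$ (assuming the neighborhood relation is symmetric, which one takes as part of the definition). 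Plugging in $\|f''\|\leq 2$ from Theorem \ref{thm31} produces the first term of \eqref{41}.

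The main obstacle, and the more delicate computation, will be the variance term. Writing $S = \sum_i X_i T_i = \sigma^{-1}\sum_i\sum_{j\in N_i}X_iX_j$, the covariance $\Cov(X_iX_j,X_kX_l)$ vanishes unless $\{k,l\}$ meets $N_i\cup N_j$, so the number of contributing quadruples with $j\in N_i$, $l\in N_k$ is at most of order $D^3 n$ (for each $(i,j)$, the index $k$ must lie in $N_i\cup N_j$, of size $\leq 2D$, and then $l\in N_k$ gives another factor $D$, with a symmetric count for the reverse). Bounding each nonzero covariance by $\tfrac12\sum_{a\in\{i,j,k,l\}}\IE[X_a^4]$ via $|\IE X_iX_jX_kX_l|\leq \tfrac14\sum_a \IE X_a^4$ (AM--GM) and similarly for $|\IE X_iX_j|\cdot|\IE X_kX_l|$, then re-indexing so each fixed $a$ is the ``anchor'' contributes at most $26 D^3$ quadruples (the numerical constant is where the $\sqrt{26}$ enters), yields
\be
\Var(S/\sigma)=\sigma^{-4}\Var(S)\leq \tfrac{26 D^3}{\sigma^4}\sum_i \IE[X_i^4].
\ee
Combining with $\|f'\|\leq\sqrt{2/\pi}$ produces the second term of \eqref{41}, and invoking Theorem \ref{thm31} completes the proof. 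The heart of the argument is thus the combinatorial accounting of dependent quadruples, which I expect to require the most care to get the constant $26$; everything else is a routine upgrade of Lemmas \ref{lem33} and \ref{lem33aa}.
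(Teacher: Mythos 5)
Your decomposition, the handling of the Taylor term via the AM--GM bound $\IE|X_iX_jX_k|\leq\tfrac13(\IE|X_i|^3+\IE|X_j|^3+\IE|X_k|^3)$ with the $D^2$ count, and the Cauchy--Schwarz treatment of the variance term are exactly the paper's argument; this is the intended routine upgrade of Lemmas \ref{lem33} and \ref{lem33aa}, so in structure the proposal matches the paper.

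The one point that does not close as written is the constant in the variance bound. You assert $\Var(S/\sigma)\leq 26D^3\sigma^{-4}\sum_i\IE[X_i^4]$ and then say that combining with $\norm{f'}\leq\sqrt{2/\pi}$ gives the second term of \eqref{41}; but $\sqrt{2/\pi}\cdot\sqrt{26D^3}/\sigma^2=\sqrt{52}\,D^{3/2}/(\sqrt{\pi}\sigma^2)$, not $\sqrt{26}\,D^{3/2}/(\sqrt{\pi}\sigma^2)$. To reach the stated constant you need $\Var\bigl(\sum_i\sum_{j\in N_i}X_iX_j\bigr)\leq 13D^3\sum_i\IE[X_i^4]$, since $\sqrt{2}\cdot\sqrt{13}=\sqrt{26}$. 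The paper gets this by expanding the second moment into the diagonal terms $i=j$ (contributing $D+D(D-1)=D^2$ times $\sum\IE X_i^4$ after AM--GM) and the off-diagonal terms, splitting the latter into quadruples where $\{X_i,X_k\}$ is independent of $\{X_j,X_l\}$ (whose contribution cancels against $\sigma^4$ up to a correction) and ``connected'' quadruples, of which there are at most $D\times 2D\times 3D=6D^3$ per base index; each of the two resulting sums is bounded by $6D^3\sum_i\IE[X_i^4]$, giving $12D^3+D^2\leq 13D^3$ in total. Your per-quadruple weight of $\tfrac12\sum_{a}\IE[X_a^4]$ together with ``$26D^3$ quadruples per anchor'' would in fact give $13D^3$ if you carry the factor $\tfrac12$ through, so the discrepancy appears to be a dropped factor of $2$ in your final display rather than a structural problem; still, the counting is the heart of the proof and needs to be carried out explicitly as above rather than sketched. (You are also right that one must assume symmetry of the neighborhood relation, and, as in the paper, that non-adjacent \emph{pairs} of index sets in the dependency graph index independent collections --- the bare definition of dependency neighborhoods only gives independence of a single $X_i$ from its non-neighbors.)
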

Note that this theorem quantifies the heuristic that a sum of many locally dependent random variables will be approximately normal.
When viewed as an asymptotic result, it's clear that under some conditions a CLT will hold even with $D$ growing with $n$.
It is also possible to prove similar theorems using further information about the dependence structure of the variables; see \cite{chsh04}.

The proof of the theorem will be analogous to the case of sums of independent random variables (a special case of this theorem), but
the analysis will be a little more complicated due to the dependence. 
\begin{proof}
From Theorem \ref{thm31}, 
to upper bound $\dw(W,Z)$ it is enough to bound $\abs{\IE [f'(W)-Wf(W)]}$, where
$\norm{f}, \norm{f''}\leq 2$ and $\norm{f'}\leq \sqrt{2/\pi}$.  Define $W_i=\sum_{j\not\in N_i} X_j$ and note that $X_i$ is independent
of $W_i$.  As in the proof of Theorem \ref{thm32}, 
we can now write
\ban
\abs{\IE[f'(W)-Wf(W)]}&\leq \left|\IE\left[\frac{1}{\sigma}\sum_{i=1}^nX_i\left( f(W)- f(W_i)-(W-W_i)f'(W)\right)\right]\right| \label{447}  \\
	& \quad +\left|\IE\left[f'(W)\left(1-\frac{1}{\sigma}\sum_{i=1}^nX_i(W-W_i)\right)\right]\right|. \label{447a}
\ee
We now proceed by showing that \eqref{447} is bounded above by the first term in \eqref{41} and 
\eqref{447a} is bounded above by the second.

By Taylor expansion, the triangle inequality, and after
pushing the absolute value inside the expectation, we obtain that \eqref{447} is bounded above by
\ban
\frac{\norm{f''}}{2\sigma}\sum_{i=1}^n\IE\left[\abs{X_i(W-W_i)^2}\right]
&\leq \frac{1}{\sigma^3}\sum_{i=1}^n\IE\left|X_i\left(\sum_{j\in N_i} X_j\right)^2 \right| \notag \\
&\leq \frac{1}{\sigma^3}\sum_{i=1}^n\sum_{j,k\in N_i}\IE\left|X_iX_jX_k\right|.  \label{448}
\ee
The arithmetic-geometric mean inequality implies that 
\ba
\IE\left|X_iX_jX_k\right|\leq \frac{1}{3}\left(\IE|X_i|^3+\IE|X_j|^3+\IE|X_k|^3\right),
\ee
so that \eqref{447} is bounded above by the first term in the bound \eqref{41}, where we use for example that
\ba
\sum_{i=1}^n\sum_{j,k\in N_i}\IE|X_j|^3\leq D^2 \sum_{j=1}^n\IE|X_j|^3.
\ee

Similar consideration implies that \eqref{447a} is bounded above by 
\ban
\frac{\norm{f'}}{\sigma^2}\IE\left|\sigma^2-\sum_{i=1}^nX_i\sum_{j\in N_i}X_j\right|\leq \frac{\sqrt{2}}{\sqrt{\pi}\sigma^2}\sqrt{\var\left(\sum_{i=1}^n\sum_{j\in N_i} X_iX_j\right)}. \label{49}
\ee 
where the inequality follows from the Cauchy-Schwarz inequality
coupled with the representation
\ba
\sigma^2=\IE\left[\sum_{i=1}^nX_i\sum_{j\in N_i}X_j\right].
\ee
The remainder of the proof consists of analysis on \eqref{49}, but note that in practice it may be possible to bound this term directly.
In order to bound the variance under the square root in \eqref{49}, we first compute
\ban
\IE\left[\left(\sum_{i=1}^n\sum_{j\in N_i} X_iX_j\right)^2\right]&=\sum_{i\not=j}\sum_{k\in N_i}\sum_{l \in N_j}\IE[X_iX_jX_kX_l]  \label{48c}\\
&+\sum_{i=1}^n\sum_{j\in N_i}\IE[X_i^2 X_j^2]+\sum_{i=1}^n\sum_{j\in N_i}\sum_{k\in N_i/\{j\}}\IE[X_i^2 X_j X_k]. \label{48a}
\ee
Using the arithmetic-geometric mean inequality, the first term of \eqref{48a} is bounded above by
\ba
\frac{1}{2}\sum_{i=1}^n\sum_{j\in N_i}\left(\IE[X_i^4]+\IE[X_j^4]\right)\leq D\sum_{i=1}^n\IE[X_i^4],
\ee
and the second by
\ba
\frac{1}{4}\sum_{i=1}^n\sum_{j\in N_i}\sum_{k\in N_i/\{j\}}\left(2\IE[X_i^4]+\IE[X_j^4]+\IE[X_k^4]\right)\leq D(D-1)\sum_{i=1}^n\IE[X_i^4].
\ee
We decompose the term \eqref{48c} into two components; 
\ban
\sum_{i\not=j}\sum_{k\in N_i}\sum_{l \in N_j}\IE[X_iX_jX_kX_l] =\sum_{\{i,k\}, \{j,l\} } \IE[X_iX_k]\IE[X_jX_l]+\sum_{\{i,k,j,l\}  }\IE[X_iX_jX_kX_l], \label{43}
\ee
where the first sum denotes the indices in which $\{X_i, X_k\}$ are independent of $\{X_j, X_l\}$, and the second term
consists of those remaining.  Note that by the arithmetic-geometric mean inequality, the second term of \eqref{43} is bounded above by
\ba
6D^3\sum_{i=1}^n\IE[X_i^4],
\ee
since the number of ``connected components" with at most four vertices of the dependency graph induced by the neighborhoods,
is no more than $D\times2D\times3D$.  The first term of \eqref{43} equals
\ba
\sigma^4-\sum_{\{i,k,j,l\}  }\IE[X_iX_k]\IE[X_jX_l],
\ee
and a couple applications of the arithmetic-geometric mean inequality yields
\ba
-\IE[X_iX_k]\IE[X_jX_l]&\leq \frac{1}{2}\left(\IE[X_iX_k]^2+\IE[X_jX_l]^2\right) \\
			& \leq  \frac{1}{2}\left(\IE[X_i^2X_k^2]+\IE[X_j^2X_l^2]\right)  \\
			&\leq \frac{1}{4}\left(\IE[X_i^4]+\IE[X_j^4]+\IE[X_k^4]+\IE[X_l^4]\right).
\ee
Putting everything together, we obtain that
\ba
\var\left(\sum_{i=1}^n\sum_{j\in N_i} X_iX_j\right)&=\IE\left[\left(\sum_{i=1}^n\sum_{j\in N_i} X_iX_j\right)^2\right]-\sigma^4 \\
	&\leq (12D^3+D^2)\sum_{i=1}^n\IE[X_i^4]\leq 13D^3\sum_{i=1}^n\IE[X_i^4],
\ee
which yields the theorem.
\end{proof}
Note that much of the proof of Theorem \ref{thm41} consists of bounding the error in a simple form.  However,
an upper bound for $\dw(W,Z)$ is obtained by adding the intermediate terms \eqref{448} and \eqref{49} which in many applications
may be directly bounded (and produce better bounds).

Theorem \ref{thm41} is an intuitively pleasing result that has many applications; a notable example is \cite{avbe93} where
CLTs for statistics of various random geometric graphs are shown.
We  apply it in the following setting.

\subsubsection{Application: Triangles in \ER\ random graphs} 
Let $G=G(n,p)$ be an \ER\ random graph on $n$ vertices with edge probability $p$ and let $T$ be the number of triangles in $G$.
We can write $T=\sum_{i=1}^N Y_i$, where $N=\binom{n}{3}$, and the $Y_i$ is the indicator that a triangle is formed at 
the ``$i$th" set of three vertices, in some arbitrary but fixed order. For $i\not=j$, $Y_i$ is independent of $Y_j$ if and only if the 
collection of edges between the vertices indexed by $i$ is disjoint from those indexed by $j$.  Thus we
let the set $N_i/\{i\}$ contain indices which share exactly two vertices with those indexed by $i$ so that $\abs{N_i}=3(n-3)+1$
and we can apply Theorem \ref{thm41} with $X_i=Y_i-p^3$
and $D=3n-8$.  Since
\ba
\IE\abs{X_i}^k=p^3(1-p^3)[(1-p^3)^{k-1}+p^{3(k-1)}], \hspace{5mm} k=1,2, \ldots
\ee  
we now only have to compute $\var(T)$ to apply the theorem.  A simple calculation using a decomposition of $T$ into indicators
shows that 
\ba
\sigma^2:=\var(T)=\binom{n}{3}p^3[1-p^3+3(n-3)p^2(1-p)],
\ee
and Theorem \ref{thm41} implies that for $W=(T-\IE[T])/\sigma$ and $Z$ a standard normal random variable
\ba
\dk(W, Z)&\leq
\frac{(3n-8)^2}{\sigma^3}\binom{n}{3}p^3(1-p^3)[(1-p^3)^{2}+p^{6}] \\
&\qquad+\frac{\sqrt{26}(3n-8)^{3/2}}{\sqrt{\pi}\sigma^2}\sqrt{\binom{n}{3}p^3(1-p^3)[(1-p^3)^{3}+p^{9}]}.
\ee

This bound holds for all $n\geq 3$ and $0\leq p \leq 1$,
but some asymptotic analysis shows that if, for example, $p\sim n^{-\alpha}$ for some $0\leq \alpha < 1$
(so that $\var(T)\rightarrow\infty$), then the number of triangles satisfies a CLT for $0\leq \alpha < 2/9$, which is
only a subset of the regime where normal convergence holds \cite{ruc88}.  It is possible that starting from \eqref{448} and \eqref{49} would
yield better rates in a wider regime, and considering finer structure yields better results \cite{bkr89}.

\subsection{Exchangeable pairs}
We begin with a definition.
\begin{definition}
The ordered pair $(W, W')$ of random variables is called an \emph{exchangeable pair} if $(W, W')\ed (W', W)$.  If for
some $0< a \leq 1$, the
exchangeable pair $(W, W')$ satisfies the relation
\ba
\IE[W'|W]=(1-a)W,
\ee
then we call $(W, W')$ an $a$-Stein pair.
\end{definition}
The next proposition contains some easy facts related to Stein pairs.
\begin{proposition}\label{prop51}
Let $(W, W')$ an exchangeable pair.
\begin{enumerate}
\item[1.] If $F:\IR^2\rightarrow \IR$ is an anti-symmetric function; that is $F(x,y)=-F(y,x)$, then $\IE[F(W,W')]=0$.
\end{enumerate}
If $(W, W')$ is an $a$-Stein pair with $\var(W)=\sigma^2$, then
\begin{enumerate}
\item[2.] $\IE[W]=0$ and $\IE[(W'-W)^2]=2a\sigma^2$.
\end{enumerate}
\end{proposition}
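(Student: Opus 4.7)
The plan is to prove the three claims in sequence, each following quickly from exchangeability combined with the Stein pair identity where relevant.

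For the anti-symmetry statement (item 1), I would use a one-line symmetry argument. Exchangeability gives $(W,W')\eqlaw (W',W)$, hence $\IE[F(W,W')]=\IE[F(W',W)]$. By anti-symmetry, $F(W',W)=-F(W,W')$, so $\IE[F(W,W')]=-\IE[F(W,W')]$, which forces the expectation to be zero. The only implicit assumption needed is that $F(W,W')$ is integrable, which I would note as a running hypothesis.

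For the mean-zero statement in item 2, I would combine the tower property with the equality of marginals. Taking expectations in $\IE[W'\mid W]=(1-a)W$ yields $\IE[W']=(1-a)\IE[W]$, while exchangeability gives $\IE[W']=\IE[W]$. Subtracting, $a\IE[W]=0$, and since $a>0$ we conclude $\IE[W]=0$.

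For the variance identity, I would expand
\be
\IE[(W'-W)^2]=\IE[W'^2]-2\IE[WW']+\IE[W^2].
\ee
Exchangeability forces $\IE[W'^2]=\IE[W^2]=\sigma^2$ (using $\IE[W]=0$ from the previous step so that $\sigma^2=\IE[W^2]$). For the cross term, I would condition on $W$ and invoke the Stein pair identity: $\IE[WW']=\IE\bklel W\,\IE[W'\mid W]\bkler=(1-a)\IE[W^2]=(1-a)\sigma^2$. Substituting gives $\IE[(W'-W)^2]=2\sigma^2-2(1-a)\sigma^2=2a\sigma^2$. There is no real obstacle here; the main thing is to apply the exchangeability at the level of marginals and second moments while reserving the Stein relation $\IE[W'\mid W]=(1-a)W$ for the cross term $\IE[WW']$.
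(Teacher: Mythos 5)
Your proposal is correct and follows essentially the same route as the paper's proof: the symmetry-plus-antisymmetry argument for item 1, taking expectations in the Stein identity for $\IE[W]=0$, and expanding $\IE[(W'-W)^2]$ with the cross term handled via $\IE[W\,\IE[W'\mid W]]=(1-a)\sigma^2$. Your explicit remark on the integrability of $F(W,W')$ is a minor but welcome addition.
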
 
\begin{proof}
Item 1 follows by the following equalities, the first by exchangeability and the second by anti-symmetry of $F$.
\ba
\IE[F(W, W')]=\IE[F(W', W)]=-\IE[F(W, W')].
\ee
The first assertion of Item 2 follows from the fact that $\IE[W]=\IE[W']=(1-a)\IE[W]$, and the second by calculating
\ba
\IE[(W'-W)^2]=\IE[(W')^2]+\IE[W^2]-2\IE[W\IE[W'|W]]=2\sigma^2-2(1-a)\sigma^2=2a\sigma^2.
\ee
\end{proof}
From this point we illustrate the use of the exchangeable pair in the following theorem.
\begin{theorem}\label{thm51}
If $(W, W')$ is an $a$-Stein pair with $\IE[W^2]=1$ and $Z$ has the standard normal distribution, then
\ba
\dw(W,Z)\leq\frac{\sqrt{\var\left(\IE[(W'-W)^2|W]\right)}}{\sqrt{2\pi}a}+\frac{\IE|W'-W|^3}{3a}. 
\ee
\end{theorem}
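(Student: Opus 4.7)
The plan is to apply Theorem~\ref{thm31}, which reduces the task to bounding $|\IE[f'(W) - Wf(W)]|$ uniformly over functions $f$ with $\|f'\| \le \sqrt{2/\pi}$ and $\|f''\| \le 2$. The central idea is to convert $\IE[Wf(W)]$ into an expression involving $f'(W)$ and the exchangeable pair $(W,W')$, exploiting the two features of a Stein pair: anti-symmetry and the linear regression property $\IE[W'\mid W]=(1-a)W$.

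First, I would invoke Proposition~\ref{prop51} Item~1 with the anti-symmetric function $F(w,w') = (w'-w)(f(w')+f(w))$ to obtain the identity
\[
 0 = \IE[(W'-W)(f(W')+f(W))].
\]
Taylor-expanding $f(W') = f(W) + (W'-W)f'(W) + R$ with $|R| \leq \tfrac{\|f''\|}{2}(W'-W)^2$, substituting, and using $\IE[(W'-W)f(W)] = -a\,\IE[Wf(W)]$ (which follows from the Stein pair property), this rearranges to
\[
 \IE[Wf(W)] = \frac{1}{2a}\IE\!\left[(W'-W)^2 f'(W)\right] + \frac{1}{2a}\IE[(W'-W)R].
\]

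Next, I would subtract this from $\IE[f'(W)]$ and split the error into two pieces:
\[
 \IE[f'(W)-Wf(W)] = \IE\!\left[f'(W)\Big(1-\tfrac{\IE[(W'-W)^2\mid W]}{2a}\Big)\right] - \frac{1}{2a}\IE[(W'-W)R].
\]
For the first term, since Proposition~\ref{prop51} Item~2 gives $\IE[(W'-W)^2]=2a$, the bracketed factor has mean zero, so Cauchy--Schwarz together with $\|f'\|\le\sqrt{2/\pi}$ yields the first term of the bound, namely $\frac{1}{a\sqrt{2\pi}}\sqrt{\var(\IE[(W'-W)^2\mid W])}$. For the second term, I would use the Taylor remainder bound and $\|f''\|\le 2$ to control $|\IE[(W'-W)R]|$ by a multiple of $\IE|W'-W|^3$, producing the second summand on the right of the theorem.

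The main obstacle is extracting the sharp constant $1/(3a)$ on the $\IE|W'-W|^3$ term; the crude bound $|R|\le \tfrac{\|f''\|}{2}(W'-W)^2$ substituted into $\frac{1}{2a}\IE[(W'-W)R]$ gives only $\frac{1}{2a}\IE|W'-W|^3$. To tighten this I would replace the crude antisymmetric identity above by the symmetrised version using $F(w,w') = (w'-w)\int_0^1 f(w+t(w'-w))\,dt$, which after Fubini produces a remainder of the form $(W'-W)^3\int_0^1 (1-u)f''(W+u(W'-W))\,du$ with a more favourable weight; integrating the weight and applying $\|f''\|\le 2$ then delivers the stated constant. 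Everything else is routine Taylor and Cauchy--Schwarz bookkeeping.
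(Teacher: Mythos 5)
Your proposal is correct and, once you make the adjustment in your final paragraph, it is essentially the paper's proof: the paper sets $F(w)=\int_0^w f(t)\,dt$ and Taylor-expands $0=\IE[F(W')-F(W)]$ to third order, which is exactly your antisymmetric function $(w'-w)\int_0^1 f\bigl(w+t(w'-w)\bigr)\,dt$, and the first error term is handled by the same Cauchy--Schwarz step using $\IE[(W'-W)^2]=2a$. One small correction: the integral-form remainder kernel is $\tfrac{1}{2}(1-u)^2$ (integrating to $1/6$), not $(1-u)$ (which integrates to $1/2$ and would only give $\IE|W'-W|^3/a$); with the correct kernel and $\norm{f''}\leq 2$ you indeed obtain the stated constant $1/(3a)$.
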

Before the proof comes a few remarks.
\begin{remark}
The strategy for using Theorem \ref{thm51} to obtain an error in the approximation of the distribution of a random variable 
$W$ by the standard normal is to construct $W'$ on the same space as $W$, such that $(W, W')$ is an $a$-Stein pair.
How can we achieve this construction?  Typically $W=W(\omega)$ is a random variable on some space $\Omega$ with
probability measure $\mu$.  It is not too difficult to see that if $X_0, X_1, \ldots$ is a Markov chain in stationary which
is reversible with respect to $\mu$, then setting (with some abusive notation) $W=W(X_0)$ and $W'=W(X_1)$
defines an exchangeable pair.  Since there is much effort put into constructing reversible Markov chains (e.g. Gibbs sampler),
this is a useful method to construct exchangeable pairs.  However, the linearity condition is not as easily abstractly
constructed and must be verified.
\end{remark}
\begin{remark}
In lieu of the previous remark, it is useful to note that 
\ba
\var\left(\IE[(W'-W)^2|W]\right)\leq \var\left(\IE[(W'-W)^2|\%F]\right),
\ee
for any sigma-field $\%F$ which is larger than the sigma-field generated by $W$.  With notation in the
previous remark, in many instances it is helpful to condition on $X_0$ rather than $W$ when computing the 
error bound from Theorem \ref{thm51}.
\end{remark}
\begin{remark}
A heuristic explanation for the form of the error terms appearing in Theorem \ref{thm51} arises by
considering an Ornstein-Uhlenbeck (O-U) diffusion process.  Define
the diffusion process $(D(t))_{t\geq0}$ by the following properties.
\begin{enumerate}
\item $\IE[D(t+a)-D(t)|D(t)=x]=-ax+\lito(a)$.
\item $\IE[(D(t+a)-D(t))^2|D(t)=x]=2a+\lito(a)$.
\item For all $\eps>0$, $\IP[\abs{D(t+a)-D(t)}>\eps|D(t)=x]=\lito(a)$.
\end{enumerate}
Here the function $g(a)$ is $\lito(a)$ if $g(a)/a$ tends to zero as $a$ tends to zero.  These three properties determine the O-U
diffusion process, and this process is reversible with the standard normal distribution as its stationary distribution.  
What does this have to do with Theorem \ref{51}? 
Roughly, if we think of $W$ as $D(t)$ and $W'$ as $D(t+a)$ for some small $a$, then Item 1 corresponds to the $a$-Stein pair
linearity condition, Item 2 implies that the first term of the error in Theorem \ref{thm51} will be small, and Item 3 relates to the 
second term in the error.  
\end{remark}

\begin{proof}[Proof of Theorem \ref{thm51}]
The strategy of the proof is to use the exchangeable pair to rewrite $\IE[Wf(W)]$ in such a way that compares
favorably to $\IE[f'(W)]$.  To this end, let $f$ be bounded with bounded first and second derivative and let $F(w):=\int_0^wf(t) dt$.
Now, exchangeability and Taylor expansion imply that
\ban
0&=\IE[F(W')-F(W)] \notag \\
	&=\IE\left[(W'-W)f(W)+\frac{1}{2}(W'-W)^2f'(W) + \frac{1}{6}(W'-W)^3f''(W^*)\right], \label{51}
\ee
where $W^*$ is a random quantity in the interval with endpoints $W$ and $W'$.  Now, the linearity condition on the Stein pair
yields
\ban
\IE\left[(W'-W)f(W)\right]=\IE[f(W)\IE[(W'-W)|W]]=-a\IE[Wf(W)]. \label{52}
\ee
Combining \eqref{51} and \eqref{52} we obtain
\ba
\IE[Wf(W)]=\IE\left[\frac{(W'-W)^2f'(W)}{2a} + \frac{(W'-W)^3f''(W^*)}{6a}\right].
\ee
From this point we can easily see 
\ban
\left|\IE[f'(W)-Wf(W)]\right|\leq \norm{f'}\IE\left|1-\frac{\IE[(W'-W)^2|W]}{2a}\right|+\norm{f''}\frac{\IE|W'-W|^3}{6a}, \label{511}
\ee
and the theorem will follow after noting that we are only considering $f$ with $\norm{f'}\leq\sqrt{2/\pi}$, and $\norm{f''}\leq 2$, and
that from Item 2 of Proposition \ref{prop51}, we have $\IE[\IE[(W'-W)^2|W]]=2a$ so that an application of the Cauchy-Schwarz
inequality yields the variance term in the bound. 
\end{proof}

Before moving to a heavier application, we consider the canonical example of a sum of independent random variables.
\begin{example}
Let $X_1, \ldots, X_n$ independent with $\IE[X_i^4]< \infty$, $\IE[X_i]=0$, $\var(X_i)=1$, and $W=n^{-1/2}\sum_{i=1}^n X_i$.
We construct our exchangeable pair by choosing an index uniformly at random and replacing it by an independent copy.  Formally,
let $I$ uniform on $\{1, \ldots, n\}$, $(X_1', \ldots, X_n')$ be an independent copy of $(X_1, \ldots, X_n)$, and define 
\ba
W'=W-\frac{X_I}{\sqrt{n}}+\frac{X_I'}{\sqrt{n}}.
\ee
It is a simple exercise to show that $(W, W')$ is exchangeable, and we now verify that is also a $1/n$-Stein pair.
The calculation below is straightforward; in the
penultimate equality we use the independence of $X_i$ and $X_i'$ and the fact that $\IE[X_i']=0$.
\ba
\IE[W'-W|(X_1, \ldots, X_n)]&=\frac{1}{\sqrt{n}}\IE[X_I'-X_I|(X_1, \ldots, X_n)] \\
	&=\frac{1}{\sqrt{n}}\sum_{i=1}^n\frac{1}{n}\IE[X_i'-X_i|(X_1, \ldots, X_n)] \\
	&=-\frac{1}{n}\sum_{i=1}^n\frac{X_i}{\sqrt{n}}=-\frac{W}{n}.
\ee
Since the conditioning on the larger sigma-field only depends on $W$, we have that
$\IE[W'-W|W]=-W/n$, as desired.

We can now apply Theorem \ref{thm51}.  We first bound
\ba
\IE\abs{W'-W}^3&=\frac{1}{n^{3/2}}\sum_{i=1}^n\IE\abs{X_i-X_i'}^3 \\
&\leq\frac{8}{n^{3/2}}\sum_{i=1}^n\IE\abs{X_i}^3,
\ee 
where we used the arithmetic-geometric mean inequality for the cross terms of the expansion of the cube of the difference
(we could also express the error in terms of these lower moments by independence).
Next we compute
\ba
\IE[(W'-W)^2|(X_1, \ldots, X_n)]&=\frac{1}{n^2}\sum_{i=1}^n \IE[(X_i'-X_i)^2|X_i] \\
	& =\frac{1}{n^2}\sum_{i=1}^n 1+X_i^2.
\ee
Taking the variance we see that 
\ba
\var\left(\IE[(W'-W)^2|W]\right)\leq \frac{1}{n^4}\sum_{i=1}^n\IE[X_i^4].
\ee
Combining the estimates above we have
\ba
\dw(W,Z)\leq \sqrt{\frac{2}{\pi}}\frac{\sqrt{\sum_{i=1}^n\IE[X_i^4]}}{2n}+\frac{2}{3n}\sum_{i=1}^n\IE\abs{X_i}^3.
\ee
Note that if the $X_i$ are i.i.d. then this term is of order $n^{-1/2}$, which is best possible.  
Finally, we could probably get away with only assuming three moments
for the $X_i$ if we use the intermediate term \eqref{511} in the proof of Theorem \ref{51}.
\end{example}

\subsubsection{Application: Anti-voter model}\label{secav}
In this section we consider an application of Theorem \ref{thm51} found in \cite{riro97}; we closely follow their treatment.  
Let $G$ be an $r$-regular\footnote{The term $r$-regular means that every vertex has degree $r$.}
graph with vertex set $V$ and edge set $E$.  Define
a Markov chain on the space $\{-1, 1\}^V$ of labelings of the vertices of $V$ by $+1$ and $-1$.  The chain follows the rule of uniformly 
choosing a vertex $v\in V$, then uniformly choosing a neighbor of $v$ and changing the sign of the label of $v$ to the opposite of
its neighbor.  The model gets its name from thinking of the vertices as people in a town full of curmudgeons where
a positive (negative)
labeling corresponding to a yes (no) vote for some measure.  At each time unit a random person talks to a random neighbor
and decides to switch votes to the opposite of that neighbor.  

It is known \cite{af10} that if the underlying graph $G$ is not bipartite or a cycle, then the anti-voter chain is irreducible and aperiodic
and has a unique stationary distribution.  This distribution can be difficult to describe, but we can use Theorem
\ref{thm51} to obtain an error in the Wasserstein distance to the standard normal distribution for the sum of the labels of 
the vertices.  We now state the theorem and postpone discussion of computing the relevant quantities in the error until after the proof.
\begin{theorem}\label{thm52}
Let $G$ be an $r$-regular graph with $n$ vertices which is not bipartite or a cycle.  Let $\textbf{X}=(X_i)_{i=1}^n\in \{-1,1\}^n$ have
the stationary distribution of the anti-voter chain and let $\textbf{X}'=(X_i')_{i=1}^n$ be one step in the chain.  Let
$\sigma_n^2=\var(\sum_i X_i)$, $W=\sigma_n^{-1}\sum_i X_i$, and $W'=\sigma_n^{-1}\sum_i X_i'$.  
Then $(W,W')$ is a $2/n$-Stein pair, and if $Z$ has the standard normal distribution, then
\ba
\dw(W,Z)\leq  \frac{4n}{3 \sigma_n^3} +\frac{\sqrt{\var(Q)}}{r\sigma_n^2\sqrt{2\pi}},
\ee  
where
\ba
Q=\sum_{i=1}^n\sum_{j\in N_i} X_i X_j,
\ee
and $N_i$ denotes the neighbors of $i$.
\end{theorem}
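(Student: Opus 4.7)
The plan is to invoke the exchangeable-pair Theorem \ref{thm51} with $a=2/n$, so I first need to verify the Stein-pair structure and then estimate the two error terms using the simple one-site nature of the anti-voter dynamics. Exchangeability of $(\textbf{X},\textbf{X}')$ follows from stationarity together with reversibility of the anti-voter chain with respect to $\mu$ (a direct detailed-balance check), which transfers to exchangeability of $(W,W')$. The sign-flip symmetry $X\mapsto -X$ of the transition rule makes $\mu$ invariant under coordinate-wise negation, so $\IE[\sum_i X_i]=0$ and hence $\IE[W^2]=1$. To verify the linearity condition, note that one step of the chain picks $I\in\{1,\ldots,n\}$ and $J\in N_I$ uniformly and sets $X'_I=-X_J$, leaving the remaining coordinates untouched, so
\ba
W'-W=\sigma_n^{-1}(X'_I-X_I)=-\sigma_n^{-1}(X_I+X_J).
\ee
Conditioning on $\textbf{X}$ and using $r$-regularity in the form $\sum_i\sum_{j\in N_i}X_j=r\sum_j X_j$ gives $\IE[W'-W\mid\textbf{X}]=-(2/n)W$, which by the tower property implies $\IE[W'\mid W]=(1-2/n)W$, so $a=2/n$.

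With $a=2/n$, Theorem \ref{thm51} reduces to
\ba
\dw(W,Z)\leq \frac{n}{2\sqrt{2\pi}}\sqrt{\var\bklr{\IE[(W'-W)^2\mid W]}}+\frac{n}{6}\IE|W'-W|^3.
\ee
For the third-moment term, since $X_I,X_J\in\{-1,1\}$ we have $|W'-W|\leq 2/\sigma_n$ pointwise, so $\IE|W'-W|^3\leq 8/\sigma_n^3$, and the second summand is bounded by $4n/(3\sigma_n^3)$, which is the first error term in the theorem.

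For the conditional-variance term, expanding $(X_I+X_J)^2=2(1+X_IX_J)$ and averaging first over $J\in N_I$ and then over $I$ yields
\ba
\IE[(W'-W)^2\mid\textbf{X}]=\frac{2}{\sigma_n^2}\bklr{1+\tfrac{Q}{nr}}.
\ee
Since $\sigma(W)\subseteq\sigma(\textbf{X})$, the remark following Theorem \ref{thm51} gives $\var(\IE[(W'-W)^2\mid W])\leq \var(\IE[(W'-W)^2\mid \textbf{X}])=4\var(Q)/(n^2r^2\sigma_n^4)$, and plugging into the first summand produces exactly $\sqrt{\var(Q)}/(r\sigma_n^2\sqrt{2\pi})$. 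The only delicate point is the verification of exchangeability via reversibility of the anti-voter chain on an $r$-regular, non-bipartite, non-cycle graph; everything else is routine once one conditions on the full state $\textbf{X}$ instead of on $W$ and exploits the $\pm1$ range of the labels together with $r$-regularity.
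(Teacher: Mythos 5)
Your verification of the linearity condition and your treatment of both error terms are correct, and in fact your route to them is cleaner than the paper's: by writing $W'-W=-\sigma_n^{-1}(X_I+X_J)$ and averaging directly over the uniformly chosen vertex $I$ and neighbor $J$, you get $\IE[W'-W\mid\textbf{X}]=-2W/n$ and $\IE[(W'-W)^2\mid\textbf{X}]=2\sigma_n^{-2}(1+Q/(nr))$ in two lines, whereas the paper reaches the same identities through bookkeeping with the edge counts $a_1$, $a_{-1}$, $a_0$ of like- and unlike-labeled edges. The constants all check out.

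However, there is a genuine gap in your treatment of exchangeability, which you yourself flag as "the only delicate point" and then resolve incorrectly. You claim that $(\textbf{X},\textbf{X}')$ is exchangeable because the anti-voter chain is reversible with respect to its stationary distribution, verifiable "by a direct detailed-balance check." The anti-voter chain is \emph{not} reversible in general --- the paper makes exactly this point, noting that exchangeability of $(W,W')$ "is non-trivial to verify since the anti-voter chain is not necessarily reversible." Indeed, a transition $\textbf{X}\to\textbf{X}'$ that flips the sign at a vertex $v$ cannot in general be reversed in one step with matching rates, since the move back requires $v$ to have a neighbor currently labeled $-X'_v$, and the forward and backward conditional probabilities need not balance against $\mu$. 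Note also that reversibility of the full chain is a strictly stronger statement than what you need: exchangeability of $(W,W')$ only concerns the scalar functional $\sum_i X_i$. The paper's fix is Lemma \ref{lem51}: if $W$ and $W'$ are identically distributed integer-valued random variables on the same space with $\IP(|W'-W|\leq 1)=1$, then $(W,W')$ is exchangeable. Here $\sigma_n(W'-W)/2\in\{-1,0,1\}$ almost surely and $\sigma_n W/2$ lives on a lattice, and $W\ed W'$ by stationarity, so the lemma applies. Without this (or some substitute for reversibility), you cannot legitimately invoke Theorem \ref{thm51}, since both the anti-symmetry identity $\IE[F(W,W')]=0$ in its proof and Proposition \ref{prop51} rest on exchangeability of the pair.
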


Part of the first assertion of the theorem is that $(W, W')$ is exchangeable, which is non-trivial to verify since the anti-voter
chain is not necessarily reversible.  However, we can apply the following lemma - the proof
here appears in \cite{roe06}.
\begin{lemma}\label{lem51}
If $W$ and $W'$ are identically distributed integer-valued random variables defined on the same space
such that $\IP(|W'-W|\leq 1)=1$, then $(W, W')$ is an exchangeable pair.
\end{lemma}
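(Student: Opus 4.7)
The plan is to reduce exchangeability to a single identity indexed by the integers and then exploit the hypothesis that $W$ and $W'$ have the same distribution to force that identity.

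Since $W$ and $W'$ are integer-valued and $\IP(|W'-W|\leq 1)=1$, the joint law of $(W,W')$ is supported on the set $\{(i,j)\in\IZ^2:|i-j|\leq 1\}$. The pairs $(i,i)$ are trivially symmetric under swapping coordinates, so to prove $(W,W')\ed (W',W)$ I only need to verify
\ba
\IP(W=i,\,W'=i+1)=\IP(W=i+1,\,W'=i) \qquad \text{for every } i\in\IZ.
\ee
Writing $a_i:=\IP(W=i,W'=i+1)$ and $b_i:=\IP(W=i+1,W'=i)$, the task is to show $a_i=b_i$ for all $i$.

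The next step is to decompose each marginal probability by conditioning on the value of the other coordinate, which (because of the support constraint) only contributes three terms. Expanding $\IP(W=i)$ and $\IP(W'=i)$ and using $\IP(W=i)=\IP(W'=i)$ gives, after the diagonal terms cancel,
\ba
b_{i-1}+a_i=a_{i-1}+b_i,
\ee
so that $a_i-b_i=a_{i-1}-b_{i-1}$ for every $i$; i.e. the difference $a_i-b_i$ is a constant $c$ independent of $i$.

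The final step, which is the only slightly delicate one, is to rule out $c\neq 0$. Here I use that $\sum_{i\in\IZ}a_i$ and $\sum_{i\in\IZ}b_i$ are both bounded by $1$, so in particular $a_i,b_i\to 0$ as $|i|\to\infty$; therefore $a_i-b_i\to 0$ and $c=0$, which gives $a_i=b_i$ for all $i\in\IZ$ and establishes exchangeability. The main (indeed only) obstacle is this summability argument, and it is handled simply by the fact that a probability distribution on $\IZ$ has vanishing tails.
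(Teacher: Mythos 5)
Your proof is correct, but it takes a genuinely different route from the paper's. You work at the level of point probabilities: setting $a_i=\IP(W=i,W'=i+1)$ and $b_i=\IP(W=i+1,W'=i)$, you decompose the two marginals, cancel the diagonal terms, and obtain the recursion $a_i-b_i=a_{i-1}-b_{i-1}$, which makes the difference a constant $c$; you then need the extra observation that summability of $(a_i)$ and $(b_i)$ forces $c=0$. The paper instead compares the distribution functions: it decomposes $\IP(W'\leq k)$ and $\IP(W\leq k)$ using the support constraint and equates them, which produces the identity $\IP(W=k+1,W'=k)=\IP(W=k,W'=k+1)$ in a single step. The two arguments are essentially dual to one another — the CDF identity is the "telescoped" form of your recursion, so the paper's version never has to rule out a stray additive constant — but your version makes the combinatorial mechanism (a conserved difference along the integers, killed by tail decay) more explicit. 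Both proofs are complete and correct; the only step you should be careful to state precisely is that the reduction to $a_i=b_i$ covers both off-diagonal cases $j=i+1$ and $j=i-1$ (it does, since the $j=i-1$ case is the $j=i+1$ case with the roles of the indices shifted), and that "$a_i\to 0$ as $\abs{i}\to\infty$" is exactly what summability of a nonnegative sequence over $\IZ$ gives you.
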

\begin{proof}
The fact that $W$ and $W'$ only differ by at most one almost surely imply
\ba
\IP(W'\leq k)=\IP(W<k)+\IP(W=k, W'\leq k)+\IP(W=k+1, W'=k),
\ee
while we also have
\ba
\IP(W\leq k)= \IP(W<k) + \IP(W=k, W'\leq k) + \IP(W=k, W'=k+1).
\ee
Since $W$ and $W'$ have the same distribution, the left hand sides of the equations above are equal, and equating the right hand sides
yields
\ba
\IP(W=k+1, W'=k)=\IP(W=k, W'=k+1),
\ee
which is the lemma.
\end{proof}

\begin{proof}[Proof of Theorem \ref{thm52}]  
For the proof below let $\sigma:=\sigma_n$ so that $\sigma W=\sum_{i=1}^n X_i$.
The exchangeability of $(W, W')$ follows by Lemma \ref{lem51} since $\IP(\sigma(W'-W)/2 \in \{-1, 0, 1\})=1$.  To show the linearity
condition for the Stein pair, we define some auxiliary quantities related to $\textbf{X}$.
Let $a_1=a_1(\textbf{X})$ be the number of edges in $G$ which have a one at each end vertex when labeled by $\textbf{X}$.  Similarly,
let $a_{-1}$ be the analogous quantity with negative ones at each end vertex and $a_0$ be the number of edges with a different labal
at each end vertex.  Due to the fact that $G$ is $r$-regular, the number of ones in $\textbf{X}$ is
$(2a_1+a_0)/r$ and the number of negative ones in $\textbf{X}$ is $(2a_{-1}+a_0)/r$.  Note that these two observation imply
\ban
\sigma W= \frac{2}{r}\left(a_1-a_{-1}\right). \label{55}
\ee
Now, since conditional on $\textbf{X}$ the event $\sigma W'=\sigma W +2$ is equal to the event that the chain
moves to $\textbf{X}'$ by choosing a vertex labeled $-1$ and then choosing a neighbor
with label $-1$, we have
\ban
\IP(\sigma(W'-W)=2|\textbf{X})=\frac{2a_{-1}}{nr} \label{56}
\ee
and similarly
\ban
\IP(\sigma(W'-W)=-2|\textbf{X})=\frac{2a_{1}}{nr}. \label{57}
\ee
Using these last two formulas and \eqref{55}, we obtain
\ba
\IE[\sigma(W'-W)|\textbf{X}]=\frac{2}{nr}\left(a_{-1}-a_{1}\right)=-\frac{2 \sigma W}{n},
\ee
as desired.

From this point we will compute the error terms from Theorem \ref{thm51}.  The first thing to note
is that $|W'-W|\leq 2/\sigma$ implies
\ba
\frac{\IE|W'-W|^3}{3a}\leq \frac{4n}{3\sigma^3},
\ee
which contributes the first part of the error from the Theorem.
Now, \eqref{56} and \eqref{57} imply
\ban
\IE[(W'-W)^2|\textbf{X}]=\frac{8}{\sigma^2n r}\left(a_{-1}+a_1\right), \label{58}
\ee
and since
\ba
&2a_1+2a_{-1}+2a_0=\sum_{i=1}^n\sum_{j\in N_i} 1 = nr, \\
&2a_1+2a_{-1}-2a_0=\sum_{i=1}^n\sum_{j\in N_i} X_i X_j = Q,
\ee
we have
\ban
Q=4(a_{-1}+a_1)-rn, \notag
\ee
which combining with \eqref{58} and a small calculation yields the second error term of the theorem.
\end{proof}

In order for Theorem \ref{thm52} to be useful for a given graph $G$, we need lower bounds on $\sigma_n^2$ and upper
bounds on $\var(Q)$.  The former item can be accomplished by the following result of \cite{af10} (Chapter 14).
\begin{lemma}\cite{af10}
Let $G$ be an $r$-regular graph and let $\kappa=\kappa(G)$ be the minimum over subsets of vertices $A$ of the quantity of edges that have both ends in $A$ or
both ends in $A^c$. If $\sigma^2$ is the variance of the stationary distribution of the anti-voter model on $G$, then
\ba
\frac{2\kappa}{r}\leq \sigma^2 \leq n.
\ee 
\end{lemma}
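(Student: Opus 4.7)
The plan is to leverage the Stein-pair identity developed in the proof of Theorem~\ref{thm52} to produce an exact formula for $\sigma_n^2$, from which both inequalities will fall out. First I would combine Item~2 of Proposition~\ref{prop51} with the fact that $(W,W')$ is a $2/n$-Stein pair with $\IE[W^2]=1$ to conclude $\IE[(W'-W)^2]=4/n$; unscaling via $W=\sigma_n^{-1}\sum_i X_i$, this is the same as $\IE[(\sum_i X_i'-\sum_i X_i)^2]=4\sigma_n^2/n$. On the other hand, taking unconditional expectations in \eqref{58} would give $\IE[(W'-W)^2]=\frac{8}{\sigma_n^2 nr}\IE[a_{-1}+a_1]$, and equating the two expressions for $\IE[(W'-W)^2]$ should produce the clean identity
\be
\sigma_n^2=\frac{2\,\IE[a_{-1}+a_1]}{r}.
\ee

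The lower bound would then come from a pointwise argument in $\textbf{X}$: for any labeling, setting $A=\{i:X_i=1\}$ makes $a_1$ count the edges with both endpoints in $A$ and $a_{-1}$ count the edges with both endpoints in $A^c$, so $a_1+a_{-1}$ is the number of monochromatic edges with respect to the cut $(A,A^c)$. By the definition of $\kappa$ this is at least $\kappa$, and taking expectations preserves the inequality to give $\sigma_n^2\geq 2\kappa/r$. For the upper bound, the anti-voter chain flips exactly one coordinate per step, so $\absl\sum_i X_i'-\sum_i X_i\absr\leq 2$ deterministically, which forces $\IE[(\sum_i X_i'-\sum_i X_i)^2]\leq 4$; combined with the first-step identity $\IE[(\sum_i X_i'-\sum_i X_i)^2]=4\sigma_n^2/n$ this immediately gives $\sigma_n^2\leq n$.

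I do not anticipate any real obstacle, since both the Stein-pair linearity condition and the conditional second moment formula \eqref{58} are already established in the proof of Theorem~\ref{thm52}. The only observation that requires any genuine thought is the combinatorial one that $a_1+a_{-1}$ equals the number of edges with endpoints on the same side of the cut induced by $\textbf{X}$; once this connection with $\kappa$ is in hand, the rest of the argument is essentially bookkeeping.
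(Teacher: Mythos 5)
Your proposal is correct, and it is worth noting that the paper itself offers no proof of this lemma at all --- it is simply quoted from \cite{af10} (Chapter 14) --- so there is nothing to compare against; what you have written is a genuine, self-contained derivation from the machinery the paper does develop. The chain of identities checks out: Proposition \ref{prop51} (Item 2) applied to the $2/n$-Stein pair with $\var(W)=1$ gives $\IE[(W'-W)^2]=4/n$, while averaging \eqref{58} gives $\IE[(W'-W)^2]=\tfrac{8}{\sigma_n^2 nr}\IE[a_1+a_{-1}]$, and equating them yields $\sigma_n^2=2\IE[a_1+a_{-1}]/r$. The combinatorial observation that $a_1+a_{-1}$ is exactly the number of edges monochromatic for the cut $A=\{i:X_i=1\}$, hence at least $\kappa$ pointwise, gives the lower bound, and the deterministic bound $|\sum_i X_i'-\sum_i X_i|\leq 2$ gives the upper bound (one could equally get $\sigma_n^2\leq n$ from the identity itself, since $a_1+a_{-1}\leq nr/2$). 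Two cosmetic remarks: the chain changes \emph{at most} one coordinate per step (the chosen vertex may already disagree with its chosen neighbor), but your bound only needs this weaker fact; and the Stein-pair computation quietly presumes $\sigma_n^2>0$ so that $W$ is defined --- running the identical argument on the unnormalized sum $T=\sum_i X_i$ removes even that caveat. Neither point affects the validity of the argument.
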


The strategy to upper bound $\var(Q)$ is to associate the anti-voter model to a so-called ``dual process" from interacting particle system
theory.  This discussion is outside the scope of our work, but see \cite{af10, dowe84, riro97}.

\subsection{Size-bias coupling}\label{sbc}
Our next method of rewriting $\IE[Wf(W)]$ to be compared to $\IE[f'(W)]$ is through the size-bias coupling which first appeared
in the context of Stein's method for normal approximation in \cite{gori96}.

\begin{definition}
For a random variable $X\geq0$ with $\IE[X]=\mu<\infty$, we say the random variable $X^s$ has the \emph{size-bias} distribution with respect to 
$X$ if for all $f$ such that $\IE|Xf(X)|<\infty$ we have
\ba
\IE[Xf(X)]=\mu\IE[f(X^s)].
\ee 
\end{definition}
Before discussing existence of the size-bias distribution, we remark that our use of $X^s$ is a bit more transparent than the exchangeable
pair.  To wit, if $\var(X)=\sigma^2<\infty$ and $W=(X-\mu)/\sigma$, then
\ban
\IE[Wf(W)]&=\IE\left[\frac{X-\mu}{\sigma}f\left(\frac{X-\mu}{\sigma}\right)\right] \notag \\
	&=\frac{\mu}{\sigma}\left[f\left(\frac{X^s-\mu}{\sigma}\right)-f\left(\frac{X-\mu}{\sigma}\right)\right], \label{71}
\ee
so that if $f$ is differentiable, then the Taylor expansion of \eqref{71} about $W$ allows us to compare $\IE[Wf(W)]$ to
$\IE[f'(W)]$.  We will make this precise shortly, but first we tie up a loose end.
\begin{proposition}
If $X\geq0$ is a random variable with $\IE[X]=\mu<\infty$ and distribution function $F$, then the size-bias distribution of $X$ 
is absolutely continuous with respect to the measure of $X$ with density read from
\ba
dF^s(x)=\frac{x}{\mu} dF(x).
\ee
\end{proposition}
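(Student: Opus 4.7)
The plan is to take the candidate size-bias law to be exactly the measure $\nu$ defined by $d\nu(x) = (x/\mu)\,dF(x)$, and then verify by a one-line change of measure that it satisfies the defining moment identity. We may assume $\mu > 0$, since $\mu = 0$ together with $X \geq 0$ forces $X \equiv 0$ almost surely, in which case the definition degenerates and there is nothing to prove.

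First I would check that $\nu$ is a bona fide probability measure on $[0,\infty)$. Nonnegativity of the density is immediate since $x \geq 0$ on the support of $F$ and $\mu > 0$. The total mass is
\[
\int_0^\infty \frac{x}{\mu}\,dF(x) \;=\; \frac{1}{\mu}\IE[X] \;=\; 1,
\]
where finiteness of $\mu$ is precisely what licenses this normalization.

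Next, let $X^s$ be any random variable with law $\nu$. For any measurable $f$ with $\IE|Xf(X)| < \infty$, the law of the unconscious statistician combined with the definition of $\nu$ gives
\[
\mu\,\IE[f(X^s)] \;=\; \mu\int_0^\infty f(x)\,d\nu(x) \;=\; \int_0^\infty x\,f(x)\,dF(x) \;=\; \IE[Xf(X)],
\]
which is exactly the defining property of the size-bias distribution. The integrability hypothesis $\IE|Xf(X)|<\infty$ on the right translates, via the same identity applied to $|f|$, into integrability of $f$ against $\nu$ on the left, so no Fubini subtlety arises.

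There is essentially no obstacle here; the argument is a direct application of the Radon--Nikodym viewpoint. To justify the definite article in ``the size-bias distribution,'' I would close with a brief uniqueness remark: since the identity $\IE[Xf(X)] = \mu\,\IE[f(X^s)]$ applied to $f(x) = \I[x \leq t]$ (which is bounded, hence trivially satisfies the integrability hypothesis) determines $\IP(X^s \leq t)$ for every $t$, the law of $X^s$ is uniquely specified, and the density description $dF^s(x) = (x/\mu)\,dF(x)$ is therefore the unique such law.
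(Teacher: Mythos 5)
Your proof is correct. The paper states this proposition without proof (it is treated as an immediate consequence of the definition), and your argument is exactly the standard one that is implicitly intended: verify that $d\nu(x)=(x/\mu)\,dF(x)$ defines a probability measure, check the defining identity $\IE[Xf(X)]=\mu\IE[f(X^s)]$ by a change of measure, and note uniqueness via indicator test functions; your handling of the degenerate case $\mu=0$ and of the integrability transfer is a welcome bit of extra care.
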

\begin{corollary}
If $X\geq0$ is an integer-valued random variable  with $\IE[X]=\mu<\infty$ then the random variable $X^s$ with the size-bias distribution
of $X$ is such that
\ba
\IP(X^s=k)=\frac{k \IP(X=k)}{\mu}.
\ee
\end{corollary}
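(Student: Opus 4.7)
The plan is to deduce the corollary directly from the proposition stated immediately above it, by specializing the general Radon--Nikodym-type identity $dF^s(x) = (x/\mu)\,dF(x)$ to the case when $F$ is supported on the non-negative integers. The proposition covers arbitrary $X \geq 0$ with finite mean, so the integer-valued case is just a concrete instance, and the real work is simply to translate the density statement into point-mass language.

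Concretely, I would proceed as follows. First, since $X$ is integer-valued and non-negative, its law is the discrete measure $dF(x) = \sum_{k=0}^\infty \IP(X=k)\,\delta_k(dx)$, where $\delta_k$ denotes the point mass at $k$. By the proposition, $X^s$ has distribution $dF^s(x) = (x/\mu)\,dF(x)$, which, applied to this point-mass decomposition, gives
\be
dF^s(x) = \sum_{k=0}^\infty \frac{k}{\mu}\,\IP(X=k)\,\delta_k(dx).
\ee
Reading off the mass at the integer $k$ yields $\IP(X^s = k) = k\,\IP(X=k)/\mu$, as claimed. Note that the $k=0$ term automatically vanishes, consistent with the fact that a size-biased non-negative random variable with positive mean almost surely takes positive values.

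For a reader who prefers a direct verification rather than appealing to the proposition, one can alternatively check the defining identity $\IE[X f(X)] = \mu \IE[f(X^s)]$ on all bounded $f$ by testing against indicators: taking $f = \I[\cdot = k]$ gives $\IE[X\,\I[X=k]] = k\,\IP(X=k)$ on the left, and $\mu\,\IP(X^s = k)$ on the right, so $\IP(X^s = k) = k\,\IP(X=k)/\mu$. A standard monotone-class or linearity argument then extends the identity from indicators to all $f$ with $\IE|X f(X)| < \infty$, confirming that this choice of PMF indeed defines the size-bias distribution.

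There is no real obstacle here: the corollary is essentially a restatement of the proposition in the discrete setting, and both the route via $dF^s$ and the direct verification via indicators are routine. The only minor point worth flagging explicitly in the exposition is that $\mu > 0$ (which is implicit in the corollary since otherwise $X \equiv 0$ and there is nothing to size-bias), so the division by $\mu$ is legitimate.
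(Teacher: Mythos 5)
Your proposal is correct and matches the paper's (implicit) treatment: the corollary is stated as an immediate specialization of the preceding proposition, which is exactly your first route, and your alternative direct verification via indicator test functions is a standard equivalent check. No gaps.
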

The size-bias distribution arises in other contexts such as the waiting time paradox and sampling theory \cite{argo11}.
We now record our main Stein's method size-bias normal approximation theorem.
\begin{theorem}\label{thm71}
Let $X\geq0$ be a random variable with $\IE[X]=\mu<\infty$ and $\var(X)=\sigma^2$.  Let $X^s$ be defined on the same space as $X$
and have the size-bias distribution with respect to $X$.  If $W=(X-\mu)/\sigma$ and $Z\sim N(0,1)$, then
\ba
\dw(W, Z) \leq\frac{\mu}{\sigma^2}\sqrt{\frac{2}{\pi}}\sqrt{\var(\IE[X^s-X|X])}+\frac{\mu}{\sigma^3}\IE[(X^s-X)^2].
\ee
\end{theorem}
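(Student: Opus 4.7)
The plan is to follow the paradigm of Theorem~\ref{thm51}, replacing the exchangeability identity with the defining property of the size-bias distribution. By Theorem~\ref{thm31}, it suffices to bound $\abs{\IE[f'(W)-Wf(W)]}$ uniformly over $f$ with $\norm{f'}\leq\sqrt{2/\pi}$ and $\norm{f''}\leq 2$, so throughout I work with such an $f$. The first step is the size-bias rewriting of $\IE[Wf(W)]$ already displayed in \eqref{71}: applying the defining property of $X^s$ to the test function $g(x)=f((x-\mu)/\sigma)$ yields
\[
\IE[Wf(W)] \;=\; \frac{\mu}{\sigma}\,\IE\bigl[f((X^s-\mu)/\sigma)-f(W)\bigr].
\]

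Next I would Taylor expand $f((X^s-\mu)/\sigma)=f\bigl(W+(X^s-X)/\sigma\bigr)$ about $W$ to second order and substitute, obtaining
\[
\IE[Wf(W)] \;=\; \frac{\mu}{\sigma^2}\,\IE[(X^s-X)f'(W)] \;+\; \frac{\mu}{2\sigma^3}\,\IE\bigl[(X^s-X)^2 f''(W^*)\bigr],
\]
where $W^*$ lies on the segment between $W$ and $W+(X^s-X)/\sigma$. Since $W$ is a function of $X$, the first expectation equals $\IE\bigl[f'(W)\,\IE[X^s-X\mid X]\bigr]$, and the size-bias identity applied to $f(x)=x$ gives the crucial normalization $\IE[X^s-X]=\sigma^2/\mu$. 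Rearranging, I would arrive at
\[
\IE[f'(W)-Wf(W)] \;=\; \IE\!\left[f'(W)\!\left(1-\frac{\mu}{\sigma^2}\IE[X^s-X\mid X]\right)\right] \;-\; \frac{\mu}{2\sigma^3}\,\IE\bigl[(X^s-X)^2 f''(W^*)\bigr].
\]

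Because $\IE\bigl[1-(\mu/\sigma^2)\IE[X^s-X\mid X]\bigr]=0$ by the normalization just noted, an application of Cauchy-Schwarz (as in the derivation of \eqref{511}) combined with $\norm{f'}\leq\sqrt{2/\pi}$ bounds the first term by $\sqrt{2/\pi}\,(\mu/\sigma^2)\sqrt{\var(\IE[X^s-X\mid X])}$, while the second term is bounded in absolute value by $(\mu/\sigma^3)\,\IE[(X^s-X)^2]$ via $\norm{f''}\leq 2$. Summing the two gives exactly the claimed inequality.

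I do not anticipate a substantive obstacle: the argument is a direct parallel of the exchangeable-pair proof, with the size-bias identity playing the role of the linearity condition and the normalization $\IE[X^s-X]=\sigma^2/\mu$ playing the role of Item~2 of Proposition~\ref{prop51}. The only mildly delicate point is that the stated bound conditions on $X$ rather than on $W$; this is legitimate because $W$ is $\sigma(X)$-measurable and hence $\var(\IE[X^s-X\mid W])\leq\var(\IE[X^s-X\mid X])$, so the inequality remains valid (and is typically easier to compute) with the larger sigma-field.
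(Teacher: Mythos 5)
Your proposal is correct and follows essentially the same route as the paper's proof: the size-bias rewriting \eqref{71}, a second-order Taylor expansion, the normalization $\IE[X^s]=(\sigma^2+\mu^2)/\mu$, and Cauchy-Schwarz on the conditional-expectation term, with the $\norm{f'}\leq\sqrt{2/\pi}$ and $\norm{f''}\leq 2$ bounds from Theorem~\ref{thm31}. No gaps.
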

\begin{proof}
Our strategy (as usual) is to bound $\abs{\IE[f'(W)-Wf(W)]}$ for $f$ bounded with two bounded derivatives.  Starting from \eqref{71},
a Taylor expansion yields
\ba
\IE[Wf(W)]=\frac{\mu}{\sigma}\IE\left[\frac{X^s-X}{\sigma} f'\left(\frac{X-\mu}{\sigma}\right)+\frac{(X^s-X)^2}{2\sigma^2}
f''\left(\frac{X^*-\mu}{\sigma}\right)\right],
\ee
for some $X^*$ in the interval with endpoints $X$ and $X^s$.  Using the definition of $W$ in terms of $X$ in the previous expression, we
obtain
\ban
\abs{\IE[f'(W)-Wf(W)]}&\leq \left|\IE\left[f'(W)\left(1-\frac{\mu}{\sigma^2}(X^s-X)\right)\right]\right| \label{72}\\
	&+\frac{\mu}{2\sigma^3} \left|\IE\left[f''\left(\frac{X^*-\mu}{\sigma}\right)(X^s-X)^2\right]\right|. \label{73}
\ee
Since we are taking the supremum over functions $f$ with $\norm{f'}\leq \sqrt{2/\pi}$ and $\norm{f''}\leq 2$, 
it is clear that \eqref{73} 
is bounded above by the second term of the error stated in the theorem and \eqref{72} is bounded above by 
\ba
\sqrt{\frac{2}{\pi}}\IE\left|1-\frac{\mu}{\sigma^2}\IE[X^s-X|X]\right|\leq \frac{\mu}{\sigma^2}\sqrt{\frac{2}{\pi}}\sqrt{\var(\IE[X^s-X|X])};
\ee
here we use the Cauchy-Schwarz inequality after noting that by the definition of $X^s$, $\IE[X^s]=(\sigma^2 +\mu^2)/\mu$.
\end{proof}

\subsubsection{Coupling construction}
At this point it is appropriate to discuss methods to couple a random variable $X$ to a size-bias version $X^s$.
In the case that $X=\sum_{i=1}^n X_i$, where $X_i\geq0$ and $\IE[X_i]=\mu_i$,
we have the following recipe for constructing a size-bias version of $X$.
\begin{enumerate}
\item For each $i=1,\ldots, n$, let $X_i^{s}$ have the size-bias distribution of $X_i$ independent of $(X_j)_{j\not=i}$
and $(X_j^s)_{j\not=i}$.  Given $X_i^s=x$, define the vector
$(X_j^{(i)})_{j\not=i}$ to have the distribution of $(X_j)_{j\not=i}$ conditional on $X_i=x$.
\item Choose a random summand $X_I$, where the index $I$ is chosen proportional to $\mu_i$ and independent of all else. 
Specifically, $\IP(I=i)=\mu_i/\mu$, where $\mu=\IE[X]$.
\item Define $X^s=\sum_{j\not=I}X_j^{(I)} + X_I^s$. 
\end{enumerate}  
\begin{proposition}\label{prop71}
Let $X=\sum_{i=1}^n X_i$, with $X_i\geq0$, $\IE[X_i]=\mu_i$, and $\mu=\IE[X]=\sum_i \mu_i$.  If $X^s$ is constructed 
by Items 1 - 3 above, then $X^s$ has the size-bias distribution of $X$.
\end{proposition}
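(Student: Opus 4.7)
The plan is to verify the defining identity $\IE[X f(X)] = \mu \IE[f(X^s)]$ directly, by decomposing $X$ into its summands and applying the univariate size-bias property to each $X_i$ in turn.

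First I would split the expectation according to the summands,
\ba
\IE[Xf(X)] = \sum_{i=1}^n \IE[X_i f(X)] = \sum_{i=1}^n \IE\bkle{X_i f\bklr{X_i + \sum_{j\neq i} X_j}}.
\ee
The next (and central) step is to establish, for each fixed $i$, the multivariate identity
\ban
\IE\bkle{X_i f\bklr{X_i + \sum_{j\neq i} X_j}} = \mu_i \,\IE\bkle{f\bklr{X_i^s + \sum_{j\neq i} X_j^{(i)}}}. \label{key}
\ee
To see this, set $\tilde f(x) := \IE[f(x + \sum_{j\neq i} X_j)\mid X_i = x]$; by conditioning on $X_i$ the left-hand side of \eq{key} equals $\IE[X_i \tilde f(X_i)]$. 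The univariate size-bias definition applied to $X_i$ gives $\IE[X_i \tilde f(X_i)] = \mu_i \IE[\tilde f(X_i^s)]$. Because $X_i^s$ is independent of everything else by construction, and the vector $(X_j^{(i)})_{j\neq i}$ is defined precisely to have the law of $(X_j)_{j\neq i}$ conditional on $X_i = X_i^s$, we obtain $\IE[\tilde f(X_i^s)] = \IE[f(X_i^s + \sum_{j\neq i} X_j^{(i)})]$, which is \eq{key}.

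Finally I would combine the $n$ identities. Summing \eq{key} over $i$ and dividing and multiplying by $\mu = \sum_i \mu_i$ gives
\ba
\IE[Xf(X)] = \mu \sum_{i=1}^n \frac{\mu_i}{\mu} \,\IE\bkle{f\bklr{X_i^s + \sum_{j\neq i} X_j^{(i)}}},
\ee
and the right-hand side is exactly $\mu\,\IE[f(X^s)]$ upon recognizing Step 2 of the construction: choosing the index $I$ with $\IP(I=i) = \mu_i/\mu$ independently of all else, and then forming $X^s = X_I^s + \sum_{j\neq I} X_j^{(I)}$.

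The step I expect to require the most care is \eq{key}, since the univariate size-bias property of $X_i$ only knows about functions of $X_i$ alone. The trick of first integrating out $(X_j)_{j\neq i}$ by conditioning on $X_i$, applying size-bias, and then re-expanding via the conditional construction of $(X_j^{(i)})_{j\neq i}$ is what bridges the univariate and multivariate pictures, and the independence of $X_i^s$ from $(X_j, X_j^s)_{j\neq i}$ is what makes the re-expansion legitimate.
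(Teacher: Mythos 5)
Your proposal is correct and follows essentially the same route as the paper: both decompose $\IE[Xf(X)]=\sum_i\IE[X_if(X)]$, prove the per-summand identity by conditioning on $X_i$ (your $\tilde f$ is exactly the paper's $h(X_i)=\IE[f(\textbf{X})\mid X_i]$), apply the univariate size-bias relation, and re-expand via the conditional construction of $(X_j^{(i)})_{j\neq i}$ before averaging over the index $I$. The only cosmetic difference is that the paper states the per-summand identity for general functions of the whole vector $\textbf{X}$ rather than of the sum.
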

\begin{proof}
Let $\textbf{X}=(X_1, \ldots, X_n)$ and for $i=1, \ldots, n$, let $\textbf{X}^i$ be a vector with coordinate $j$ equal to $X_j^{(i)}$ for $j\not=i$
and coordinate $i$ equal to $X_i^s$ as in item 1 above.  In order to prove the result, it is enough to show
\ban
\IE[Wf(\textbf{X})]=\mu\IE[f(\textbf{X}^I)], \label{77aa}
\ee
for $f:\IR^n\rightarrow \IR$ such that $\IE|Wf(\textbf{X})|< \infty$.  Equation \eq{77aa} follows easily after we show that for all $i=1, \ldots, n$,
\ban
\IE[X_i f(\textbf{X})]=\mu_i\IE[f(\textbf{X}^i)]. \label{75}
\ee
To see \eqref{75}, note that for $h(X_i)=\IE[f(\textbf{X})|X_i]$,
\ba
\IE[X_i f(\textbf{X})]&=\IE[X_i h(X_i)] \\
	&=\mu_i\IE[h(X_i^s)],
\ee
which is the right hand side of \eqref{75}.
\end{proof}

Note the following special cases of Proposition \ref{prop71}.
\begin{corollary}\label{cor71}
Let $X_1, \ldots, X_n$ be non-negative independent random variables with $\IE[X_i]=\mu_i$,
and for each $i=1,\ldots, n$, let $X_i^{s}$ have the size-bias distribution of $X_i$ independent of $(X_j)_{j\not=i}$
and $(X_j^s)_{j\not=i}$.  If $X=\sum_{i=1}^n X_i$, $\mu=\IE[X]$, and $I$
is chosen independent of all else with $\IP(I=i)=\mu_i/\mu$, then
$X^s=X-X_I+X_I^s$ has the size-bias distribution of $X$.
\end{corollary}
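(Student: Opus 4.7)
The plan is to derive Corollary~\ref{cor71} as an immediate specialization of Proposition~\ref{prop71} by exploiting the independence of the summands. The only non-trivial piece of the general construction (Items 1--3 preceding Proposition~\ref{prop71}) is the vector $(X_j^{(i)})_{j\neq i}$, defined to have the distribution of $(X_j)_{j\neq i}$ \emph{conditional on} $X_i=x$. Under the independence assumption of the corollary, this conditional distribution does not depend on $x$ and coincides with the unconditional joint distribution of $(X_j)_{j\neq i}$.

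Concretely, I would proceed as follows. First, start with the $X_j$'s and the independent copies $X_j^s$ given in the hypothesis. Define $X_j^{(i)} := X_j$ for each $j\neq i$; by independence this has exactly the required conditional law, and it is independent of $X_i^s$ as demanded in Item~1. Second, pick the random index $I$ with $\IP(I=i)=\mu_i/\mu$ independently of everything else, as in Item~2. Third, apply Item~3 of the construction:
\be
X^s \;=\; \sum_{j\neq I} X_j^{(I)} + X_I^s \;=\; \sum_{j\neq I} X_j + X_I^s \;=\; X - X_I + X_I^s.
\ee
Proposition~\ref{prop71} then directly certifies that this random variable has the size-bias distribution of $X$.

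If one prefers a self-contained verification rather than a citation of Proposition~\ref{prop71}, the argument reduces to checking $\IE[Xf(X)] = \mu\,\IE[f(X^s)]$ for bounded $f$, which by linearity reduces to $\IE[X_i f(X)] = \mu_i\,\IE[f(X - X_i + X_i^s)]$ for each $i$. This identity follows by conditioning on $(X_j)_{j\neq i}$: independence lets us pull these out of the conditional expectation, and then the defining property of the size-bias law of $X_i$ (applied to the function $y\mapsto f(y + \sum_{j\neq i} X_j)$) converts $\IE[X_i f(X_i + S_i)\mid S_i]$ into $\mu_i\,\IE[f(X_i^s + S_i)\mid S_i]$, where $S_i := \sum_{j\neq i} X_j$.

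There is really no obstacle to speak of here: the entire content of the corollary is the observation that independence trivializes the conditional re-sampling step of the general size-bias construction, so that one may simply leave the other summands untouched and swap out $X_I$ for its size-biased copy $X_I^s$. The only minor care needed is in correctly propagating the independence assumptions between $X_I^s$, $X_I$, $I$, and the remaining $X_j$'s so that the verification of \eqref{77aa} goes through verbatim.
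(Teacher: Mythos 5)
Your proposal is correct and matches the paper's argument: the paper also deduces the corollary from Proposition \ref{prop71} by noting that independence makes the conditioning in the general construction have no effect, so one may take $X_j^{(i)}=X_j$. Your optional self-contained verification via $\IE[X_i f(X)]=\mu_i\,\IE[f(X-X_i+X_i^s)]$ is a sound (and slightly more explicit) elaboration of the same idea.
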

\begin{corollary}\label{cor72}
Let $X_1, \ldots, X_n$ be zero-one random variables with $\IP(X_i=1)=p_i$. 
For each $i=1,\ldots, n$, let $(X_j^{(i)})_{j\not=i}$ have the distribution of $(X_j)_{j\not=i}$ conditional on $X_i=1$.
If $X=\sum_{i=1}^n X_i$, $\mu=\IE[X]$,
and $I$
is chosen independent of all else with $\IP(I=i)=p_i/\mu$, then $X^s=\sum_{j\not=I} X_j^{(I)} +1$ has the size-bias distribution of $X$.
\end{corollary}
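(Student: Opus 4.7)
The plan is to derive Corollary \ref{cor72} as a direct specialization of Proposition \ref{prop71}, so no new size-bias argument is required; essentially I just need to identify what the construction recipe of Proposition \ref{prop71} produces when every summand is Bernoulli.

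First, I would verify what the size-bias distribution of a single Bernoulli summand looks like. Since $X_i \in \{0,1\}$ with $\IP(X_i=1)=p_i$, we have $\mu_i = \IE[X_i] = p_i$, and the discrete size-bias formula from the corollary just before Theorem \ref{thm71} gives
\ba
\IP(X_i^s = k) = \frac{k\,\IP(X_i = k)}{p_i}, \qquad k\in\{0,1\},
\ee
which vanishes at $k=0$ and equals $1$ at $k=1$. Hence $X_i^s \equiv 1$ almost surely, and the ``given $X_i^s = x$'' conditioning in Step 1 of the recipe for Proposition \ref{prop71} becomes simply conditioning on $X_i = 1$. This is precisely the distribution assigned to $(X_j^{(i)})_{j\neq i}$ in the statement of the corollary.

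Next, I would note that Step 2 of that recipe picks the random index $I$ with $\IP(I=i) = \mu_i/\mu = p_i/\mu$, which matches the distribution of $I$ in Corollary \ref{cor72}. Finally, Step 3 defines $X^s = \sum_{j\neq I} X_j^{(I)} + X_I^s$, and substituting $X_I^s = 1$ from the previous paragraph yields exactly $X^s = \sum_{j\neq I} X_j^{(I)} + 1$ as claimed. By Proposition \ref{prop71} this random variable has the size-bias distribution of $X$, completing the proof.

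There is no real obstacle here; the only non-mechanical step is recognizing that the size-bias of a Bernoulli variable is deterministically $1$, and everything else is a transcription of the general construction. If one wanted a fully self-contained proof without citing Proposition \ref{prop71}, one could instead verify $\IE[Xf(X)] = \mu\,\IE[f(X^s)]$ directly by writing $\IE[Xf(X)] = \sum_i \IE[X_i f(X)]$, conditioning each summand on $X_i = 1$, and using $\IP(X_i=1) = p_i$ together with the definition of $I$; but invoking Proposition \ref{prop71} is cleaner.
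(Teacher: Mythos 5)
Your proof is correct and matches the paper's argument: the paper likewise deduces Corollary \ref{cor72} from Proposition \ref{prop71} by observing that $X_i^s=1$ for a zero-one random variable, so the conditioning in Step 1 reduces to conditioning on $X_i=1$. Your write-up simply spells out the remaining mechanical identifications in more detail.
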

\begin{proof}
Corollary \ref{71} is obvious since due to independence, the conditioning in the construction has no effect.
Corollary \ref{cor72} follows
after noting that for $X_i$ a zero-one random variable, $X_i^s=1$.
\end{proof}

\subsubsection{Applications}
\begin{example}\label{ex71}
We can use Corollary \ref{cor71} in Theorem \ref{thm71}
to bound the Wasserstein distance between the normalized sum of independent variables with finite third moment
and the normal distribution - we leave this as an exercise. 
\end{example}
\begin{example}\label{ex72}
Let $G=G(n,p)$ be an \ER\ graph and for $i=1, \ldots, n$, let $X_i$ be the indicator that vertex $v_i$ (under some arbitrary but fixed labeling)
has degree zero so that $X=\sum_{i=1}^n X_i$ is the number of isolated vertices of $G$.  We will use Theorem \ref{thm71} to
obtain an upper bound on the Wasserstein metric between the normal distribution and the distribution of $W=(X-\mu)/\sigma$ where $\mu=\IE[X]$
and $\sigma^2=\var(X)$.

Since $X$ is a sum of identically distributed indicators, we can use Corollary \ref{cor72} to construct $X^s$, a size-bias
version of $X$.  Corollary \ref{cor72} states that in order to size-bias $X$, we first choose
an index $I$ uniformly at random from the set $\{1, \ldots, n\}$, then size-bias $X_I$ by setting it equal to one, and finally adjust the remaining
summands conditional on $X_I=1$ (the new size-bias value).  We can realize
$X_I^s=1$ by erasing any edges connected to vertex $v_I$.  Given that
$X_I=1$ ($v_I$ is isolated), the graph $G$ is just an \ER\ graph on the remaining $n-1$ vertices.  
Thus $X^s$ can be realized as the number of isolated vertices in $G$ after erasing all the edges connected to 
$v_I$.

In order to apply Theorem \ref{thm71} using this construction, we need to compute $\IE[X]$, $\var(X)$, $\var(\IE[X^s-X|X])$, and $\IE[(X^s-X)^2]$.
Since the chance that a given vertex is isolated is $(1-p)^{n-1}$, we have 
\ba
\mu:=\IE[X]=n(1-p)^{n-1},
\ee
and also that
\ban
\sigma^2:=\var(X)&=\mu\left(1-(1-p)^{n-1}\right)+n(n-1)\cov(X_1, X_2) \notag \\
	&=\mu[1+(np-1)(1-p)^{n-2}], \label{77}
\ee
since $\IE[X_1X_2]=(1-p)^{2n-3}$.  Let $d_i$ be the degree of $v_i$ in $G$ and let 
$D_i$ be the number of vertices connected to $v_i$ which have degree one.  Then it is clear that
\ba
X^s-X=D_I+\I[d_I>0],
\ee
so that
\ban
\var(\IE[X^s-X|G])&=\frac{1}{n^2}\var\left(\sum_{i=1}^n(D_i+\I[d_i>0])\right) \\
	&\leq \frac{2}{n^2}\left[\var\left(\sum_{i=1}^nD_i\right)+\var\left(\sum_{i=1}^n\I[d_i>0]\right)\right]. \label{78}
\ee
Since $\sum_{i=1}^n\I[d_i>0]=n-X$, the second variance term of \eqref{78} is given by \eqref{77}.
Now, $\sum_{i=1}^nD_i$ is the number of vertices in $G$ with degree one
which can be expressed as $\sum_{i=1}^nY_i$, where $Y_i$ is the indicator that $v_i$
has degree one in $G$.  Thus,
\ba
&\var\left(\sum_{i=1}^nD_i\right)=n(n-1)p(1-p)^{n-2}\left(1-(n-1)p(1-p)^{n-2}\right)+n(n-1)\cov(Y_1, Y_2) \\
	&=n(n-1)p(1-p)^{n-2}\left[1-(n-1)p(1-p)^{n-2}+(1-p)^{n-2}+(n-1)^2p^2(1-p)^{n-3}\right],
\ee 
since $\IE[Y_1 Y_2]=p(1-p)^{2n-4}+(n-1)^2p^2(1-p)^{2n-5}$ (the first term corresponds to $v_1$ and $v_2$ being joined).

The final term we need to bound is
\ba
\IE[(X^s-X)^2]&=\IE\left[\IE[(X^s-X)^2|X]\right] \\
	&=\frac{1}{n}\sum_{i=1}^n\IE[(D_i+\I[d_i>0])^2] \\
	&\leq \frac{1}{n}\sum_{i=1}^n\IE[(D_i+1)^2] \\
	&=\IE[D_1^2]+2\IE[D_1]+1.
\ee
Expressing $D_1$ as a sum of indicators, it is not difficult to show
\ba
\IE[D_1^2]=(n-1)p(1-p)^{n-2}+(n-1)(n-2)p^2(1-p)^{2n-5},
\ee
and after noting that $D_1\leq D_1^2$ almost surely, we can combine the estimates above with
Theorem \ref{thm71} to obtain an explicit upper bound between the distribution of $W$ and the standard normal in the Wasserstein metric. 
In particular, we can read the following result from our work above.
\begin{theorem}
If $X$ is the number of isolated vertices in an \ER\ graph $G(n,p)$, $W=(X-\mu)/\sigma$, and
for some $1\leq \alpha <2$ we have $\lim_{n\rightarrow\infty}n^\alpha p=c\in(0, \infty)$, then
\ba
\dw(W,Z)\leq \frac{C}{\sigma},
\ee
for some constant $C$.
\end{theorem}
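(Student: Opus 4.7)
The plan is to apply Theorem \ref{thm71} with $X^s$ as constructed in Example \ref{ex72}: erase all edges incident to a uniformly chosen vertex $v_I$ and count isolated vertices in the resulting graph. The relevant moments and the conditional variance bound are already laid out there, so the remaining task is to translate those expressions into asymptotic rates under the hypothesis $n^\alpha p \to c$, $1\leq\alpha<2$, and check that both terms in Theorem \ref{thm71} are $O(1/\sigma)$.

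First I would pin down the basic orders. Since $np$ is uniformly bounded in $n$, the factor $(1-p)^{n-1}$ stays between two positive constants and $\mu=n(1-p)^{n-1}\asymp n$. For the variance I would Taylor expand $(1-p)^{n-2}$ to show $1+(np-1)(1-p)^{n-2}=(2n-2)p+O(n^2p^2)$ when $\alpha>1$, and separately compute the limit $1+(c-1)e^{-c}$ for $\alpha=1$. A short analysis of $f(c)=1+(c-1)e^{-c}$ via $f(0)=0$, $f'(c)=(2-c)e^{-c}$, $f(\infty)=1$ shows $f(c)>0$ for $c>0$. Together these give $1+(np-1)(1-p)^{n-2}\asymp np$ uniformly, hence $\sigma^2\asymp n^2p$ and $\sigma\asymp n\sqrt{p}$.

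Next, for the second term of Theorem \ref{thm71}, the identity $X^s-X=D_I+\I[d_I>0]$ combined with $D_I\I[d_I=0]=0$ yields
\be
\IE[(X^s-X)^2]=\IE[D_1^2]+2\IE[D_1]+\IP(d_1>0).
\ee
The refinement I would make over Example \ref{ex72} is to bound $\IP(d_1>0)\leq\IE[d_1]=(n-1)p$ instead of by $1$; together with $\IE[D_1^2]\leq\IE[d_1^2]=O(np)$ this gives $\IE[(X^s-X)^2]=O(np)$, and hence $\frac{\mu}{\sigma^3}\IE[(X^s-X)^2]=O(n\cdot np/(n^2p)^{3/2})=O(1/\sigma)$. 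For the first term I would use the law of total variance to write $\var(\IE[X^s-X|X])\leq\var(\IE[X^s-X|G])$, then plug in the example's bound $\var(\IE[X^s-X|G])\leq\frac{2}{n^2}[\var(\sum_iD_i)+\sigma^2]$. Reading $\var(\sum_iD_i)=O(n^2p)$ directly off its closed form (the leading factor $n(n-1)p(1-p)^{n-2}\asymp n^2p$, bracket bounded) gives $\var(\IE[X^s-X|X])=O(p)$, so $\frac{\mu}{\sigma^2}\sqrt{\var(\IE[X^s-X|X])}=O(n\sqrt{p}/(n^2p))=O(1/\sigma)$.

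The main obstacle is the \emph{lower} bound $\sigma^2\gtrsim n^2p$, which requires the Taylor expansion above for $\alpha>1$ and the non-obvious positivity of $1+(c-1)e^{-c}$ at the boundary $\alpha=1$. A secondary issue is recognizing that the crude bound $\IP(d_1>0)\leq 1$ used in the example is too lossy when $\alpha>1$ and must be sharpened to $O(np)$ for the second term to come in at $O(1/\sigma)$. Once these two refinements are in place, the remainder is routine bookkeeping with the formulas already in Example \ref{ex72}.
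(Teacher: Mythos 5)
Your proposal is correct and follows the same route as the paper: the paper's proof simply asserts the four asymptotics $\mu\asymp n$, $\sigma^2\asymp n^{2-\alpha}$, $\var(\IE[X^s-X\mid X])\asymp\sigma^2/n^2$, $\IE[(X^s-X)^2]\asymp n^{1-\alpha}$ and plugs them into Theorem \ref{thm71}, which is exactly what you verify in detail. Your two "refinements" (the lower bound on $\sigma^2$, including the positivity of $1+(c-1)e^{-c}$ at $\alpha=1$, and replacing $\IP(d_1>0)\leq 1$ by $O(np)$) are precisely the justifications the paper leaves implicit, and the latter is indeed needed for the asserted order of $\IE[(X^s-X)^2]$ when $\alpha>1$.
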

\begin{proof}
The asymptotic hypothesis $\lim_{n\rightarrow\infty}n^\alpha p=c\in(0, \infty)$ for some $1\leq \alpha <2$
implies that $(1-p)^n$ tends to a finite positive constant.  Thus we can see that
$\mu\asymp n$, $\sigma^2\asymp n^{2-\alpha}$,  $\var(\IE[X^s-X|X])\asymp \sigma^2/n^2$, and $\IE[(X^s-X)^2]\asymp n^{1-\alpha}$,
from which the result follows from Theorem \ref{thm71}.
\end{proof}
\end{example}

Example \ref{ex71} can be generalized to counts of vertices of a given degree $d$ at some computational expense
\cite{gol10, gori96}; related results
pertain to the number of subgraphs counts in an \ER\ graph (such as the number of triangles) \cite{gol10}. 
We will examine such constructions 
in greater detail in our treatment of Stein's method for Poisson approximation where the size-bias coupling will play a large role.

\subsection{Zero-bias coupling}
Our next method of rewriting $\IE[Wf(W)]$ to be compared to $\IE[f'(W)]$ is through the zero-bias coupling first introduced
in \cite{gore97}.

\begin{definition}
For a random variable $W$ with $\IE[W]=0$ and $\var(W)=\sigma^2<\infty$,
we say the random variable $W^z$ has the \emph{zero-bias} distribution with respect to 
$W$ if for all absolutely continuous $f$ such that $\IE|Wf(W)|<\infty$ we have
\ba
\IE[Wf(W)]=\sigma^2\IE[f'(W^z)].
\ee 
\end{definition}
Before discussing existence and properties of the zero-bias distribution, we note that it is appropriate to
view the zero-biasing as a distributional transform which has the normal distribution as its unique fixed point.
Also note that zero-biasing is our most transparent effort to compare $\IE[Wf(W)]$ to $\IE[f'(W)]$, culminating
in the following result.
\begin{theorem}\label{thm81}
Let $W$ be a mean zero, variance one random variable 
and let $W^z$ be defined on the same space as $W$
and have the zero-bias distribution with respect to $W$.  If  $Z\sim N(0,1)$, then
\ba
\dw(W, Z) \leq 2\IE|W^z-W|.
\ee
\end{theorem}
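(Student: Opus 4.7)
The plan is to apply Theorem \ref{thm31} and then exploit the defining property of the zero-bias distribution, which is tailored precisely to make the right-hand side of \eqref{35} collapse into a single clean quantity. By Theorem \ref{thm31},
\be
\dw(W,Z)\leq \sup_{f\in\%F}\babs{\IE[f'(W)-Wf(W)]},
\ee
where $\%F=\{f:\norm{f},\norm{f''}\leq 2,\ \norm{f'}\leq\sqrt{2/\pi}\}$. So the task reduces to bounding $\abs{\IE[f'(W)-Wf(W)]}$ uniformly over this class.

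The next step is the key one: since $\var(W)=1$ and $W^z$ has the zero-bias distribution of $W$, the definition gives immediately
\be
\IE[Wf(W)]=\IE[f'(W^z)]
\ee
for absolutely continuous $f$ with $\IE\abs{Wf(W)}<\infty$ (the members of $\%F$ satisfy this since $f'$ is bounded). Therefore
\be
\IE[f'(W)-Wf(W)]=\IE[f'(W)-f'(W^z)].
\ee
This is where having $W$ and $W^z$ coupled on the same probability space becomes essential: it allows us to take pointwise differences rather than comparing marginals.

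Now I would finish by a simple mean-value estimate. Since $f$ has bounded second derivative, $f'$ is Lipschitz with constant $\norm{f''}\leq 2$, so almost surely
\be
\abs{f'(W)-f'(W^z)}\leq \norm{f''}\,\abs{W^z-W}\leq 2\abs{W^z-W}.
\ee
Taking expectations, moving the absolute value inside, and taking the supremum over $f\in\%F$ yields
\be
\dw(W,Z)\leq 2\IE\abs{W^z-W},
\ee
as claimed.

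There is really no hard step here; the difficulty in the zero-bias approach has been pushed entirely into the earlier conceptual work of defining $W^z$ and establishing its existence and a coupling recipe. Compared with the size-bias and exchangeable-pair arguments, note that only one error term appears (no ``variance'' term involving $\IE[(W^z-W)^2\mid W]$), because the zero-bias identity is an exact equality rather than an approximate linearity condition that must be controlled in a second moment. This is precisely the ``most transparent'' feature alluded to just before the statement.
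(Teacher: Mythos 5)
Your proof is correct and follows exactly the paper's own argument: apply Theorem \ref{thm31}, use the zero-bias identity to rewrite $\IE[Wf(W)]$ as $\IE[f'(W^z)]$, and bound the difference $\IE[f'(W)-f'(W^z)]$ via the Lipschitz constant $\norm{f''}\leq 2$. No issues.
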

\begin{proof}
Let $\%F$ be the set of functions such that$\norm{f'}\leq \sqrt{2/\pi}$ and $\norm{f}, \norm{f''}\leq 2$. Then
\ba
\dw(W,Z)&\leq\sup_{f\in\%F}\left|\IE[f'(W)-Wf(W)]\right| \\
	&=\sup_{f\in\%F}\left|\IE[f'(W)-f'(W^z)]\right|  \\
	&\leq \sup_{f\in\%F}\norm{f''}\IE\left|W-W^z\right|. 
\ee
\end{proof}
Before proceeding further, we discuss some fundamental properties of the zero-bias distribution.
\begin{proposition}
Let $W$ be a random variable with $\IE[W]=0$ and $\var(W)=\sigma^2<\infty$.  
\begin{enumerate}
\item\label{i81} There is a unique probability distribution for a random variable $W^z$ satisfying 
\ban
\IE[Wf(W)]=\sigma^2\IE[f'(W^z)] \label{81}
\ee 
for all absolutely continuous $f$ such that $\IE|Wf(W)|<\infty$.
\item\label{i82} The distribution of $W^z$ as defined by \eqref{81} is absolutely continuous with respect to Lebesgue measure
with density
\ban
p^z(w)=\sigma^{-2}\IE\left[W\I[W>w]\right]=-\sigma^{-2}\IE\left[W\I[W\leq w]\right]. \label{82}
\ee
\end{enumerate}
\end{proposition}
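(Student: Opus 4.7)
The plan is to establish Item \ref{i82} first (since it essentially constructs the object whose existence is asserted in Item \ref{i81}), and then derive uniqueness by a duality argument.

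For Item \ref{i82}, I would propose $p^z(w)=\sigma^{-2}\IE[W\I[W>w]]$ as the candidate density and verify three properties. First, non-negativity: for $w\geq 0$ the integrand $W\I[W>w]$ is pointwise non-negative, while for $w<0$ I would use $\IE[W]=0$ to write $\IE[W\I[W>w]]=-\IE[W\I[W\leq w]]$, and now $W\I[W\leq w]\leq 0$ pointwise. This also establishes the second equality in \eqref{82} for free. Second, normalization: split $\int_{\IR}p^z(w)\,dw$ at $0$ and apply Fubini,
\be
\int_0^\infty \IE[W\I[W>w]]\,dw=\IE\bbkle{W\int_0^W\I[W>0]\,dw}=\IE[W^2\I[W>0]],
\ee
with the symmetric computation for the negative half via the alternative representation; adding the two pieces yields $\sigma^2$. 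Third, and most importantly, the zero-bias identity itself: WLOG $f(0)=0$ (replacing $f$ by $f-f(0)$ does not change $\IE[Wf(W)]$ since $\IE[W]=0$), and then
\be
f(W)=\int_{\IR}f'(t)\bklr{\I[0<t\leq W]-\I[W<t\leq 0]}\,dt.
\ee
Multiplying by $W$, taking expectation, and swapping the order of integration, the coefficient of $f'(t)$ becomes $\IE[W\I[W\geq t]]$ for $t>0$ and $-\IE[W\I[W<t]]$ for $t\leq 0$; both agree Lebesgue-a.e. with $\IE[W\I[W>t]]=\sigma^2 p^z(t)$, giving $\IE[Wf(W)]=\sigma^2\IE[f'(W^z)]$ as required.

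For Item \ref{i81}, existence is now settled. For uniqueness, suppose two laws $\mu_1,\mu_2$ both satisfy \eqref{81}. For any bounded continuous $g$ with compact support set $f(x)=\int_0^xg(t)\,dt$; then $f$ is bounded, so $\IE|Wf(W)|\leq\norm{f}_\infty\IE|W|<\infty$, and \eqref{81} forces $\int g\,d\mu_1=\int g\,d\mu_2$. Since this class of $g$ is separating, $\mu_1=\mu_2$.

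The main obstacle I anticipate is the rigorous justification of Fubini in the zero-bias computation, which must be done under the sole hypothesis $\IE|Wf(W)|<\infty$ rather than something stronger like $\IE|Wf'(W^z)|<\infty$. I would handle this by decomposing into the four sign combinations of $W$ and $t$ and dominating each piece; for example, on $\{W>0,\, 0<t<W\}$ the integrand $|Wf'(t)|$ integrated against $dt$ gives $|W||f(W)-f(0)|$, whose expectation is finite by hypothesis. The other signs are symmetric. Once Fubini is justified, the remainder is bookkeeping.
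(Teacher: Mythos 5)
Your proposal is correct in substance and follows essentially the same route as the paper: propose $p^z$ as in \eqref{82}, check non-negativity via the two representations and $\IE[W]=0$, check normalization by Fubini to get $\IE[W^2\I[W>0]]+\IE[W^2\I[W<0]]=\sigma^2$, verify the defining identity by interchanging the order of integration, and get uniqueness from a separating class of test functions. The only quibble is your Fubini domination: on $\{W>0,\,0<t<W\}$ the $t$-integral of $\abs{Wf'(t)}$ is $\abs{W}\int_0^W\abs{f'(t)}\,dt$, which dominates $\abs{W}\,\abs{f(W)-f(0)}$ but equals it only when $f'$ has constant sign, so the hypothesis $\IE\abs{Wf(W)}<\infty$ does not by itself license Fubini for general absolutely continuous $f$; the clean fix (which is what the paper does) is to first prove the identity by Tonelli for $f$ with non-negative derivative and then extend by splitting $f'$ into positive and negative parts.
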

\begin{proof}
Assume that $\sigma^2=1$; the proof for general $\sigma$ is similar.
We will show Items \ref{i81} and \ref{i82} simultaneously by showing that $p^z$ defined by \eqref{82} is a probability density 
which defines a distribution satisfying \eqref{81}.

Let $f(x)=\int_0^x g(t) dt$ for a non-negative function $g$ integrable on compact domains.  Then
\ba
\int_0^\infty f'(u)\IE[W\I[W>u]] du&= \int_0^{\infty} g(u)\IE[W\I[W>u]] du \\
	&=\IE[W\int_0^{\max\{0, W\}}g(u) du = \IE[Wf(W)\I[W\geq0].
\ee
and similarly $\int_{-\infty}^0 f'(u)p^z(u) du =\IE[Wf(W)\I[W\leq0]$,
which implies that 
\ban
\int_\IR f'(u) p^z(u)du = \IE[Wf(W)] \label{83}
\ee
for all $f$ as above.  However, \eqref{83} extends to all absolutely continuous $f$ such that $\IE|Wf(W)|< \infty$ by routine
analytic considerations (e.g. considering the positive and negative part of $f$).  

We now show that $p^z$
is a probability density.  That $p^z$ is non-negative follows by considering the two representations in \eqref{82} - note that these representations
are equal since $\IE[W]=0$.   We also have
\ba
\int_0^\infty p^z(u) du= \IE[W^2 \I[W>0] \mbox{\, and \,} \int_{-\infty}^0 p^z(u) du= \IE[W^2 \I[W<0], 
\ee
so that $\int_\IR p^z(u) du = \IE[W^2]=1$.

Finally, uniqueness follows since for random variables $X$ and $Y$ such that
$\IE[f'(X)]=\IE[f'(Y)]$ for all continuously differentiable $f$ with compact support (say), then $X\ed Y$.
\end{proof}

The next result shows that little generality is lost in only considering 
$W$ with $\var(W)=1$ as we have done in Theorem \ref{thm81}.  The result can be read from
the density formula above or by a direct computation.
\begin{proposition}
If $W$ has mean zero and finite variance then $(aW)^z\ed~aW^z$.
\end{proposition}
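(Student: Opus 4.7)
The plan is to verify the defining identity of the zero-bias distribution directly, and then invoke the uniqueness assertion (Item~\ref{i81} of the preceding proposition) to conclude. Concretely, set $V = aW$, so $\IE[V] = 0$ and $\var(V) = a^2\sigma^2$. I would need to check that for every absolutely continuous $f$ with $\IE|Vf(V)|<\infty$,
\ba
\IE[Vf(V)] = a^2\sigma^2\,\IE[f'(aW^z)],
\ee
since this is precisely the characterizing equation for the zero-bias distribution of $V$ evaluated at $aW^z$.

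The key trick is the change of test function. First I would write $\IE[Vf(V)] = a\,\IE[W\,f(aW)]$, then introduce $g(w) := f(aw)$, which is absolutely continuous with $g'(w) = a f'(aw)$. Applying the defining zero-bias identity \eq{81} for $W$ to the test function $g$ yields
\ba
\IE[W g(W)] = \sigma^2\,\IE[g'(W^z)] = a\sigma^2\,\IE[f'(aW^z)],
\ee
so $\IE[Vf(V)] = a^2\sigma^2\,\IE[f'(aW^z)]$, as desired. The (mild) integrability bookkeeping is that $\IE|Vf(V)| < \infty$ translates to $\IE|Wg(W)| < \infty$, which is exactly what is needed to apply the zero-bias identity for $W$ to $g$.

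I do not anticipate any genuine obstacle; the only minor subtlety is the case $a = 0$, in which $aW \equiv 0$ has zero variance and the zero-bias distribution is not defined, so we implicitly restrict to $a \ne 0$. The proposition then follows immediately by the uniqueness clause of the previous proposition. As the author notes, a direct computation using the density formula \eq{82} and the change of variables $u = av$ would give the same conclusion, but the route through the defining equation is cleaner and avoids splitting into the cases $a > 0$ and $a < 0$.
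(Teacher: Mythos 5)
Your proof is correct and is precisely the ``direct computation'' route that the paper alludes to (verify the defining identity for $aW$ via the test-function substitution $g(w)=f(aw)$, then invoke uniqueness); the paper itself writes out no details. Your handling of the integrability condition and the exclusion of $a=0$ is also fine.
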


\subsubsection{Coupling construction}
How do we construct a zero-bias coupling for a random variable?  In general this can be difficult, but we now discuss
the nicest case of a sum of independent random variables and work out a neat theoretical application using the 
construction.  Another canonical method of construction that is useful in practice can be derived from a Stein pair
- see \cite{gore97}. 

Let $X_1, \ldots, X_n$ be independent random variables with $\IE[X_i]=0$, $\var(X_i)=\sigma_i^2$, $\sum_{i=1}^n\sigma_i^2=1$,
and define $W=\sum_{i=1}^n X_i$.  We have the following recipe for constructing a zero-bias version of $W$.
\begin{enumerate}
\item For each $i=1,\ldots, n$, let $X_i^{z}$ have the zero-bias distribution of $X_i$ independent of $(X_j)_{j\not=i}$ and
$(X_j^z)_{j\not=i}$.
\item Choose a random summand $X_I$, where the index $I$ satisfies $\IP(I=i)=\sigma_i^2$ and is independent of all else.
\item Define $W^z=\sum_{j\not=I}X_j + X_I^z$. 
\end{enumerate}  
\begin{proposition}\label{prop81}
Let $W=\sum_{i=1}^n X_i$ be defined as above.  If $W^z$ is constructed 
as per Items 1 - 3 above, then $W^z$ has the zero-bias distribution of $W$.
\end{proposition}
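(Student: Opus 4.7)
The plan is to verify directly the defining zero-bias identity
\be
\IE[Wf(W)] = \IE[f'(W^z)]
\ee
for every absolutely continuous $f$ with $\IE|Wf(W)|<\infty$ (note $\var(W)=\sum_i\sigma_i^2=1$, so no constant appears on the right). The proof is essentially a one-at-a-time application of the zero-bias property to each summand, stitched together by the mixture structure induced by the random index $I$.

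First I would decompose linearly and isolate each summand: write $W_i := \sum_{j\neq i} X_j$, which is independent of $X_i$, so that
\ba
\IE[Wf(W)] = \sum_{i=1}^n \IE[X_i\, f(X_i + W_i)].
\ee
Conditioning on $W_i$, define $g_w(x) := f(x+w)$; then $g_w$ is absolutely continuous with $g_w'(x)=f'(x+w)$, and since $X_i^z$ is constructed independently of $(X_j)_{j\neq i}$ and of $W_i$, the zero-bias identity for $X_i$ applied pointwise in $w=W_i$ yields
\ba
\IE[X_i\, f(X_i+W_i)\mid W_i] = \sigma_i^2\, \IE[f'(X_i^z + W_i)\mid W_i].
\ee
Taking expectations removes the conditioning and gives $\IE[X_i f(W)] = \sigma_i^2\,\IE[f'(X_i^z + W_i)]$.

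Next I would recognize the mixture. By construction, $\IP(I=i)=\sigma_i^2$ and $I$ is independent of all the $X_j$ and $X_j^z$, and on the event $\{I=i\}$ we have $W^z = X_i^z + W_i$. Hence
\ba
\sum_{i=1}^n \sigma_i^2\, \IE[f'(X_i^z+W_i)] = \sum_{i=1}^n \IP(I=i)\,\IE\bklel f'(X_i^z+W_i)\bklgl I=i\bklgr\bkler = \IE[f'(W^z)],
\ee
which combined with the previous display gives $\IE[Wf(W)]=\IE[f'(W^z)]$, as required. Uniqueness of the zero-bias distribution (Item~\ref{i81} of the preceding proposition) then identifies the law of $W^z$ as $W$-zero-biased.

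The only real subtlety is bookkeeping the independence: one must ensure that when applying the zero-bias identity for $X_i$, the coupled variable $X_i^z$ is independent of $W_i$, and that the mixing index $I$ is independent of the entire family $\{X_j,X_j^z\}_{j=1}^n$, so that conditioning on $\{I=i\}$ does not alter the joint law of $(X_i^z, W_i)$. Both are guaranteed by the construction in Items 1--3. Integrability of all quantities follows from $\IE|Wf(W)|<\infty$ together with $\sum_i\sigma_i^2=1<\infty$, so the interchange of sum and expectation is legitimate.
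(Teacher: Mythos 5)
Your proof is correct and follows essentially the same route as the paper's: decompose $\IE[Wf(W)]=\sum_i\IE[X_i f(X_i+W_i)]$, apply the zero-bias identity coordinate-wise using the independence of $X_i^z$ from $W_i$, and recognize the resulting sum $\sum_i\sigma_i^2\IE[f'(X_i^z+W_i)]$ as the mixture $\IE[f'(W^z)]$ over the index $I$. Your version is somewhat more careful about the conditioning and independence bookkeeping, which the paper leaves implicit.
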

\begin{proof}
We must show that $\IE[Wf(W)]=\IE[f'(W^z)]$ for all appropriate $f$.  Using the definition of zero-biasing
in the coordinate $X_i$ and the fact that $W-X_i$ is independent of $X_i$, we have
\ba
\IE[Wf(W)]&=\sum_{i=1}^n X_i f(W-X_i +X_i) \\
	&=\sum_{i=1}^n \sigma_i^2 f(W-X_i +X_i^z) \\
	&=\IE[f'(W-X_I+X_I^z)].
\ee
Since $\sum_{j\not=I}X_j + X_I^z=W-X_I+X_I^z$, the proof is complete.
\end{proof}

\subsubsection{Lindeberg-Feller condition}
We now discuss the way in which zero-biasing appears naturally in the proof of the Lindeberg-Feller CLT.  Our treatment closely
follows \cite{gol09a}.  

Let $(X_{i,n})_{1\leq n,  1\leq i \leq n}$ be a triangular array of random variables\footnote{That is, for each $n$, 
$(X_{i,n})_{1\leq i \leq n}$ is a collection of independent random variables.} such that $\var(X_{i,n})=\sigma_{i,n}^2<\infty$.  Let
$W_n=\sum_{i=1}^n X_{i,n}$, and assume that $\var(W_n)=1$.  A sufficient condition for $W_n$ to satisfy a CLT
as $n\rightarrow\infty$ is the Lindeberg condition: for all $\eps>0$,
\ban
\sum_{i=1}^n \IE[X_{i, n}^2\I[\abs{X_{i,n}}>\eps]]\rightarrow 0, \mbox{ as } n\rightarrow \infty. \label{85}
\ee
The condition ensures that no single term dominates in the sum so that the limit is not altered by the distribution of a summand.  Note that
the condition is not sufficient as we could take $X_{1,n}$ to be standard normal and the rest of the terms zero.  
We now have the following result.
\begin{theorem}\label{thm82}
Let $(X_{i,n})_{1 \leq n, 1\leq i \leq n}$ be the triangular array defined above and let $I_n$ be a random
variable independent of the $X_{i,n}$ and such that $\IP(I_n=i)=\sigma_{i, n}^2$.  For each $1\leq i \leq n$,  let
$X_{i,n}^z$ have the zero-bias distribution of $X_{i,n}$ independent of all else.  Then the Lindeberg condition \eqref{85} holds if and only if
\ban
X_{I_n, n}^z \stackrel{p}{\rightarrow} 0 \mbox{ as } n \to \infty. \label{86}
\ee
\end{theorem}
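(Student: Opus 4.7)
The plan is to compute $\IP(\abs{X_{I_n,n}^z} > \eps)$ exactly as a weighted truncated-second-moment sum over $i$, and then sandwich it between two Lindeberg-type quantities, turning the equivalence with \eq{85} into a matter of bookkeeping. First, since $I_n$ is independent of $(X_{i,n}^z)_{i=1}^n$ and $\IP(I_n = i) = \sigma_{i,n}^2$, the law of $X_{I_n,n}^z$ is the mixture
\be
\IP(X_{I_n,n}^z \in A) = \sum_{i=1}^n \sigma_{i,n}^2\, \IP(X_{i,n}^z \in A).
\ee

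The heart of the argument is the identity
\be
\sigma^2\, \IP(\abs{X^z} > \eps) = \IE\bklel \abs{X}(\abs{X}-\eps)\, \I[\abs{X}>\eps] \bkler ,
\ee
valid for any mean-zero random variable $X$ with variance $\sigma^2$ and any $\eps > 0$. To prove it I would use the two representations of the zero-bias density in \eq{82}: $p^z(w) = \sigma^{-2}\IE[X\I[X>w]]$ for $w \geq 0$ and $p^z(w) = -\sigma^{-2}\IE[X\I[X\leq w]]$ for $w \leq 0$. Integrating these over $(\eps,\infty)$ and $(-\infty,-\eps)$ respectively and swapping integrals by Fubini gives $\sigma^2\IP(X^z > \eps) = \IE[X(X-\eps)\I[X>\eps]]$ and $\sigma^2\IP(X^z < -\eps) = -\IE[X(X+\eps)\I[X\leq -\eps]]$; a sign check (on $\{X>\eps\}$ we have $X=\abs{X}$, on $\{X<-\eps\}$ we have $X=-\abs{X}$) shows these sum to the right-hand side above.

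With the identity in hand, set $L_n(\eps) := \sum_{i=1}^n \IE[X_{i,n}^2\, \I[\abs{X_{i,n}} > \eps]]$ for the Lindeberg quantity. Combining the mixture representation with the identity yields
\be
\IP(\abs{X_{I_n,n}^z} > \eps) = \sum_{i=1}^n \IE\bklel \abs{X_{i,n}}(\abs{X_{i,n}}-\eps)\, \I[\abs{X_{i,n}}>\eps]\bkler .
\ee
The pointwise bounds $0 \leq \abs{X}(\abs{X}-\eps) \leq X^2$ on $\{\abs{X}>\eps\}$ and $\abs{X}(\abs{X}-\eps) \geq X^2/2$ on $\{\abs{X}>2\eps\}$ immediately produce the sandwich
\be
\tsfrac{1}{2}\, L_n(2\eps) \leq \IP(\abs{X_{I_n,n}^z} > \eps) \leq L_n(\eps).
\ee
If \eq{85} holds then the upper bound forces $\IP(\abs{X_{I_n,n}^z}>\eps)\to 0$ for each $\eps>0$, which is \eq{86}. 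Conversely, if \eq{86} holds then the lower bound forces $L_n(2\eps) \to 0$ for every $\eps > 0$, which is \eq{85} after rescaling $\eps$.

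The main obstacle is the identity in the middle step: everything else is elementary inequalities and mixture algebra. The slightly delicate part is the sign-tracking when splicing the two cases $w\geq 0$ and $w\leq 0$ of the density formula \eq{82} and verifying that the $X>\eps$ and $X<-\eps$ contributions assemble cleanly into the single $\IE[\abs{X}(\abs{X}-\eps)\I[\abs{X}>\eps]]$ expression, with the cross terms in $\eps$ cancelling (this is where centering $\IE[X]=0$ implicitly enters, via the compatibility of the two formulas for $p^z$ at $w=0$).
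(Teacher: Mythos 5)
Your proposal is correct and follows essentially the same route as the paper: the paper obtains your key identity by applying the zero-bias relation to the $f$ with $f'(x)=\I[\abs{x}\geq\eps]$ and $f(0)=0$ (so that $xf(x)=(x^2-\eps\abs{x})\I[\abs{x}\geq\eps]$), which is the same computation as your integration of the density \eq{82}, and then uses the identical sandwich $\tfrac{1}{2}x^2\I[\abs{x}\geq 2\eps]\leq(x^2-\eps\abs{x})\I[\abs{x}\geq\eps]\leq x^2\I[\abs{x}\geq\eps]$ together with the same mixture representation over $I_n$. (The only blemish is a sign slip in your intermediate formula, which should read $\sigma^2\IP(X^z<-\eps)=+\IE\bkle{X(X+\eps)\I[X\leq-\eps]}$; with the correct sign your final identity, and hence the argument, stands.)
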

From this point, we can use a modification of Theorem \ref{thm81} to prove the following result which also follows
from Theorem \ref{thm82} and the classical Lindeberg-Feller CLT mentioned above.
\begin{theorem}\label{thm83}
In the notation of Theorem \ref{thm82} and the remarks directly preceding it, if $X_{I_n, n}^z \to 0$ in probability as $n \to \infty$,
then $W_n$ satisfies a CLT.
\end{theorem}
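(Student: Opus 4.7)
The plan is to couple $W_n$ to a zero-bias partner $W_n^z$ via Proposition \ref{prop81}, so that $W_n^z - W_n = X_{I_n,n}^z - X_{I_n,n}$, and then adapt Theorem \ref{thm81} so that it only needs $|W_n^z - W_n| \to 0$ \emph{in probability} rather than in $L^1$. The hypothesis already controls $X_{I_n,n}^z$; the first key step is therefore to show that $X_{I_n,n} \to 0$ in probability as well, and the second step is to convert this into $\IE h(W_n) \to \IE h(Z)$ for bounded Lipschitz $h$ through the Stein equation.

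For the first step, I would derive the pointwise tail comparison
\ba
\IP(|X| > 2a) \leq \frac{\sigma^2}{a^2}\IP(|X^z| > a), \qquad a > 0,
\ee
valid for every mean-zero $X$ with variance $\sigma^2$. This follows from the density formula \eqref{82}:
\ba
\sigma^2\IP(X^z > a) = \int_a^\infty \IE[X\I[X > w]]\,dw = \IE[X(X-a)^+] \geq a^2\IP(X > 2a),
\ee
together with a mirror argument for the lower tail. Because both $X_{I_n,n}$ and $X_{I_n,n}^z$ are mixtures with weights $\sigma_{i,n}^2$, conditioning on $I_n=i$ and averaging yields
\ba
\IP(|X_{I_n,n}| > 2a) = \sum_i \sigma_{i,n}^2\IP(|X_{i,n}| > 2a) \leq \frac{1}{a^2}\sum_i \sigma_{i,n}^4\IP(|X_{i,n}^z| > a) \leq \frac{1}{a^2}\IP(|X_{I_n,n}^z| > a),
\ee
where the final bound uses $\sigma_{i,n}^2 \leq 1$, forced by $\sum_j \sigma_{j,n}^2 = 1$. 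Thus $X_{I_n,n}\to 0$, and hence $|W_n^z-W_n|\to 0$, in probability.

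For the second step, fix a bounded Lipschitz $h$ with $\norm{h'} \leq 1$. The Stein solution $f_h$ has $\norm{f_h'}$ and $\norm{f_h''}$ bounded by Lemma \ref{lem31}(\ref{i32}), so combining Proposition \ref{prop31} with the defining property of the zero-bias coupling gives
\ba
\abs{\IE h(W_n) - \IE h(Z)} = \abs{\IE[f_h'(W_n) - f_h'(W_n^z)]} \leq \norm{f_h''}\,\eps + 2\norm{f_h'}\,\IP(|W_n^z - W_n| > \eps)
\ee
for every $\eps > 0$; this truncation is what replaces the $\IE|W_n^z - W_n|$ bound of Theorem \ref{thm81}. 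Letting $n \to \infty$ and then $\eps \to 0$ yields $\IE h(W_n) \to \IE h(Z)$, and since this holds for all bounded Lipschitz $h$, it follows that $W_n \to Z$ in distribution.

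The main obstacle I foresee is the tail inequality linking $X^z$ back to $X$, and specifically getting the mixture estimate to collapse cleanly: the natural averaging over $I_n$ produces a $\sigma_{i,n}^4$ inside the sum, and one needs the normalization $\sigma_{i,n}^2 \leq 1$ coming from $\var(W_n)=1$ to trade this factor for $\sigma_{i,n}^2$ and recognize the right-hand side as the hypothesized quantity $\IP(|X_{I_n,n}^z| > a)$.
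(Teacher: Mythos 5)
Your proposal is correct, and it follows the paper's skeleton — couple via Proposition \ref{prop81} so that $W_n^z-W_n=X_{I_n,n}^z-X_{I_n,n}$, reduce to showing $X_{I_n,n}\to0$ in probability, and then upgrade Theorem \ref{thm81} to a statement needing only convergence in probability (your $\eps$-truncation is the same argument as the paper's layer-cake/dominated-convergence bound, just written additively). Where you genuinely diverge is the middle step. The paper shows $X_{I_n,n}\to0$ in probability by Chebyshev, $\IP(\abs{X_{I_n,n}}\geq\eps)\leq m_n/\eps^2$ with $m_n=\max_i\sigma_{i,n}^2$, and then proves $m_n\to0$ by invoking the Lindeberg condition, which it gets from the hypothesis via the equivalence of Theorem \ref{thm82}. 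You instead prove the pointwise tail-domination $\IP(\abs{X}>2a)\leq(\sigma^2/a^2)\IP(\abs{X^z}>a)$ directly from the density formula \eq{82} (your computation $\sigma^2\IP(X^z>a)=\IE[X(X-a)^+]\geq a^2\IP(X>2a)$ is valid — in fact it gives $2a^2$ — and the mirror bound for the lower tail works the same way), and then average over $I_n$ using $\sigma_{i,n}^2\leq1$ to absorb the extra variance factor. Your route buys self-containedness — it never touches the Lindeberg condition or Theorem \ref{thm82} — and the tail inequality is a nice quantitative fact about the zero-bias transform in its own right, showing directly that $X^z$ stochastically dominates the tails of $X$ up to the factor $\sigma^2/a^2$. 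The paper's route is shorter given that Theorem \ref{thm82} has already been proved, and it additionally extracts the uniform asymptotic negligibility $m_n\to0$, which is of independent interest but which your argument does not need.
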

Before proving these two results, we note that Theorem \ref{thm83} is heuristically explained by Theorem \ref{thm81} and the zero-bias
construction of $W_n$.   Specifically, $|W_n^z-W_n|=\abs{X_{I_n,n}^z-X_{I_n, n}}$ and Theorem \ref{thm81} implies that $W_n$
is approximately normal if this latter quantity is small (in expectation). The proof of Theorem \ref{thm83} uses a modification
of the error in Theorem \ref{thm81} and the (non-trivial) fact that  
$X_{I_n, n}^z \to 0$ in probability implies that $X_{I_n,n} \to 0$ in probability.  Finally, the quantity $\abs{X_{i,n}^z-X_{i, n}}$
will also be small if $X_{i, n}$ is approximately normal, which indicates that the zero-bias approach will
show convergence in the CLT for the sum of independent random variables when such a result holds.  

\begin{proof}[Proof of Theorem \ref{thm82}]
We first perform a preliminary calculation to relate the Lindeberg-Feller condition to the zero-bias quantity of interest.
For some fixed $\eps>0$, let $f'(x)=\I[\abs{x}\geq\eps]$ and $f(0)=0$.
Using that $xf(x)=(x^2-\eps\abs{x})\I[\abs{x}\geq\eps]$ and the definition of the zero-bias transform, we find
\ba
\IP(|X_{I_n, n}^z|\geq \eps)&=\sum_{i=1}^n\sigma_{i,n}^2\IP(\abs{X_{i,n}^z}\geq \eps) \\
      &=\sum_{i=1}^n\sigma_{i,n}^2\IE[f'(X_{i,n}^z)] \\
	&=\sum_{i=1}^n\IE\left[(X_{i,n}^2-\eps\abs{X_{i,n}})\I[\abs{X_{i,n}}\geq\eps]\right].
\ee
From this point we note
\ba
\frac{x^2}{2}\I[\abs{x}\geq2\eps]\leq (x^2-\eps \abs{x})\I[\abs{x}\geq\eps]\leq x^2\I[\abs{x}\geq\eps]
\ee
which implies that for all $\eps>0$,
\ba
\frac{1}{2}\sum_{i=1}^n\IE\left[X_{i,n}^2\I[\abs{X_{i,n}}\geq2\eps]\right]\leq \IP(|X_{I_n, n}^z|\geq \eps) \leq \sum_{i=1}^n\IE\left[X_{i,n}^2\I[\abs{X_{i,n}}\geq\eps]\right],
\ee
so that \eq{85} and \eq{86} are equivalent.
\end{proof}
\begin{proof}[Proof of Theorem \ref{thm83}]
According to the proof of Theorem \ref{thm81}, it is enough to show that 
\ban
\abs{\IE[f'(W_n)-f'(W_n^z)]}\to 0 \mbox{ as } n \to \infty \label{89a}
\ee
for all bounded $f$ with two bounded derivatives.  We will show that $|W_n^z-W_n|\to 0$ in probability which implies
\eq{89a} by the following calculation.
\ba
\abs{\IE[f'(W_n)-f'(W_n^z)]}&\leq \IE|f'(W_n)-f'(W_n^z)| \\
	&=\int_0^\infty \IP(|f'(W_n)-f'(W_n^z)|\geq t) dt \\
	&= \int_0^{2\norm{f'}} \IP(|f'(W_n)-f'(W_n^z)|\geq t) dt \\
	&\leq \int_0^{2\norm{f'}} \IP(\norm{f''}|W_n-W_n^z|\geq t) dt \\
	&\leq \int_0^{2\norm{f'}} \IP(|W_n-W_n^z|\geq t/\norm{f''}) dt, 
\ee
which tends to zero by dominated convergence.  

We now must show that $|W_n^z-W_n|\to 0$ in probability.  Since we are assuming that $X_{I_n, n}^z \to 0$ in probability, 
and $|W_n^z-W_n|=|X_{I_n, n}^z-X_{I_n, n}|$,  it is enough to show that $X_{I_n, n}\to 0$ in probability.  
For $\eps>0$, and $m_n:=\max_{1\leq i \leq n}\sigma_{i,n}^2$, 
\ba
\IP(\abs{X_{I_n, n}}\geq \eps)&\leq \frac{\var(X_{I_n, n})}{\eps^2} \\
	&=\frac{1}{\eps^2}\sum_{i=1}^n \sigma^4_{i,n} \\
	&\leq\frac{m_n}{\eps^2}\sum_{i=1}^n \sigma^2_{i,n}=\frac{m_n}{\eps^2}.
\ee
From this point we show $m_n\to 0$, which will complete the proof.  For any $\delta>0$, we have
\ban
\sigma_{i,n}^2&=\IE[X_{i,n}^2\I[\abs{X_{i,n}}\leq \delta]]+\IE[X_{i,n}^2\I[\abs{X_{i,n}}> \delta]] \notag \\
	&\leq \delta^2+ \IE[X_{i,n}^2\I[\abs{X_{i,n}}> \delta]]. \label{89}
\ee
Using the calculations in the proof of Theorem \ref{thm82} based on the 
assumption that $X_{I_n, n}^z \to 0$ in probability, it follows that
\ba
\sum_{i=1}^n \IE[X_{i,n}^2\I[\abs{X_{i,n}}> \delta]] \to 0 \mbox{ as } n\to \infty,
\ee 
so that the second term of \eq{89} goes to zero as $n$ goes to infinity uniformly in $i$.
Thus we have that $\limsup_{n}m_n\leq \delta^2$ for all $\delta>0$ which implies that
$m_n\to 0$ since $m_n>0$.
\end{proof}

\subsection{Normal approximation in the Kolmogorov metric}
Our previous work has been to develop bounds on the Wasserstein metric between a distribution of interest and the normal distribution.  
For $W$ a random variable and $Z$ standard normal, we have the inequality
\ba
\dk(W,Z)\leq (2/\pi)^{1/4} \sqrt{\dw(W,Z)},
\ee
so that our previous effort implies bounds for the Kolmogorov metric.  However, it is often the case that this
inequality is suboptimal - for example if $W$ is a standardized binomial random variable with parameters $n$ and $p$,
then both $\dk(W,Z)$ and $\dw(W, Z)$ are of order $n^{-1/2}$.  In this section we 
develop Stein's method for normal approximation in the Kolmogorov metric in hopes of reconciling this 
discrepancy.\footnote{Of course improved rates will come at the cost of additional hypotheses, but we will see that 
the theorems are still useful in application.}  We follow \cite{cgs11} in our exposition below but
similar results using related methods appear elsewhere \cite{rope10, riro97, shsu06}.

Recall the following restatement of Corollary \ref{cor1}. 
\begin{theorem}\label{thm91}
Let $\Phi$ denote the standard normal distribution function and let $f_x(w)$ be the unique bounded solution of
\ban
f_x'(w)-wf_x(w)=\I[w\leq x]-\Phi(x). \label{91}
\ee
If $W$ is a random variable with finite mean and $Z$ is standard normal, then
\ba
\dk(W,Z)=\sup_{x\in\IR}\abs{\IE[f_x'(W)-Wf_x(W)]}.
\ee
\end{theorem}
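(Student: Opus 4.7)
The plan is to observe that this theorem is essentially a direct restatement of Corollary \ref{cor1}, with the sup over $x$ on both sides, so the proof is very short once we invoke Lemma \ref{lem2}. First I would recall the definition of the Kolmogorov metric from Section \ref{probmet}: taking $\%H = \{\I[\cdot \leq x] : x \in \IR\}$ in \eqref{23} gives
\be
\dk(W,Z) = \sup_{x \in \IR} \abs{\IP(W \leq x) - \Phi(x)}.
\ee

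Next, for each fixed $x \in \IR$, Lemma \ref{lem2} provides a bounded solution $f_x$ of the Stein equation \eqref{91}. The key step is then to substitute the random variable $W$ into \eqref{91} and take expectations, yielding
\be
\IE[f_x'(W) - W f_x(W)] = \IP(W \leq x) - \Phi(x).
\ee
Here I need to check that $\IE[W f_x(W)]$ is well-defined: since $f_x$ is bounded (with $\norm{f_x} \leq \sqrt{\pi/2}$ by Lemma \ref{lem2}) and $W$ has finite mean by hypothesis, $|W f_x(W)| \leq \sqrt{\pi/2}\,|W|$ is integrable, and $f_x'(W)$ is integrable because \eqref{91} expresses it as the bounded quantity $\I[W \leq x] - \Phi(x)$ plus $W f_x(W)$.

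Taking absolute values and then the supremum over $x \in \IR$ on both sides gives the theorem. The only potential obstacle is the integrability point above, but the finite-mean hypothesis on $W$ together with the boundedness of $f_x$ from Lemma \ref{lem2} handles it cleanly; nothing beyond what is already available in the excerpt is needed.
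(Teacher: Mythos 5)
Your proposal is correct and follows essentially the same route as the paper, which presents Theorem \ref{thm91} as a restatement of Corollary \ref{cor1}, itself obtained by substituting $W$ into the Stein equation of Lemma \ref{lem2}, taking expectations, and then taking the supremum over $x$. Your explicit check that $\IE[Wf_x(W)]$ and $\IE[f_x'(W)]$ are well-defined under the finite-mean hypothesis is a detail the paper leaves implicit, but it does not change the argument.
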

Moreover, we have the following lemma, which can be read from \cite{cgs11}, Lemma 2.3.
\begin{lemma}\label{lem91}
If $f_x$ is the unique bounded solution to \eq{91}, then
\ba
\norm{f_x}\leq \sqrt{\frac{\pi}{2}}, \mbox{ \hspace{1in} } \norm{f_x'}\leq 2, 
\ee
and for all $u, v, w \in \IR$,
\ba
\abs{(w+u)f_x(w+u)-(w+v)f_x(w+v)}\leq (\abs{w}+\sqrt{2\pi}/4)(\abs{u}+\abs{v})
\ee
\end{lemma}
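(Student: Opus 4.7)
The first two bounds are straightforward from the explicit representation of $f_x$ in Lemma \ref{lem2}. The inequality $\norm{f_x}\leq\sqrt{\pi/2}$ was proved there using the piecewise representation together with Mills-ratio bounds. For $\norm{f_x'}\leq 2$, the Stein equation gives $f_x'(w)=wf_x(w)+\I[w\leq x]-\Phi(x)$, so $|f_x'(w)|\leq|wf_x(w)|+1$, and it suffices to show $|wf_x(w)|\leq 1$ for all $w,x$. This is a direct case-by-case computation from the piecewise formula using the Mills-ratio inequality $\sqrt{2\pi}\,|t|e^{t^2/2}(1-\Phi(|t|))\leq 1$ (splitting on whether $w\leq x$ or $w>x$ and on the sign of $w$).

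The third inequality is the substantive part. By the triangle inequality
\begin{equation*}
|(w+u)f_x(w+u)-(w+v)f_x(w+v)|\leq|(w+u)f_x(w+u)-wf_x(w)|+|wf_x(w)-(w+v)f_x(w+v)|,
\end{equation*}
it is enough to prove the one-sided version $|(w+s)f_x(w+s)-wf_x(w)|\leq (|w|+\sqrt{2\pi}/4)|s|$ for all $w,s,x$. The plan is to expand
\begin{equation*}
(w+s)f_x(w+s)-wf_x(w)=w\bklr{f_x(w+s)-f_x(w)}+sf_x(w+s),
\end{equation*}
and bound the first summand by $|w|\cdot|s|$ via the pointwise estimate $\norm{f_x'}\leq 1$ and the second by $(\sqrt{2\pi}/4)|s|$ via $\norm{f_x}\leq\sqrt{2\pi}/4$. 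Summing the two one-sided bounds (taking $s=u$ and $s=v$) and using $|s|\leq|u|+|v|$ trivially then yields the claim.

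The main obstacle is establishing these two sharper pointwise bounds, which are strictly tighter than the estimates appearing in parts (a)--(b) of the lemma. Both are obtained by direct analysis of the piecewise formula for $f_x$. For $\norm{f_x}\leq\sqrt{2\pi}/4$, one checks monotonicity to see that $w\mapsto f_x(w)$ is maximized at $w=x$; one then maximizes the one-variable function $x\mapsto\sqrt{2\pi}e^{x^2/2}\Phi(x)(1-\Phi(x))$ to obtain the extremal value $\sqrt{2\pi}/4$, attained at $x=0$. For $\norm{f_x'}\leq 1$, one substitutes the piecewise formula into the Stein equation to obtain, for $w\leq x$,
\begin{equation*}
f_x'(w)=(1-\Phi(x))\bklr{1+w\sqrt{2\pi}e^{w^2/2}\Phi(w)},
\end{equation*}
and the symmetric expression for $w>x$, and verifies nonnegativity and upper bound $1$ on each piece via Mills-ratio inequalities. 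The detailed verification is carried out in Lemma~2.3 of \cite{cgs11}.
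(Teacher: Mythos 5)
Your proof is correct, and it reconstructs exactly the argument of Lemma~2.3 of \cite{cgs11}, which is all the paper itself offers for this lemma (it is stated with a citation rather than a proof). The decomposition $(w+s)f_x(w+s)-wf_x(w)=w(f_x(w+s)-f_x(w))+sf_x(w+s)$ together with the sharper bounds $\norm{f_x}\leq\sqrt{2\pi}/4$ and $\norm{f_x'}\leq 1$ is the standard route, and your Mills-ratio verifications of those bounds are sound.
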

Our program can be summed up in the following corollary to the results above.
\begin{corollary}\label{cor91}
If $\%F$ is the set of functions satisfying the bounds of Lemma \ref{lem91} and
$W$ is a random variable with finite mean and $Z$ is standard normal, then
\ba
\dk(W,Z)\leq\sup_{f\in\%F}\abs{\IE[f'(W)-Wf(W)]}.
\ee
\end{corollary}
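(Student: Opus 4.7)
The plan is to obtain the corollary as an essentially immediate consequence of Theorem \ref{thm91} and Lemma \ref{lem91}. The key observation is that the family of Stein solutions $\{f_x\}_{x \in \IR}$ appearing in Theorem \ref{thm91} is, by Lemma \ref{lem91}, a subset of $\%F$, so the supremum over $\%F$ can only be larger than the supremum over the Stein solutions, which already equals $\dk(W,Z)$.

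Concretely, I would first invoke Theorem \ref{thm91} to write
\be
\dk(W,Z) = \sup_{x \in \IR} \abs{\IE[f_x'(W) - W f_x(W)]},
\ee
where $f_x$ denotes the unique bounded solution of \eq{91}. Next I would apply Lemma \ref{lem91} to verify that every such $f_x$ satisfies the three bounds $\norm{f_x}\leq\sqrt{\pi/2}$, $\norm{f_x'}\leq 2$, and the incremental bound on $(w+u)f_x(w+u)-(w+v)f_x(w+v)$, which are exactly what defines membership in $\%F$. Hence $\{f_x : x \in \IR\} \subseteq \%F$, and since enlarging the indexing set of a supremum cannot decrease its value, we obtain
\be
\sup_{x \in \IR} \abs{\IE[f_x'(W) - W f_x(W)]} \;\leq\; \sup_{f \in \%F} \abs{\IE[f'(W) - W f(W)]},
\ee
which combined with the previous display gives the claim.

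One small item to check along the way is that the expectations on the right-hand side are actually well defined for every $f \in \%F$; this follows from the bounds that define $\%F$, since $\norm{f}\leq\sqrt{\pi/2}$ and $\norm{f'}\leq 2$ give $\IE\abs{f'(W)}\leq 2$ and $\IE\abs{Wf(W)} \leq \sqrt{\pi/2}\,\IE\abs{W} < \infty$ under the standing hypothesis that $W$ has finite mean. There is no genuine obstacle here: the substantive content lies entirely in Theorem \ref{thm91} and Lemma \ref{lem91}, and the role of the corollary is simply to package them into a convenient ``generator'' form, analogous to Theorem \ref{thm31} in the Wasserstein setting, so that subsequent coupling arguments can be carried out by bounding $\abs{\IE[f'(W) - W f(W)]}$ uniformly over $\%F$ without ever again referring to the specific Stein solutions $f_x$.
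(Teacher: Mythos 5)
Your proposal is correct and matches the paper's (implicit) argument exactly: the corollary is stated in the paper as an immediate consequence of Theorem \ref{thm91} and Lemma \ref{lem91}, obtained by noting that each Stein solution $f_x$ lies in $\%F$ and enlarging the set over which the supremum is taken. The integrability check you include is a harmless extra detail that the paper omits.
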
 
\subsubsection{Zero-bias transformation}
To get better out-the-door rates using the zero-bias transform, we must assume a boundedness condition.
\begin{theorem}
Let $W$ be a mean zero, variance one random variable and suppose there is $W^z$ having the zero-bias
distribution of $W$ on the same space as $W$ such that $\abs{W^z-W}\leq \delta$ almost surely.  If
$Z$ is standard normal, then
\ba
\dk(W,Z)\leq \left(1+\frac{1}{\sqrt{2\pi}}+\frac{\sqrt{2\pi}}{4}\right)\delta.
\ee
\end{theorem}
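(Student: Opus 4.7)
The plan is to exploit the defining property of the zero-bias coupling together with the non-Lipschitz-but-locally-controllable bound on $wf_x(w)$ supplied by the third estimate in Lemma \ref{lem91}, and then transfer the resulting Kolmogorov bound for $W^z$ back to $W$ via the almost-sure coupling.

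\textbf{Step 1: Rewrite the Stein error using zero-biasing.} By Corollary \ref{cor91}, it suffices to bound $|\IE[f'_x(W) - Wf_x(W)]| = |\IP(W\leq x) - \Phi(x)|$ uniformly in $x$. By the defining property of the zero-bias distribution, $\IE[Wf_x(W)] = \IE[f'_x(W^z)]$. Substituting the characterizing equation $f'_x(w) = wf_x(w) + \I[w\leq x] - \Phi(x)$ into this identity and rearranging yields the key formula
\be
  \IP(W^z \leq x) - \Phi(x) = \IE\bklel Wf_x(W) - W^z f_x(W^z)\bklrr.
\ee

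\textbf{Step 2: Control the right-hand side via Lemma \ref{lem91}.} I apply the third inequality of Lemma \ref{lem91} with $w = W$, $u = 0$, $v = W^z - W$, which gives the almost sure bound
\be
  |Wf_x(W) - W^z f_x(W^z)| \leq (|W| + \sqrt{2\pi}/4)\,|W^z - W| \leq (|W| + \sqrt{2\pi}/4)\,\delta.
\ee
Taking expectations and using $\IE|W| \leq \sqrt{\IE W^2} = 1$ yields
\be
  \sup_{x\in\IR}|\IP(W^z \leq x) - \Phi(x)| \leq \bklr{1 + \sqrt{2\pi}/4}\delta.
\ee

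\textbf{Step 3: Transfer from $W^z$ to $W$.} Because $|W^z - W| \leq \delta$ a.s., I have the containments $\{W \leq x\} \subseteq \{W^z \leq x + \delta\}$ and $\{W^z \leq x - \delta\} \subseteq \{W \leq x\}$. Combining these with the Step 2 bound and the standard inequality $\Phi(x+\delta) - \Phi(x-\delta) \leq \delta/\sqrt{2\pi} \cdot 2$ (more precisely, comparing $\Phi(x\pm\delta)$ to $\Phi(x)$ using the density bound $\|\phi\|_\infty = 1/\sqrt{2\pi}$) yields
\be
  \IP(W \leq x) \leq \IP(W^z \leq x+\delta) \leq \Phi(x+\delta) + (1 + \sqrt{2\pi}/4)\delta \leq \Phi(x) + \bklr{1/\sqrt{2\pi} + 1 + \sqrt{2\pi}/4}\delta,
\ee
and a symmetric lower bound. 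Taking the supremum over $x$ completes the proof.

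The only subtle point is Step 1: one is tempted to write $\IE[f'_x(W) - f'_x(W^z)]$ and invoke a Lipschitz bound on $f'_x$, but $f'_x$ has a jump discontinuity at $x$. The trick of rewriting $f'_x$ via the Stein equation so that the discontinuous indicator cancels and one is left with only the \emph{continuous} combination $wf_x(w)$, for which Lemma \ref{lem91} supplies exactly the right modulus of continuity, is where all the work really happens; the rest is routine.
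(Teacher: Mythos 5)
Your proposal is correct and follows essentially the same route as the paper's proof: bound $\dk(W^z,Z)$ by rewriting $\IE[f_x'(W^z)-W^zf_x(W^z)]$ as $\IE[Wf_x(W)-W^zf_x(W^z)]$ via the zero-bias identity and applying the third estimate of Lemma \ref{lem91}, then transfer to $W$ using the almost-sure bound $\abs{W^z-W}\leq\delta$ and the normal density bound. The only cosmetic difference is the order of the two steps (the paper does the transfer first), so nothing further is needed.
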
 
\begin{proof}
Our strategy of proof is to show that the condition
$\abs{W^z-W}\leq \delta$ implies that 
$\abs{\dk(W,Z)-\dk(W^z,Z)}$ is bounded by a constant times $\delta$.  From this point we will only 
need to show that $\dk(W^z, Z)$ is of order $\delta$, which is not as difficult due heuristically to the fact that
the zero-bias transform is smooth (absolutely continuous with respect to Lebesgue measure).

We implement the first part of the program.  For $z\in \IR$,
\ban
\IP(W\leq z)-\IP(Z\leq z)&\leq \IP(W\leq z) - \IP(Z\leq z+\delta)+ \IP(Z\leq z+\delta)- \IP(Z\leq z) \notag \\
	&\leq \IP(W^z\leq z+\delta)-\IP(Z\leq z+\delta)+\frac{\delta}{\sqrt{2\pi}} \notag \\
	&\leq \dk(W^z, Z)+\frac{\delta}{\sqrt{2\pi}}, \label{93}
\ee
where the second inequality follows since $\{W\leq z\}\subseteq \{W^z \leq z+\delta\}$ and since
$Z$ has density bounded by $(2\pi)^{-1/2}$.
Similarly, 
\ba
\IP(W\leq z)-\IP(Z\leq z)&\geq \IP(W\leq z) - \IP(Z\leq z-\delta)+ \IP(Z\leq z-\delta)- \IP(Z\leq z) \\
	&\geq \IP(W^z\leq z-\delta)-\IP(Z\leq z-\delta)-\frac{\delta}{\sqrt{2\pi}},
\ee
which after taking the supremum over $z$ and combining with \eq{93} implies that
\ban
\abs{\dk(W,Z)-\dk(W^z, Z)}\leq \frac{\delta}{\sqrt{2\pi}}. \label{94}
\ee
Now, by Corollary \ref{cor91} (and using the notation there), we have
\ban
\dk(W^z, Z)\leq \sup_{f\in\%F}\left|\IE[f'(W^z)-W^zf(W^z)]\right|, \label{94aa}
\ee
and for $f\in \%F$, we find after using the definition of the zero-bias transform and Lemma \ref{lem91} 
\ban
\left|\IE[f'(W^z)-W^zf(W^z)]\right|&=\left|\IE[Wf(W)-W^zf(W^z)]\right| \notag \\
			&\leq\IE\left[\left(\abs{W}+\frac{\sqrt{2\pi}}{4}\right)\left|W^z-W\right|\right] \notag \\
			&\leq \delta\left(1+\frac{\sqrt{2\pi}}{4}\right). \label{95}
\ee
Combining \eq{94}, \eq{94aa}, and \eq{95} yields the theorem.
\end{proof}
Theorem \ref{thm91} can be applied to sums of independent random variables which are almost surely bounded (note that $W$ bounded
implies $W^z$ bounded), and can also be used to derive a bound in Hoeffding's combinatorial CLT
under some boundedness assumption.  

\subsubsection{Exchangeable pairs}
To get better rates using exchangeable pairs, we again assume a boundedness condition.
A slightly more general version of this theorem appears in \cite{shsu06}.
\begin{theorem}\label{thm92}
If $(W, W')$ is an $a$-Stein pair with $\var(W)=1$ and such that $|W'-W|\leq \delta$, then
\ba
\dk(W,Z)\leq \frac{\sqrt{\var\left(\IE[(W'-W)^2|W]\right)}}{2a}+\frac{\delta^3}{2a}+\frac{3\delta}{2}.
\ee
\end{theorem}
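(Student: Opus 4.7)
The plan is to mimic the strategy of Theorem 5.1 (exchangeable pairs in Wasserstein distance), but now with the test functions $f = f_x$ from Corollary 9.1 rather than smooth test functions. So the first step is to apply Corollary 9.1, reducing the problem to obtaining a uniform bound on $|\IE[f'_x(W) - Wf_x(W)]|$ over $x \in \IR$, where $f_x$ solves the Stein equation \eqref{91} and satisfies the estimates of Lemma 9.1.

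Next, I would repeat the exchangeable-pair manipulation used in the proof of Theorem 5.1: combining exchangeability (so that $\IE[(W'-W)(f(W)+f(W'))] = 0$ by the anti-symmetry argument of Proposition 5.1) with the Stein-pair identity $\IE[(W'-W)\mid W] = -aW$, one obtains
\be
\IE[Wf(W)] = \frac{1}{2a}\,\IE\bigl[(W'-W)(f(W')-f(W))\bigr].
\ee
Writing $f(W')-f(W) = (W'-W)f'(W) + \int_W^{W'}(f'(t)-f'(W))\,dt$ and substituting, one gets the decomposition
\be
\IE[f'(W)-Wf(W)] = \IE\!\left[f'(W)\!\left(1 - \frac{(W'-W)^2}{2a}\right)\right] - \frac{1}{2a}\,\IE\!\left[(W'-W)\!\int_W^{W'}\!(f'(t)-f'(W))\,dt\right].
\ee
The first term is bounded by Cauchy-Schwarz after conditioning on $W$, using Proposition 5.1 to identify $\IE[(W'-W)^2] = 2a$ and Lemma 9.1 to bound $\norm{f'_x}$; this yields the contribution $\sqrt{\var(\IE[(W'-W)^2\mid W])}/(2a)$.

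The substantive work is the remainder term, and the main obstacle is that, unlike in the Wasserstein setting of Theorem 5.1, $f'_x$ has a jump discontinuity at $x$ and is not twice differentiable, so a second-order Taylor expansion is unavailable. I would sidestep this by using the Stein equation itself to split
\be
f'_x(t) - f'_x(W) = \bigl(tf_x(t) - Wf_x(W)\bigr) + \bigl(\I[t\leq x] - \I[W\leq x]\bigr).
\ee
The "smooth" piece $tf_x(t) - Wf_x(W)$ is exactly where the Lipschitz-type inequality of Lemma 9.1, namely $|(w+u)f_x(w+u)-(w+v)f_x(w+v)|\leq(|w|+\sqrt{2\pi}/4)(|u|+|v|)$, does the job: integrated over $t$ between $W$ and $W'$, with $|t-W|\leq|W'-W|\leq\delta$ and $\IE|W|\leq 1$, this produces the $\delta^3/(2a)$ term.

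The "jump" piece is the core difficulty and accounts for the $\tfrac{3\delta}{2}$ in the bound. The integral $\int_W^{W'}(\I[t\leq x]-\I[W\leq x])\,dt$ vanishes unless $x$ lies between $W$ and $W'$, and on that event it is bounded in absolute value by $|W'-W|\leq\delta$. Pairing this with the outer factor $(W'-W)$, its contribution is controlled by an estimate of the form $\frac{\delta}{2a}\IE[|W'-W|\,\I[|W-x|\leq\delta]]$, which is handled by the concentration estimate $\IP(|W-x|\leq\delta)\leq 2\delta/\sqrt{2\pi} + 2\dk(W,Z)$ that follows from comparing $W$ to $Z$ using the definition of $\dk$ and the boundedness of the normal density. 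This produces a self-referential inequality in $\dk(W,Z)$, which can be rearranged (absorbing the small $\dk(W,Z)$ coefficient into the left-hand side) to yield the $\tfrac{3\delta}{2}$ summand. Once all three contributions are collected, taking the supremum over $x$ completes the proof.
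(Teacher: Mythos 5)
Your outline tracks the paper's proof exactly up to and including the treatment of the ``smooth'' piece: the reduction via Corollary \ref{cor91}, the identity $\IE[Wf_x(W)]=\tfrac{1}{2a}\IE[(W'-W)(f_x(W')-f_x(W))]$, the Cauchy--Schwarz bound giving the variance term, the use of the Stein equation \eq{91} to split $f_x'(t)-f_x'(W)$ into $(tf_x(t)-Wf_x(W))+(\I[t\leq x]-\I[W\leq x])$, and the $\delta^3/(2a)$ bound from Lemma \ref{lem91}. The gap is in your treatment of the jump piece. You reduce it to $\tfrac{\delta}{2a}\IE[\abs{W'-W}\I[\abs{W-x}\leq\delta]]\leq\tfrac{\delta^2}{2a}\IP(\abs{W-x}\leq\delta)$ and then invoke $\IP(\abs{W-x}\leq\delta)\leq 2\delta/\sqrt{2\pi}+2\dk(W,Z)$, hoping to absorb the resulting $\dk(W,Z)$ term into the left-hand side. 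But the coefficient of $\dk(W,Z)$ produced this way is $\delta^2/a$, and this is \emph{never} less than $2$: Proposition \ref{prop51} gives $\IE[(W'-W)^2]=2a\var(W)=2a$, while $\abs{W'-W}\leq\delta$ forces $\IE[(W'-W)^2]\leq\delta^2$, so $\delta^2/a\geq 2$. The self-referential inequality therefore reads $\dk(W,Z)\leq(\text{const})+c\,\dk(W,Z)$ with $c\geq 2$ and cannot be rearranged; no refinement via Cauchy--Schwarz on $\IE[(W'-W)^2\I[\cdot]]$ rescues it either, since $\delta/\sqrt{2a}\geq1$ for the same reason.

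What the paper does instead is prove a separate concentration estimate (Lemma \ref{lem92}) that exploits the exchangeable pair itself rather than comparison with the normal: taking $g$ with $g'(w)=\I[x-\delta<w\leq x+2\delta]$ and $g(x+\delta/2)=0$, so $\norm{g}\leq 3\delta/2$, the Stein-pair identity $2a\IE[Wg(W)]=\IE[(W'-W)(g(W')-g(W))]$ yields
\be
\IE\left[(W'-W)^2\I[x<W\leq x+\delta]\right]\leq 3\delta a .
\ee
The crucial feature is the factor of $a$ on the right, which cancels the $1/(2a)$ in front of the jump term and leaves exactly $3\delta/2$. Your approach produces a bound proportional to $\delta^2/(2a)$ times a probability of order $\delta+\dk(W,Z)$, which has no compensating factor of $a$; this is precisely the obstruction Lemma \ref{lem92} is designed to overcome, and without it (or an equivalent) the argument does not close.
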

\begin{proof}
Let $f_x$ the bounded solution of \eq{91}.  Exchangeability implies
\ba
\IE[Wf_x(W)]=\frac{1}{2a}\IE[(W'-W)(f_x(W')-f_x(W))],
\ee
so that we can see
\ban
\IE[f_x'(W)-Wf_x(W)]&=\IE\left[f_x'(W)\left(1-\frac{(W'-W)^2}{2a}\right)\right] \label{98}\\
	&\qquad+\IE\left[\frac{W'-W}{2a}\int_{0}^{W'-W}\left[f_x'(W)-f_x'(W+t)\right]dt\right]. \label{99}
\ee
Exactly as in the proof of Theorem \ref{thm51}, 
(the result analogous to Theorem \ref{thm92} but for the Wasserstein metric)
the term \eq{98} contributes the first error term from the theorem (using the bounds
of Lemma \ref{lem91}).  Now, since
$f_x$ satisfies \eq{91}, we can rewrite \eqref{99}
\ban
&\IE\left[\frac{W'-W}{2a}\int_{0}^{W'-W}\left[Wf_x(W)-(W+t)f_x(W+t)\right]dt\right] \label{911} \\
&\qquad +\IE\left[\frac{W'-W}{2a}\int_{0}^{W'-W}\left[\I[W\leq x]-\I[W+t\leq x\right]dt\right], \label{912}
\ee
and we can apply Lemma \ref{91} to find that the absolute value of \eq{911} is bounded above by
\ba
\IE\left[\frac{\abs{W'-W}}{2a}\int_{0}^{W'-W}\left(\abs{W}+\frac{\sqrt{2\pi}}{4}\right)\abs{t}dt\right]
\leq \IE\left[\frac{\abs{W'-W}^3}{4a}\left(\abs{W}+\frac{\sqrt{2\pi}}{4}\right)\right]\leq \frac{\delta^3}{2a}.
\ee
In order to bound the absolute value of \eq{912}, we consider separately the cases $W'-W$ positive and negative.  For example,
\ba
&\left|\IE\left[\frac{(W'-W)\I[W'<W]}{2a}\int_{W'-W}^{0}\I[x<W\leq x-t]dt\right]\right| \\
&\qquad\leq \frac{1}{2a}\IE\left[(W'-W)^2\I[W'<W]\I[x<W\leq x+\delta]\right],
\ee 
where we have used that $\abs{W'-W}\leq \delta$.  A similar inequality can be obtained for $W'>W$ and 
combining these terms implies that the absolute value of \eq{912} is bounded above 
\ban
\frac{1}{2a}\IE\left[(W'-W)^2\I[x<W\leq x+\delta]\right]. \label{9112}
\ee
Lemma \ref{lem92} below shows \eq{9112} is bounded above by $3\delta /2$,
which proves the theorem.
\end{proof}
\begin{lemma}\label{lem92}
If $(W, W')$ is an $a$-Stein pair with $\var(W)=1$ and such that $|W'-W|\leq \delta$, then for all $x\in \IR$
\ba
\IE\left[(W'-W)^2\I[x<W\leq x+\delta]\right]\leq 3\delta a.
\ee
\end{lemma}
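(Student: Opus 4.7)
The plan is to apply the exchangeable-pair Stein identity with a carefully chosen bounded, non-decreasing, $1$-Lipschitz test function $h$ that unfolds $(W'-W)^2\,\I[x<W\leq x+\delta]$ into an almost-surely non-negative quantity. A natural candidate is the clipped affine function
\be
h(w) \;=\; \max\!\bigl(-\tsfrac{3\delta}{2},\; \min\!\bigl(\tsfrac{3\delta}{2},\; w - x - \tsfrac{\delta}{2}\bigr)\bigr),
\ee
which is non-decreasing, $1$-Lipschitz, satisfies $\norm{h}\leq 3\delta/2$, and crucially coincides with the affine map $w\mapsto w - x - \delta/2$ on the buffered interval $[x-\delta,\, x+2\delta]$.

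First I would invoke the standard exchangeable-pair identity
\be
2a\,\IE[Wh(W)] \;=\; \IE\bigl[(W'-W)(h(W')-h(W))\bigr],
\ee
which follows from the exchangeability of $(W,W')$ together with the linearity condition $\IE[W'-W\mid W] = -aW$, exactly as in the derivation of \eqref{52}. I would then exploit the structure of $h$ to decompose the right-hand side. On the event $\{x<W\leq x+\delta\}$, the assumption $|W'-W|\leq\delta$ forces both $W$ and $W'$ into $[x-\delta,\, x+2\delta]$, the region where $h$ is affine, so $h(W')-h(W) = W'-W$ and therefore $(W'-W)(h(W')-h(W)) = (W'-W)^2$ pointwise on this event. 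On the complement $\{W\notin(x,x+\delta]\}$ the same product is still non-negative because $h$ is monotone. Combining these two observations,
\be
\IE\bigl[(W'-W)^2\,\I[x<W\leq x+\delta]\bigr] \;\leq\; \IE\bigl[(W'-W)(h(W')-h(W))\bigr] \;=\; 2a\,\IE[Wh(W)].
\ee

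To finish, I would bound $\IE[Wh(W)]$ using $\norm{h}\leq 3\delta/2$ together with the normalization $\var(W)=1$ (which gives $\IE|W|\leq \sqrt{\IE[W^2]}=1$): $|\IE[Wh(W)]|\leq \norm{h}\,\IE|W|\leq 3\delta/2$, and multiplication by $2a$ delivers the claimed bound $3\delta a$. The only genuinely creative ingredient is the design of $h$: the affine region must be wide enough (a $\delta$-buffer on each side of $A=(x,x+\delta]$) to accommodate the displacement $|W'-W|\leq\delta$, while the clipping level $3\delta/2$ is precisely what makes $\norm{h}\,\IE|W|$ couple with $\var(W)=1$ to yield the constant $3/2$; once $h$ is written down, the remaining verifications are routine.
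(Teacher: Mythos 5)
Your proof is correct and is essentially the paper's own argument: your clipped affine $h$ is exactly the function $g$ the paper defines via $g'(w)=\I[x-\delta<w\leq x+2\delta]$ and $g(x+\delta/2)=0$, and both proofs combine the identity $\IE[(W'-W)(h(W')-h(W))]=2a\IE[Wh(W)]$ with the observations that the integrand is non-negative everywhere and equals $(W'-W)^2$ on the event $\{x<W\leq x+\delta\}$, then finish with $\abs{\IE[Wh(W)]}\leq \norm{h}\,\IE\abs{W}\leq 3\delta/2$. The only difference is presentational (your pointwise case split versus the paper's integral representation of $h(W')-h(W)$).
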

\begin{proof}
Let $g'(w)=\I[x-\delta<w\leq x+2\delta]$ and $g(x+\delta/2)=0$. Using that $\norm{g}\leq 3\delta/2$ in the first inequality below, we have
\ba
3\delta a&\geq 2a \IE[Wg(W)] \\
	&=\IE\left[(W'-W)(g(W')-g(W))\right] \\
	&=\IE\left[(W'-W)\int_0^{W'-W}g'(W+t) dt\right] \\
	&\geq \IE\left[(W'-W)\int_0^{W'-W}\I[x-\delta<W+t\leq x+2\delta]\I[x<W\leq x+\delta] dt\right] \\
	&=\IE\left[(W'-W)^2\I[x<W\leq x+\delta]\right],
\ee
as desired.
\end{proof}
Theorem \ref{thm92} can be applied to sums of independent random variables which are almost surely bounded,
and can also be applied to the anti-voter model to yield rates in the Kolmogorov metric that are comparable to those we obtained
in the Wasserstein metric in Section \ref{secav}.

\section{Poisson Approximation}\label{poi}
One great advantage of Stein's method is that it can easily be adapted to various distributions and metrics. In this section we 
develop Stein's method for bounding the total variation distance (see Section \ref{probmet}) 
between a distribution of interest and the Poisson distribution.
We will move quickly through the material analogous to that of Section \ref{stnorm} 
for normal approximation, as the general framework is similar.  We follow the exposition of \cite{bhj92}.

\begin{lemma}\label{lem101}
For $\lambda>0$, define the functional operator $\mathcal{A}$ by
\ba
\mathcal{A}f(k)=\lambda f(k+1)-kf(k).
\ee
\begin{enumerate}
\item If the random variable $Z$ has the Poisson distribution with mean $\lambda$, then 
$\IE\%{A}f(Z)=0$ for all bounded $f$.
\item If for some non-negative integer-valued random variable $W$, $\IE\%{A}f(W)=0$ for all bounded functions 
$f$, then $W$ has the Poisson distribution with mean $\lambda$.
\end{enumerate}
The operator $\%A$ is referred to as a characterizing operator of the Poisson distribution.
\end{lemma}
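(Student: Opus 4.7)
For Item 1, the plan is to compute $\IE[\lambda f(Z+1) - Zf(Z)]$ directly from the Poisson pmf. Writing $\IE[\lambda f(Z+1)] = \sum_{k=0}^{\infty} \lambda f(k+1)\, \e^{-\lambda}\lambda^k/k!$ and reindexing with $j = k+1$ converts this sum to $\sum_{j=1}^{\infty} f(j)\, \e^{-\lambda}\lambda^j/(j-1)! = \sum_{j=1}^{\infty} j f(j)\, \e^{-\lambda}\lambda^j/j! = \IE[Zf(Z)]$. Boundedness of $f$ together with the finiteness of all moments of $Z$ justifies the rearrangement of the sums, so the two expectations coincide and $\IE[\mathcal{A}f(Z)] = 0$.

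For Item 2, the key observation is that the hypothesis is required to hold for \emph{every} bounded $f$, so one can afford to plug in the most informative test functions, namely the singleton indicators. For each fixed $j \geq 1$, I would take $f(k) = \I[k = j]$; then $\IE[\lambda f(W+1) - W f(W)] = 0$ collapses to the simple two-term recursion $\lambda\, \IP(W = j-1) = j\, \IP(W = j)$. Iterating yields $\IP(W = j) = (\lambda^j/j!)\, \IP(W = 0)$ for every $j \geq 0$, and since $W$ takes values in $\{0,1,2,\ldots\}$, the normalization $\sum_{j\geq 0} \IP(W=j) = 1$ forces $\IP(W = 0) = \e^{-\lambda}$, so $W$ is $\Po(\lambda)$-distributed.

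There is essentially no serious obstacle here: the singleton-indicator trick in Item 2 sidesteps any need to construct bounded solutions of a Stein equation. That more delicate construction, the analogue of Lemma \ref{lem2} for the Poisson operator $\mathcal{A}f(k) = \lambda f(k+1) - k f(k)$, is what will genuinely be needed at the next step, when one wants a \emph{quantitative} bound on $\dtv(W, \Po(\lambda))$ expressed as $\sup_{f \in \mathcal{F}} |\IE[\lambda f(W+1) - W f(W)]|$ for an appropriate class $\mathcal{F}$ of solutions; the qualitative characterization in Lemma \ref{lem101}, however, is obtained purely by applying the hypothesis to one indicator at a time.
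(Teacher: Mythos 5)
Your proposal is correct and matches the paper's own argument: Item 1 is the same reindexing computation with the Poisson pmf, and your singleton-indicator argument for Item 2 is precisely the alternative proof the paper itself records (the recursion $\lambda\,\IP(W=j-1)=j\,\IP(W=j)$ plus normalization). Your closing remark is also on target — the bounded solutions $f_A$ of the Stein equation are only needed for the quantitative total variation bound, not for this characterization.
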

Before proving the lemma, we state one more result and then its consequence.
\begin{lemma}\label{lem102}
Let $\P$ denote probability with respect to a Poisson distribution with mean $\lambda$ and $A\subseteq\IN\cup\{0\}$.  The unique
solution $f_A$  of
\ban
\lambda f_A(k+1)-kf_A(k)=\I[k\in A]-\P(A) \label{101}
\ee
with $f_A(0)=0$ is given by
\ba
f_A(k)=\lambda^{-k}e^\lambda (k-1)!\left[\P(A\cap U_k)-\P(A)\P(U_k)\right],
\ee
where $U_k=\{0, 1, \ldots, k-1\}$.
\end{lemma}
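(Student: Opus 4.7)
The plan is to mimic the integrating-factor trick used for the normal Stein equation in Lemma \ref{lem2}, adapted to the discrete setting. First I would note that the recurrence \eqref{101} can be rewritten as
\[
f_A(k+1) = \lambda^{-1}\bigl(k f_A(k) + \I[k\in A] - \P(A)\bigr),
\]
so, since $\lambda > 0$, the initial value $f_A(0) = 0$ determines $f_A(k)$ uniquely for every $k \geq 0$. This settles uniqueness, and reduces the existence statement to verifying that the proposed closed form satisfies the recurrence.

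The key observation is the discrete identity $\lambda \pi(k) = (k+1)\pi(k+1)$, where $\pi(k) := e^{-\lambda}\lambda^k/k!$ is the Poisson mass function. Multiplying both sides of \eqref{101} by $\pi(k)$ and applying this identity to the first term on the left, the left-hand side becomes $(k+1)f_A(k+1)\pi(k+1) - k f_A(k)\pi(k)$, which is the forward difference of the quantity $g(k) := k f_A(k)\pi(k)$. Thus the Stein equation is equivalent to
\[
g(k+1) - g(k) = \bigl(\I[k \in A] - \P(A)\bigr)\pi(k).
\]
Since $g(0) = 0$, summing from $k = 0$ to $k = n-1$ telescopes to
\[
n f_A(n)\pi(n) = \P(A \cap U_n) - \P(A)\P(U_n),
\]
and dividing by $n\pi(n) = \lambda^n e^{-\lambda}/(n-1)!$ yields precisely the formula claimed in the lemma.

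The only nontrivial step is spotting that $\pi(k)$ is the right discrete ``integrating factor'' making the equation telescope, playing the role that $e^{-w^2/2}$ played in Lemma \ref{lem2}; the rest is bookkeeping. A minor point to check is that the formula is consistent with the prescribed initial condition: for $n = 0$ the telescoped identity reduces to $0 = 0$, so $f_A(0)$ is not pinned down by the closed form but is instead set separately, which is also why the factor $(n-1)!$ in the formula only makes sense for $n \geq 1$.
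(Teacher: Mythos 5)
Your proof is correct. It differs from the paper's in a worthwhile way: the paper disposes of existence by simply asserting that the closed form can be "verified by substitution" into the recursion \eqref{101}, whereas you actually \emph{derive} the formula, using the Poisson mass function $\pi(k)=e^{-\lambda}\lambda^k/k!$ as a discrete integrating factor so that the equation telescopes in $g(k)=kf_A(k)\pi(k)$. The two arguments amount to the same computation read in opposite directions, but yours explains where the formula comes from and makes explicit the analogy with the factor $e^{-w^2/2}$ in Lemma \ref{lem2}; the paper's version is shorter but leaves the formula unmotivated. Your closing remarks are also accurate and worth keeping: the uniqueness of $f_A(k)$ for $k\geq 1$ follows from the recursion alone (indeed $f_A(1)=\lambda^{-1}(\I[0\in A]-\mathcal{P}_\lambda(A))$ does not involve $f_A(0)$), the telescoped identity is vacuous at $n=0$ since $g(0)=0$ automatically, and the condition $f_A(0)=0$ is a separate normalization, consistent with the $(n-1)!$ in the closed form being meaningful only for $n\geq 1$.
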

Analogous to normal approximation, this setup immediately yields the following promising result.
\begin{corollary}\label{cor101}
If $W\geq0$ is an integer-valued random variable with mean $\lambda$, then
\ba
\left|\IP(W\in A)-\P(A)\right|=\left|\IE[\lambda f_A(W+1) - Wf_A(W)]\right|.
\ee
\end{corollary}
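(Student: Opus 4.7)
The plan is to simply substitute the random variable $W$ into the defining equation for $f_A$ from Lemma \ref{lem102} and take expectations. That is, starting from
\be
\lambda f_A(k+1)-kf_A(k)=\I[k\in A]-\P(A),
\ee
valid for all non-negative integers $k$, I would replace $k$ by $W$ (almost surely an element of $\IN\cup\{0\}$), so that the identity
\be
\lambda f_A(W+1)-Wf_A(W)=\I[W\in A]-\P(A)
\ee
holds pointwise. Taking expectations on both sides and then absolute values immediately yields
\be
\bbabs{\IE[\lambda f_A(W+1)-Wf_A(W)]}=\bbabs{\IP(W\in A)-\P(A)},
\ee
which is the desired statement.

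The only non-trivial step is checking that the expectation on the left-hand side is well-defined, i.e., that $\IE|\lambda f_A(W+1)|<\infty$ and $\IE|Wf_A(W)|<\infty$. I would dispatch this by noting that the closed-form expression for $f_A$ given in Lemma \ref{lem102} implies $f_A$ is bounded (the bracketed term is a difference of probabilities, hence bounded in $[-1,1]$, and the prefactor can be shown to yield a uniformly bounded function of $k$; explicit bounds on $\norm{f_A}$ are a standard companion result to Lemma \ref{lem102}). With $f_A$ bounded by some constant $C$, $\IE|\lambda f_A(W+1)|\leq C\lambda<\infty$ and $\IE|Wf_A(W)|\leq C\,\IE W=C\lambda<\infty$, since $W$ has mean $\lambda$ by hypothesis.

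The main obstacle, such as it is, is just this boundedness check, and it is genuinely routine given Lemma \ref{lem102}. The substantive content of the corollary is entirely contained in Lemma \ref{lem102}: the characterizing equation lets us convert a probability statement about $W$ (the right-hand side) into an expectation involving the Stein operator $\%A f_A(W)=\lambda f_A(W+1)-Wf_A(W)$ (the left-hand side). This parallels exactly the normal case in Corollary \ref{cor1}, and the point of stating it here is to set up the Poisson Stein strategy: bound $\dtv(W,\Po(\lambda))=\sup_{A}|\IE[\%A f_A(W)]|$ by exploiting structural features of $W$, just as we did for $\IE[f'(W)-Wf(W)]$ in the normal setting.
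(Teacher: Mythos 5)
Your proof is correct and matches the paper's (implicit) argument exactly: the corollary is stated as an immediate consequence of Lemma \ref{lem102}, obtained by substituting $W$ for $k$ in the Stein equation and taking expectations, with integrability guaranteed by the boundedness of $f_A$ (made explicit later in Lemma \ref{lem103}) together with $\IE W=\lambda<\infty$. Nothing further is needed.
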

\begin{proof}[Proof of Lemma \ref{lem102}]
The relation \eq{101} defines $f_A$ recursively, so it is obvious that the solution is unique 
under the boundary condition $f_A(0)=0$.  The fact that the solution is as claimed can be easily
verified by substitution into the recursion \eq{101}.
\end{proof}
\begin{proof}[Proof of Lemma \ref{lem101}]
Item 1 follows easily by direct calculation: if $Z\sim\Po(\lambda)$ and $f$ is bounded, then
\ba
\lambda\IE[f(Z+1)]&=e^{-\lambda}\sum_{k=0}^\infty\frac{\lambda^{k+1}}{k!}f(k+1) \\
	&=e^{-\lambda}\sum_{k=0}^\infty\frac{\lambda^{k+1}}{(k+1)!}(k+1)f(k+1) \\
	&=\IE[Zf(Z)].
\ee
For Item 2, let $\IE\%{A}f(W)=0$ for all bounded functions $f$.  Lemma \ref{lem103} below shows that
$f_k\equiv f_{\{k\}}$ is bounded, and
then $\IE\%{A}f_k(W)=0$ implies that $W$ has Poisson point probabilities.  
Alternatively, for $j\in \IN\cup\{0\}$, we could take $f(k)=\I[k=j]$ so that the $\IE\%{A}f(W)=0$ implies that
\ba
\lambda\IP(W=j-1)=j\IP(W=j),
\ee
which is defining since $W$ is a non-negative integer-valued random variable.  A third proof can be obtained by
taking $f(k)=e^{-uk}$, from which the Laplace transform of $W$ can be derived.
\end{proof} 
We now derive useful properties of the solutions $f_A$ of \eq{101}.
\begin{lemma}\label{lem103}
If $f_A$ solves \eqref{101}, then
\ban
\norm{f_A}\leq\min\left\{1, \lambda^{-1/2}\right\} \mbox{ and \,} \norm{\Delta f_A}\leq \frac{1-e^{-\lambda}}{\lambda}\leq \min\left\{1, \lambda^{-1}\right\},
\label{102}
\ee
where $\Delta f(k):=f(k+1)-f(k)$.
\end{lemma}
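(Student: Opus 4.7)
The plan is to derive both inequalities directly from the explicit formula for $f_A$ given in Lemma \ref{lem102}, exploiting two simple reductions. By the linearity of \eq{101} in the indicator $\I[\cdot \in A]$, we have $f_A = \sum_{j \in A} f_{\{j\}}$, and substituting $A \mapsto A^c$ negates the right-hand side of \eq{101}, giving the complement symmetry $f_A = -f_{A^c}$.

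For the sup-norm bound, I would rewrite
\[
\P(A \cap U_k) - \P(A)\,\P(U_k) \;=\; \P(U_k^c)\,\P(A \cap U_k) \;-\; \P(U_k)\,\P(A \cap U_k^c),
\]
which displays $f_A(k)$ (with $k$ fixed) as a signed sum of $j$-indexed contributions, positive for $j \in A \cap U_k$ and negative for $j \in A \cap U_k^c$. Consequently the supremum of $|f_A(k)|$ over $A$ is attained at $A = U_k$ (equivalently $A = U_k^c$ by the complement symmetry), with common value
\[
\sup_A |f_A(k)| \;=\; \lambda^{-k}\,e^\lambda\,(k-1)!\;\P(U_k)\,\P(U_k^c).
\]
Using $\P(\{k-1\}) = e^{-\lambda}\lambda^{k-1}/(k-1)!$ this simplifies to $\P(U_k)\,\P(U_k^c)/\bigl(\lambda\,\P(\{k-1\})\bigr)$. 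The bound by $1$ follows by direct estimation, noting the right-hand side is maximized near the Poisson mean, while the bound by $\lambda^{-1/2}$ requires Stirling-type control of the Poisson mode combined with $\P(U_k)\,\P(U_k^c) \leq 1/4$, with separate analysis of the regimes $k \leq \lambda$ and $k > \lambda$.

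For $\norm{\Delta f_A}$, a conceptually clean route is a probabilistic representation via the immigration-death process $(Z_k(t))_{t\geq 0}$ on $\{0,1,\ldots\}$ with birth rate $\lambda$, per-capita death rate $1$, and stationary distribution $\Po(\lambda)$. The function $\psi(k) := \int_0^\infty [\P(A) - \IP(Z_k(t) \in A)]\,dt$ satisfies $L\psi = \I[\cdot \in A] - \P(A)$ for the generator $L$, from which one verifies that $f_A(k) = \psi(k) - \psi(k-1)$ solves \eq{101} with $f_A(0)=0$. The standard thinning coupling $Z_{k+1}(t) = Z_k(t) + B_t$, with $B_t \sim \Be(e^{-t})$ independent of $Z_k(t)$, reduces $\psi(k+1) - 2\psi(k) + \psi(k-1)$ to $e^{-2t}$ times a bounded second discrete difference of $\I_A$, and careful integration in $t$ yields the stated bound $(1-e^{-\lambda})/\lambda$. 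Alternatively, one can argue combinatorially from the explicit formula, computing $\Delta f_A(k)$ in closed form, extremizing over $A$, and identifying the maximizer (for instance $A = \{0\}$, $k = 0$, where the bound is attained).

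The main obstacle is the $\lambda^{-1/2}$ bound: uniformly estimating $\P(U_k)\,\P(U_k^c)/\P(\{k-1\})$ by $\sqrt{\lambda}$ requires sharp Poisson-tail and Stirling-type estimates together with casework in $k$. The difference bound is cleaner via coupling, but matching the exact constant $(1-e^{-\lambda})/\lambda$ (rather than, say, the weaker $1$) still requires delicate handling of the second-difference integration.
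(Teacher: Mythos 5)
Your plan for $\norm{\Delta f_A}$ via the immigration--death generator is a genuinely different route from the paper's. The paper (following Barbour--Holst--Janson) argues analytically: it decomposes $\Delta f_A(k)=\sum_{j\in A}\Delta f_{\{j\}}(k)$ and observes the sign pattern $\Delta f_{\{j\}}(k)\leq 0$ for $k\neq j$ while $0\leq\Delta f_{\{j\}}(j)\leq(1-e^{-\lambda})/\lambda$; hence the sum has at most one positive term, giving the upper bound, and $f_{A^c}=-f_A$ gives the lower bound. Your generator representation $\Delta f_A(k)=-\int_0^\infty e^{-2t}\,\IE\bigl[\I_A(Y_t+2)-2\I_A(Y_t+1)+\I_A(Y_t)\bigr]dt$ with $Y_t=Z_{k-1}(t)$ is correct and conceptually attractive (it generalizes to other birth--death stationary laws), but be aware that the obvious estimate $\abs{\Delta^2\I_A}\leq 2$ only yields $\norm{\Delta f_A}\leq 1$, and the refinement via $\dtv(Y_t+1,Y_t)$ yields a bound of order $\lambda^{-1/2}$, not $\lambda^{-1}$. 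Extracting the exact constant $(1-e^{-\lambda})/\lambda$ from the integral is not a routine integration; the clean way to see that constant is precisely the singleton sign-pattern computation above (the extremal case $A=\{0\}$, $k=0$ you mention, where $\Delta f_{\{0\}}(0)=f_{\{0\}}(1)=\lambda^{-1}(1-e^{-\lambda})$ exactly). Your ``alternative combinatorial route'' gestures at this but omits the one observation that makes it work --- that all but one of the summands $\Delta f_{\{j\}}(k)$ are nonpositive --- so as written, neither of your routes actually closes the second bound.

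On the sup-norm bound you are on equal footing with the paper, which proves only the second assertion and refers to \cite{bhj92} for the first. Your reduction $\sup_A\abs{f_A(k)}=\P(U_k)\P(U_k^c)\big/\bigl(\lambda\,\P(\{k-1\})\bigr)$, attained at $A=U_k$ (equivalently $A=U_k^c$), is correct and is the right starting point; the remaining uniform estimate by $\min\{1,\lambda^{-1/2}\}$ does indeed require the Poisson-tail/Stirling casework you describe, and you have not carried it out, but neither does the paper.
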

\begin{proof}
The proof of Lemma \ref{lem103} follows from careful analysis.  We prove the second assertion and refer to \cite{bhj92} for further details.
Upon rewriting
\ba
f_A(k)=\lambda^{-k}(k-1)!e^\lambda\left[\P(A\cap U_k)\P(U_k^c)-\P(A\cap U_k^c)\P(U_k)\right],
\ee
some consideration leads us to observe that for $j\geq1$, $f_j:=f_{\{j\}}$ satisfies
\begin{itemize}
\item $f_j(k)\leq 0$ for $k\leq j$ and $f_j(k)\geq 0$ for $k> j$,
\item $\Delta f_j(k)\leq 0$ for $k\not=j$, and  $\Delta f_j(j)\geq 0$,
\item $\Delta f_j(j)\leq \min\left\{j^{-1}, (1-e^{-\lambda})/\lambda\right\}$.
\end{itemize}
And also $\Delta f_0(k)<0$.  Since
\ba
\Delta f_A(k)=\sum_{j\in A} f_j(k)
\ee
is a sum of terms which are all negative except for at most one, we find
\ban
\Delta f_A(k)\leq \frac{1-e^{-\lambda}}{\lambda}. \label{105}
\ee
Since $f_{A^c}=-f_A$, \eq{105} yields the second assertion.
\end{proof}

We can now state our main Poisson approximation theorem which follows from Corollary \ref{cor101} and Lemma \ref{lem103}.
\begin{theorem}\label{thm101}
Let $\%F$ be the set of functions satisfying \eq{102}.  If $W\geq0$ is an integer-valued random variable with mean $\lambda$ and $Z\sim\Po(\lambda)$, then
\ban
\dtv(W,Z)\leq \sup_{f\in\%F}\left|\IE[\lambda f(W+1) - Wf(W)]\right|. \label{1019}
\ee
\end{theorem}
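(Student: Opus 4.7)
The plan is to assemble Theorem \ref{thm101} directly from the two preceding ingredients: Corollary \ref{cor101} (the Stein identity for point probabilities) and Lemma \ref{lem103} (uniform bounds on the solutions $f_A$). There is essentially no new work — the theorem is a packaging statement, so my job is just to verify that the class $\%F$ is rich enough to accommodate every solution $f_A$ that appears when one takes the supremum over events.

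First I would recall that since $W$ and $Z$ are supported on the discrete space $\IN \cup \{0\}$, Proposition \ref{prop22} (Item 3) together with the definition of $\dtv$ from Section \ref{probmet} give
\be
\dtv(W,Z) = \sup_{A \subseteq \IN\cup\{0\}} \babs{\IP(W\in A) - \P(A)}.
\ee
Fix such a subset $A$. Corollary \ref{cor101} yields the pointwise identity
\be
\babs{\IP(W\in A) - \P(A)} = \babs{\IE[\lambda f_A(W+1) - W f_A(W)]},
\ee
where $f_A$ is the unique solution of \eq{101} with $f_A(0)=0$ supplied by Lemma \ref{lem102}.

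Next I would invoke Lemma \ref{lem103}, which asserts precisely that every such $f_A$ satisfies the twin bounds \eq{102}, i.e., $\norm{f_A} \leq \min\{1, \lambda^{-1/2}\}$ and $\norm{\Delta f_A} \leq (1-e^{-\lambda})/\lambda$. In other words, the family of Stein solutions $\{f_A : A \subseteq \IN\cup\{0\}\}$ is contained in the class $\%F$ defined in the statement. Therefore, taking a supremum over $A$ on the left and enlarging the indexing set to all of $\%F$ on the right,
\be
\dtv(W,Z) = \sup_A \babs{\IE[\lambda f_A(W+1) - W f_A(W)]} \leq \sup_{f \in \%F} \babs{\IE[\lambda f(W+1) - W f(W)]},
\ee
which is exactly \eq{1019}.

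The ``main obstacle'' is more administrative than mathematical: one must be careful that the definition of total variation being used is the one ranging over arbitrary subsets of the support (so that each relevant $f_A$ genuinely appears), and one must confirm that the absolutely solvable recursion of Lemma \ref{lem102} indeed produces a function satisfying the boundary condition used in Corollary \ref{cor101}. Both of these are in place, so the result drops out in one line once Lemma \ref{lem103} is invoked.
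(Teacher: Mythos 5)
Your proof is correct and is exactly the argument the paper intends: the theorem is stated as an immediate consequence of Corollary \ref{cor101} (the identity for each set $A$) and Lemma \ref{lem103} (which places every $f_A$ in $\%F$), and your write-up simply makes that assembly explicit. No issues.
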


We are ready to apply Theorem \ref{thm101} to some examples, but first some remarks.  Recall 
that our main strategy for normal approximation was to find some structure in $W$, the random variable of interest,
that allows us to compare $\IE[Wf(W)]$ to $\IE[f'(W)]$ for appropriate $f$.  The canonical such structures were 
\begin{enumerate}
\item Sums of independent random variables,
\item Sums of locally dependent random variables,
\item Exchangeable pairs,
\item Size-biasing,
\item Zero-biasing.
\end{enumerate}
Note that each of these structures essentially provided a way to break down
$\IE[Wf(W)]$ into a functional of $f$ and some auxiliary random variables. 
Also, from the form of the Poisson characterizing operator, we want to find some  
structure in $W$ (the random variable of interest) that allows us to 
compare $\IE[Wf(W)]$ to $\lambda\IE[f(W+1)]$ for appropriate $f$.  These two observations imply that the first four items
on the list above may be germane to
Poisson approximation, which is exactly the program we will pursue (since zero-biasing involves $f'$, we won't
find use for it in our discrete setting).

\subsection{Law of small numbers}
It is well known that if $W_n\sim\Bi(n,\lambda/n)$ and $Z\sim\Po(\lambda)$ then $\dtv(W_n, Z)\to 0$ as $n\to \infty$, and
it is not difficult to obtain a rate of this convergence.  From this fact, it is easy to believe that
if $X_1, \ldots, X_n$ are independent indicators with $\IP(X_i=1)=p_i$, then $W=\sum_{i=1}^n X_i$
will be approximately Poisson if $\max_i p_i$ is small.  In fact, we will show the following result.
\begin{theorem}
Let $X_1, \ldots, X_n$ independent indicators with $\IP(X_i=1)=p_i$, $W=\sum_{i=1}^n X_i$, and $\lambda=\IE[W]=\sum_i p_i$.
If $Z\sim\Po(\lambda)$, then
\ba
\dtv(W, Z)&\leq \min\{1, \lambda^{-1}\} \sum_{i=1}^n p_i^2 \\
	&\leq \min\{1, \lambda\} \max_i p_i.
\ee
\end{theorem}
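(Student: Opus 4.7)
The plan is to apply Theorem \ref{thm101}, so the task reduces to bounding $|\IE[\lambda f(W+1) - Wf(W)]|$ uniformly over $f \in \%F$, where $\%F$ is the family of functions with $\|\Delta f\| \leq \min\{1, \lambda^{-1}\}$. The key structural fact I will exploit is the independence of the $X_i$'s, which lets me isolate each summand of $W$ and match it against the Poisson characterizing operator.

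First I would set $W_i := W - X_i$ so that $W_i$ is independent of $X_i$, and rewrite
\[
\IE[W f(W)] = \sum_{i=1}^n \IE[X_i f(W_i + X_i)] = \sum_{i=1}^n p_i \IE[f(W_i + 1)],
\]
using that $X_i f(W_i + X_i) = X_i f(W_i + 1)$ pointwise (as $X_i \in \{0,1\}$) and then integrating out the independent factor $X_i$. In parallel, I would write
\[
\lambda \IE[f(W+1)] = \sum_{i=1}^n p_i \IE[f(W_i + X_i + 1)].
\]
Subtracting these gives
\[
\lambda \IE[f(W+1)] - \IE[W f(W)] = \sum_{i=1}^n p_i \IE[f(W_i + X_i + 1) - f(W_i + 1)] = \sum_{i=1}^n p_i \IE[X_i \, \Delta f(W_i + 1)],
\]
where again I use $X_i \in \{0,1\}$ to turn the increment into $X_i \Delta f(W_i + 1)$.

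Now I would use independence once more: $\IE[X_i \Delta f(W_i + 1)] = p_i \IE[\Delta f(W_i + 1)]$, which is bounded in absolute value by $p_i \|\Delta f\|$. Summing yields
\[
|\lambda \IE[f(W+1)] - \IE[W f(W)]| \leq \|\Delta f\| \sum_{i=1}^n p_i^2 \leq \min\{1, \lambda^{-1}\} \sum_{i=1}^n p_i^2,
\]
and plugging into Theorem \ref{thm101} gives the first bound. For the second bound, I would note $\sum_i p_i^2 \leq (\max_i p_i) \sum_i p_i = \lambda \max_i p_i$, so $\min\{1,\lambda^{-1}\} \sum_i p_i^2 \leq \min\{\lambda, 1\} \max_i p_i$.

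There is no real obstacle here; the only place one must be careful is the cancellation step, where the identity $X_i f(W_i + X_i) = X_i f(W_i + 1)$ (valid because $X_i$ is a Bernoulli) is what makes the Poisson operator behave so cleanly on sums of independent indicators. The boundedness bound on $\Delta f$ from Lemma \ref{lem103}, which supplies the sharp $\min\{1,\lambda^{-1}\}$ factor, is the ingredient that makes the rate optimal in $\lambda$.
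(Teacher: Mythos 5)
Your proposal is correct and follows essentially the same route as the paper: decompose $\IE[Wf(W)]$ using independence of $X_i$ and $W_i=W-X_i$ to get $\sum_i p_i\IE[f(W_i+1)]$, match it against $\lambda\IE[f(W+1)]=\sum_i p_i\IE[f(W+1)]$, and bound the difference by $\norm{\Delta f}\sum_i p_i^2$. The only cosmetic difference is that you write the increment explicitly as $X_i\,\Delta f(W_i+1)$ and integrate out $X_i$ a second time, whereas the paper telescopes $f(W+1)-f(W_i+1)$ and uses $\IE\abs{W-W_i}=p_i$; these are the same estimate.
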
\label{thm102}
\begin{proof}
The second inequality is clear and is only included to address the discussion preceding the theorem.  For the first
inequality, we apply Theorem \ref{thm101}.  Let $f$ satisfy \eq{102} and note that
\ban
\IE[Wf(W)]&=\sum_{i=1}^n\IE[X_i f(W)] \notag\\
	&=\sum_{i=1}^n \IE[f(W)|X_i=1] \IP[X_i=1] \notag\\
	&=\sum_{i=1}^n p_i\IE[f(W_i+1)], \label{108}
\ee
where $W_i=W-X_i$ and \eq{108} follows since $X_i$ is independent of $W_i$.  Since $\lambda f(W+1)=\sum_i p_i f(W+1)$, we obtain
\ba
\left|\IE[\lambda f(W+1)-Wf(W)]\right|&=\left|\sum_{i=1}^n p_i\IE[f(W+1)-f(W_i+1)]\right| \\
	&\leq \sum_{i=1}^n p_i \norm{\Delta f} \IE\abs{W-W_i}\\
	&=\min\{1, \lambda^{-1}\} \sum_{i=1}^n p_i \IE[X_i],
\ee
where the inequality is by rewriting $f(W+1)-f(W_i+1)$ as a telescoping sum of $\abs{W-W_i}$ first differences of $f$.  Combining this
last calculation with Theorem \ref{thm101} yields the desired result. 
\end{proof}

\subsection{Dependency neighborhoods}
Analogous to normal approximation, we can generalize Theorem \ref{thm102} to sums of locally dependent variables \cite{agg89, agg90}.
\begin{theorem}\label{thm103}
Let $X_1, \ldots, X_n$ indicator variables with $\IP(X_i=1)=p_i$, $W=\sum_{i=1}^n X_i$, and $\lambda=\IE[W]=\sum_i p_i$.
For each $i$, let $N_i\subseteq\{1, \ldots, n\}$ such that $X_i$ is independent
of $\{X_j: j\not\in N_i\}$.  If $p_{i j}:=\IE[X_i X_j]$ and $Z\sim\Po(\lambda)$, then
\ba
\dtv(W,Z)\leq \min\{1, \lambda^{-1}\}\left(\sum_{i=1}^n\sum_{j\in N_i} p_i p_j +\sum_{i=1}^n\sum_{j\in N_i/\{i\}} p_{i j}\right).
\ee
\end{theorem}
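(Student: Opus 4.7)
The plan is to follow the template of the proof of Theorem \ref{thm102}, modifying the decomposition of $\IE[X_i f(W)]$ to account for local dependence. By Theorem \ref{thm101}, it suffices to bound $\babs{\IE[\lambda f(W+1) - Wf(W)]}$ uniformly over $f$ satisfying the bounds of Lemma \ref{lem103}, and as in the independent case I would write
\ba
\IE[\lambda f(W+1) - Wf(W)] = \sum_{i=1}^n \bklr{p_i \IE[f(W+1)] - \IE[X_i f(W)]}
\ee
and control each summand separately.

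For each $i$, I would decompose $W = W_i + V_i + X_i$, where $W_i = \sum_{j \notin N_i} X_j$ and $V_i = \sum_{j \in N_i \setminus \{i\}} X_j$; the only independence fact I need is $X_i \perp W_i$, which is exactly the hypothesis. The pivotal observation is that since $X_i \in \{0,1\}$, we have $X_i f(W) = X_i f(W_i + V_i + 1)$, and by independence of $X_i$ and $W_i$, we have $p_i \IE[f(W_i + 1)] = \IE[X_i f(W_i + 1)]$. Inserting $\pm p_i\IE[f(W_i + 1)]$ then yields
\ba
p_i \IE[f(W+1)] - \IE[X_i f(W)] &= p_i \IE\bkle{f(W_i + V_i + X_i + 1) - f(W_i + 1)} \\
&\qquad - \IE\bkle{X_i\klr{f(W_i + V_i + 1) - f(W_i + 1)}}.
\ee

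Each bracketed difference is a telescoping sum of first differences of $f$, so pushing absolute values inside gives pointwise bounds: the first contributes at most $p_i \norm{\Delta f} \IE[V_i + X_i] = p_i \norm{\Delta f} \sum_{j \in N_i} p_j$, and the second contributes at most $\norm{\Delta f} \IE[X_i V_i] = \norm{\Delta f} \sum_{j \in N_i \setminus \{i\}} p_{ij}$. Summing over $i$, applying the triangle inequality, and invoking $\norm{\Delta f} \leq \min\{1, \lambda^{-1}\}$ from Lemma \ref{lem103} gives the claimed bound.

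The main subtlety is choosing the identity $X_i f(W) = X_i f(W_i + V_i + 1)$, which exploits the indicator structure to absorb $X_i$ into the argument of $f$; this allows the telescoping step to proceed using only the pointwise bound $\babs{f(a+m) - f(a)} \leq m\norm{\Delta f}$, so I do not need to assume joint independence of $(X_i, V_i)$ from $W_i$ beyond what the hypothesis provides. Once this decomposition is in hand, the two error terms emerge naturally and the remainder of the argument mimics Theorem \ref{thm102}.
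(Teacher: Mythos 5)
Your proof is correct and follows essentially the same route as the paper: both arguments hinge on the indicator identity $X_i f(W)=X_i f(W-X_i+1)$, the independence of $X_i$ from the sum outside $N_i$, and telescoping first differences bounded by $\norm{\Delta f}$. The only difference is bookkeeping — you center on the independent sum $\sum_{j\notin N_i}X_j$ directly, so the cross terms $p_ip_j$ and $p_{ij}$ land in different intermediate expressions than in the paper, but the two pieces sum to the identical bound.
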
  
\begin{remark}
The neighborhoods $N_i$ can be defined with greater flexibility (i.e. dropping the assumption that $X_i$ is independent of the variables not 
indexed by $N_i$) at the cost of an additional error term that (roughly) measures dependence (see \cite{agg89, agg90}).
\end{remark}
\begin{proof}
We want to mimic the proof of Theorem \ref{thm102} up to \eqref{108}, the point where the hypothesis of independence is used.
Let $f$ satisfy \eq{102}, $W_i=W-X_i$, and $V_i=\sum_{j\not\in N_i} X_j$.  Since $X_if(W)=X_if(W_i+1)$ almost surely, we find
\ban
\IE[\lambda f(W+1)-Wf(W)]&=\sum_{i=1}^n p_i\IE[f(W+1)-f(W_i+1)]  \label{109}\\
	&\qquad+\sum_{i=1}^n \IE[(p_i-X_i)f(W_i+1)]  \label{1010}
\ee
As in the proof of Theorem \ref{thm102}, the absolute value of \eq{109} is bounded above by $\norm{\Delta f} \sum_{i} p_i^2$.  Due to 
the independence of $X_i$ and $V_i$, and the fact that $\IE[X_i]=p_i$, we find that \eq{1010} is equal to
\ba
\sum_{i=1}^n \IE[(p_i-X_i)(f(W_i+1)-f(V_i+1))],
\ee
so that the absolute value of \eq{1010} is bounded above by
\ba
\norm{\Delta f}\sum_{i=1}^n \IE\bigg[\left|p_i-X_i\right| \left|W_i-V_i\right|\bigg]&\leq \norm{\Delta f}\sum_{i=1}^n \IE\bigg[(p_i+X_i)\sum_{j\in N_i/\{i\}}X_j\bigg] \\
	&=\norm{\Delta f}\sum_{i=1}^n\sum_{j\in N_i/\{i\}}\left(p_i p_j + p_{i j}\right).
\ee
Combining these bounds for \eq{109} and \eq{1010} yields the theorem. 
\end{proof}

\subsubsection{Application: Head runs}
In this section we consider an example that arises in an application from biology,
that of DNA comparison.  We postpone discussion of the details of this relation until the end of the section.

In a sequence of zeroes and ones we call an occurrence of the
pattern $\cdots011\cdots10$ (or
$11\cdots10\cdots$ or $\cdots011\cdots1$ at the boundaries of the sequence) with exactly $k$ ones 
a head run of length $k$.  
Let $W$ be the number of head runs of length at least $k$ in a sequence of $n$ independent tosses
of a coin with head probability $p$.  More precisely, let $Y_1, \ldots, Y_n$ be i.i.d. indicator variables with
$\IP(Y_i=1)=p$ and let 
\ba
X_1=\prod_{j=1}^kY_j,
\ee
and for $i=2, \ldots, n-k+1$ let 
\ba
X_i=(1-Y_{i-1})\prod_{j=0}^{k-1}Y_{i+j}.
\ee
Then $X_i$ is the indicator that a run of ones of length at least $k$ begins at position $i$ in the sequence
$(Y_1, \ldots, Y_n)$ so that we set $W=\sum_{i=1}^{n-k+1} X_i$.  Note that the factor $1-Y_{i-1}$ is used to
``de-clump" the runs of length greater than $k$ so that we do not
count the same run more than once.  At this point we can apply Theorem \ref{thm103} with only a little effort to obtain the following result.
\begin{theorem}\label{thm104}
Let $W$ be the number of head runs of at least length $k$ in a sequence of $n$ independent tosses
of a coin with head probability $p$ as defined above.  If $\lambda=\IE[W]=p^k((n-k)(1-p)+1)$ and $Z\sim \Po(\lambda)$, then
\ban
\dtv(W,Z)\leq \lambda^2\frac{2k+1}{n-k+1}+2\lambda p^k. \label{1011}
\ee
\end{theorem}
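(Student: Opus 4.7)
The plan is to apply Theorem \ref{thm103} to the decomposition $W = \sum_{i=1}^{n-k+1} X_i$ and carefully evaluate the two sums in its bound.

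First, I identify the dependency neighborhoods. The indicator $X_i$ is a function of only $Y_{\max(1,i-1)}, Y_i, \ldots, Y_{i+k-1}$, so $X_i$ is independent of any $X_j$ whose underlying $Y$-range is disjoint from this block, which happens precisely when $|i-j| > k$. Setting
\[
N_i := \{ j \in \{1, \ldots, n-k+1\} : |i - j| \leq k \},
\]
we have $|N_i| \leq 2k+1$ and $X_i$ is independent of $\{X_j : j \notin N_i\}$, so the hypotheses of Theorem \ref{thm103} are met. The computation $\IE[X_1] = p^k$ and $\IE[X_i] = p^k(1-p)$ for $i \geq 2$ immediately gives $\lambda = \IE[W] = p^k((n-k)(1-p)+1)$ as claimed.

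Second, I argue that the second sum $\sum_i \sum_{j \in N_i \setminus \{i\}} p_{ij}$ vanishes. For $i < j$ with $j - i \leq k$, suppose $X_i = 1$ and $X_j = 1$. Since $j - 1 \in \{i, \ldots, i+k-1\}$, the condition $X_i = 1$ forces $Y_{j-1} = 1$, whereas the de-clumping factor $(1-Y_{j-1})$ in the definition of $X_j$ forces $Y_{j-1} = 0$, a contradiction. The case $i > j$ is symmetric, and the case $i = 1$ is covered because $X_1$ already forces $Y_1 = \cdots = Y_k = 1$ and any $X_j$ with $2 \leq j \leq k+1$ requires $Y_{j-1} = 0$ for some index in that same block. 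Hence $p_{ij} = 0$ for every $j \in N_i \setminus \{i\}$.

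Third, the task reduces to bounding $\sum_i \sum_{j \in N_i} p_i p_j$. Writing $q := p^k(1-p)$, so that $p_i = q$ for $i \geq 2$ and $p_1 = q + p^{k+1}$, I split the double sum into a bulk part consisting of pairs $(i,j)$ with both indices $\geq 2$, and boundary corrections involving index $1$. The bulk contributes at most $(2k+1)(n-k+1) q^2$ terms; using the easily verified inequality $(n-k+1)q \leq \lambda$ (equivalently $(1-p)(n-k+1) \leq 1 + (n-k)(1-p)$), this yields the first part $\lambda^2 (2k+1)/(n-k+1)$ of the stated bound. The $O(k)$ boundary terms each have size at most $p^{2k}$, and they are absorbed into the second part $2\lambda p^k$ via $p^{2k} \leq \lambda p^k$ (since $p^k \leq \lambda$). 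Theorem \ref{thm103} then produces the final estimate.

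The main obstacle is the boundary bookkeeping in the third step: isolating the exceptional value $p_1 = p^k$ from the common value $q = p^k(1-p)$ and tracking the corrections cleanly enough to fit the stated constants. By contrast, the independence verification in the first step and the vanishing claim in the second step are conceptually transparent, requiring only the observation that two head runs of length $\geq k$ whose starting positions lie within $k$ of one another would force contradictory values on some $Y_\ell$.
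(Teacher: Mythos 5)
Your approach is exactly the paper's: apply Theorem \ref{thm103} with $N_i=\{j:|i-j|\le k\}$, observe that $p_{ij}=0$ for $j\in N_i\setminus\{i\}$ because two runs of length at least $k$ cannot start within $k$ positions of each other, and then bound the remaining double sum $\sum_i\sum_{j\in N_i}p_ip_j$ by separating the terms that involve the exceptional index $1$ from those that do not. Your first two steps and your bulk estimate are correct; in particular $(n-k+1)q\le\lambda$ does yield the first term of \eqref{1011}.

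The gap is in the absorption of the boundary terms. There are up to $2k+1$ pairs involving index $1$, and bounding each by $p^{2k}$ gives a total of order $k\,p^{2k}$, whereas the available budget is $2\lambda p^k=2p^{2k}\bigl(1+(n-k)(1-p)\bigr)$. The inequality $p^{2k}\le\lambda p^k$ that you invoke accounts for only \emph{one} such term; summed over $O(k)$ terms it gives $O(k)\,\lambda p^k$, not $2\lambda p^k$. Concretely, whenever $(n-k)(1-p)<k-\tfrac12$ (e.g.\ $k>n/2$, or $p$ close to $1$) the crude bound $(2k+1)p^{2k}$ strictly exceeds $2\lambda p^k$, so the absorption as written fails. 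The fix is to use the actual values of the boundary terms rather than the worst case: the only term of size $p^{2k}$ is the diagonal pair $(1,1)$; every off-diagonal boundary term equals $p_1p_j=p^kq=p^{2k}(1-p)$; and there are at most $2\min\{k,\,n-k\}\le 2(n-k)$ of them, since $N_1\setminus\{1\}\subseteq\{2,\dots,\min\{k+1,\,n-k+1\}\}$. Hence the boundary sum is at most $p^{2k}+2(n-k)(1-p)p^{2k}\le 2p^{2k}\bigl(1+(n-k)(1-p)\bigr)=2\lambda p^k$, which closes the argument.
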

\begin{remark}\label{re101}
Although Theorem \ref{thm104} provides an error for all $n, p, k$, it can also be interpreted asymptotically as $n \to \infty$ and $\lambda$
bounded away from zero and infinity.  Roughly, if
\ba
k=\frac{\log(n(1-p))}{\log(1/p)}+c
\ee
for some constant $c$, then for fixed $p$, $\lim_{n\to\infty} \lambda=p^c$. In this case the bound \eq{1011} is of order $\log(n)/n$.
\end{remark}
\begin{proof}[Proof of Theorem \ref{thm104}]
As discussed in the remarks preceding the theorem, $W$ has representation as a sum of indicators: $W=\sum_{i=1}^{n-k+1} X_i$.
The fact that $\lambda$ is as stated follows from this representation using that $\IE[X_1]=p^k$ and $\IE[X_i]=(1-p)p^k$ for $i\not=1$.

We will apply Theorem \ref{thm103} with $N_i=\{1\leq j \leq n-k+1: \abs{i-j}\leq k\}$ which clearly has the property that $X_i$ is independent
of $\{X_j: j\not\in N_i\}$.  Moreover, if $j\in N_i/\{i\}$, then $\IE[X_i X_j]=0$ since two runs of length at least $k$ cannot begin within $k$ positions
of each other.  Theorem \ref{thm103} now implies
\ba
\dtv(W, Z)\leq \sum_{i=1}^n\sum_{j\in N_i} \IE[X_i]\IE[X_j].
\ee
It only remains to show that this quantity is bounded above by \eq{1011}
which follows by grouping and counting the terms of the sum into those 
that contain $\IE[X_1]$ and those that do not.
\end{proof}

A related quantity which is of interest in the biological application below is 
$R_n$, the length of the longest head run in $n$ independent coin tosses.  Due to the equality of events, we have
$\IP(W=0)=\IP(R_n<k)$, so that we can use Remark \ref{re101} to roughly state 
\ba
\left|\IP\left(R_n-\frac{\log(n(1-p))}{\log(1/p)}<x\right)-e^{-p^x}\right|\leq C \left(\frac{\log(n)}{n}\right).
\ee
The inequality above needs some qualification due to the fact that $R_n$ is integer-valued, but
it can be made precise - see \cite{agg89, agg90, agw90} for more details.

Theorem \ref{thm104} was relatively simple to derive,
but many embellishments are possible which can be also handled similarly, but with more technicalities.
For example, for $0< a \leq 1$, we can define a ``quality $a$" run of length $j$ to be a run of length $j$ with at least $aj$ heads.  We could then
take $W$ to be the number of quality $a$ runs of length at least $k$ and $R_n$ to be the longest quality $a$ run in $n$ independent coin tosses.
A story analogous to that above emerges.  

These particular results can also be viewed as elaborations of the classical theorem:
\begin{theorem}[\ER\ Law]
If $R_n$ is the longest quality $a$ head run in a sequence of $n$ independent tosses
of a coin with head probability $p$ as defined above, then almost surely,
\ba
\frac{R_n}{\log(n)}\to \frac{1}{H(a, p)},
\ee
where for $0< a< 1$, $H(a, p)=a\log(a/p)+(1-a)\log((1-a)/(1-p))$, and $H(1, p)=\log(1/p)$.
\end{theorem}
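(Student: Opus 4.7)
The plan is to prove both directions via Borel--Cantelli along the dyadic subsequence $n_j = 2^j$ and extend to all $n$ by the monotonicity $R_{n_j}\leq R_n\leq R_{n_{j+1}}$ for $n_j\leq n<n_{j+1}$, using that $\log n_j/\log n_{j+1}\to 1$. The single analytic input beyond the material in the paper is the Cram\'er/Chernoff estimate: writing $S_k$ for the number of heads in $k$ tosses, for $a>p$ one has the Chernoff upper bound $\IP(S_k\geq ak)\leq e^{-kH(a,p)}$ together with a matching polynomially-corrected lower bound $\IP(S_k\geq ak)\geq c(a,p)\,k^{-1/2}\,e^{-kH(a,p)}$ (via Stirling). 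The case $a=1$ degenerates to $\IP(S_k=k)=p^k$, consistent with $H(1,p)=\log(1/p)$.

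For the upper bound, fix $\varepsilon>0$ and set $k_j=\lceil(1+\varepsilon)\log n_j/H(a,p)\rceil$. A union bound over the at most $n_j$ length-$k_j$ windows together with Chernoff gives
\ba
\IP(R_{n_j}\geq k_j)\;\leq\; n_j\,\IP(S_{k_j}\geq ak_j)\;\leq\; n_j\cdot e^{-k_j H(a,p)}\;\leq\; n_j^{-\varepsilon}\;=\;2^{-j\varepsilon},
\ee
which is summable in $j$. Borel--Cantelli yields $\limsup_j R_{n_j}/\log n_j\leq(1+\varepsilon)/H(a,p)$ almost surely; letting $\varepsilon$ run through $1/m$, $m\in\IN$, and invoking the sandwich gives $\limsup_n R_n/\log n\leq 1/H(a,p)$ a.s.

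For the lower bound, fix $\varepsilon>0$, set $k_j=\lfloor(1-\varepsilon)\log n_j/H(a,p)\rfloor$, and, following the recipe preceding Theorem~\ref{thm104}, let $W_j$ count the de-clumped quality-$a$ runs of length $k_j$ in the first $n_j$ tosses, using indicators $X_i$ that insist the length-$k_j$ window at position $i$ is quality-$a$ while the window at position $i-1$ is not (so that $\{W_j=0\}=\{R_{n_j}<k_j\}$). The Cram\'er lower bound gives $\IE W_j\geq c\,n_j\,k_j^{-1/2}e^{-k_j H(a,p)}\geq c\,n_j^{\varepsilon}/\sqrt{\log n_j}\to\infty$. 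The indicators satisfy the hypothesis of Theorem~\ref{thm103} with $N_i=\{j:|i-j|\leq k_j\}$ of size $O(k_j)$; the resulting error bound is $\dtv(W_j,\Po(\IE W_j))=O(\sqrt{\log n_j}\cdot n_j^{-(1-\varepsilon)})$, which is $o(1)$ and summable for small $\varepsilon$. Hence
\ba
\IP(R_{n_j}<k_j)\;=\;\IP(W_j=0)\;\leq\; e^{-\IE W_j}+\dtv(W_j,\Po(\IE W_j))
\ee
is summable in $j$; Borel--Cantelli and the sandwich give $\liminf_n R_n/\log n\geq(1-\varepsilon)/H(a,p)$ a.s., and $\varepsilon\downarrow0$ completes the proof.

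The main obstacle is the lower bound, which requires three interlocking ingredients: (i)~the polynomially-corrected Cram\'er \emph{lower} bound on binomial tails, since Chernoff alone would leave $\IE W_j$ potentially too small; (ii)~a de-clumping rule that genuinely avoids double-counting quality-$a$ runs while still satisfying $\{W_j=0\}=\{R_{n_j}<k_j\}$, which is subtler than in the pure-head case ($a=1$) because overlapping windows can each be quality-$a$; and (iii)~verifying that the Theorem~\ref{thm103} error is dominated by the Poisson atom $e^{-\IE W_j}$, which ultimately reduces to comparing polynomial and exponential rates in $n_j$. Each piece is routine in isolation but requires some bookkeeping in tandem.
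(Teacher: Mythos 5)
The paper does not actually prove this theorem --- it is quoted as a classical result for context --- so I am judging your argument on its own terms. Your upper bound is essentially fine (modulo the standard point that $\{R_{n_j}\geq k_j\}$ requires a union over windows of \emph{all} lengths $m\geq k_j$, not just length $k_j$; since $\IP(S_m\geq am)\leq e^{-mH}$ this only costs the geometric constant $(1-e^{-H})^{-1}$). The genuine gap is in the lower bound, at the step you label (iii) and dismiss as bookkeeping: the Theorem \ref{thm103} error is \emph{not} $\bigo(\sqrt{\log n_j}\,n_j^{-(1-\eps)})$. You have computed only the first error term $\min\{1,\lambda^{-1}\}\sum_i\sum_{l\in N_i}p_ip_l$ and ignored the second term $\min\{1,\lambda^{-1}\}\sum_i\sum_{l\in N_i\setminus\{i\}}\IE[X_iX_l]$. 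In the pure head-run case ($a=1$) that term vanishes identically, but for $a<1$ your de-clumping rule does not kill it: $X_i=X_{i+2}=1$ is perfectly possible (window $i$ quality, window $i+1$ fails because $Y_{i}=1$ and $Y_{i+k}=0$, window $i+2$ quality again because $Y_{i+1}=0$ and $Y_{i+k+1}=1$), and conditioning on window $i$ being quality this boundary configuration has probability bounded below by a constant $c(a,p)>0$. Hence $\IE[X_iX_{i+2}]\geq c\,\IE[X_i]$, so $\sum_i\sum_{l\in N_i\setminus\{i\}}\IE[X_iX_l]\geq c\,\lambda_j$ and the normalized error term is bounded \emph{below} by a positive constant. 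This is not an artifact of the bound: the de-clumped count genuinely clumps (it is compound-Poisson-like, not Poisson), so $\dtv(W_j,\Po(\IE W_j))\not\to0$ and the inequality $\IP(W_j=0)\leq e^{-\IE W_j}+\dtv(W_j,\Po(\IE W_j))$ gives you nothing summable.

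The theorem is still within reach by a small change of route. The cheapest fix abandons Poisson approximation for the lower bound entirely: restrict attention to the $\lfloor n_j/k_j\rfloor$ \emph{disjoint} blocks of length $k_j$. These are independent, each is quality-$a$ with probability $q_j\geq c\,k_j^{-1/2}e^{-k_jH}\geq c\,n_j^{-(1-\eps)}(\log n_j)^{-1/2}$ by your Stirling lower bound, and $\{R_{n_j}<k_j\}$ is contained in the event that no block is quality, so $\IP(R_{n_j}<k_j)\leq(1-q_j)^{\lfloor n_j/k_j\rfloor}\leq\exp\{-c\,n_j^{\eps}(\log n_j)^{-3/2}\}$, which is summable; the factor $k_j$ lost in the mean is harmless. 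Alternatively, a second-moment bound $\IP(W_j=0)\leq\var(W_j)/(\IE W_j)^2$ tolerates covariances summing to $\bigo(\IE W_j)$ rather than $\lito(\IE W_j)$, but then you must actually prove the exponential decay of $\IP(\text{window } i+m \text{ quality}\mid\text{window } i \text{ quality})$ in $m$, which is the nontrivial lemma your item (ii) gestures at. Either way, the Poisson-approximation machinery of Section \ref{poi} is the wrong tool for this particular lower bound as you have set it up.
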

\begin{remark}
Some of the impetus for the results above and especially their embellishments stems from an application in computational biology - see
\cite{agg89, agg90, agw90, wat95} for an entry into this literature.  We briefly describe this application here.

DNA is made up of long sequences of the letters $A, G, C$, and $T$ which
stand for certain amino acids.  Frequently it is desirable to know how closely\footnote{For example, whether the two sequences
have a similar biological function or whether one sequence could be transformed to the other by few mutations.} 
two sequences of DNA are related.  

Assume for simplicity that the two sequences of DNA to be compared both have length $n$.  One possible
measure of closeness between these sequences is the length of the longest run where the sequences agree
when compared coordinate-wise.  More precisely, if sequence \textbf{A} is $A_1 A_2 \cdots A_n$, sequence \textbf{B} 
is $B_1 B_2 \cdots B_n$, and we define $Y_i=\I[A_i=B_i]$, then the measure of closeness between the sequences 
\textbf{A} and \textbf{B} would be the length of the longest run of ones in 
$(Y_1, \ldots, Y_n)$.

Now, given the sequences \textbf{A} and \textbf{B}, how long should the longest run be in order to consider them close?
The usual statistical setup to handle this question is to assume a probabilistic model under the hypothesis that the 
sequences are not related, and then compute the probability of the event ``at least as long a run" as the observed run.  If this
probability is low enough, then it is likely that the sequences are closely related (assuming the model is accurate).  

We make the (likely unrealistic) assumption that sequences of DNA are generated as independent picks from the alphabet
$\{A, G, C, T\}$ under some probability distribution with frequencies $p_A$, $p_G$, $p_C$, and $p_T$.  The hypothesis
that the sequences are unrelated corresponds to the sequences being generated \emph{independently}.

In this framework, the distribution of the longest run between two unrelated sequences of DNA of length $n$ is exactly
$R_n$ above with $p:=\IP(Y_i=1)=p_A^2+p_G^2+p_C^2+p_T^2$.  Thus the work above can be used to approximate
tail
probabilities of the longest run length
under the assumption that two sequences of DNA are unrelated and then used to determine the likeliness of the observed longest run lengths.
  
\end{remark}

\subsection{Size-bias Coupling}
The most powerful method of rewriting $\IE[Wf(W)]$ so that it can be usefully compared to $\IE[W]\IE[f(W+1)]$ is through the size-bias
coupling already defined in Section \ref{sbc} - recall the relevant definitions and properties there.  The book \cite{bhj92} is almost entirely
devoted to Poisson approximation through the size-bias coupling (although that terminology is not used), so we will spend
some time fleshing out their powerful and general results.

\begin{theorem}\label{thm121}
Let $W\geq0$ an integer-valued random variable with $\IE[W]=\lambda>0$ and let $W^s$ be a size-bias coupling of $W$.  
If $Z\sim \Po(\lambda)$,
then
\ba
\dtv(W,Z)\leq \min\{1, \lambda\}\IE\abs{W+1-W^s}.
\ee 
\end{theorem}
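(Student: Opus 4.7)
The plan is to apply Theorem \ref{thm101} directly and then exploit the defining property of the size-bias distribution to rewrite $\IE[Wf(W)]$ in a form that compares cleanly to $\lambda\IE[f(W+1)]$. By Theorem \ref{thm101}, it suffices to bound $|\IE[\lambda f(W+1) - Wf(W)]|$ uniformly over $f \in \%F$, the family of functions satisfying $\norm{f} \leq \min\{1, \lambda^{-1/2}\}$ and $\norm{\Delta f} \leq \min\{1, \lambda^{-1}\}$.

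First I would use the definition of the size-bias distribution: for any $f$ such that $\IE|Wf(W)| < \infty$, we have $\IE[Wf(W)] = \lambda \IE[f(W^s)]$. Since $W^s$ can be taken to be integer-valued (as $W$ is), substituting gives
\be
\IE[\lambda f(W+1) - Wf(W)] = \lambda \, \IE\bigl[f(W+1) - f(W^s)\bigr].
\ee
The key observation is that for integer-valued arguments $a, b$, we may write $f(a) - f(b)$ as a telescoping sum of at most $|a-b|$ first differences of $f$, and hence $|f(a) - f(b)| \leq \norm{\Delta f}\, |a - b|$.

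Applying this pointwise inside the expectation and using the bound $\norm{\Delta f} \leq \min\{1, \lambda^{-1}\}$ from Lemma \ref{lem103}, we obtain
\be
\bigl|\IE[\lambda f(W+1) - Wf(W)]\bigr| \leq \lambda \norm{\Delta f}\, \IE|W+1 - W^s| \leq \min\{\lambda, 1\}\, \IE|W+1 - W^s|.
\ee
Taking the supremum over $f \in \%F$ and invoking Theorem \ref{thm101} yields the claimed bound.

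There is no real obstacle here: the argument is essentially a one-line consequence of the size-bias identity combined with the characterizing operator and the $\Delta f$ bound. The only minor point requiring care is that the size-bias coupling construction produces an integer-valued $W^s$ (ensured by the formula $\IP(W^s = k) = k\IP(W=k)/\lambda$ from the corollary following the size-bias definition), so that the telescoping-sum bound on $|f(a)-f(b)|$ applies. The elegance of the result comes from the fact that size-biasing is perfectly tailored to the Poisson characterizing operator $\lambda f(\cdot+1) - \cdot f(\cdot)$, in the same way that zero-biasing was tailored to the normal characterizing operator $f'(\cdot) - \cdot f(\cdot)$.
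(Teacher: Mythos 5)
Your proposal is correct and follows exactly the paper's argument: apply Theorem \ref{thm101}, use the size-bias identity $\IE[Wf(W)]=\lambda\IE[f(W^s)]$, and bound $|f(W+1)-f(W^s)|$ by $\norm{\Delta f}\,|W+1-W^s|$ via telescoping, with $\lambda\min\{1,\lambda^{-1}\}=\min\{1,\lambda\}$. No gaps.
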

\begin{proof}
Let $f$ bounded and $\norm{\Delta f}\leq \min\{1, \lambda^{-1}\}$.  Then
\ba
\left|\IE[ \lambda f(W+1) -Wf(W)]\right|&=\lambda\left|\IE[ f(W+1) -f(W^s)]\right|\\
	&\leq \lambda \norm{\Delta f}\IE\abs{W+1-W^s},
\ee
where we have used the definition of the size-bias distribution and rewritten $f(W+1) -f(W^s)$ as a telescoping sum of $\abs{W+1-W^s}$ terms.
\end{proof}

Due to the canonical ``law of small numbers" for Poisson approximation, 
we will mostly be concerned with approximating a sum of indicators by a Poisson distribution.  Recall the following 
construction of a size-bias coupling from Section \ref{sbc}, and useful special case.

\begin{corollary}\label{cor121}
Let $X_1, \ldots, X_n$ be indicator variables with $\IP(X_i=1)=p_i$, $W=\sum_{i=1}^n X_i$, and $\lambda=\IE[W]=\sum_i p_i$.
If for each $i=1,\ldots, n$, $(X_j^{(i)})_{j\not=i}$ has the distribution of $(X_j)_{j\not=i}$ conditional on $X_i=1$ and
$I$ is a random variable independent of all else such that $\IP(I=i)=p_i/\lambda$, then $W^s=\sum_{j\not=I} X_j^{(I)} +1$
has the size-bias distribution of $X$.
\end{corollary}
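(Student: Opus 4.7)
The plan is to deduce this directly from Proposition \ref{prop71}, the general size-bias construction for sums of non-negative variables, by simplifying the construction in the special case where the summands are indicators.

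First I would verify the elementary fact that if $X$ is a Bernoulli variable with $\IP(X=1)=p > 0$, then its size-bias version $X^s$ equals $1$ almost surely. This follows from the defining identity $\IE[Xf(X)] = p\,\IE[f(X^s)]$: the left-hand side equals $p f(1)$ for every $f$, which forces $\IE[f(X^s)] = f(1)$ and hence $X^s \equiv 1$. Once this is in hand, Step 1 of the construction in Proposition \ref{prop71} reduces to setting $X_i^s = 1$ deterministically, and the conditioning ``given $X_i^s = x$, take $(X_j^{(i)})_{j \neq i}$ distributed as $(X_j)_{j \neq i}$ conditional on $X_i = x$'' becomes precisely ``take $(X_j^{(i)})_{j \neq i}$ distributed as $(X_j)_{j \neq i}$ conditional on $X_i = 1$.'' This matches the hypothesis of the corollary.

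Next I would observe that since $\IE[X_i] = p_i$ (so $\mu_i = p_i$ in the notation of Proposition \ref{prop71}), the mixing weights in Step 2 of that construction, $\IP(I=i) = \mu_i/\mu$, become $\IP(I=i) = p_i/\lambda$, again matching the hypothesis. Finally, Step 3 reads $W^s = \sum_{j \neq I} X_j^{(I)} + X_I^s$, and substituting $X_I^s = 1$ yields $W^s = \sum_{j \neq I} X_j^{(I)} + 1$, exactly as claimed. The conclusion that $W^s$ has the size-bias distribution of $W$ is then immediate from Proposition \ref{prop71}.

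There is no real obstacle here: the corollary is essentially a restatement of the general recipe in the indicator case, and the only substantive point is the observation $X_i^s = 1$. If one wanted a self-contained proof rather than an appeal to Proposition \ref{prop71}, the cleanest route would be to verify directly that $\IE[W f(W)] = \lambda \IE[f(W^s)]$ by writing $\IE[Wf(W)] = \sum_i \IE[X_i f(W)] = \sum_i p_i \IE[f(W) \mid X_i = 1]$ and recognizing the right-hand side as $\lambda \IE[f(W^s)]$ by the definition of $I$ and the $(X_j^{(i)})_{j\neq i}$; but since Proposition \ref{prop71} is already established, the shorter argument above suffices.
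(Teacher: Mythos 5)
Your proof is correct and follows essentially the same route as the paper, which also deduces this corollary from Proposition \ref{prop71} via the observation that $X_i^s=1$ for a zero-one random variable (this is exactly how Corollary \ref{cor72} is proved in Section \ref{sbc}, and Corollary \ref{cor121} is just that result restated). The additional direct verification you sketch at the end is a fine alternative but not needed.
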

\begin{corollary}\label{cor122}
Let $X_1, \ldots, X_n$ be exchangeable indicator variables 
and
let $(X_j^{(1)})_{j\not=1}$ have the distribution of $(X_j)_{j\not=1}$ conditional on $X_1=1$.
If $W=\sum_{i=1}^n X_i$, then the size-bias distribution of $X$ can be represented
by $X^s=\sum_{j\not=1}X_j^{(1)} + 1$.
\end{corollary}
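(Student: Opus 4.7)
The plan is to derive Corollary~\ref{cor122} as a direct specialization of Corollary~\ref{cor121} using exchangeability. Since $X_1,\ldots,X_n$ are exchangeable indicators, they share a common mean $\IP(X_i=1)=p$, so the mixing index $I$ in Corollary~\ref{cor121}, which satisfies $\IP(I=i)=p_i/\lambda$, becomes uniform on $\{1,\ldots,n\}$.

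Next, I would exploit the permutation symmetry of $(X_1,\ldots,X_n)$ to argue that for each fixed $i$, the distribution of $\sum_{j\neq i}X_j^{(i)}$ coincides with the distribution of $\sum_{j\neq 1}X_j^{(1)}$. Exchangeability means that the joint law of $(X_1,\ldots,X_n)$ is invariant under the transposition swapping coordinates $1$ and $i$; applying this transposition to the conditional law of $(X_j)_{j\neq 1}$ given $X_1=1$ yields the conditional law of $(X_j)_{j\neq i}$ given $X_i=1$, up to a relabeling of indices. Since the sum $\sum_{j\neq i}X_j^{(i)}$ is invariant under any such relabeling, the two sums have identical distributions.

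Combining these two observations, for any bounded $f$ the construction in Corollary~\ref{cor121} gives
\ba
\IE[f(W^s)]=\sum_{i=1}^n \frac{p_i}{\lambda}\IE\!\left[f\bklr{\textstyle\sum_{j\neq i}X_j^{(i)}+1}\right]=\IE\!\left[f\bklr{\textstyle\sum_{j\neq 1}X_j^{(1)}+1}\right]=\IE[f(X^s)],
\ee
where the second equality uses that all $n$ summands are equal and the weights $p_i/\lambda=1/n$ sum to one. Since Corollary~\ref{cor121} guarantees that $W^s$ has the size-bias distribution of $W$, it follows that $X^s$ does as well.

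The only subtlety, and hence the ``hard part,'' is justifying cleanly that exchangeability transfers to the \emph{conditional} law of $(X_j)_{j\neq i}$ given $X_i=1$; but this is essentially immediate once one writes the conditional probability of any cylinder event in terms of the joint law and invokes its symmetry under the transposition $(1\,i)$. No bounds or delicate estimates are required, so the proof is a short symmetry argument on top of the already-established general construction.
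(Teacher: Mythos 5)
Your proposal is correct and is essentially the paper's own argument: the paper likewise derives Corollary \ref{cor122} from Corollary \ref{cor121} by noting that exchangeability makes $I$ uniform and that each mixture component $\sum_{j\neq i}X_j^{(i)}+1$ has the same law as $\sum_{j\neq 1}X_j^{(1)}+1$. Your elaboration of the transposition-symmetry step is a correct filling-in of the detail the paper leaves implicit.
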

\begin{proof}
Corollary \ref{cor121} was proved in Section \ref{sbc} and 
and Corollary \ref{cor122} follows from the fact that exchangeability
implies that $I$ is uniform and $\sum_{j\not=i}X_j^{(i)} + X_i^s\ed \sum_{j\not=1}X_j^{(1)} + X_1^s$.
\end{proof}

\begin{example}[Law of small numbers]
Let $W=\sum_{i=1}^n X_i$ where the $X_i$ are independent indicators with $\IP(X_i=1)=p_i$. According to 
Corollary \ref{cor121}, in order to size-bias $W$, we first choose an index $I$ with $\IP(I=i)=p_i/\lambda$, where
$\lambda=\IE[W]=\sum_ip_i$.  Given $I=i$ we construct $X_j^{(i)}$ having the distribution of $X_j$ conditional
on $X_i=1$.  However, by independence, $(X_j^{(i)})_{j\not=i}$ has the same distribution as $(X_j)_{j\not=i}$ so that we can take
$W^s=\sum_{j\not= I} X_j+1$.  Applying Theorem \ref{thm121} we find that for $Z\sim\Po(\lambda)$, 
\ba
\dtv(W,Z)\leq\min\{1, \lambda\}\IE[X_I]=\min\{1, \lambda\}\sum_{i=1}^n\frac{p_i}{\lambda}\IE[X_i]=\min\{1, \lambda^{-1}\}\sum_{i=1}^n p_i^2,
\ee
which agrees with our previous bound for this example.
\end{example}

\begin{example}[Isolated Vertices]\label{ex122}
Let $W$ be the number of isolated vertices in an \ER\ random graph on $n$ vertices with edge probabilities $p$.
Note that $W=\sum_{i=1}^n X_i$, where $X_i$ is the indicator that vertex $v_i$ (in some arbitrary but fixed labeling)
is isolated.  We constructed a size-bias coupling of $W$ in Section \ref{sbc} 
using Corollary \ref{cor121}, and we can simplify
this coupling by using Corollary \ref{cor122}\footnote{This simplification would not have yielded a useful error bound in Section \ref{sbc} 
since the size-bias normal approximation theorem contains a variance term; there 
the randomization provides an extra factor of $1/n$.
}
as follows.  

We first generate an \ER\ random graph $G$, and then erase all edges connected to vertex $v_1$.  
Then take $X_j^{(1)}$ be
the indicator that vertex $v_j$ is isolated in this new graph.  
By the independence of the edges in the graph, 
it is clear that $(X_j^{(1)})_{j\not=1}$ has the distribution of $(X_j)_{j\not=1}$ conditional on $X_1=1$,
so that by Corollary \ref{cor122}, we can take $W^s=\sum_{j\not=1}X_j^{(1)} + 1$ and of course we take $W$
to be the number of isolated vertices in $G$. 

In order to apply Theorem \ref{thm121}, we only need to compute $\lambda=\IE[W]$ and
$\IE\abs{W+1-W^s}$.  From Example \ref{ex72} in Section \ref{sbc}, $\lambda=n(1-p)^{n-1}$ and from the construction above
\ba
\IE\abs{W+1-W^s}&=\IE\left|X_1+\sum_{j=2}^n X_j - X_j^{(1)} \right| \\
	&=\IE[X_1]+\sum_{j=2}^n\IE\left[X_j^{(1)}-X_j\right],
\ee
where we use the fact that $X_j^{(1)}\geq X_j$ which follows since we
can only increase the number of isolated vertices by erasing edges.
Thus, $X_j^{(1)}-X_j$ is equal to zero or one and the latter happens only if
vertex $v_j$ has degree one and is connected to $v_1$ which occurs with
probability $p(1-p)^{n-2}$.  Putting this all together in Theorem \ref{thm121}, we obtain the following.
\begin{proposition}
Let $W$ the number of isolated vertices in an \ER\ random graph and $\lambda=\IE[W]$. If $Z\sim\Po(\lambda)$, then
\ba
\dtv(W, Z)&\leq \min\{1, \lambda\}\left((n-1)p(1-p)^{n-2}+(1-p)^{n-1}\right)\\
	&\leq \min\{\lambda, \lambda^2\}\left(\frac{p}{1-p}+\frac{1}{n}\right).
\ee
\end{proposition}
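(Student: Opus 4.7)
The plan is to apply Theorem \ref{thm121} directly to the size-bias coupling $(W, W^s)$ constructed in the paragraph preceding the proposition, where $W^s = \sum_{j\neq 1} X_j^{(1)} + 1$ with $X_j^{(1)}$ denoting the indicator that vertex $v_j$ is isolated in the graph $G'$ obtained from $G$ by erasing all edges incident to $v_1$. The two quantities that need to be identified are $\lambda = \IE[W]$ and $\IE|W+1-W^s|$.

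The mean $\lambda = n(1-p)^{n-1}$ was already computed in Example \ref{ex72}, so only the coupling expectation remains. Writing
\ba
W + 1 - W^s = X_1 - \sum_{j=2}^n (X_j^{(1)} - X_j),
\ee
I would use the key monotonicity observation that $X_j^{(1)} \geq X_j$ almost surely, since erasing edges can only create (never destroy) isolated vertices. Applying the triangle inequality then gives
\ba
\IE|W+1-W^s| \leq \IE[X_1] + \sum_{j=2}^n \IE[X_j^{(1)} - X_j].
\ee
The first term equals $(1-p)^{n-1}$. For $j \geq 2$, the event $\{X_j^{(1)} - X_j = 1\}$ is exactly the event that $v_j$ has degree one in $G$ and its sole neighbor is $v_1$, which has probability $p(1-p)^{n-2}$ by direct computation on the independent edge indicators. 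Summing over the $n-1$ values of $j$ and applying Theorem \ref{thm121} yields the first stated inequality.

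For the second inequality, I would factor $\min\{1,\lambda\} \cdot \lambda = \min\{\lambda,\lambda^2\}$ out front and observe from $\lambda = n(1-p)^{n-1}$ that $(1-p)^{n-1} = \lambda/n$, while $(n-1)p(1-p)^{n-2} \leq np(1-p)^{n-2} = \lambda \cdot p/(1-p)$. Substituting these bounds into the first inequality yields the claim. There is no real obstacle here — the proof is essentially a matter of assembling the two probability computations that are already flagged in the preceding discussion; the only point requiring care is the monotonicity $X_j^{(1)} \geq X_j$, which justifies passing the absolute value inside the sum.
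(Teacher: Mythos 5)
Your proposal is correct and follows essentially the same route as the paper: the same size-bias coupling via Corollary \ref{cor122}, the same monotonicity observation $X_j^{(1)}\geq X_j$, the same identification of the event $\{X_j^{(1)}-X_j=1\}$ with $v_j$ having degree one with sole neighbor $v_1$, and the same algebra for the second inequality. (If anything, your use of the triangle inequality is slightly more careful than the paper's, which writes that step as an equality.)
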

To interpret this result asymptotically,
if $\lambda$ is to stay away from zero and infinity as $n$ gets large, $p$ must be of order $\log(n)/n$,  in which case the error above
is of order $\log(n)/n$.
\end{example}

\begin{example}[Degree $d$ vertices]
We can generalize Example \ref{ex122} by taking $W$ to
be the number of degree $d\geq0$ vertices in an \ER\ random graph on $n$ vertices with edge probabilities $p$.
Note that $W=\sum_{i=1}^n X_i$, where $X_i$ is the indicator that vertex $v_i$ (in some arbitrary but fixed labeling)
has degree $d$.  We can construct a size-bias coupling of $W$
by using Corollary \ref{cor122}
as follows.  
Let $G$ be an \ER\ random graph.  
\begin{itemize}
\item If the degree of vertex $v_1$ is $d_1 \geq d$, then erase $d_1-d$ edges chosen 
uniformly at random from the $d_1$ edges connected to $v_1$.  
\item If the degree of vertex $v_1$ is $d_1 < d$, then add edges from $v_1$ to the $d-d_1$ vertices
not connected to $v_1$ chosen 
uniformly at random from the $n-d_1-1$ vertices unconnected to $v_1$.
\end{itemize}
Let $X_j^{(1)}$ be
the indicator that vertex $v_j$ has degree $d$ in this new graph.  
By the independence of the edges in the graph, 
it is clear that $(X_j^{(1)})_{j\not=1}$ has the distribution of $(X_j)_{j\not=1}$ conditional on $X_1=1$,
so that by Corollary \ref{cor122}, we can take $W^s=\sum_{j\not=1}X_j^{(1)} + 1$ and of course we take $W$
to be the number of isolated vertices in $G$. 

Armed with this coupling, we could apply Theorem \ref{thm121} to yield a bound in the variation distance
between $W$ and a Poisson distribution. However, the analysis for this particular example is a bit technical, so we
refer to Section 5.2 of \cite{bhj92} for the details.  

\end{example}

\subsubsection{Increasing size-bias couplings}

A crucial simplification occurred in Example \ref{ex122} because the size-bias coupling was increasing in a certain sense.
The following result quantifies this simplification.
\begin{theorem}\label{thm122}
Let $X_1, \ldots, X_n$ be indicator variables with $\IP(X_i=1)=p_i$, $W=\sum_{i=1}^n X_i$, and $\lambda=\IE[W]=\sum_i p_i$.
For each $i=1,\ldots, n$, let $(X_j^{(i)})_{j\not=i}$ have the distribution of $(X_j)_{j\not=i}$ conditional on $X_i=1$ and
let $I$ be a random variable independent of all else, such that $\IP(I=i)=p_i/\lambda$ so that
$W^s=\sum_{j\not=I} X_j^{(I)} +1$ has the size-bias distribution of $W$.
If $X_j^{(i)}\geq X_j$ for all $i\not= j$, and $Z\sim \Po(\lambda)$,
then 
\ba
\dtv(W, Z) \leq \min\{1, \lambda^{-1}\}\left(\var(W)-\lambda+2\sum_{i=1}^n p_i^2\right).
\ee
\end{theorem}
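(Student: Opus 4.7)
The plan is to apply Theorem~\ref{thm121} and extract a cleaner error term by exploiting the monotonicity $X_j^{(i)}\geq X_j$ of the coupling. From Theorem~\ref{thm121} we have
\be
\dtv(W,Z)\leq \min\{1,\lambda\}\,\IE\abs{W+1-W^s},
\ee
so the whole task is to bound $\IE\abs{W+1-W^s}$ using the structure of the coupling.

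First I would rewrite the difference $W+1-W^s$ in a form where the monotonicity bites. Since $W^s=1+\sum_{j\neq I}X_j^{(I)}$, a direct computation gives
\be
W+1-W^s = X_I - R, \qquad \text{where}\quad R:=\sum_{j\neq I}(X_j^{(I)}-X_j)\geq 0
\ee
by the hypothesis $X_j^{(i)}\geq X_j$. The point is that $X_I\in\{0,1\}$ and $R\geq 0$, so the crude but sign-insensitive bound $\abs{X_I-R}\leq X_I+R$ yields
\be
\IE\abs{W+1-W^s}\leq \IE[X_I]+\IE[R] = 2\IE[X_I]+(\IE[W^s]-\lambda-1),
\ee
where I used $\IE[R]=\IE[W^s-1-W+X_I]=\IE[W^s]-\lambda-1+\IE[X_I]$.

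Next I would compute the two expectations explicitly. By definition of $I$,
\be
\IE[X_I]=\sum_{i=1}^n \frac{p_i}{\lambda}\,\IE[X_i\mid X_i=1]\cdot(\text{actually } \IE[X_I]=\sum_i \tfrac{p_i}{\lambda}p_i) =\frac{1}{\lambda}\sum_{i=1}^n p_i^2,
\ee
and from the defining property of the size-bias distribution applied to $f(x)=x$,
\be
\IE[W^s]=\frac{\IE[W^2]}{\lambda}=\frac{\var(W)+\lambda^2}{\lambda}=\frac{\var(W)}{\lambda}+\lambda.
\ee
Substituting gives
\be
\IE\abs{W+1-W^s}\leq \frac{1}{\lambda}\bbkle{\var(W)-\lambda+2\sum_{i=1}^n p_i^2}.
\ee

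Finally I would reconcile the prefactors: multiplying by $\min\{1,\lambda\}$ produces $\min\{1,\lambda\}/\lambda=\min\{\lambda^{-1},1\}$, giving exactly the bound claimed in the theorem. There is no real obstacle here beyond recognizing the sign-insensitive trick $\abs{X_I-R}\leq X_I+R$; without it one would be tempted to split into cases depending on whether $W^s\geq W+1$ or not, which is awkward because the monotonicity only controls one direction. Positive association (a consequence of the coupling hypothesis) guarantees that the bound $\var(W)-\lambda+2\sum p_i^2=\sum p_i^2+\sum_{i\neq j}\Cov(X_i,X_j)$ is nonnegative, so the estimate is well-posed.
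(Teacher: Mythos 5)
Your proposal is correct and follows essentially the same route as the paper: apply Theorem~\ref{thm121}, use the monotonicity $X_j^{(i)}\geq X_j$ to bound $\abs{W+1-W^s}$ by $X_I+\sum_{j\neq I}(X_j^{(i)}-X_j)$, and then evaluate the expectation via $\IE[W^s]=\IE[W^2]/\lambda$. The only cosmetic difference is that you keep the randomized index $I$ throughout while the paper sums over $i$ with weights $p_i$; your explicit use of the triangle inequality $\abs{X_I-R}\leq X_I+R$ is in fact slightly cleaner than the paper's statement of that step.
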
   
\begin{proof}
Let $W_i=\sum_{j\not=i} X_j^{(i)} +1$.  From Theorem \ref{thm121} and the size-bias construction of Corollary \ref{cor121}, we have
\ban
\dtv(W,Z)&\leq \min\{1, \lambda^{-1}\}\sum_{i=1}^n p_i\IE\abs{W+1-W_i}\notag \\
	&=\min\{1, \lambda^{-1}\}\sum_{i=1}^n p_i\IE\left[\sum_{j\not=i}\left( X_j^{(i)}-X_j\right)+X_i\right] \notag  \\
	&=\min\{1, \lambda^{-1}\}\sum_{i=1}^n p_i\IE\left[W_i-W-1+2 X_i\right], \label{122}
\ee
where the penultimate equality uses the monotonicity of the size-bias coupling.  Using again the construction of the size-bias coupling we obtain
that \eq{122} is equal to
\ba
\min\{1, \lambda^{-1}\}\left(\lambda \IE[W^s]-\lambda^2-\lambda+2\sum_{i=1}^n p_i^2\right),
\ee
which yields the desired inequality by the definition of the size-bias distribution.
\end{proof}

\subsubsection{Application: Subgraph counts}\label{ss121}
Let $G=G(n,p)$ be an \ER\ random graph on $n$ vertices with edge probability $p$ and let $H$ be a graph
on $0<v_H\leq n$ vertices with $e_H$ edges and no isolated vertices.  We want to analyze the number
of copies of $H$ in $G$; that is, the number of subgraphs of the complete graph on $n$ vertices
which are isomorphic to $H$ which appear in $G$.  For example, we could take $H$ to be a triangle so that
$v_H=e_H=3$.  

Let $\Gamma$ be the set of all copies of $H$ in $K_n$, the complete graph on $n$ vertices and for $\alpha \in \Gamma$,
let $X_\alpha$ be the indicator that there is a copy of $H$ in $G$ at $\alpha$ and set $W=\sum_{\alpha\in\Gamma}X_\alpha$.
We now have the following result.
\begin{theorem}\label{thm123}
Let $W$ be the number of copies of a graph $H$ with no isolated vertices in $G$ as defined above and let $\lambda=\IE[W]$.
If $H$ has $e_H$ edges and $Z\sim\Po(\lambda)$, then
\ba
\dtv(W, Z) \leq \min\{1, \lambda^{-1}\}\left(\var(W)-\lambda+2\lambda p^{e_H}\right).
\ee
\end{theorem}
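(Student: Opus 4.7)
The plan is to realize Theorem \ref{thm123} as a direct application of the monotone size-bias result, Theorem \ref{thm122}. Each indicator $X_\alpha$ satisfies $\IP(X_\alpha=1)=p^{e_H}$, so all the probabilities $p_i$ in the notation of Theorem \ref{thm122} are equal to $p^{e_H}$; in particular
\be
2\sum_{\alpha\in\Gamma} p_\alpha^2 = 2|\Gamma|p^{2e_H}=2\lambda p^{e_H},
\ee
which already matches the last term of the asserted bound. Thus, once Theorem \ref{thm122} applies, the rest follows immediately.

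What I actually need to do, then, is construct a size-bias coupling $W^s$ of $W$ in the framework of Corollary \ref{cor121} such that $X^{(\alpha)}_\beta\geq X_\beta$ for all $\alpha\neq \beta$. First I would pick an index $I$ uniformly over $\Gamma$ (uniform since the $p_\alpha$ are equal). Given $I=\alpha$, I construct $G^{(\alpha)}$ from $G$ by inserting every edge of $\alpha$ that is not already present, and leaving all other (potential) edges of $K_n$ untouched. Set $X^{(\alpha)}_\beta$ equal to the indicator that $H$ appears at $\beta$ in $G^{(\alpha)}$, and set $W^s=\sum_{\beta\neq I}X^{(I)}_\beta+1$, as in Corollary \ref{cor121}.

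Two things must then be checked. First, the conditional-distribution identity: $(X^{(\alpha)}_\beta)_{\beta\neq\alpha}$ should have the distribution of $(X_\beta)_{\beta\neq\alpha}$ given $X_\alpha=1$. This follows from the fact that the edges of $G(n,p)$ are mutually independent: conditioning on $X_\alpha=1$ is exactly conditioning on all $e_H$ edges of $\alpha$ being present, which leaves every other edge of $K_n$ Bernoulli$(p)$ and independent, which is precisely the distribution of edges in $G^{(\alpha)}$. Second, the monotonicity $X^{(\alpha)}_\beta\geq X_\beta$: the construction only adds edges and never removes them, so any copy of $H$ already present at $\beta$ in $G$ is still present in $G^{(\alpha)}$.

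With these in hand, Theorem \ref{thm122} applies directly and yields exactly the claimed inequality after substituting $\sum_\alpha p_\alpha^2=\lambda p^{e_H}$. The only real point of caution, and the piece I expect a reader to worry about most, is the verification of the conditional distribution, since we must be sure that the surgical insertion of the missing edges of $\alpha$ produces the \emph{correct} conditional law and not merely a distribution under which $X_\alpha=1$; the independence of edges in the \ER\ model is what makes this painless. Everything else is bookkeeping.
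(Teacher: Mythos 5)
Your proposal is correct and follows essentially the same route as the paper: both construct the size-bias coupling by adding the missing edges of the chosen copy $\alpha$ to $G$, verify the conditional law via edge independence and the monotonicity $X_\beta^{(\alpha)}\geq X_\beta$ since edges are only added, and then invoke Theorem \ref{thm122} with $p_\alpha=p^{e_H}$. The only cosmetic difference is that the paper exploits exchangeability (Corollary \ref{cor122}) to work with a single fixed $\alpha$ rather than the uniformly random index $I$ of Corollary \ref{cor121}, which changes nothing in substance.
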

\begin{proof}
We will show that Theorem \ref{thm122} applies to $W$.
Since $W=\sum_{\alpha\in\Gamma}X_\alpha$ is a sum of exchangeable indicators, we can apply Corollary \ref{cor122}
to construct a size-bias coupling of $W$.  To this end, for a fixed $\alpha\in \Gamma$, let $X_\beta^{(\alpha)}$ be the
indicator that there is a copy of $H$ in $G\cup\{\alpha\}$ at $\beta$.  Here, $G\cup\{\alpha\}$ means we add the minimum
edges necessary to $G$ to have a copy of $H$ at $\alpha$.    The following three evident facts now imply the theorem:
\begin{enumerate}
\item $(X_\beta^{(\alpha)})_{\beta\not=\alpha}$ has the distribution of $\left(X_\beta\right)_{\beta\not=\alpha}$
given that $X_\alpha=1$.
\item For all $\beta\in \Gamma/\{\alpha\}$, $X_\beta^{(\alpha)}\geq X_\beta$.
\item $\IE[X_{\alpha}]=p^{e_H}$.
\end{enumerate} 
\end{proof}

Theorem \ref{thm123} is a very general result, but it can be difficult to interpret. That is,
what properties of a subgraph $H$ make $W$ approximately Poisson?  We can begin to answer that question
by expressing the mean and variance of $W$ in terms of properties of $H$ which yields the following.
\begin{corollary}\label{cor123}
Let $W$ be the number of copies of a graph $H$ with no isolated vertices in $G$ as defined above and let $\lambda=\IE[W]$.
For fixed $\alpha\in\Gamma$, let $\Gamma_\alpha^t\subseteq \Gamma$ be the set of subgraphs of $K_n$ isomorphic
to $H$ with exactly $t$ edges not in $\alpha$.  If $H$ has $e_H$ edges and $Z\sim\Po(\lambda)$, then
\ba
\dtv(W, Z)\leq \min\{1, \lambda\}\left(p^{e_H}+\sum_{t=1}^{e_H-1}\abs{\Gamma_\alpha^t}\left(p^t-p^{e_H}\right)\right).
\ee
\end{corollary}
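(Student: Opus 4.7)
The plan is to start from Theorem \ref{thm123} and algebraically rewrite the quantity $\var(W) - \lambda + 2\lambda p^{e_H}$ in terms of the enumerators $|\Gamma_\alpha^t|$. The extra factor of $\lambda$ that appears naturally in this computation will convert $\min\{1, \lambda^{-1}\}$ into $\min\{1, \lambda\}$, matching the statement.

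First I would expand
\[
\var(W) = \sum_{\alpha \in \Gamma} \var(X_\alpha) + \sum_{\alpha \neq \beta} \cov(X_\alpha, X_\beta).
\]
The diagonal gives $|\Gamma| p^{e_H}(1-p^{e_H})$. For the off-diagonal terms, the key combinatorial observation is that if $\beta \in \Gamma$ shares exactly $e_H - t$ edges with $\alpha$, then $\alpha \cup \beta$ has $e_H + t$ edges, so $\IE[X_\alpha X_\beta] = p^{e_H + t}$ and hence $\cov(X_\alpha, X_\beta) = p^{e_H + t} - p^{2e_H}$. Since $H$ has no isolated vertices, the edges of $\beta$ determine $\beta$, so $t = 0$ forces $\beta = \alpha$; thus the off-diagonal sum ranges over $t = 1, \ldots, e_H$. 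By exchangeability of the $\alpha \in \Gamma$, the inner sum $\sum_{\beta \neq \alpha}$ decomposes using $|\Gamma_\alpha^t|$ and does not depend on $\alpha$, yielding
\[
\var(W) = |\Gamma| p^{e_H}(1-p^{e_H}) + |\Gamma| \sum_{t=1}^{e_H} |\Gamma_\alpha^t|\bigl(p^{e_H + t} - p^{2 e_H}\bigr).
\]

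Next I would observe that the $t = e_H$ summand is identically zero, so the sum runs effectively over $t = 1, \ldots, e_H - 1$. Using $\lambda = |\Gamma| p^{e_H}$, one gets
\[
\var(W) - \lambda + 2\lambda p^{e_H} = |\Gamma| p^{2e_H} + |\Gamma| \sum_{t=1}^{e_H - 1} |\Gamma_\alpha^t|\bigl(p^{e_H+t} - p^{2 e_H}\bigr) = \lambda\Bigl(p^{e_H} + \sum_{t=1}^{e_H - 1} |\Gamma_\alpha^t|(p^t - p^{e_H})\Bigr),
\]
after factoring out a common $p^{e_H}$ in the second step. Substituting this into Theorem \ref{thm123} and using $\lambda \cdot \min\{1, \lambda^{-1}\} = \min\{\lambda, 1\}$ finishes the proof.

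The main obstacle is purely bookkeeping: one must be careful that $\Gamma_\alpha^0 = \{\alpha\}$ (so the diagonal term is not double counted), that $|\Gamma_\alpha^t|$ is independent of $\alpha$ (which is where the ``no isolated vertices'' hypothesis and exchangeability of copies of $H$ in $K_n$ are used), and that the cancellation between $-\lambda$, $+2\lambda p^{e_H}$, and the $t = e_H$ boundary term works out so that the final expression is non-negative with the clean factorization above. No new ideas beyond Theorem \ref{thm123} are needed.
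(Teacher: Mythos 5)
Your proposal is correct and follows essentially the same route as the paper: compute $\var(W)$ by organizing the covariance terms according to $|\Gamma_\alpha^t|$ (your $\IE[X_\alpha X_\beta] = p^{e_H + t}$ is exactly the paper's $p^{e_H}\,\IE[X_\beta \mid X_\alpha = 1]$ with $\IE[X_\beta \mid X_\alpha = 1] = p^t$), drop the vanishing $t = e_H$ term, and substitute into Theorem~\ref{thm123}, converting $\lambda\cdot\min\{1,\lambda^{-1}\}$ to $\min\{1,\lambda\}$. The bookkeeping in your plan matches the paper's in every step.
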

\begin{proof}
The corollary follows after deriving the mean and variance of $W$.  The terms $\abs{\Gamma_\alpha^t}$ account
for the number of covariance terms for different types of pairs of indicators.  In detail, 
\ba
\var(W)&=\sum_{\alpha\in\Gamma}\var(X_\alpha)+\sum_{\alpha\in\Gamma}\sum_{\beta\not=\alpha}\cov(X_\alpha, X_\beta) \\
	&=\lambda (1-p^{e_H})+\sum_{\alpha\in\Gamma}\sum_{t=1}^{e_H}\sum_{\beta\in\Gamma_\alpha^t}\cov(X_\alpha, X_\beta) \\
	&=\lambda (1-p^{e_H})+\sum_{\alpha\in\Gamma}p^{e_H}\sum_{t=1}^{e_H}\sum_{\beta\in\Gamma_\alpha^t}\left(\IE[X_\beta|X_\alpha=1]-p^{e_H}\right) \\
	& =\lambda\left(1-p^{e_H}  + \sum_{t=1}^{e_H-1}\abs{\Gamma_\alpha^t}\left(p^t-p^{e_H}\right)\right),
\ee 
since $\lambda=\sum_{\alpha\in\Gamma}p^{e_H}$ and for $\beta\in\Gamma_\alpha^t$, $\IE[X_\beta|X_\alpha=1]=p^t$.
\end{proof}
It is possible to rewrite the error in other forms which can be used to make some general statements (see \cite{bhj92}, Chapter 5), but
we content ourselves with some examples.
\begin{example}[Triangles]
Let $H$ be a triangle.  In this case, $e_H=3$, $\abs{\Gamma_\alpha^2}=3(n-3)$, and $\abs{\Gamma_\alpha^1}=0$
since triangles either share one edge or all three edges (corresponding to $t=2$ and $t=0$).  Thus Corollary \ref{cor123}
implies that for $W$ the number of triangles in $G$ and $Z$ an appropriate Poisson variable, 
\ban
\dtv(W, Z)\leq \min\{1, \lambda\}\left(p^{3}+3(n-3)p^2(1-p)\right). \label{124}
\ee
Since $\lambda=\binom{n}{3}p^3$ we can view \eq{124} as an asymptotic result with $p$ of order $1/n$.  In this case,
\eq{124} is of order $1/n$.
\end{example}
\begin{example}[k-cycles]
More generally, we can let $H$ be a $k$-cycle (a triangle is $3$-cycle). Now note that
for some constants $c_t$ and $C_k$, 
\ba
\abs{\Gamma_\alpha^t}\leq \binom{k}{k-t}c_t n^{t-1}\leq C_k n^{t-1},
\ee
since we choose the $k-t$ edges shared in the $k$-cycle $\alpha$, and then we have
order $n^{t-1}$ sequences of vertices to create a cycle with $t$ edges outside of the $k-t$ edges shared with $\alpha$.
The second equality follows by maximizing $\binom{k}{k-t}c_t$ over the possible values of $t$.  We can now find for
$W$ the number of 
$k$-cycles in $G$ and $Z$ an appropriate Poisson variable, 
\ban 
\dtv(W, Z)\leq\min\{1, \lambda\}\left(p^{k}+ C_k p \sum_{t=1}^{k-1} (np)^{t-1}\right). \label{125}
\ee
To interpret this bound asymptotically, we note that
$\lambda=\abs{\Gamma}p^k$ and 
\ba
\abs{\Gamma}=\binom{n}{k}\frac{(k-1)!}{2}p^k,
\ee
since the number of non-isomorphic $k$-cycles on $K_k$ is $k!/(2k)$ (since $k!$ is the number of permutations of the vertices, which
over counts by a factor of $2k$ due to reflections and rotations).  Thus $\lambda$ is of order $(np)^k$ for fixed $k$ so that 
we take $p$ to be of order $1/n$ and in this regime \eq{125} is of order $1/n$.
\end{example}

Similar results can be derived for \emph{induced} and \emph{isolated} subgraph counts - again we refer to \cite{bhj92}, Chapter 5.

\subsubsection{Implicit coupling}
In this section we show that it can be possible to apply Theorem \ref{thm122} without
constructing the size-bias coupling explicitly.  We first need some terminology.
\begin{definition}
We say a function $f:\IR^n\to\IR$ is increasing (decreasing) if for 
all $\textbf{x}=(x_1, \ldots, x_n)$ and $\textbf{y}=(y_1, \ldots, y_n)$ such that $x_i\leq y_i$ for all $i= 1, \ldots, n$,
we have $f(\textbf{x})\leq f(\textbf{y})$ ($f(\textbf{x})\geq f(\textbf{y})$).
\end{definition}
\begin{theorem}\label{thm124}
Let $\textbf{Y}=(Y_i)_{j=1}^N$ be a finite collection of independent indicators and assume $X_1, \ldots, X_n$ 
are increasing or decreasing functions from $\{0,1\}^N$ into $\{0, 1\}$.
If $W=\sum_{i=1}^n X_i(\textbf{Y})$ and $\IE[W]=\lambda$,
then
\ba
\dtv(W, Z) \leq \min\{1, \lambda^{-1}\}\left(\var(W)-\lambda+2\sum_{i=1}^n p_i^2\right).
\ee
\end{theorem}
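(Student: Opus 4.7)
The plan is to reduce Theorem \ref{thm124} to Theorem \ref{thm122} by exhibiting a monotone size-bias coupling whose existence is forced by the assumed monotonicity of the $X_i$. Since replacing $Y_j$ by $1-Y_j$ converts decreasing functions of $\mathbf{Y}$ into increasing functions of $1-\mathbf{Y}$ (which is again a vector of independent indicators), I would first reduce to the case where every $X_i$ is an increasing $\{0,1\}$-valued function of $\mathbf{Y}$.

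The key step is to construct, for each $i$ with $p_i = \IP(X_i(\mathbf{Y})=1) > 0$, a random vector $\mathbf{Y}^{(i)}$ on the same space as $\mathbf{Y}$ such that (a) $\mathbf{Y}^{(i)}$ has the law of $\mathbf{Y}$ conditioned on $X_i(\mathbf{Y})=1$, and (b) $Y_j^{(i)} \geq Y_j$ coordinatewise almost surely. The existence of such a coupling follows from the FKG/Holley inequality: since $X_i$ is increasing and $\{0,1\}^N$ carries a product (hence FKG) measure, the conditional law of $\mathbf{Y}$ given the increasing event $\{X_i(\mathbf{Y})=1\}$ stochastically dominates the unconditional law of $\mathbf{Y}$, and Strassen's theorem then supplies a monotone coupling. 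Having constructed $\mathbf{Y}^{(i)}$, I set $X_j^{(i)} := X_j(\mathbf{Y}^{(i)})$ for $j \neq i$. Then $(X_j^{(i)})_{j \neq i}$ has the correct conditional distribution by part (a), while the monotonicity of each $X_j$ combined with $\mathbf{Y}^{(i)} \geq \mathbf{Y}$ gives $X_j^{(i)} \geq X_j$, as required by the hypothesis of Theorem \ref{thm122}.

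Finally, I would introduce an independent index $I$ with $\IP(I=i) = p_i/\lambda$ and set $W^s = \sum_{j \neq I} X_j^{(I)} + 1$. By Corollary \ref{cor121} this is a size-bias coupling of $W$, and by construction it is an increasing coupling in the sense required for Theorem \ref{thm122}. Direct application of Theorem \ref{thm122} then yields the stated total variation bound.

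The main obstacle is the construction of the monotone coupling $\mathbf{Y}^{(i)} \geq \mathbf{Y}$ with the correct conditional law: this is the only step that genuinely uses the hypothesis and goes beyond bookkeeping. Once the FKG/Holley stochastic domination is invoked and the monotone coupling is produced, everything else is a direct translation of the functional setting into the sum-of-indicators setting handled by Theorem \ref{thm122}.
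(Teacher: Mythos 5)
Your proposal is correct and follows essentially the same route as the paper: both arguments combine the FKG inequality for monotone functions of independent indicators with Strassen's theorem to produce a monotone size-bias coupling, and then invoke Theorem \ref{thm122}. The only cosmetic difference is that you apply Strassen at the level of the underlying vector $\textbf{Y}$ and push the monotone coupling forward through the maps $X_j$, whereas the paper applies FKG to the pairs $(X_i,\phi\circ\textbf{X})$ and obtains the coupling of $\textbf{X}^{(i)}$ with $\textbf{X}$ directly via Lemma \ref{lem121}.
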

\begin{proof}
We will show that there exists a size-bias coupling of $W$ satisfying the hypotheses of Theorem \ref{thm122}, which implies the result.  
From Lemma \ref{lem121} below, it is enough to show that 
$\cov(X_i(\textbf{Y}), \phi\circ\textbf{X}(\textbf{Y}))\geq0$ for all increasing indicator functions $\phi$.
However, since each $X_i(\textbf{Y})$ is an increasing or decreasing function applied to independent indicators, then so is
$\phi\circ\textbf{X}(\textbf{Y})$.
Thus we may apply
the FKG inequality (see Chapter 2 of \cite{lig05})  which in this case states
\ba
\IE[X_i(\textbf{Y}) \phi\circ\textbf{X}(\textbf{Y})]\geq \IE[X_i(\textbf{Y})]\IE[\phi\circ\textbf{X}(\textbf{Y})],
\ee
as desired.
\end{proof}
\begin{lemma}\label{lem121}
Let $\textbf{X}=(X_1, \ldots, X_n)$ be a vector of indicator variables
and let $\textbf{X}^{(i)}=(X_1^{(i)}, \ldots, X_n^{(i)})\ed\textbf{X}|X_i=1$.  Then the following are equivalent:
\begin{enumerate}
\item\label{i121} There exists a coupling such that $X_j^{(i)}\geq X_j$.
\item\label{i122} For all increasing indicator functions $\phi$, $\IE[\phi(\textbf{X}^{(i)})]\geq\IE[\phi(\textbf{X})]$.
\item\label{i123} For all increasing indicator functions $\phi$, $\cov(X_i, \phi(\textbf{X}))\geq0$.
\end{enumerate}
\end{lemma}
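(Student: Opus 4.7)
The plan is to establish the three implications $(1) \Rightarrow (2) \Rightarrow (3) \Rightarrow (2) \Rightarrow (1)$, where the middle double implication collapses to a straightforward reformulation and the only substantive step is $(2) \Rightarrow (1)$. Throughout, I assume $\IP(X_i = 1) > 0$, since otherwise $\textbf{X}^{(i)}$ is not defined and the claim is vacuous; the case $\IP(X_i = 1) = 1$ is trivial.

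First, $(1) \Rightarrow (2)$ is immediate: if there is a coupling on a common probability space under which $X_j^{(i)} \geq X_j$ for all $j$, then $\textbf{X}^{(i)} \geq \textbf{X}$ componentwise, so for any increasing indicator $\phi$ we have $\phi(\textbf{X}^{(i)}) \geq \phi(\textbf{X})$ pointwise, and taking expectations yields (2).

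Next, I would prove $(2) \Leftrightarrow (3)$ by direct calculation. Since $\textbf{X}^{(i)}$ is distributed as $\textbf{X}$ conditional on $X_i = 1$, for any bounded $\phi$ we have $\IE[\phi(\textbf{X}^{(i)})] = \IE[\phi(\textbf{X}) \mid X_i = 1]$. Hence
\bes
\cov(X_i, \phi(\textbf{X})) &= \IE[X_i \phi(\textbf{X})] - \IE[X_i]\IE[\phi(\textbf{X})] \\
&= \IP(X_i = 1)\bklr{\IE[\phi(\textbf{X}^{(i)})] - \IE[\phi(\textbf{X})]},
\ee
so (2) and (3) are equivalent term-by-term over increasing indicator functions $\phi$.

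The main obstacle is $(2) \Rightarrow (1)$, which is the standard stochastic-domination statement: if one probability measure on the finite poset $\{0,1\}^n$ dominates another against every increasing indicator (equivalently, every increasing function), then there is a monotone coupling. This is Strassen's theorem, whose proof in the finite setting can be given by a Hall/max-flow marriage argument: build a bipartite graph with one copy of $\{0,1\}^n$ on each side, put an edge from $\textbf{x}$ to $\textbf{y}$ iff $\textbf{x} \leq \textbf{y}$, and seek a fractional matching whose marginals are the laws of $\textbf{X}$ and $\textbf{X}^{(i)}$. For any up-set $U \subseteq \{0,1\}^n$, applying (2) with $\phi = \I_U$ gives $\IP(\textbf{X} \in U) \leq \IP(\textbf{X}^{(i)} \in U)$, and this is precisely the Hall-type condition needed to guarantee existence of such a coupling; alternatively one can simply invoke Strassen's theorem. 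Putting the three implications together closes the loop and completes the proof.
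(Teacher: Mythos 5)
Your proposal is correct and follows essentially the same route as the paper: the equivalence of (2) and (3) via the identity $\cov(X_i,\phi(\textbf{X}))=\IP(X_i=1)\bklr{\IE[\phi(\textbf{X}^{(i)})]-\IE[\phi(\textbf{X})]}$, and the equivalence of (1) and (2) via Strassen's theorem. The only difference is that you sketch the finite-poset max-flow/Hall justification of Strassen's theorem rather than merely citing it, which is a welcome but inessential elaboration.
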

\begin{proof}
The equivalence \ref{i121}$\Leftrightarrow$\ref{i122} follows from a general version of Strassen's theorem which
can be found in \cite{lig05}.  In one dimension, Strassen's theorem says that there exists a coupling 
of random variables $X$ and $Y$ such that $X\geq Y$ if and only if $F_X(z)\leq F_Y(z)$ for all $z\in \IR$ where
$F_X$ and $F_Y$ are distribution functions.

The equivalence \ref{i122}$\Leftrightarrow$\ref{i123} follows from the following calculation.
\ba 
\IE[\phi(\textbf{X}^{(i)})]=\IE[\phi(\textbf{X})|X_i=1]=\IE[X_i\phi(\textbf{X})|X_i=1]=\frac{\IE[X_i\phi(\textbf{X})]}{\IP(X_i=1)}.
\ee
\end{proof}
\begin{example}[Subgraph counts]
Theorem \ref{thm124} applies to the example of Section \ref{ss121}, since the indicator of a copy of $H$ at a given location
is an increasing function of the edge indicators of the graph $G$.  
\end{example}
\begin{example}[Large degree vertices]
Let $d\geq 0$ and let $W$ be the number of vertices with degree at least $d$.  Clearly $W$ is a sum of indicators that are increasing
functions of the edge indicators of the graph $G$ so that Theorem \ref{thm124} can be applied.  After some technical analysis, we
arrive at the following result - see Section 5.2 of \cite{bhj92} for details.
If $q_1=\sum_{k\geq d}\binom{n-1}{k}p^k(1-p)^{n-k-1}$ and $Z\sim\Po(nq_1)$, then
\ba
\dtv(W,Z)\leq q_1+\frac{d^2(1-p)\left[\binom{n-1}{d}p^d(1-p)^{n-d-1}\right]^2}{(n-1)p q_1}.
\ee
\end{example}
\begin{example}[Small degree vertices]
Let $d\geq 0$ and let $W$ be the number of vertices with degree at most $d$.  Clearly $W$ is a sum of indicators that are decreasing
functions of the edge indicators of the graph $G$ so that Theorem \ref{thm124} can be applied. After some technical analysis, we
arrive at the following result - see Section 5.2 of \cite{bhj92} for details.
If $q_2=\sum_{k\leq d}\binom{n-1}{k}p^k(1-p)^{n-k-1}$ and $Z\sim\Po(nq_2)$, then
\ba
\dtv(W,Z)\leq q_2+\frac{(n-d-1)^2 p \left[\binom{n-1}{d}p^d(1-p)^{n-d-1}\right]^2}{(n-1)(1-p) q_2}.
\ee

\end{example}

\subsubsection{Decreasing size-bias couplings}
In this section we prove and apply a result complementary to Theorem \ref{thm122}.
\begin{theorem}\label{thm125}
Let $X_1, \ldots, X_n$ be indicator variables with $\IP(X_i=1)=p_i$, $W=\sum_{i=1}^n X_i$, and $\lambda=\IE[W]=\sum_i p_i$.
For each $i=1,\ldots, n$, let $(X_j^{(i)})_{j\not=i}$ have the distribution of $(X_j)_{j\not=i}$ conditional on $X_i=1$ and
let $I$ be a random variable independent of all else, such that $\IP(I=i)=p_i/\lambda$ so that
$W^s=\sum_{j\not=I} X_j^{(I)} +1$ has the size-bias distribution of $W$.  If 
$X_j^{(i)}\leq X_j$ for all $i\not= j$, and $Z\sim \Po(\lambda)$,
then 
\ba
\dtv(W, Z) \leq \min\{1, \lambda\}\left(1-\frac{\var(W)}{\lambda}\right).
\ee
\end{theorem}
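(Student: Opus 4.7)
The plan is to follow the template of Theorem \ref{thm122}, with the crucial twist that the assumed \emph{decreasing} monotonicity $X_j^{(i)} \leq X_j$ will let me drop the absolute value in $\IE|W+1-W^s|$ as an exact identity rather than by the triangle inequality. This is exactly what eliminates the $2\sum p_i^2$ correction that appears in the increasing case and produces the cleaner bound $1-\var(W)/\lambda$.

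Concretely, I would start from Theorem \ref{thm121} together with the construction of $W^s$ stated in the theorem (cf.\ Corollary \ref{cor121}), writing
\[
\dtv(W,Z) \leq \min\{1,\lambda\}\,\IE|W+1-W^s| = \min\{1,\lambda^{-1}\}\sum_{i=1}^n p_i\,\IE|W+1-W_i|,
\]
where $W_i := \sum_{j\ne i} X_j^{(i)} + 1$. The key observation is that
\[
W + 1 - W_i = X_i + \sum_{j\ne i}\bigl(X_j - X_j^{(i)}\bigr),
\]
and under the hypothesis \emph{both} summands on the right are almost surely non-negative. Hence $|W+1-W_i| = W+1-W_i$ pointwise and $\IE|W+1-W_i| = \lambda + 1 - \IE[W_i]$.

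The remainder is a short computation. Since $(X_j^{(i)})_{j \ne i}$ has the law of $(X_j)_{j\ne i}$ conditional on $X_i = 1$,
\[
\IE[W_i] = 1 + \IE\Bigl[\sum_{j\ne i} X_j \Bigm| X_i = 1\Bigr] = 1 + \frac{\IE\bigl[X_i(W-X_i)\bigr]}{p_i} = \frac{\IE[X_iW]}{p_i},
\]
so that summing,
\[
\sum_{i=1}^n p_i\,\IE|W+1-W_i| = \lambda(\lambda+1) - \sum_{i=1}^n \IE[X_iW] = \lambda(\lambda+1) - \IE[W^2] = \lambda - \var(W).
\]
Plugging back in and factoring $\lambda$ yields $\dtv(W,Z) \leq \min\{1,\lambda^{-1}\}\bigl(\lambda-\var(W)\bigr) = \min\{1,\lambda\}\bigl(1-\var(W)/\lambda\bigr)$, as claimed. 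The only step with any real content is the sign observation in the second paragraph; positivity of the bound then follows for free, since each $\IE|W+1-W_i| \geq 0$ forces $\lambda \geq \var(W)$ automatically in the decreasing regime. I do not anticipate any serious obstacle beyond being careful that the absolute value truly disappears under the hypothesis.
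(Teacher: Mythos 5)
Your proposal is correct and follows essentially the same route as the paper's proof: apply Theorem \ref{thm121} with the construction of Corollary \ref{cor121}, use the decreasing monotonicity to drop the absolute value in $\IE|W+1-W_i|$, and reduce the resulting sum to $\lambda-\var(W)$. The only cosmetic difference is that you evaluate $\sum_i p_i\IE[W_i]$ directly as $\IE[W^2]$ via conditioning, whereas the paper routes the same computation through $\IE[W^s]=\IE[W^2]/\lambda$.
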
   
\begin{proof}
Let $W_i=\sum_{j\not=i} X_j^{(i)} +1$.  From Theorem \ref{thm121} and the size-bias construction of Corollary \ref{cor121}, we have
\ban
\dtv(W,Z)&\leq \min\{1, \lambda^{-1}\}\sum_{i=1}^n p_i\IE\abs{W+1-W_i}\notag \\
	&=\min\{1, \lambda^{-1}\}\sum_{i=1}^n p_i\IE\left[\sum_{j\not=i} \left(X_j- X_j^{(i)}\right)+X_i\right] \notag  \\
	&=\min\{1, \lambda^{-1}\}\sum_{i=1}^n p_i\IE\left[W-W_i+1\right], \label{128}
\ee
where the penultimate equality uses the monotonicity of the size-bias coupling.  Using again the construction of the size-bias coupling we obtain
that \eq{128} is equal to
\ba
\min\{1, \lambda^{-1}\}\left(\lambda^2-\lambda\IE[W^s]+\lambda\right),
\ee
which yields the desired inequality by the definition of the size-bias distribution.
\end{proof}
\begin{example}[Hypergeometric distribution]
Suppose we have $N$ balls in an urn in which $1\leq n \leq N$ are colored red and we draw $1\leq m \leq N$ balls uniformly at
random without replacement so that each of the $\binom{N}{m}$ subsets of balls is equally likely.  Let $W$ be the number of 
red balls.  It is well known that if $N$ is large and $m/N$ is small, then $W$ has approximately a binomial distribution since the dependence
diminishes.  Thus, we would also expect $W$ to be approximately Poisson distributed if in addition, $n/N$ is small.
We can use Theorem \ref{thm125} to make this heuristic precise.  

Label the red balls in the urn arbitrarily and let $X_i$ be the indicator that ball $i$ is chosen in the $m$-sample
so that we have the representation $W=\sum_{i=1}^nX_i$.  Since the $X_i$ are exchangeable, we can use Corollary \ref{cor122} 
to size-bias $W$. If the ball labelled one already appears in the $m$-sample then do nothing, otherwise, we 
force $X_1^s=1$ by adding the ball labelled one to the sample and putting back a ball chosen uniformly at random
from the initial $m$-sample.  If we let $X_i^{(1)}$ be the indicator that ball $i$ is in the sample 
after this procedure, then it is clear that $(X_i^{(1)})_{i\geq2}$ has the distribution of $(X_i)_{i\geq2}$ conditional on $X_1=1$
so that we can take $W^s=\sum_{i\geq2}X_i^{(1)} +1$.  

In the construction of $W^s$ above, no additional red balls labeled $2, \ldots, n$ can be added to the $m$-sample.
Thus $X_i^{(1)}\leq X_i$
for $i\geq 2$, and we
can apply Theorem \ref{thm125}.  A simple calculation yields
\ba
\IE[W]=\frac{nm}{N} \mbox{\, and \,} \var(W) = \frac{nm (N-n) (n-m)}{N^2 (N-1)},
\ee
so that for $Z\sim\Po(nm/N)$, 
\ba
\dtv(W, Z)\leq \min\left\{1, \frac{nm}{N}\right\}\left(\frac{n}{N-1}+\frac{m}{N-1}-\frac{nm}{N(N-1)}-\frac{1}{N-1}\right).
\ee
\begin{remark}
This same bound can be recovered after noting the well known fact that 
$W$ has the representation as a sum of \emph{independent} indicators  (see \cite{pit97}) 
which implies that Theorem \ref{thm125} can be applied.
\end{remark}
\end{example}
\begin{example}[Coupon collecting]\label{ex123}
Assume that a certain brand of cereal puts a toy in each carton.  There are $n$ distinct
types of toys and 
each week you pick up a carton of this cereal from the 
grocery store in such a way as to receive a uniformly random
type of toy, independent of the toys received previously. 
The classical coupon collecting problem asks the question
of how many cartons of cereal you must pick up in order to have
received all $n$ types of toys.

We formulate the problem as follows.  Assume you have $n$ boxes and $k$ balls are tossed independently into these boxes
uniformly at random.  Let $W$ be the number of empty boxes after tossing all $k$ balls into the boxes.
Viewing the $n$ boxes as types of toys and the $k$ balls as cartons of cereal, it is easy to see that the 
event $\{W=0\}$ corresponds to the event that $k$ cartons of cereal are sufficient to receive all $n$ types of toys.
We will use Theorem \ref{thm125} to show that $W$ is approximately Poisson which will yield an estimate with error
for the probability of this event.

Let $X_i$ be the indicator that box $i$ (under some arbitrary labeling) is empty after tossing the $k$ balls so that
$W=\sum_{i=1}^n X_i$.  Since the $X_i$ are exchangeable, we can use Corollary \ref{cor122} 
to size-bias $W$ by
first setting $X_1^s=1$ by emptying box $1$ (if it is not already empty) and then redistributing the
balls in box $1$ uniformly among boxes $2$ through $n$.
If we let $X_i^{(1)}$ be the indicator that box $i$ is empty  
after this procedure, then it is clear that $(X_i^{(1)})_{i\geq2}$ has the distribution of $(X_i)_{i\geq2}$ conditional on $X_1=1$
so that we can take $W^s=\sum_{i\geq2}X_i^{(1)} +1$.  

In the construction of $W^s$ above we can only add balls to boxes $2$ through $n$, which implies $X_i^{(1)}\leq X_i$ for $i\geq2$ so that we
can apply Theorem \ref{thm125}.  In order to apply the theorem we only need to compute the mean and variance of $W$.  First note
that $\IP(X_i=1)=((n-1)/n)^k$ so that 
\ba
\lambda:=\IE[W]=n\left(1-\frac{1}{n}\right)^k,
\ee
and also that for $i\not=j$, $\IP(X_i=1, X_j=1)=((n-2)/n)^k$ so that
\ba
\var(W)=\lambda\left[1-\left(1-\frac{1}{n}\right)^k\right]+n(n-1)\left[\left(1-\frac{2}{n}\right)^k-\left(1-\frac{1}{n}\right)^{2k}\right].
\ee
Using these calculations in Theorem \ref{thm125} yields that for $Z\sim\Po(\lambda)$, 
\ban
\dtv(W, Z)\leq \min\{1, \lambda\}\left(\left(1-\frac{1}{n}\right)^k+(n-1)\left[\left(1-\frac{1}{n}\right)^k - \left(1-\frac{1}{n-1}\right)^k\right]\right). \label{129}
\ee

In order to interpret this result asymptotically, let $k = n\log(n)-cn$ for some constant $c$ so that $\lambda=e^c$ is bounded away from zero and infinity as $n\to\infty$.  In this case \eq{129} is asymptotically of order
\ba
\frac{\lambda}{n}+\lambda\left[1-\left(\frac{n(n-2)}{(n-1)^2}\right)^k\right],
\ee 
and since for $0<a \leq 1$, $1-a^x\leq-\log(a)x$ and also $\log(1+x)\leq x$, we find
\ba
1-\left(\frac{n(n-2)}{(n-1)^2}\right)^k&\leq k \log\left(\frac{(n-1)^2}{n(n-2)}\right)  \leq \frac{k}{n(n-2)} =\frac{\log(n)-c}{(n-2)},
\ee
which implies
\ba
\dtv(W, Z)\leq C \frac{\log(n)}{n}.
\ee
\end{example}
\begin{example}[Coupon collecting continued]
We can embellish the coupon collecting problem of Example \ref{ex123} in a number of ways; recall
the notation and setup there.  For example, rather
than distribute the balls into the boxes uniformly and independently, we could
distribute each ball independently according to some probability distribution, say $p_i$ is the chance that a ball
goes into box $i$ with $\sum_{i=1}^n p_i=1$.  Note that Example \ref{ex123} had $p_i=1/n$ for all $i$.  

Let $X_i$ be the indicator that box $i$ (under some arbitrary labeling) is empty after tossing $k$ balls and
$W=\sum_{i=1}^n X_i$.
In this setting,
the $X_i$ are not necessarily exchangeable so that we use Corollary \ref{cor121} to construct $W^s$.  First we
compute
\ba
\lambda:=\IE[W]=\sum_{i=1}^n (1-p_i)^k
\ee
and let $I$ be a random variable such that $\IP(I=i)=(1-p_i)^k/\lambda$.  Corollary \ref{cor121} now states that in order 
to construct $W^s$, we empty box $I$ (forcing $X_I^s=1$) and then redistribute the balls that were removed into the remaining
boxes independently and with chance of landing in box $j$ equal to $p_j/(1-p_I)$.  If we let $X_j^{(I)}$ be the indicator that box $j$ is empty  
after this procedure, then it is clear that $(X_j^{(I)})_{j\not=I}$ has the distribution of $(X_j)_{j\not=I}$ conditional on $X_I=1$
so that we can take $W^s=\sum_{j\not=I}X_j^{(I)} +1$.  

Analogous to Example \ref{ex123}, $X_j^{(i)}\leq X_j$ for  $j\not=i$ so that we can apply Theorem \ref{thm125}.   The mean and variance
are easily computed, but not as easily interpreted.  A little analysis (see \cite{bhj92} Section 6.2) yields that for $Z\sim\Po(\lambda)$,
\ba
\dtv(W,Z)\leq \min\{1, \lambda\}\left[\max_{i}(1-p_i)^k+\frac{k}{\lambda}\left(\frac{\lambda \log(k)}{k-\log(\lambda)}+\frac{4}{k}\right)^2\right].
\ee  

\begin{remark}
We could also consider the number of boxes with at most $m\geq 0$ balls; we just studied the case $m=0$.  The coupling is still
decreasing because it can be constructed by first choosing a box randomly and if it has greater than $m$ balls in it, redistributing
a random
number of them among the other boxes.  Thus Theorem \ref{thm125} can be applied and an error in the Poisson approximation can 
be obtained in terms of the mean and the variance.  We again refer to Chapter 6 of \cite{bhj92} for the details.  

Finally, could also consider the number of boxes containing exactly $m\geq0$ balls and containing at least $m\geq1$ balls, Theorem \ref{thm122} applies to the latter problem.  See Chapter 6 of \cite{bhj92}.
\end{remark}
\end{example}

\subsection{Exchangeable Pairs}

In this section, we will develop Stein's method for Poisson approximation using exchangeable pairs as
detailed in \cite{cdm05}.  
The applications for this theory are not as developed as that of dependency neighborhoods and size-biasing, but
the method fits well into our framework and the ideas here prove useful elsewhere \cite{rr10}.

As we have done for dependency neighborhoods and size-biasing, we could develop exchangeable pairs
for Poisson approximation by following our development for normal approximation 
which involved rewriting $\IE[Wf(W)]$ as the expectation of a term involving the exchangeable pair and $f$. 
However, this approach is not as useful as a different one which has the added advantage of
removing the $a$-Stein pair linearity condition.  
\begin{theorem}\label{thm161}
Let $W$ be a non-negative integer valued random variable and let $(W, W')$ be an 
exchangeable pair.   If $\%F$ is a sigma-field with $\sigma(W)\subseteq\%F$, and $Z\sim\Po(\lambda)$, then
for all $c\in\IR$, 
\ba
\dtv(W, Z)\leq \min\{1, \lambda^{-1/2}\}\bigg(\IE\big|\lambda-c\IP(W'=W+1|\%F)\big|
+\IE\big|W-c\IP(W'=W-1|\%F)\big|\bigg).
\ee
\end{theorem}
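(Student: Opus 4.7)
The plan is to deploy Theorem \ref{thm101}, which reduces the problem to bounding $|\IE[\lambda f(W+1) - Wf(W)]|$ uniformly over functions $f$ with $\norm{f} \leq \min\{1,\lambda^{-1/2}\}$. The essential trick is to use exchangeability via the antisymmetric function identity from Proposition \ref{prop51}(1): any antisymmetric $F(w,w') = -F(w',w)$ satisfies $\IE[F(W,W')] = 0$.

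The key step will be to construct the right antisymmetric function. I would take
\be
F(w,w') = f(w+1)\I[w'=w+1] - f(w)\I[w'=w-1],
\ee
which one checks to be antisymmetric (when $w' = w+1$ the second term of $F(w',w)$ activates and equals $f(w+1)\I[w'=w+1]$; when $w'=w-1$ the first term of $F(w',w)$ activates and equals $f(w)\I[w'=w-1]$). Hence $\IE[F(W,W')]=0$, i.e.\ $\IE[f(W+1)\I[W'=W+1]] = \IE[f(W)\I[W'=W-1]]$. Because $\sigma(W)\subseteq\%F$, the quantities $f(W)$ and $f(W+1)$ are $\%F$-measurable, so conditioning on $\%F$ yields the identity
\be
\IE[f(W+1)\IP(W'=W+1|\%F)] = \IE[f(W)\IP(W'=W-1|\%F)].
\ee

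Given this identity, I would introduce a ``telescoping" decomposition with the free parameter $c$:
\ba
\IE[\lambda f(W+1) - Wf(W)] &= \IE\bigl[(\lambda - c\IP(W'=W+1|\%F))\,f(W+1)\bigr] \\
&\quad + \IE\bigl[c\IP(W'=W+1|\%F)f(W+1) - c\IP(W'=W-1|\%F)f(W)\bigr] \\
&\quad + \IE\bigl[(c\IP(W'=W-1|\%F) - W)\,f(W)\bigr].
\ee
The middle line vanishes by the identity above, and applying the triangle inequality together with the bound $\norm{f}\leq \min\{1,\lambda^{-1/2}\}$ to the remaining two lines yields exactly the stated inequality after taking $\sup_{f}$ and invoking Theorem \ref{thm101}. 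The only real obstacle is guessing the antisymmetric $F$; once chosen, everything else is routine manipulation, and the flexibility in $c$ (which does not enter the identity) is what produces a useful bound in applications by letting one match $c\IP(W'=W\pm1|\%F)$ against $\lambda$ and $W$ respectively.
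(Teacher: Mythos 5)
Your proposal is correct and follows essentially the same route as the paper: the same antisymmetric function $F(w,w')=\I[w'=w+1]f(w')-\I[w'=w-1]f(w)$, the same conditioning on $\%F$ to obtain the vanishing middle term, and the same decomposition with the free parameter $c$ followed by the triangle inequality and the bound $\norm{f}\leq\min\{1,\lambda^{-1/2}\}$ from Theorem \ref{thm101}. No gaps.
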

Before the proof, a few remarks.
\begin{remark}
Typically $c$ is chosen to be approximately equal
to $\lambda/\IP(W'=W+1)=\lambda/\IP(W'=W-1)$ so that the 
terms in absolute value have a small mean.
\end{remark}
\begin{remark}
Similar to exchangeable pairs for normal approximation, there is a stochastic interpretation for the 
terms appearing in the error of Theorem \ref{thm161}.  We can define a birth-death
process on $\IN\cup\{0\}$ where the birth rate at state $k$ is $\alpha(k)=\lambda$ and death rate 
at state $k$ is $\beta(k)=k$.  This birth-death process has a $\Po(\lambda)$ stationary distribution
so that the theorem says that if there is a reversible Markov chain with stationary distribution
equal to the distribution of $W$ such that the chance of increasing by one is approximately 
proportional to some constant $\lambda$ and the chance
of decreasing by one is approximately proportional to the current state, then $W$ will be approximately Poisson.
\end{remark}
\begin{proof}
As usual, we want to bound $\abs{\IE[\lambda f(W+1)-Wf(W)]}$ for functions $f$ such that
$\norm{f}\leq \min\{1, \lambda^{-1/2}\}$ and $\norm{\Delta f}\leq \{1, \lambda^{-1}\}$.  Now, the
function 
\ba
F(w, w')=\I[w'=w+1]f(w')-\I[w'=w-1]f(w),
\ee
is anti-symmetric, so that $\IE[F(W,W')]=0$.  Moreover, by conditioning on $\%F$ we obtain
that for all $c\in\IR$,
\ba
c\IE\left[\IP(W'=W+1|\%F)f(W+1)-\IP(W'=W-1|\%F)f(W)\right]=0.
\ee
which implies that
\ban
&\IE[\lambda f(W+1)-Wf(W)]= \notag \\
&\qquad\IE\left[\left(\lambda-c\IP(W'=W+1|\%F)\right)f(W+1)
-\left(W-c\IP(W'=W-1|\%F)\right)f(W)\right]. \label{161}
\ee
Taking the absolute value and applying the triangle inequality yields the theorem.
\end{proof}
\begin{example}[Law of small numbers]
Let $W=\sum_{i=1}^n X_i$ where the $X_i$ are independent indicators with $\IP(X_i=1)=p_i$ and define $W'=W-X_I+X_I'$,
where $I$ is uniform on $\{1, \ldots, n\}$ independent of $W$ and $X_1', \ldots, X_n'$ are independent copies of
the $X_i$ independent of each other and all else.  It is easy to see that $(W, W')$ is an exchangeable pair and 
that 
\ba
&\IP(W'=W+1|(X_i)_{i\geq1})=\frac{1}{n}\sum_{i=1}^n(1-X_i)p_i,  \\
&\IP(W'=W-1|(X_i)_{i\geq1})=\frac{1}{n}\sum_{i=1}^nX_i(1-p_i),
\ee
so that Theorem \ref{thm161} with $c=n$ yields
\ba
\dtv(W, Z)&\leq \min\{1, \lambda^{-1/2}\}\bigg(\IE\left|\sum_{i=1}^np_i-\sum_{i=1}^n(1-X_i)p_i\right|
+\IE\left|\sum_{i=1}^nX_i-\sum_{i=1}^nX_i(1-p_i)\right|\bigg) \\
&=2\min\{1, \lambda^{-1/2}\}\IE\left[\sum_{i=1}^np_iX_i\right]=2\min\{1, \lambda^{-1/2}\}\sum_{i=1}^np_i^2.
\ee
This bound is not as good as that obtained using size-biasing (for example), but the better result can be 
recovered with exchangeable pairs by bounding the absolute value of \eq{161} directly.
\end{example}
\begin{example}[Fixed points of permutations]
Let $\pi$ be a permutation of $\{1, \ldots, n\}$ chosen uniformly at random, let $\tau$ be a uniformly chosen random transposition,
and let $\pi'=\pi\tau$.  Let $W$ be the number of fixed points of $\pi$ and $W'$ be the number of fixed points of $\pi'$.  Then $(W, W')$
is exchangeable and if $W_2$ is the number of transpositions when $\pi$ is written as a product of disjoint cycles, then
\ba
&\IP(W'=W+1|\pi)=\frac{n-W-2W_2}{\binom{n}{2}},  \\
&\IP(W'=W-1|\pi)=\frac{W(n-W)}{\binom{n}{2}}.
\ee
To see why these expressions are true, note that in order for the number of fixed points of a permutation to increase
by exactly one after multiplication by a transposition, a letter must be fixed that is not already and is not in a transposition (else
the number of fixed points would increase by two).  Similar considerations lead to the second expression.

By considering $W$ as a sum of indicators, it is easy to see that $\IE[W]=1$ so that
applying Theorem \ref{thm161} with $c=(n-1)/2$ yields
\ba
\dtv(W, Z)&\leq \IE\left|1-\frac{n-W-2W_2}{n}\right|
+\IE\left|W-\frac{W(n-W)}{n}\right| \\
&=\frac{1}{n}\IE\left[W+2W_2\right]+\frac{1}{n}\IE[W^2]\\
&=4/n,
\ee
where the final inequality follows by considering $W$ and $W_2$ as a sum of indicators which leads
to $\IE[W^2]=2$ and $\IE[W_2]=1/2$.  As is well known, the true rate of convergence is much better than order $1/n$;
it is not clear how to get a better rate with this method.

We could also handle the number of $i$-cycles in a random permutation, but the analysis is a bit more tedious
and is not worth pursuing due to the fact that so much more is known in this example - see \cite{abt03} for a thorough account.

\end{example}

\section{Exponential approximation}\label{exp}

In this section we will 
develop Stein's method for bounding the Wasserstein distance (see Section \ref{probmet}) 
between a distribution of interest and the Exponential distribution.
We will move quickly through the material analogous to that of Section \ref{stnorm} 
for normal approximation, as the general framework is similar.
Our treatment follows \cite{rope10} closely; alternative approaches can be found in \cite{cfr06}.

\begin{definition}
We say that a random variable $Z$ has the exponential distribution with rate $\lambda$, denoted $Z\sim\Exp(\lambda)$
if $Z$ has density $\lambda e^{-\lambda z}$ for $z>0$.	
\end{definition}

\begin{lemma}\label{lem171}
Define the functional operator $\mathcal{A}$ by
\ba
\mathcal{A}f(x)=f'(x)-f(x)+f(0).
\ee
\begin{enumerate}
\item If $Z\sim\Exp(1)$, then
$\IE\%{A}f(Z)=0$ for all absolutely continuous $f$ with $\IE\abs{f'(Z)}<\infty$.
\item If for some non-negative random variable $W$, $\IE\%{A}f(W)=0$ for all absolutely continuous $f$
with $\IE\abs{f'(Z)}<\infty$, then $W\sim\Exp(1)$.
\end{enumerate}
The operator $\%A$ is referred to as a characterizing operator of the exponential distribution.
\end{lemma}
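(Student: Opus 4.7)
The plan is to mirror exactly the structure of the proof of Lemma \ref{lem1} for the normal distribution, with integration by parts for Item 1 and an explicit solution to a Stein equation for Item 2.

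For Item 1, the argument is a direct integration by parts. If $Z\sim\Exp(1)$ and $f$ is absolutely continuous with $\IE\abs{f'(Z)}<\infty$, then I would write
\ba
\IE[f'(Z)]=\int_0^\infty f'(z) e^{-z} dz = \bigl[f(z)e^{-z}\bigr]_0^\infty + \int_0^\infty f(z) e^{-z} dz = -f(0)+\IE[f(Z)],
\ee
so $\IE[\%{A}f(Z)]=\IE[f'(Z)-f(Z)+f(0)]=0$. The only subtle step is justifying that the boundary term at infinity vanishes; since $f(z)=f(0)+\int_0^z f'(t)dt$ and $\IE\abs{f'(Z)}<\infty$ controls $\int_0^\infty \abs{f'(t)}e^{-t}dt$, a Fubini-type rearrangement (essentially the same one used in the normal case in the proof of Lemma \ref{lem1}) gives the decay $f(z)e^{-z}\to 0$ along a subsequence, which is all that is needed for the identity.

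For Item 2, I would construct for each $x\geq 0$ an absolutely continuous function $f_x$ with $f_x(0)=0$ solving the Stein equation
\ban
f_x'(w)-f_x(w)=\I[w\leq x] - (1-e^{-x}), \qquad w\geq 0. \label{expstein}
\ee
Multiplying by the integrating factor $e^{-w}$, integrating from $0$ to $w$, and using $f_x(0)=0$ yields the explicit solution
\ba
f_x(w)=\begin{cases} e^{w-x}-e^{-x}, & 0\leq w\leq x, \\ 1-e^{-x}, & w>x, \end{cases}
\ee
which is bounded by $1$, absolutely continuous, and has bounded derivative, so the integrability hypothesis $\IE\abs{f_x'(Z)}<\infty$ is satisfied. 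Applying the assumption $\IE[\%{A}f_x(W)]=0$ and using $f_x(0)=0$ gives $\IE\bkle{\I[W\leq x]-(1-e^{-x})}=0$, i.e.\ $\IP(W\leq x)=1-e^{-x}$ for every $x\geq 0$. Since $W\geq 0$, this identifies the distribution of $W$ as $\Exp(1)$.

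The main obstacle is not conceptual but technical: the integration-by-parts identity in Item 1 must be justified under the minimal hypothesis $\IE\abs{f'(Z)}<\infty$ (rather than some stronger tail condition on $f$ itself), and this is handled exactly as in the proof of Lemma \ref{lem1} by splitting the derivative integral over $[0,\infty)$, writing $\int_t^\infty$ in the exponential weight, and swapping the order of integration via Fubini. Everything else is a routine integrating-factor computation parallel to the normal case in Lemma \ref{lem2}.
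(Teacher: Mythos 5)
Your proof is correct. For Item 1 you take essentially the paper's route: the paper writes $e^{-t}=\int_t^\infty e^{-x}\,dx$ and swaps the order of integration by Fubini, which is exactly the justification you invoke for killing the boundary term in your integration by parts (and indeed $f(R)e^{-R}\to0$ genuinely, not just along a subsequence, once one splits $\int_0^R\abs{f'}$ at $R/2$).

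For Item 2, however, you take a genuinely different and arguably cleaner route. The paper tests against $f(x)=x^k$ to obtain the moment recursion $k\,\IE[W^{k-1}]=\IE[W^k]$ and then identifies the distribution via Carleman's condition (or, alternatively, via the Laplace transform with $f(x)=e^{-ux}$). You instead solve the Stein equation $f_x'(w)-f_x(w)=\I[w\leq x]-(1-e^{-x})$ explicitly with $f_x(0)=0$ and read off $\IP(W\leq x)=1-e^{-x}$ directly; your formula for $f_x$ is correct ($f_x(w)=e^{w-x}-e^{-x}$ on $[0,x]$ and $f_x(w)=1-e^{-x}$ beyond, with $\norm{f_x},\norm{f_x'}\leq1$), and this mirrors what the paper does in the normal case via Lemma \ref{lem2} and Corollary \ref{cor1}. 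Your approach buys two things: it avoids any appeal to moment determinacy, and it only uses bounded test functions with bounded derivative, whereas the paper's choice $f(x)=x^k$ tacitly presupposes that $\IE[W^k]<\infty$ for all $k$ in order for $\IE[\%A f(W)]$ to be well defined. The paper's approach, in exchange, requires no construction at all beyond plugging in monomials. The only point worth a sentence in your write-up is the convention for $f_x'$ at the kink $w=x$ (take the left derivative, so the Stein equation holds there too, which matters only if $W$ has an atom at $x$); this is the same standard issue present in the Kolmogorov-metric normal Stein equation and does not affect the argument.
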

Before proving the lemma, we state one more result and then its consequence.
\begin{lemma}\label{lem172}
Let $Z\sim\Exp(1)$ and for some function $h$ let $f_h$ be the unique solution of
\ban
f_h'(x)-f_h(x)=h(x)-\IE[h(Z)] \label{171}
\ee
such that $f_h(0)=0$. 
\begin{enumerate}
\item\label{i171} If $h$ is non-negative and bounded, then 
\ba
\norm{f_h}\leq \norm{h} \mbox{\, and \,} \norm{f_h'}\leq 2 \norm{h}.
\ee 
\item\label{i172} If $h$ is absolutely continuous, then 
\ba
\norm{f_h'}\leq \norm{h'} \mbox{\, and \,}  \hspace{.5in} \norm{f_h''}\leq 2 \norm{h'}. 
\ee
\end{enumerate}
\end{lemma}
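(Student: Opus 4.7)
The plan is to first solve the Stein equation \eqref{171} explicitly by the method of integrating factors, and then read the claimed bounds off of that representation together with some integration by parts.

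Multiplying \eqref{171} by the integrating factor $e^{-x}$ and using $f_h(0)=0$ gives the unique solution
\be
f_h(x)=e^x\int_0^x e^{-t}\bklr{h(t)-\IE[h(Z)]}dt=-e^x\int_x^\infty e^{-t}\bklr{h(t)-\IE[h(Z)]}dt,
\ee
where the second equality follows because the full integral over $(0,\infty)$ of $e^{-t}(h(t)-\IE[h(Z)])$ vanishes (it is the definition of $\IE[h(Z)]$ for $Z\sim\Exp(1)$, minus itself).

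For Item \ref{i171}, the key observation is that if $h\geq 0$ is bounded then both $h(t)$ and $\IE[h(Z)]$ lie in $[0,\norm{h}]$, so $\abs{h(t)-\IE[h(Z)]}\leq \norm{h}$ pointwise. Plugging this into the second integral representation and using $e^x\int_x^\infty e^{-t}dt=1$ yields $\norm{f_h}\leq\norm{h}$. The bound on $f_h'$ is then immediate from the Stein equation itself: $\abs{f_h'(x)}\leq \abs{f_h(x)}+\abs{h(x)-\IE[h(Z)]}\leq 2\norm{h}$.

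For Item \ref{i172}, I would integrate by parts in the second representation with $u=h(t)-\IE[h(Z)]$, $dv=e^{-t}dt$. Assuming the boundary term $e^{-t}(h(t)-\IE[h(Z)])$ vanishes at infinity (which follows from absolute continuity combined with $\IE\abs{h'(Z)}<\infty$, hence the needed integrability), this produces
\be
f_h(x)=-\bklr{h(x)-\IE[h(Z)]}-e^x\int_x^\infty e^{-t}h'(t)\,dt,
\ee
and comparing with the Stein equation $f_h'(x)=f_h(x)+h(x)-\IE[h(Z)]$ gives the clean identity
\be
f_h'(x)=-e^x\int_x^\infty e^{-t}h'(t)\,dt.
\ee
From this representation $\abs{f_h'(x)}\leq \norm{h'}\,e^x\int_x^\infty e^{-t}dt=\norm{h'}$. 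Differentiating \eqref{171} gives $f_h''(x)=f_h'(x)+h'(x)$, so $\norm{f_h''}\leq 2\norm{h'}$, completing the proof.

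The only subtle step is justifying the vanishing of the boundary term at infinity when integrating by parts; this is the main technical point one has to be careful with, and is handled by the integrability hypothesis $\IE\abs{h'(Z)}<\infty$ together with absolute continuity of $h$. Everything else is either algebraic manipulation of the explicit solution or a direct appeal to the differential equation \eqref{171}.
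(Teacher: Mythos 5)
Your proof is correct and follows essentially the same route as the paper: solve \eqref{171} by an integrating factor to get $f_h(x)=-e^x\int_x^\infty e^{-t}(h(t)-\IE[h(Z)])\,dt$, bound $\norm{f_h}$ pointwise using $0\leq h\leq\norm{h}$, read $\norm{f_h'}\leq 2\norm{h}$ off the equation itself, and for Item \ref{i172} observe that $f_h'$ satisfies the analogous equation driven by $h'$ (which you make explicit via integration by parts, where the paper simply differentiates \eqref{171} and reuses the previous argument). Your attention to the vanishing boundary term at infinity is a reasonable extra care that the paper glosses over.
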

Analogous to normal approximation, this setup immediately yields the following promising result.
\begin{theorem}\label{thm171}
Let $W\geq0$ be a random variable with finite mean and $Z\sim\Exp(1)$.
\begin{enumerate}
\item If $\%F_W$ is the set of functions with $\norm{f'}\leq 1$, $\norm{f''}\leq 2$, 
and $f(0)=0$, then 
\ba
\dw(W, Z)\leq \sup_{f\in\%F_W}\left|\IE[f'(W)-f(W)]\right|.
\ee
\item If $\%F_K$ is the set of functions with $\norm{f}\leq 1$, $\norm{f'}\leq 2$, 
and $f(0)=0$, then 
\ba
\dk(W, Z)\leq \sup_{f\in\%F_K}\left|\IE[f'(W)-f(W)]\right|.
\ee
\end{enumerate}
\end{theorem}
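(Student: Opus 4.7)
The plan is to handle both items by the same recipe: fix a test function $h$ from the class defining the metric, solve the Stein equation \eqref{171} with boundary condition $f_h(0)=0$, rewrite the distance as an expectation of $f_h'(W)-f_h(W)$, and then invoke Lemma \ref{lem172} to certify that $f_h$ lies in $\%F_W$ or $\%F_K$.

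First I would fix $h$ in the appropriate class and let $f_h$ be the unique solution of \eqref{171} with $f_h(0)=0$, whose existence is asserted in Lemma \ref{lem172}. Evaluating \eqref{171} at $W$ and taking expectations gives
\ba
\IE[h(W)]-\IE[h(Z)]=\IE[f_h'(W)-f_h(W)],
\ee
so by the representation \eqref{23} we have
\ba
d_{\%H}(W,Z)=\sup_{h\in\%H}\bigl|\IE[f_h'(W)-f_h(W)]\bigr|.
\ee
For the Wasserstein case, $\%H$ is the set of $1$-Lipschitz functions, which are in particular absolutely continuous with $\norm{h'}\leq 1$. Item \ref{i172} of Lemma \ref{lem172} then yields $\norm{f_h'}\leq 1$ and $\norm{f_h''}\leq 2$; combined with $f_h(0)=0$ this places $f_h\in\%F_W$, so the supremum over $\%H$ is dominated by the supremum over $\%F_W$. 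For the Kolmogorov case, $\%H$ consists of the indicators $\I[\cdot\leq x]$, each of which is non-negative and bounded with $\norm{h}=1$; Item \ref{i171} of Lemma \ref{lem172} gives $\norm{f_h}\leq 1$ and $\norm{f_h'}\leq 2$, and again $f_h(0)=0$, so $f_h\in\%F_K$ and the second inequality follows.

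The only subtlety, which I do not expect to cause real difficulty, is justifying the expectation manipulation: in the Wasserstein case we need $\IE|h(W)|$ and $\IE|f_h(W)|,\IE|f_h'(W)|$ finite, which follows from the assumption $\IE W<\infty$ together with the linear growth of Lipschitz $h$ and the bounds $\norm{f_h'}\leq 1$, $|f_h(w)|\leq |w|$ implied by Item \ref{i172} and $f_h(0)=0$; in the Kolmogorov case all integrands are bounded by Item \ref{i171}, so no integrability issue arises. Beyond this routine check, the argument is essentially a direct transcription of the Stein-equation recipe already used for the normal in Theorem \ref{thm31} and for the Poisson in Theorem \ref{thm101}.
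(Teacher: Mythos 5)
Your proposal is correct and is exactly the argument the paper intends: the paper presents Theorem \ref{thm171} as an immediate consequence of the Stein equation \eqref{171} together with the bounds of Lemma \ref{lem172}, which is precisely your recipe of solving the equation for each test function $h$, checking via Items \ref{i171} and \ref{i172} that $f_h$ lands in $\%F_K$ or $\%F_W$ respectively, and taking suprema. Your integrability remarks for the Wasserstein case are a reasonable (and harmless) extra precaution that the paper leaves implicit.
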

\begin{proof}[Proof of Lemma \ref{lem172}]
Writing $\tilde{h}(t):=h(t)-\IE[h(Z)]$, the relation \eq{171} can easily be solved to yield
\ban
f_h(x)  
=-e^x\int_{x}^\infty\tilde{h}(t)e^{-t}dt. \label{172}
\ee
\begin{enumerate}
\item[\ref{i171}.] If $h$ is bounded, then \eq{172} implies that
\ba
\abs{f_h(x)}\leq e^x\int_x^{\infty} \abs{\tilde{h}(t)}e^{-t} dt \leq \norm{h}.
\ee
Since $f_h$ solves \eqref{171}, we have
\ba
\abs{f_h'(x)}=\abs{f_h(x)+\tilde{h}(x)}\leq \norm{f_h}+\norm{\tilde{h}}\leq 2\norm{h},
\ee
where we have used the bound on $\norm{f_h}$ above and that $h$ is non-negative.
\item[\ref{i172}.] If $h$ is absolutely continuous, then by the form of \eq{172} it is clear
that $f_h$ is twice differentiable.  Thus we have that $f_h$ satisfies
\ba
f_h''(x)-f_h'(x)=h'(x),
\ee
so that we can use the arguments of the proof of the previous item to establish the bounds
on $\norm{f_h'}$ and $\norm{f_h''}$.

\end{enumerate}
\end{proof}
\begin{proof}[Proof of Lemma \ref{lem171}]
Item 1 essentially follows by integration by parts.  More formally,
for $f$ absolutely continuous, we have
\ba
\IE[f'(Z)]&=\int_0^\infty f'(t) e^{-t} dt =\int_0^\infty f'(t) \int_t^\infty e^{-x} dx dt \\
	&=\int_0^\infty e^{-x} \int_0^x f'(t)dt dx =\IE[f(Z)]-f(0),
\ee
as desired.  For the second item, assume that $W\geq0$ satisfies
$\IE[f'(W)]=\IE[f(W)]-f(0)$ for all absolutely continuous $f$ with $\IE\abs{f'(Z)}<\infty$.
The functions $f(x)=x^k$ are in this family, so that 
\ba
k\IE[W^{k-1}]=\IE[W^k],
\ee
and this relation determines the moments of $W$ as those of an exponential distribution with rate one, which
satisfy Carleman's condition (using Stirling's approximation).
Alternatively, the hypothesis on $W$ also determines the Laplace transform as that of an exponential
variable.
\end{proof}

It is clear from the form of the error in Theorem \ref{thm171} that we want to 
find some structure in $W$, the random variable of interest, that allows us to compare
$\IE[f(W)]$ to $\IE[f'(W)]$ for appropriate $f$.  Unfortunately the tools we have previously developed
for the analogous task in Poisson and Normal approximation will not help us directly here.  
However, we will be able to define a transformation amenable to the form of the exponential
characterizing operator which will prove fruitful.  An alternative approach (followed in \cite{cfr06, chsh11}) is to use exchangeable
pairs with a modified $a$-Stein condition.

\subsection{Equilibrium coupling}
We begin with a definition.
\begin{definition}
Let $W\geq0$  a random variable with $\IE[W]=\mu$.  We say that $W^e$ has the \emph{equilibrium} distribution
with respect to $W$ if 
\ban
\IE[f(W)]-f(0)=\mu\IE[f'(W^e)] \label{173}
\ee
for all Lipschitz functions $f$.  
\end{definition}
We will see below that the equilibrium distribution is absolutely continuous with respect to Lebesgue measure, so that the
right hand side of \eq{173} is well defined.  Before this discussion, we note
the following consequence of this definition.
\begin{theorem}\label{thm172}
Let $W\geq0$ a random variable with $\IE[W]=1$ and $\IE[W^2]<\infty$.  If $W^e$ has the equilibrium distribution with
respect to $W$ and is coupled to $W$, then
\ba
\dw(W,Z)\leq 2\IE\abs{W^e-W}.
\ee
\end{theorem}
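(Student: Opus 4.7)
The plan is to mimic the proof of Theorem \ref{thm81} (the zero-bias analogue), since the equilibrium transform is tailored to the exponential characterizing operator in essentially the same way that the zero-bias transform is tailored to the normal one.

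First I would invoke Item 1 of Theorem \ref{thm171}, which reduces bounding $\dw(W,Z)$ to bounding $|\IE[f'(W) - f(W)]|$ uniformly over the class $\%F_W$ of functions with $f(0) = 0$, $\norm{f'} \leq 1$, and $\norm{f''} \leq 2$. Next, for any such $f$, the condition $\norm{f'} \leq 1$ makes $f$ Lipschitz, so the defining equation \eqref{173} of the equilibrium distribution applies. Since $\mu = \IE[W] = 1$ and $f(0) = 0$, this gives
\be
\IE[f(W)] = \IE[f'(W^e)].
\ee

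Substituting into the Stein expression, I get
\be
\bigl|\IE[f'(W) - f(W)]\bigr| = \bigl|\IE[f'(W) - f'(W^e)]\bigr| \leq \norm{f''}\, \IE|W - W^e| \leq 2\, \IE|W^e - W|,
\ee
using that $W$ and $W^e$ are coupled on the same space together with the bound $\norm{f''} \leq 2$. Taking the supremum over $f \in \%F_W$ yields the claim.

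There is no real obstacle: the only point of care is checking that $f \in \%F_W$ is indeed in the class of functions to which \eqref{173} applies (Lipschitz), and that the boundary condition $f(0) = 0$ is exactly what makes the $f(0)$ term in \eqref{173} drop out. The finiteness of $\IE[W^2]$ is implicitly needed to ensure the equilibrium distribution has a finite first moment, so that $\IE|W^e - W|$ is a meaningful quantity and the coupling is well defined.
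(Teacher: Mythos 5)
Your proof is correct and is essentially identical to the paper's: both reduce to Theorem \ref{thm171}, use the defining relation \eqref{173} with $\mu=1$ and $f(0)=0$ to replace $\IE[f(W)]$ by $\IE[f'(W^e)]$, and then bound the difference of $f'$ values by $\norm{f''}\,\IE\abs{W^e-W}\leq 2\,\IE\abs{W^e-W}$. No further comment is needed.
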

\begin{proof}
Note that if $f(0)=0$ and $\IE[W]=1$, then 
$\IE[f(W)]=\IE[f'(W^e)]$ so that for $f$ with bounded first and second derivative and such that $f(0)=0$, we have
\ba
\abs{\IE[f'(W)-f(W)]}=\abs{\IE[f'(W)-f'(W^e)]}\leq\norm{f''}\IE\abs{W^e-W}.
\ee
Applying Theorem \ref{thm171} now proves the desired conclusion.
\end{proof}
We now state a constructive definition of the equilibrium distribution which will also be useful later.
\begin{proposition}\label{prop171}
Let $W\geq0$ be a random variable with $\IE[W]=\mu$ and let $W^s$ have the size-bias distribution
of $W$.  If $U$ is uniform on the interval $(0,1)$ and independent of $W^s$, then $UW^s$ has the equilibrium
distribution of $W$. 
\end{proposition}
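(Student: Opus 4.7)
The plan is to verify the defining identity \eq{173} for $W^e := UW^s$ by starting from Lipschitz $f$, reducing the identity to an application of the size-bias definition via the fundamental theorem of calculus with the change of variable $t = uw$. Concretely, for Lipschitz $f$, $f$ is a.e.\ differentiable with $f'$ bounded, so for each $w \geq 0$ we have $f(w) - f(0) = \int_0^w f'(t)\,dt = w\int_0^1 f'(uw)\,du$ after substituting $t = uw$. Substituting $w = W$ gives
\be
f(W) - f(0) = W\int_0^1 f'(UW)\,dU,
\ee
where I take $U$ uniform on $(0,1)$ independent of $W$ so the right-hand side makes probabilistic sense.

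Next I would take expectations and invoke Fubini, justified by boundedness of $f'$ and $\IE[W]<\infty$, to obtain
\be
\IE[f(W)] - f(0) = \int_0^1 \IE\bigl[W\, f'(uW)\bigr]\,du.
\ee
Now the plan is to apply the size-bias definition to the integrand for each fixed $u$: setting $h_u(w) := f'(uw)$, boundedness of $f'$ ensures $\IE|W h_u(W)| \leq \norm{f'}\mu < \infty$, so $\IE[W h_u(W)] = \mu\,\IE[f'(uW^s)]$. Substituting and pulling $\mu$ out yields
\be
\IE[f(W)] - f(0) = \mu \int_0^1 \IE[f'(uW^s)]\,du.
\ee

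Finally, with $U$ uniform on $(0,1)$ independent of $W^s$, Fubini (applied in reverse) gives $\int_0^1 \IE[f'(uW^s)]\,du = \IE[f'(UW^s)]$, which is exactly \eq{173} with $W^e = UW^s$. I do not expect any genuine obstacle: the argument reduces entirely to the fundamental theorem of calculus plus one application of the size-bias identity, and the only care needed is ensuring $f'$ is integrable against the distribution of $W$ (handled by the Lipschitz hypothesis bounding $\norm{f'}$ and the assumption $\IE[W]=\mu<\infty$). If one wishes to extend beyond Lipschitz, the same argument works for any absolutely continuous $f$ such that $\IE|W f'(UW)| < \infty$, since the pointwise identity $f(w)-f(0) = w\int_0^1 f'(uw)\,du$ holds for all absolutely continuous $f$.
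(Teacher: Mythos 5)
Your proof is correct and takes essentially the same route as the paper: both arguments rest on the identity $\int_0^1 f'(uw)\,du = (f(w)-f(0))/w$ combined with a single application of the size-bias definition. The paper merely performs the two interchangeable steps in the opposite order—integrating out $U$ first and then size-biasing the function $w \mapsto f(w)/w$ (using $f(0)=0$)—whereas you size-bias $h_u(w)=f'(uw)$ for each fixed $u$ and then integrate over $u$; the difference is immaterial.
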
 
\begin{proof}
Let $f$ be Lipschitz with $f(0)=0$.  Then
\ba
\IE[f'(UW^s)]=\IE\left[\int_{0}^1f'(uW^s)du\right]=\IE\left[\frac{f(W^s)}{W^s}\right]=\mu^{-1}\IE[f(W)],
\ee
where in the final equality we use the definition of the size-bias distribution.
\end{proof}
\begin{remark}\label{rem171}
This proposition shows that the equilibrium distribution is the same as that from renewal theory.  That
is, a renewal process in stationary with increments distributed as a random variable $Y$ is given by
$Y^e+Y_1+\cdots+Y_n$, where $Y^e$ has the equilibrium distribution of  $Y$ and and is independent of
the i.i.d. sequence $Y_1, Y_2, \ldots$
\end{remark}

We will use Theorem \ref{thm172} to treat some less trivial applications shortly, but first we handle a canonical exponential approximation result.
\begin{example}[Geometric distribution]\label{ex171}
Let $N$ be geometric with parameter $p$ with positive support (denoted $N\sim\Ge(p)$), specifically, $\IP(N=k)=(1-p)^{k-1}p$ for $k\geq 1$.
It is well known that as $p\to0$, $pN$ converges weakly to an exponential distribution; this fact is not surprising as a simple
calculation shows that if $Z\sim\Exp(\lambda)$, then the smallest integer no greater than $Z$ is geometrically distributed.  We can use
Theorem \ref{thm172} above to obtain an error in this approximation.

A little calculation shows that $N$ has the same distribution as a variable which is uniform on $\{1, \ldots, N^s\}$,
where $N^s$ has the size-bias distribution of $N$ 
(heuristically this is due to the memoryless property of the geometric distribution - see Remark \ref{rem171}).  Thus
Proposition \ref{prop171} implies that for $U$ uniform on $(0,1)$ independent of $N$, $N-U$ has the equilibrium distribution of $N$. 

It is easy to verify that for a constant $c$ and a non-negative variable $X$, $(cX)^e\ed cX^e$, so that if we define $W=pN$ 
our remarks above imply that $W^e:=W-pU$ has the equilibrium distribution with respect to $W$.
We now apply Theorem \ref{thm172} to find that for $Z\sim\Exp(1)$, 
\ba
\dw(W, Z)\leq 2\IE[pU]=p.
\ee
\end{example}

\subsection{Application: Geometric sums}
Our first application is a generalization of the following classical result which in turn generalizes Example \ref{ex171}.
\begin{theorem}[R\'enyi's Theorem]
Let $X_1, X_2, \ldots$ be an i.i.d. sequence of non-negative random variables with $\IE[X_i]=1$ and let $N\sim\Ge(p)$ independent
of the $X_i$.  If $W=p\sum_{i=1}^N X_i$ and $Z\sim\Exp(1)$, then
\ba
\lim_{p\to 0} \dk(W, Z)=0.
\ee 
\end{theorem}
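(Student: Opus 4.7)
The plan is to apply the equilibrium-coupling bound of Theorem \ref{thm172} after reducing to the Wasserstein metric. Since the $\Exp(1)$ density is bounded by $1$, Proposition \ref{prop22}(2) gives $\dk(W,Z) \leq \sqrt{2\,\dw(W,Z)}$, so it suffices to exhibit a coupling of $W$ with some $W^e$ having the equilibrium distribution of $W$ for which $\IE|W - W^e| \to 0$ as $p \to 0$.

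The construction exploits the geometric-sum structure. Let $X_1^e, X_2^e, \ldots$ be equilibrium versions of $X_1, X_2, \ldots$, each built via Proposition \ref{prop171} from independent auxiliary randomness so that $X_n^e$ is independent of $(X_1,\ldots,X_{n-1})$ and of $N$. Writing $S_k := X_1 + \cdots + X_k$, I define
\[
W^e := p\,S_{N-1} + p\,X_N^e.
\]
To verify that $W^e$ has the equilibrium distribution of $W$, take Lipschitz $f$ with $f(0)=0$, condition on $N = n$, and apply the defining identity \eqref{173} to the function $t \mapsto f(pS_{n-1} + pt)$, which gives
\[
\IE\bigl[f'(pS_{n-1} + pX_n^e)\bigr] = \frac{1}{p}\bigl(\IE f(pS_n) - \IE f(pS_{n-1})\bigr).
\]
A summation by parts against the geometric weights $\IP(N=n) = p(1-p)^{n-1}$ then telescopes the resulting series into $\sum_{n\geq 1}\IP(N=n)\,\IE f(pS_n) = \IE f(W)$, which, together with $\IE W = 1$, is exactly \eqref{173} for $(W, W^e)$.

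With this construction $W - W^e = p(X_N - X_N^e)$, and since $(X_N, X_N^e) \eqlaw (X_1, X_1^e)$,
\[
\IE|W - W^e| \;=\; p\,\IE|X_1 - X_1^e| \;\leq\; p\bigl(1 + \tfrac{1}{2}\IE[X_1^2]\bigr),
\]
which tends to $0$ with $p$ whenever $\IE[X_1^2] < \infty$; in that case one in fact obtains the quantitative rate $\dk(W,Z) = \bigo(\sqrt{p})$. The main obstacle is the stated generality of only $\IE[X_1]=1$, since then $\IE[X_1^e]$, and hence the $L^1$ bound above, can be infinite. The standard remedy is truncation: apply the argument to $\tilde X_i := (X_i \wedge M)/\IE[X_i \wedge M]$ to obtain $\dk(\tilde W, Z) \to 0$ for each fixed $M$, and separately control $\dk(W, \tilde W)$ via the natural monotone coupling as $M \to \infty$, then let $M = M(p) \to \infty$ sufficiently slowly; carefully coordinating this two-parameter limit is the main technical step remaining.
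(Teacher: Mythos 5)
Your construction $W^e = pS_{N-1} + pX_N^e$ is exactly the coupling \eqref{179} from the paper's proof of Theorem \ref{thm1710}, specialized to $M\equiv N$ (which is valid precisely because $N$ is geometric), and your verification via the defining identity for $X_n^e$ plus summation by parts against the geometric weights, together with the resulting bound $\dw(W,Z)\leq 2p\bigl(1+\tfrac{1}{2}\IE[X_1^2]\bigr)$ and the $\dk\leq\sqrt{2\dw}$ step, reproduces the paper's argument. The second-moment caveat you flag is genuine but applies equally to the paper, which deduces R\'enyi's theorem only from the square-integrable Theorem \ref{thm1710} and does not carry out any truncation either; so your proposal is essentially the paper's proof, stated with more care about the hypotheses.
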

The special case where $X_i\equiv1$ is handled in Example \ref{ex171}; intuitively, the example can be generalized because 
for $p$ small, $N$ is large so that the law of large numbers implies that $\sum_{i=1}^N X_i$ is approximately equal to $N$.
We will show the following result which implies R\'enyi's Theorem.
\begin{theorem}\label{thm1710}
Let $X_1, X_2, \ldots$ be square integrable, non-negative, and independent random variables with $\IE[X_i]=1$.
Let $N>0$ be an integer valued random variable with $\IE[N]=1/p$ for
some $0<p\leq 1$ and let $M$ be defined on the same space as $N$ such that
\ba
\IP(M=m)=p \IP(N\geq m).
\ee
If $W=p\sum_{i=1}^N X_i$, $Z\sim\Exp(1)$, and $X_i^e$ is an equilibrium coupling of $X_i$ independent of $N, M$, and $(X_j)_{j\not=i}$,
then 
\ban
\dw(W, Z)&\leq 2p\left(\IE\abs{X_M-X_M^e}+\IE\abs{N-M}\right) \label{177} \\
	&\leq 2p\left(1+\frac{\mu_2}{2}+\IE\abs{N-M}\right), \label{178}
\ee
where $\mu_2:=\sup_{i} \IE[X_i^2]$.
\end{theorem}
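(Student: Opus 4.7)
The plan is to apply Theorem \ref{thm172}, which reduces bounding $\dw(W,Z)$ to constructing an equilibrium coupling $W^e$ of $W$ on the same probability space and then estimating $\IE\abs{W^e-W}$ (note that $\IE[W]=p\IE[N]=1$). Writing $S_n:=\sum_{i=1}^n X_i$ with $S_0=0$ so that $W=pS_N$, I propose the coupling
\ba
W^e:=pS_{M-1}+pX_M^e,
\ee
which places the equilibrium transformation on the ``last'' summand, selected by the random index $M$. The main obstacle is guessing this form: it is the compound-sum analog of the construction for a deterministic number of summands, with the definition $\IP(M=m)=p\IP(N\geq m)$ playing the role of a discrete equilibrium of the counting variable $N$. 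Once this $W^e$ is in hand, the remainder is a clean telescoping plus independence argument.

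To verify that $W^e$ is an equilibrium coupling, it suffices to show $\IE[f(W)]-f(0)=\IE[f'(W^e)]$ for all Lipschitz $f$. Since $S_0=0$, telescope and use the independence of $N$ from $(X_j)_j$ to exchange summation and expectation:
\ba
\IE[f(W)]-f(0)=\sum_{i=1}^\infty\IP(N\geq i)\,\IE[f(pS_{i-1}+pX_i)-f(pS_{i-1})].
\ee
For each fixed $i$, apply the equilibrium property of the mean-$p$ variable $pX_i$ (whose equilibrium is realized by $pX_i^e$, by a scaling argument) to the function $y\mapsto f(pS_{i-1}+y)-f(pS_{i-1})$, conditionally on $S_{i-1}$, obtaining
\ba
\IE[f(pS_{i-1}+pX_i)-f(pS_{i-1})]=p\,\IE[f'(pS_{i-1}+pX_i^e)].
\ee
Substituting back and using $\IP(M=i)=p\IP(N\geq i)$ together with the independence of $M$ from $(X_j,X_j^e)_j$ yields $\IE[f(W)]-f(0)=\IE[f'(pS_{M-1}+pX_M^e)]=\IE[f'(W^e)]$, as required.

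For the error, decompose $W^e-W=p(X_M^e-X_M)+p(S_M-S_N)$ and apply the triangle inequality:
\ba
\IE\abs{W^e-W}\leq p\,\IE\abs{X_M^e-X_M}+p\,\IE\abs{S_M-S_N}.
\ee
Conditioning on $(M,N)$ and using the non-negativity, independence from $(M,N)$, and mean-one property of the $X_i$ gives $\IE[\abs{S_M-S_N}\mid M,N]=\abs{M-N}$, hence $\IE\abs{S_M-S_N}=\IE\abs{N-M}$. Combined with Theorem \ref{thm172}, this proves \eq{177}. For \eq{178}, bound
\ba
\IE\abs{X_M^e-X_M}\leq \IE X_M^e+\IE X_M=1+\IE X_M^e,
\ee
since $X_M$ is independent of $M$ with $\IE[X_M]=1$. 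By Proposition \ref{prop171}, $X_i^e\ed UX_i^s$ for $U\sim\U(0,1)$ independent, giving $\IE X_i^e=\ahalf\IE[X_i^2]$; averaging over $M$ yields $\IE X_M^e\leq\mu_2/2$, which completes \eq{178}.
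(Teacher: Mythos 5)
Your proposal is correct and follows essentially the same route as the paper: the same equilibrium coupling $W^e=p\bigl(\sum_{i=1}^{M-1}X_i+X_M^e\bigr)$, verified via the equilibrium property of each $pX_i$ together with the defining relation $\IP(M=m)=p\IP(N\geq m)$, and the same decomposition $W^e-W=p(X_M^e-X_M)+p(S_M-S_N)$ followed by the triangle inequality and the identity $\IE[X_m^e]=\tfrac12\IE[X_m^2]$. The paper phrases the verification as two conditional expectations (conditioning on $M$, then on $(X_i)$) rather than an explicit telescoping sum, but the computations are identical.
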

Before the proof of the theorem, we make a few remarks.
\begin{remark}
The theorem can be a little difficult to parse on first glance, so we make a few comments to interpret the error. 
The random variable $M$ is a discrete version of the equilibrium transform which we have already seen above in Example \ref{ex171}.
More specifically, it is easy to verify that
if $N^s$ has the size-bias distribution of $N$, then $M$ is distributed uniformly on the set $\{1, \ldots, N^s\}$.  If $N\sim\Ge(p)$,
then we can take $M\equiv N$ so that the final term of the error of \eq{177} and \eq{178}
is zero.  Thus $\IE\abs{N-M}$ quantifies the proximity of the distribution of $N$ to
a geometric distribution.  
We will formalize this more precisely when we cover Stein's method for geometric approximation below.

The first term of the error in \eq{178} can be interpreted as a term measuring the regularity of the $X_i$. 
The heuristic is that the law of large numbers needs to kick in so that $\sum_{i=1}^NX_i\approx N$,
and the theorem shows that in order for this to occur, it is enough that the $X_i$'s have uniformly bounded variances.  Moreover, the first term 
of the error \eq{177} shows that the approximation also benefits from having the $X_i$ be close to exponentially distributed.  In particular,
if all of the $X_i$ are exponential and $N$ is geometric, then the theorem shows $\dw(W, Z)=0$ which can also be
easily verified using Laplace transforms.
\end{remark}
\begin{remark}
A more general theorem can be proved with a little added technicality which allows for the $X_i$ to have different means and 
for the $X_i$ to have a certain dependence - see~\cite{rope10}.
\end{remark}
\begin{proof}
We will show that 
\ban
W^e=p\left[\sum_{i=1}^{M-1}X_i+X_M^e\right]  \label{179}
\ee
is an equilibrium coupling of $W$.  From this point Theorem \ref{thm172} implies that
\ba
\dw(W, Z)&\leq 2\IE\abs{W^e-W} \\
	&= 2p \IE\left|X_M^e-X_M+\sgn(M-N)\sum_{i=M\wedge N+1}^{M \vee N} X_i\right|\\
	&\leq 2p \IE\left[\abs{X_M^e-X_M}+\abs{N-M}\right],	
\ee
which proves \eq{177}.  The second bound \eq{178} follows from \eq{177} after noting that
\ba
\IE[\abs{X_M^e-X_M}\big|M]&\leq \IE[X_M^e|M]+\IE[X_M|M] \\
	&=\frac{1}{2}\IE[X_M^2|M]+1\leq \frac{\mu_2}{2}+1,
\ee
where the equality follows from the definition of the equilibrium coupling.  

It only remains to show \eq{179}.  Let $f$ be Lipschitz with $f(0)=0$ and define
\ba
g(m)=f\left(p\sum_{i=1}^m X_i\right).
\ee
On one hand, using independence and the defining relation of $X_m^e$, we obtain
\ba
\IE\left[f'\left(p\sum_{i=1}^{M-1} X_i + pX_M^e\right)\bigg|M\right]=p^{-1}\IE[g(M)-g(M-1)|M],
\ee 
and on the other, the definition of $M$ implies
\ba
p^{-1}\IE[g(M)-g(M-1)|(X_i)_{i\geq1}]=\IE[g(N)|(X_i)_{i\geq1}],
\ee
so that altogether we obtain $\IE[f'(W^e)]=\IE[g(N)]=\IE[f(W)]$,
as desired.
\end{proof}
\subsection{Application: Critical branching process}
In this section we obtain an error in a classical theorem of Yaglom pertaining to the
generation size of a critical Galton-Watson branching process conditioned on non-extinction.  
We will attempt to have this section be self-contained, but it will be helpful
to have been exposed to the elementary properties and definitions of
branching processes, found for example in the first chapter of~\cite{atne72}.

Let $Z_0=1$, and $Z_1$ be a non-negative integer valued random variable with finite mean.  For $i, j\geq 1$, let 
$Z_{i, j}$ be i.i.d. copies of $Z_1$ and define for $n\geq1$
\ba
Z_{n+1}=\sum_{i=1}^{Z_n}Z_{n, i}.
\ee
We think of of $Z_n$ as the generation size of a population that initially has one individual and where 
each individual in a generation has a $Z_1$ distributed number of offspring (or children) independently of
the other individuals in the generation.  We also assume that all individuals in a generation have offspring
at the same time (creating the next generation) and die immediately after reproducing.

It is a basic fact \cite{atne72} that if $\IE[Z_1]\leq 1$ and $\IP(Z_1=1<1)$ then the population almost surely dies out, whereas 
if $\IE[Z_1]>1$, then the probability the population lives forever is strictly positive.
Thus the case where $\IE[Z_1]=1$ is referred to as the critical case and
a fundamental result of the behavior in this case is the following.
\begin{theorem}[Yaglom's Theorem]
Let $1=Z_0, Z_1,\ldots$ be the generation sizes of a Galton-Watson branching process where
$\IE[Z_1]=1$ and $\var(Z_1)=\sigma^2<\infty$.  If $Y_n\ed (Z_n|Z_n>0)$ and $Z\sim \Exp(1)$, then
\ba
\lim_{n\to\infty}\dk\left(\frac{2Y_n}{n\sigma^2}, Z\right)=0.
\ee
\end{theorem}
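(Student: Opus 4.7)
The strategy is to invoke Theorem \ref{thm172}: set $W_n = 2Y_n/(n\sigma^2)$, construct an equilibrium coupling $W_n^e$ of $W_n$, and show $\IE|W_n^e - W_n|\to 0$. Kolmogorov's classical survival estimate $n\IP(Z_n>0)\to 2/\sigma^2$, combined with the critical identity $\IE[Z_n]=1$, gives $\IE[W_n] = 2/(n\sigma^2\IP(Z_n>0))\to 1$; since Theorem \ref{thm172} requires mean exactly one, I would work with the slightly renormalized variable $\tilde W_n := Y_n/\IE[Y_n]$ and absorb the vanishing rescaling at the end. After bounding $\dw(\tilde W_n, Z)$ via Theorem \ref{thm172}, Item 2 of Proposition \ref{prop22} converts this to a bound on $\dk(\tilde W_n, Z)$ (using that the $\Exp(1)$ density is bounded by one), and the normalization correction then yields the stated Kolmogorov convergence for $W_n$.

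By Proposition \ref{prop171}, the equilibrium coupling has the form $\tilde W_n^e = U \tilde W_n^s$ with $U\sim U(0,1)$ independent of $\tilde W_n^s$. A direct computation using $\IE[Z_n]=1$ shows that the size-biased law of $Y_n$ coincides with that of $Z_n$ on $\{1,2,\ldots\}$ (the factors of $\IP(Z_n>0)$ cancel), so it suffices to size-bias the unconditional generation size $Z_n$. For this I would use the classical spinal (size-biased tree) construction: start with one distinguished "spine" particle; let the spine particle at each generation produce $\xi \ed Z_1^s$ offspring with one chosen uniformly at random to continue the spine; and let every non-spine particle reproduce independently according to the original offspring law $Z_1$. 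Writing $\hat Z_n$ for the total population at generation $n$ of this tree, a standard Radon-Nikodym / martingale check verifies $\hat Z_n \ed Z_n^s$, so $\tilde W_n^e$ is realized as $U\hat Z_n/\IE[Y_n]$.

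The main obstacle is the coupling estimate for $\IE|U\hat Z_n - Y_n|$. The spine construction yields the useful decomposition $\hat Z_n = 1 + \sum_{k=1}^n\sum_{j=1}^{\xi_k - 1}T_{n-k}^{(k,j)}$, where the $\xi_k\ed Z_1^s$ are the spine offspring counts and each $T_{n-k}^{(k,j)}$ is an independent critical Galton-Watson population at generation $n-k$ started from one particle; this correctly reproduces $\IE[\hat Z_n] = 1 + n\sigma^2 = \IE[Z_n^2]$ via $\IE[\xi_k-1]=\sigma^2$ and $\IE[T_{n-k}]=1$. The clever step, which exploits the independent uniform $U$, is to interpret $\lceil nU\rceil$ as a uniformly chosen generation $K$ along the spine and to identify the subpopulation descending from that generation (conditional on the spine) with an approximately unconditional $Y_{n-K}$, so that $U\hat Z_n$ and $Y_n$ can be built on a common probability space with small expected discrepancy. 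I expect the technical heart to reduce to second-moment control for critical branching processes: using $\var(Z_m) = m\sigma^2$ and Kolmogorov's estimate $\IE[Z_m\mid Z_m>0]\sim m\sigma^2/2$, together with tail bounds on the subtree sizes $T_{n-k}$, to establish $\IE|U\hat Z_n - Y_n| = \lito(n\sigma^2)$. Theorem \ref{thm172} and Item 2 of Proposition \ref{prop22} then complete the proof.
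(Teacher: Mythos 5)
Your overall architecture coincides with the paper's: both routes reduce the claim to Theorem \ref{thm172} via an equilibrium coupling, use Proposition \ref{prop171} to write the equilibrium variable as $U$ times a size-biased copy, note that size-biasing $Y_n$ is the same as size-biasing $Z_n$, and realize $Z_n^s$ as generation $n$ of the Lyons--Pemantle--Peres spinal tree. Your decomposition $\hat Z_n = 1+\sum_{k}\sum_{j} T^{(k,j)}_{n-k}$ and the mean check are correct, and your instinct that finite variance should suffice for the qualitative limit is sound: the third moment enters the paper's argument only to extract the $\log(n)/n$ rate, and with only $\sigma^2<\infty$ a dominated-convergence/Ces\`aro argument still gives $\IE\abs{Y_n^e-Y_n}=\lito(n)$.

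The gap is in the one step that carries all the weight: the joint construction of $Y_n$ and $Y_n^e$ on a common space. Interpreting $\lceil nU\rceil$ as a uniformly chosen spine generation $K$ and matching $U\hat Z_n$ with the descendants of that generation does not produce the equilibrium transform: the generation-$n$ descendants of the spine vertex $v_K$ form a size-biased population of depth $n-K$ (a copy of $Z_{n-K}^s$), not a uniform fraction of $\hat Z_n$, and there is no identity linking $U\hat Z_n$ to such a subpopulation. The device that actually works is the planar ordering of the spinal tree: given $S_n:=\hat Z_n$, the spine vertex $v_n$ is uniform among the $S_n$ individuals of generation $n$, so its right-rank $R_n$ is uniform on $\{1,\dots,S_n\}$ and $R_n-U\ed US_n$, which is exactly $Y_n^e$; simultaneously $Y_n\ed (S_n\mid L_n=0)$, where $L_n$ counts individuals strictly to the left of $v_n$, and this conditioning is realized on the same tree by resampling the left-descendant contributions $R_{n,j}$ on the events $\{L_{n,j}>0\}$. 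The coupling discrepancy then localizes to $\abs{1-U}+\sum_{j=1}^n\abs{R'_{n,j}-R_{n,j}}\I[L_{n,j}>0]$ and is controlled generation by generation via $\IP(L_{n,j}>0)\leq\sigma^2\IP(Z_{n-j}>0)$ and Kolmogorov's survival estimate. Without this (or an equivalent) identification, your claim of a "small expected discrepancy" between $U\hat Z_n$ and $Y_n$ is asserted rather than proved, and the proposed second-moment computations have nothing to act on.
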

We will provide a rate of convergence in this theorem under a stricter moment assumption.
\begin{theorem}
Let $Z_0, Z_1, \ldots$ as in Yaglom's Theorem above and assume also that $\IE\abs{Z_1}^3<\infty$.  If $Y_n\ed (Z_n|Z_n>0)$ and $Z\sim \Exp(1)$, then for some constant $C$, 
\ba
\dw\left(\frac{2Y_n}{n\sigma^2}, Z\right)\leq C\frac{\log(n)}{n}.
\ee
\end{theorem}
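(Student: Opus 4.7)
The plan is to apply Theorem~\ref{thm172} to a mean-one rescaling of $2Y_n/(n\sigma^2)$, constructing an equilibrium coupling from a size-biased branching tree. Set $s_n := \IP(Z_n > 0)$; a classical refinement of Kolmogorov's extinction-probability estimate, valid under the third-moment hypothesis, gives $s_n = 2/(n\sigma^2) + O(\log n / n^2)$. Since $\IE[Z_n]=1$ implies $\IE[Y_n]=1/s_n$, the variable $V_n := s_n Y_n$ has mean one, and the deterministic factor $2/(n\sigma^2 s_n)-1$ relating $V_n$ to $2Y_n/(n\sigma^2)$ is $O(\log n / n)$. By the triangle inequality for $\dw$ it therefore suffices to prove $\dw(V_n, Z) = O(\log n / n)$.

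To build an equilibrium coupling of $V_n$ via Proposition~\ref{prop171}, I first construct a size-biased copy $\hat Z_n$ of $Z_n$ using the spine (Kesten) construction. Let $v_0$ be the root; for each $k < n$ in turn, give the spine vertex $v_k$ a number $N_k \ed Z_1^s$ of children (independently of the past), pick one of them uniformly at random to be $v_{k+1}$, and root independent copies of the ordinary Galton-Watson tree of height $n-k-1$ at each of the remaining $N_k - 1$ children. Let $\hat Z_n$ be the total number of descendants at generation $n$. A direct moment computation shows $\hat Z_n$ has the size-bias distribution of $Z_n$, which coincides with the size-bias distribution of $Y_n$ since size-biasing is insensitive to the atom at $0$. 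Proposition~\ref{prop171} then yields that $V_n^e := U s_n \hat Z_n$ is an equilibrium coupling of $V_n$, for $U \sim \text{Uniform}(0,1)$ independent of $\hat Z_n$.

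For the coupling to $V_n$, I would define $\hat Z_n$ and $Y_n$ jointly on one space by sampling the non-spine subtrees of the Kesten tree as subtrees of an ordinary Galton-Watson tree, then conditioning on survival at height $n$ to realise $Y_n$. Theorem~\ref{thm172} then bounds $\dw(V_n, Z) \leq 2\IE|V_n^e - V_n| = 2 s_n \IE|U \hat Z_n - Y_n|$. Moment accounting via the spine decomposition gives $\IE[\hat Z_n] = \IE[Z_n^2] = 1 + n\sigma^2$ (using the critical variance $\var(Z_n) = n\sigma^2$), so that $\IE[U \hat Z_n] \sim n\sigma^2/2 \sim \IE[Y_n]$, confirming that the means agree to the required order; second-moment estimates on $\hat Z_n$ and $Y_n$ are available through the size-biased offspring moment $\IE[(Z_1^s)^2] = \IE[Z_1^3]$, which is finite by hypothesis.

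The main obstacle is squeezing the coupling error down to $O(\log n / n)$. Naive variance-type bounds give only $O(n^{-1/2})$, since $U \hat Z_n$ and $Y_n$ are both of order $n$ with fluctuations of the same order. The sharp rate requires exploiting the distributional identity at the heart of Yaglom's theorem itself: conditioning on $\{Z_n > 0\}$ followed by sampling a uniformly random generation-$n$ individual and tracing its line of ancestry produces exactly the spine tree, with an error controlled by $|s_n - 2/(n\sigma^2)| = O(\log n / n^2)$. Combining this identification of the conditional law with third-moment control of $Z_1$ (entering through $\IE[(Z_1^s)^2]$) should yield the desired $O(\log n / n)$ bound on $\IE|U\hat Z_n - Y_n|$ and hence on $\dw(V_n, Z)$.
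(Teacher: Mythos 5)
Your setup coincides with the paper's: you build the Lyons--Pemantle--Peres spine tree to realize a size-biased copy $S_n$ of $Z_n$ (equivalently of $Y_n$), take $U S_n$ as the equilibrium variable via Proposition \ref{prop171}, and aim to apply Theorem \ref{thm172}. Your handling of the normalization (rescaling by $s_n=\IP(Z_n>0)$ so the mean is exactly one and absorbing $|s_n-2/(n\sigma^2)|$ via the refined Kolmogorov estimate, which does hold under third moments) is legitimate, and in fact more explicit than the survey's, which silently treats $2Y_n/(n\sigma^2)$ as having mean one. But the entire content of the proof is the step you defer to the last paragraph: exhibiting $Y_n$ and $US_n$ on one probability space with $\IE\abs{US_n-Y_n}=O(\log n)$. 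That construction and estimate are absent.

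Moreover, the route you sketch for it rests on a false identity. Conditioning the Galton--Watson tree on $\{Z_n>0\}$ and marking a uniform generation-$n$ vertex does \emph{not} produce the spine tree up to an error of order $|s_n-2/(n\sigma^2)|$: the marked conditioned tree puts weight $G_n(t)/(s_n t_n)$ on a tree $t$ with $t_n$ generation-$n$ vertices, while the spine tree puts weight $G_n(t)$, so the likelihood ratio $1/(s_n t_n)$ fluctuates at order one and the two laws stay at total variation distance bounded away from zero (their generation sizes are $Y_n$ and $Y_n^s$, which do not merge). The paper's actual device is different: with $L_n$ and $R_n$ the numbers of generation-$n$ individuals strictly to the left of and weakly to the right of the spine vertex $v_n$, one has the exact identity $S_n\,|\,\{L_n=0\}\ed Y_n$; a copy of $Y_n$ is then manufactured inside the spine tree as $R_n^*=1+\sum_{j=1}^n R_{n,j}^*$, where $R_{n,j}^*$ agrees with $R_{n,j}$ on $\{L_{n,j}=0\}$ and is an independently resampled copy otherwise, and the discrepancy is controlled generation by generation using $\IP(L_{n,j}>0)\leq\sigma^2\IP(Z_{n-j}>0)=O((n-j+1)^{-1})$ together with the third-moment control of the size-biased offspring numbers; summing over $j$ produces the $\log n$. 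Without this (or an equivalent) coupling your argument does not reach the stated rate --- as you acknowledge, generic second-moment bounds stall at $O(n^{-1/2})$.
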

\begin{proof}
We will construct a copy of $Y_n$ and $Y_n^e$ having the equilibrium distribution on the same space and then show that
$\IE\abs{Y_n-Y_n^e}\leq C \log(n)$.  Once this is established, the result is proved 
by Theorem \ref{thm172} and the fact that $(cY_n)^e\ed cY_n^e$ for any constant $c$.

In order to couple $Y_n$ and $Y_n^e$, we will construct a ``size-bias" tree and then find copies of the variables we need
in it.  The clever construction we will use is due to \cite{lpp95} and implicit in their work is the fact that $\IE\abs{Y_n-Y_n^e}/n\to0$
(used to show that $n\IP(Z_n>0)\to2/\sigma^2$), but we will weed out a rate from their analysis. 

We view the size-bias tree
as labeled and ordered, in the sense that, if $w$ and $v$
are vertices in the tree from the same generation and $w$ is to the left of $v$,
then the offspring of $w$ is to the left of the offspring of $v$. Start in
generation $0$ with one vertex $v_0$ and let it have a number of offspring
distributed according to the size-bias distribution of $Z_1$. Pick one of
the
offspring of $v_0$ uniformly at random and call it $v_1$. To each of the
siblings of $v_1$ attach an independent Galton-Watson branching process having the
offspring distribution of $Z_1$. For $v_1$ proceed as for $v_0$, i.e., give
it a
size-bias number of offspring, pick one uniformly at random, call it $v_2$,
attach independent Galton-Watson branching process to the siblings of $v_2$ and
so on. It is clear that this process will always give an infinite tree as the ``spine''
$v_0,v_1,v_2,\dots$ will be infinite.  See Figure 1 of \cite{lpp95}
for an illustration of this tree.

Now, for a fixed tree $t$, let $G_n(t)$ be the chance that the original branching process 
driven by $Z_1$ agrees with $t$ up to generation $n$, let $G_n^s(t)$ be the chance that
the size-bias tree just described agrees with $t$ up to generation $n$, and for $v$ 
an individual of $t$ in generation $n$, let
$G_n^s(t, v)$ be the chance that the size-bias tree agrees with $t$ up to generation $n$
and has the vertex $v$ as the distinguished vertex $v_n$ in generation $n$.
We claim that
\ban
G_n^s(t, v)=G_n(t). \label{1710}
\ee 
Before proving this claim, we note some immediate consequences which imply
that our size-bias tree naturally contains a copy of $Y_n^e$.
Let $S_n$ be the size of generation $n$ in the size-bias tree. 
\begin{enumerate}
\item\label{i171a} $S_n$ has the size-bias distribution of $Z_n$.
\item\label{i172a} If $Y_n^s$ has the size-bias distribution of $Y_n$, then
$S_n\ed Y_n^s$.
\item\label{i173} Given $S_n$, $v_n$ is uniformly distributed among the individuals of generation $n$.
\item\label{i174} If $R_n$ is the number of individuals to the right (inclusive) of $v_n$ and
$U$ is uniform on $(0,1)$, independent of all else, then $Y_n^e:=R_n-U$ has the equilibrium
distribution of $Y_n$.
\end{enumerate}
To show the first item, note that \eq{1710} implies  
\ban
G_n^s(t)=t_n G_n(t), \label{1711}
\ee
 where $t_n$ is the number 
of individuals in the $n$th generation of $t$.  Now, $\IP(S_n=k)$ is obtained by integrating
the left hand side of \eq{1711} over trees $t$ with $t_n=k$, and performing the same
integral on the right hand side of \eq{1711} yields $k\IP(Z_n=k)$.  The second
item follows from the more general fact that conditioning a  non-negative random variable to be positive
does not change the size-bias distribution.  Item \ref{i173} can be read from the
right hand side of \eq{1710}, since it does not depend on $v$.  For Item \ref{i174},
Item \ref{i173} implies that
$R_n$ is uniform on $\{1, \ldots, S_n\}$ so that $R_n-U\ed US_n$, from which the result
follows from Item \ref{i172a} and Proposition \ref{prop171}.

At this point, we would like to find a copy of $Y_n$ in the size-bias tree, but before proceeding
further we prove \eq{1710}.  Since trees formed below distinct vertices in a given generation 
are independent, we will prove the formula by writing down a recursion.  To this end, for
a given planar tree $t$ with $k$ individuals in the first generation, label the subtrees with these
$k$ individuals as a root from left to right by $t_1, t_2, \ldots, t_k$.  Now, a vertex $v$ in 
generation $n+1$ of $t$ lies in exactly one of the subtrees $t_1, \ldots, t_k$, say $t_i$.  With this setup,
we have
\ba
G_{n+1}^s(t,v)=G_n(t_i, v)\prod_{j\not=i} G_n(t_j)[k\IP(Z_1=k)]\frac{1}{k}.
\ee 
The first factor corresponds to the chance of seeing the tree $t_i$ up to generation $n+1$ below the distinguished vertex $v_1$ and choosing $v$
as the distinguished vertex in generation $n+1$.  The second factor is the chance of seeing the remaining subtrees up 
to generation $n+1$, and the remaining factors correspond to having $k$ offspring initially (with the size-bias distribution of $Z_1$)
and choosing vertex $v_1$ (the root 
of $t_i$) as the distinguished vertex initially.  With this formula in hand, it is enough to verify that \eq{1710} follows this recursion.

We must now find a copy of $Y_n$ in our size-bias tree.  If $L_n$ is the number of individuals to the left of $v_n$ (exclusive, so
$S_n=L_n+R_n$), then we claim that
\ban
S_n\big|\{L_n=0\}\ed Y_n. \label{1712}
\ee
Indeed, we have
\ba
\IP(S_n=k|L_n=0)&=\frac{\IP(L_n=0|S_n=k)\IP(S_n=k)}{\IP(L_n=0)}  \\
	&=\frac{\IP(S_n=k)}{k\IP(L_n=0)}=\frac{\IP(Z_n=k)}{\IP(L_n=0)},
\ee
where we have used Items \ref{i172a} and \ref{i173} above and the claim now follows since
\ba
\IP(L_n=0)=\sum_{k\geq 1}\IP(L_n=0|S_n=k)\IP(S_n=k)=\sum_{k\geq1}\frac{\IP(S_n=k)}{k}=\IP(Z_n>0).
\ee

We are only part of the way to finding a copy of $Y_n$ in the size-bias tree since we still need 
to realize $S_n$ given the event $L_n=0$.  
Denote by $S_{n,j}$ the number of particles in generation $n$ that stem from any
of the siblings of $v_j$ (but not $v_j$ itself). Clearly, $S_n = 1 +
\sum_{j=1}^n S_{n,j}$, where the summands are independent. Likewise, let
$L_{n,j}$ and $R_{n,j}$, be the number of particles in generation
$n$ that stem from the siblings to the left and right of $v_j$ (exclusive)
and note that $L_{n,n}$ and $R_{n,n}$ are just the number of siblings of $v_n$ to
the left and to the right, respectively. We have the relations $L_n =
\sum_{j=1}^n L_{n,j}$ and $R_n = 1 + \sum_{j=1}^n R_{n,j}$. Note that for
fixed~$j$, $L_{n,j}$ and $R_{n,j}$ are in general not independent, as they are linked through
the offspring size of $v_{j-1}$.

Let now $R_{n,j}'$ be independent random variables such that
\be
    R'_{n,j} \ed R_{n,j} \big| \{ L_{n,j} = 0\}.
\ee
and
\ba                                                           
    R_{n,j}^* = R_{n,j} \I[L_{n,j}=0] + R_{n,j}' \I[L_{n,j}>0]
    = R_{n,j} + (R_{n,j}' - R_{n,j}) \I[L_{n,j}>0].
\ee
Finally, if $R_n^* = 1 + \sum_{j=1}^n R_{n,j}^*$, then \eq{1712} implies that 
we can take $Y_n:=R_n^*$.

Having coupled $Y_n$ and $Y_n^e$, we can now proceed to show 
$\IE\abs{Y_n^e-Y_n}=\bigo(\log(n))$.  By Item \ref{i174} above, 
\ba
\abs{Y_n-Y_n^e}&=\left|1-U+\sum_{j=1}^n(R_{n,j}' - R_{n,j}) \I[L_{n,j}>0]\right| \\
	&\leq \abs{1-U}+\sum_{j=1}^nR_{n,j}'\I[L_{n,j}>0]+\sum_{j=1}^nR_{n,j}\I[L_{n,j}>0].
\ee
Taking expectation in the inequality above, our result will follow after we show that
\ba
    (i) &\enskip \IE \left[R_{n,j}'\I[L_{n,j}>0]\right]\leq \sigma^2\IP(L_{n,j}>0),\\
    (ii) &\enskip \text{$\IE\left[R_{n,j} \I[L_{n,j}>0]\right] \leq \IE[Z_1^3]
            \IP(Z_{n-j}>0),$} \\
    (iii) &\enskip \text{$\IP(L_{n,j}>0)\leq \sigma^2\IP(Z_{n-j}>0)\leq C (n-j+1)^{-1}$ for some $C>0$.}
\ee
For part (i), independence implies that
\ba
\IE \left[R_{n,j}'\I[L_{n,j}>0]\right]=\IE[R_{n,j}']\IP(L_{n,j}>0),
\ee
and using that $S_{n, j}$ and $\I[L_{n,j}=0]$ are negatively correlated (in the second inequality) below, we find
\ba
\IE[R_{n,j}']&=\IE[R_{n,j}|L_{n,j}=0]\\
&\leq \IE[S_{n,j}-1|L_{n,j}=0] \\
	&\leq \IE[S_{n,j}]-1 \\
	&\leq \IE[S_n]-1=\sigma^2.
\ee

For part (ii), if
$X_j$ denotes the number of
siblings of $v_j$, having the size-bias distribution of $Z_1$ minus $1$, we have
\ba
   \IE\left[R_{n,j} \I[L_{n,j}>0]\right] &\leq   \IE[X_j \I[L_{n,j}>0]]  \\
   &\leq \sum_{k}k\IP(X_j=k,L_{n,j}>0)\\ 
     &\leq \sum_{k}k\IP(X_j=k)\IP(L_{n,j}>0|X_j=k) \\
     &\leq \sum_{k}k^2\IP(X_j=k)\IP(Z_{n-j}>0) \\
    &\leq \IE[Z_1^3]\IP(Z_{n-j}>0),
\ee
where we have used that $\IE[R_{n,j}\I[L_{n,j}>0]|X_j]\leq X_j \I[L_{n,j}>0]$ in the first inequality
and that $\IP(L_{n,j}>0|X_j=k)\leq k\IP(Z_{n-j}>0)$ in the penultimate inequality.

Finally, we have
\ba
    \IP(L_{n,j}>0) &= \IE\left[\IP(L_{n,j}>0|X_j)\right] \\
    &\leq \IE\left[X_j\IP(Z_{n-j}>0\right] \\
    &\leq \sigma^2\IP(Z_{n-j}>0).
\ee
Using Kolmogorov's estimate (see Chapter 1, Section 9 of \cite{atne72}),
we have $\lim_{n\to\infty}n\IP(Z_n>0) = 2/\sigma^2$, which
implies the final statement of (iii).
\end{proof}

\section{Geometric approximation}\label{geo}

Due to Example \ref{ex171}, if $W>0$ is integer-valued such that $W/\IE[W]$ is approximately exponential and $\IE[W]$ is large, then 
we expect that $W$ will be approximately geometrically distributed.  In fact, if we write $\IE[W]=1/p$, and let $X\sim\Ge(p)$ and 
$Z\sim\Exp(1)$, then the triangle inequality implies that
\ba
\left|\dw(pW, Z)-\dw(X, W)\right|\leq p.
\ee
However, if we want to bound $\dtv(W, X)$, then the inequality above is not useful.  For example, if $W\ed kX$ for some positive integer $k$,
then $\dtv(W, X)\approx (k-1)/k$ since the support of $W$ and $X$ do not match.  
This
issue of support mismatch is typical in bounding the total variation distance between integer-valued random variables
and can be handled by introducing a term into the bound that quantifies the ``smoothness"
of the random variable of interest.  

The version of Stein's method for geometric approximation which we will discuss below
can be used to handle these types of technicalities \cite{prr10}, but the
arguments can be a bit tedious.  Thus, we will 
develop
a simplified version of the method and apply it to an example where these technicalities
do not arise and where exponential approximation
does not hold.

We parallel the development of Stein's method for exponential approximation above, so
we will move quickly through the initial theoretical framework;
our work below follows \cite{prr10}.

\subsection{Main theorem}
A typical issue when discussing the geometric distribution is whether
to have the support begin at zero or one.  
In our work below we will focus on the geometric distribution which puts
mass at zero; that is $N\sim\Geo(p)$ if for $k=0, 1, \ldots$, we have $\IP(N=k)=(1-p)^k p$ .
Developing the theory below for the geometric distribution with positive
support is similar in flavor, but different in detail - see \cite{prr10}.

As usual we begin by defining the characterizing operator that we will use.
\begin{lemma}\label{lem173}
Define the functional operator $\mathcal{A}$ by
\ba
\mathcal{A}f(k)=(1-p)\Delta f(k)-p f(k)+ p f(0).
\ee
\begin{enumerate}
\item If $Z\sim\Geo(p)$, then
$\IE\%{A}f(Z)=0$ for all bounded $f$.
\item If for some non-negative random variable $W$, $\IE\%{A}f(W)=0$ for all bounded $f$,
then $W\sim\Geo(p)$.
\end{enumerate}
The operator $\%A$ is referred to as a characterizing operator of the geometric distribution.
\end{lemma}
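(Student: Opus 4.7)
The plan is to mimic the template used for the Poisson case (Lemma \ref{lem101}) and the exponential case (Lemma \ref{lem171}): Item 1 is a direct computation using the geometric PMF, and Item 2 is obtained by plugging in a judiciously chosen family of bounded test functions to recover the point probabilities of $W$ recursively.

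For Item 1, let $Z\sim\Geo(p)$ so that $\IP(Z=k)=p(1-p)^k$ for $k\geq 0$. The key identity, obtained by reindexing $j=k+1$, is
\ba
(1-p)\IE[f(Z+1)] = \sum_{k=0}^\infty f(k+1)\, p(1-p)^{k+1} = \sum_{j=1}^\infty f(j)\,p(1-p)^j = \IE[f(Z)] - pf(0).
\ee
Rearranging this yields $(1-p)\IE[\Delta f(Z)] = p\IE[f(Z)] - p f(0)$, which is exactly $\IE\mathcal{A}f(Z)=0$. Interchanging sum and expectation is harmless because $f$ is bounded and the geometric weights are summable.

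For Item 2, assume $W\geq 0$ is integer-valued with $\IE\mathcal{A}f(W)=0$ for every bounded $f$. For each $j\geq 0$, apply this to the bounded function $f_j(k):=\I[k\geq j+1]$. Note $f_j(0)=0$ and $\Delta f_j(k)=\I[k=j]$, so the identity $\IE\mathcal{A}f_j(W)=0$ reduces to
\ba
(1-p)\IP(W=j) = p\,\IP(W\geq j+1).
\ee
Combining this with the trivial decomposition $\IP(W\geq j) = \IP(W=j)+\IP(W\geq j+1)$ gives $\IP(W\geq j+1) = (1-p)\IP(W\geq j)$, and induction from $\IP(W\geq 0)=1$ yields $\IP(W\geq j) = (1-p)^j$. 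Consequently $\IP(W=j)=p(1-p)^j$, so $W\sim\Geo(p)$.

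There is no real obstacle here: both items are essentially bookkeeping, and the only mildly delicate point is the choice of test functions in Item 2 that encode the survival function directly, so that the recursion produced by the characterizing operator is immediate. An alternative route (analogous to the Poisson proof) would be to use the bounded solutions of the Stein equation associated with $\mathcal{A}$ evaluated at indicator test functions, but the approach above avoids introducing those solutions before they are needed.
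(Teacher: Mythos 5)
Your proof is correct and follows essentially the same route as the paper: Item 1 by direct computation with the geometric PMF, and Item 2 by substituting bounded indicator test functions to extract a recursion that determines the law of $W$. The only cosmetic difference is that the paper plugs in $f(k)=\I[k=j]$ to recurse directly on the point probabilities (giving $\IP(W=0)=p$ and $\IP(W=j)=(1-p)\IP(W=j-1)$), whereas you use $f_j(k)=\I[k\geq j+1]$ to recurse on the survival function; both yield $\IP(W=j)=p(1-p)^j$ in one line.
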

We now state the properties of the solution to the Stein equation that we need.
\begin{lemma}\label{lem174}
If $Z\sim\Geo(p)$, $A\subseteq \IN\cup\{0\}$, and $f_A$ is the unique solution with $f_A(0)=0$ of
\ba
(1-p)\Delta f_A(k)-p f_A(k)=\I[k\in A]-\IP(Z\in A),
\ee
then $-1\leq \Delta f(k) \leq 1$.
\end{lemma}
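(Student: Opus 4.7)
The plan is to mirror the Poisson argument used for Lemma \ref{lem103}, exploiting that the Stein equation here is a first-order linear recursion. Setting $q=1-p$ and $P=\IP(Z\in A)$, I would rewrite the defining equation as $q f_A(k+1)=f_A(k)+\I[k\in A]-P$ with $f_A(0)=0$, then solve it explicitly. Direct summation (or verification by substitution, mimicking Lemma \ref{lem102}) should yield the closed form
$$f_A(k)=\frac{1}{pq^{k}}\bigl[\IP(Z\in A,Z<k)\IP(Z\geq k)-\IP(Z\in A,Z\geq k)\IP(Z<k)\bigr],$$
which is the exact geometric analogue of the Poisson formula.

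Next I would write $f_A=\sum_{j\in A}f_j$ with $f_j:=f_{\{j\}}$, so that $\Delta f_A(k)=\sum_{j\in A}\Delta f_j(k)$. Plugging $A=\{j\}$ into the closed form and simplifying gives $f_j(0)=0$, $f_j(k)=-q^{j-k}(1-q^{k})$ for $1\leq k\leq j$, and $f_j(k)=q^{j}$ for $k>j$; a short calculation of first differences then produces $\Delta f_j(k)=-pq^{j-k-1}$ for $0\leq k<j$, $\Delta f_j(j)=1$, and $\Delta f_j(k)=0$ for $k>j$. Thus each $\Delta f_j$ is nonpositive everywhere except at $k=j$, where it equals exactly $1$.

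Summing over $j\in A$, at most a single positive term contributes (namely $\Delta f_k(k)=1$, and only when $k\in A$), while every other term is $\leq 0$, so $\Delta f_A(k)\leq 1$. For the lower bound, the identity $f_{A^c}=-f_A$ (which follows from uniqueness of the solution, since the Stein right-hand sides for $A$ and $A^c$ are negatives of each other) gives $\Delta f_A(k)=-\Delta f_{A^c}(k)\geq-1$. The only real work is the bookkeeping needed to solve the recursion cleanly and to check the three regimes $k<j$, $k=j$, $k>j$; once the per-atom sign/magnitude pattern is in hand the bound drops out with no further inequality manipulation, making this a remarkably close parallel to the Poisson argument.
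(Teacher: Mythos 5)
Your proof is correct, and every formula checks out: your closed form for $f_A(k)$ is algebraically identical to the paper's representation $f_A(k)=\sum_{i\in A}(1-p)^{i}-\sum_{i\in A,\,i\geq k}(1-p)^{i-k}$, and your per-atom values $\Delta f_j(k)=-p(1-p)^{j-k-1}$ for $k<j$, $\Delta f_j(j)=1$, $\Delta f_j(k)=0$ for $k>j$ are exactly right. The difference from the paper is organizational: the paper does not transplant the atom-by-atom sign analysis from the Poisson case but instead computes $\Delta f_A(k)=\I[k\in A]-p\sum_{i\in A,\,i\geq k+1}(1-p)^{i-k-1}$ directly from the closed form and observes that this is a difference of two non-negative quantities each bounded by one (the second because $p\sum_{m\geq0}(1-p)^m=1$), which gives both bounds at once. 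Your route buys nothing extra here and costs you the complementation step $f_{A^c}=-f_A$ for the lower bound, which is in fact unnecessary even within your own argument: summing your atomic differences over $j\in A$ reproduces the displayed identity, and the negative part is already at least $-p\sum_{m\geq0}(1-p)^m=-1$ by the geometric series, so $\Delta f_A(k)\geq-1$ falls out without appealing to $A^c$. The one legitimate advantage of your decomposition is that it records the full sign pattern of each $\Delta f_j$, which is the extra information the Poisson proof actually needs; for this lemma the paper's shortcut is cleaner.
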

These two lemmas lead easily to the following result.
\begin{theorem}\label{thm178}
Let $\%F$ be the set of functions with $f(0)=0$ and $\norm{\Delta f}\leq 1$ and let
$W\geq 0$ be an integer-valued random variable with $\IE[W]=(1-p)/p$ for some $0<p\leq 1$.  If $N\sim\Geo(p)$, 
then
\ba
\dtv(W, N)\leq \sup_{f \in \%F}\left|\IE[(1-p)\Delta f(W)-p f(W)]\right|.
\ee
\end{theorem}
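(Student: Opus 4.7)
The plan is to derive this as an immediate corollary of Lemmas \ref{lem173} and \ref{lem174}, mimicking the derivations of the analogous out-the-door theorems for Poisson (Theorem \ref{thm101}) and exponential (Theorem \ref{thm171}) approximation, both of which followed essentially automatically from the characterizing-operator identity and the associated regularity bounds on the Stein-equation solutions.

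First I would use the standard representation $\dtv(W,N) = \sup_{A \subseteq \IN \cup \{0\}} |\IP(W \in A) - \IP(N \in A)|$ (from Proposition \ref{prop22}, item 3). For each fixed $A$, Lemma \ref{lem174} supplies the unique solution $f_A$ of the Stein equation $(1-p)\Delta f_A(k) - p f_A(k) = \I[k \in A] - \IP(N \in A)$ with $f_A(0) = 0$, and moreover guarantees $\norm{\Delta f_A} \leq 1$, so that $f_A \in \%F$.

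Next I would substitute $k = W$ into this Stein equation and take expectations to obtain $\IE[(1-p)\Delta f_A(W) - p f_A(W)] = \IP(W \in A) - \IP(N \in A)$. The expectations are well defined, since $f_A(0) = 0$ combined with $\norm{\Delta f_A} \leq 1$ gives $|f_A(k)| \leq k$, and the hypothesis $\IE[W] = (1-p)/p < \infty$ provides the required integrability. Taking absolute values, taking the supremum over $A$, and enlarging the supremum to all of $\%F$ (which is permissible since $f_A \in \%F$) yields the claimed inequality.

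The main obstacle is effectively absent: all the substantive content — bounding $\norm{\Delta f_A}$ uniformly in $A$, and solving the first-order recursion with the boundary condition $f_A(0)=0$ — has already been absorbed into Lemma \ref{lem174}. The only subtle point worth noting is that the precise value $\IE[W] = (1-p)/p$ is not strictly needed for the inequality to hold (finiteness of $\IE[W]$ suffices); rather, it is the correct normalization, since the test function $f(k) = k$ lies in $\%F$ and renders the right-hand side equal to $|(1-p) - p\IE[W]|$, which vanishes exactly when the means match. Thus this hypothesis is what permits the bound to be useful in practice, precisely in parallel with the role of $\IE[W] = \lambda$ in Theorem \ref{thm101}.
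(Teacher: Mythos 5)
Your proof is correct and follows exactly the route the paper intends: the paper states only that ``these two lemmas lead easily to the following result,'' and your argument --- solve the Stein equation via Lemma \ref{lem174}, note $f_A\in\%F$, substitute $W$, take expectations, and enlarge the supremum --- is the standard derivation implicit in that remark, mirroring Theorems \ref{thm101} and \ref{thm171}. Your side observation that finiteness of $\IE[W]$ alone suffices for the inequality (with $\IE[W]=(1-p)/p$ being the normalization that makes it useful, since $f(k)=k\in\%F$ forces the bound to be at least $|(1-p)-p\IE[W]|$) is a correct and worthwhile clarification.
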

Before proceeding further, we briefly indicate the proofs of Lemmas \ref{lem173} and \ref{lem174}.
\begin{proof}[Proof of Lemma \ref{lem173}]
The first assertion is a simple computation while the second can be verified by choosing
$f(k)=\I[k=j]$ for each $j=0, 1, \ldots$ which yields a recursion for the point probabilities for $W$.
\end{proof}
\begin{proof}[Proof of Lemma \ref{lem174}]
After noting that
\ba			
f_A(k)=\sum_{i\in A }(1-p)^{i} - \sum_{i\in A, i\geq k} (1-p)^{i-k},
\ee
we easily see
\ba
\Delta f_A(k)=\I[k\in A]-p\sum_{i\in A, i\geq k+1} (1-p)^{i-k-1},
\ee
which is the difference of two non-negative terms, each of which is bounded above by one.
\end{proof}

It is clear from the form of the error of Theorem \ref{thm178} that 
it may be fruitful to attempt to define a discrete version of the equilibrium distribution used
in the exponential approximation formulation above, which is the program we will follow.
An alternative coupling is used in~\cite{pek96}.

\subsection{Discrete equilibrium coupling}
We begin with a definition.
\begin{definition}
Let $W\geq0$  a random variable with $\IE[W]=(1-p)/p$ for some $0<p\leq 1$.  We say that $W^e$ has the \emph{discrete equilibrium} distribution
with respect to $W$ if for all functions $f$ with $\norm{\Delta f}<\infty$,
\ben
p\IE[f(W)]-pf(0)=(1-p)\IE[\Delta f(W^e)].    \label {1717}
\ee
\end{definition}
The following result provides a constructive definition of the discrete equilibrium distribution, so that the
right hand side of \eq{1717} defines a probability distribution.
\begin{proposition}\label{prop172}
Let $W\geq0$ be an integer-valued random variable with $\IE[W]=(1-p)/p$ for some $0<p\leq 1$ and let $W^s$ have the size-bias distribution
of $W$.  If conditional on $W^s$, $W^e$ is uniform on $\{0, 1, \ldots, W^s-1\}$, then $W^e$
has the discrete equilibrium distribution with respect to $W$.
\end{proposition}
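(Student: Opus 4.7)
The plan is to verify directly that the proposed $W^e$ satisfies the defining relation \eqref{1717} by conditioning on $W^s$, telescoping, and then applying the definition of the size-bias distribution. Let $\mu=\IE[W]=(1-p)/p$ and let $f$ be any function with $\norm{\Delta f}<\infty$. First I would note that $W^s$ is almost surely at least one, since $\IP(W^s=0)=0\cdot\IP(W=0)/\mu=0$, so the uniform random variable $W^e\in\{0,1,\ldots,W^s-1\}$ is well defined.

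Next, by conditioning on $W^s$ and using that $W^e$ is uniform on $\{0,1,\ldots,W^s-1\}$, I would write
\be
\IE[\Delta f(W^e)]=\IE\!\left[\frac{1}{W^s}\sum_{k=0}^{W^s-1}\Delta f(k)\right].
\ee
The inner sum telescopes to $f(W^s)-f(0)$, yielding
\be
\IE[\Delta f(W^e)]=\IE\!\left[\frac{f(W^s)-f(0)}{W^s}\right].
\ee

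Now I would apply the defining property of the size-bias distribution to the function $g(w):=(f(w)-f(0))/w$ for $w\geq1$ (with $g(0)$ defined arbitrarily, say $g(0)=0$). Since $W g(W)=f(W)-f(0)$ holds for $W\geq1$ and also trivially for $W=0$, we have $\IE|Wg(W)|<\infty$ because $\norm{\Delta f}<\infty$ forces $|f(w)-f(0)|\leq\norm{\Delta f}\,w$, and so
\be
\IE[f(W)-f(0)]=\IE[Wg(W)]=\mu\,\IE[g(W^s)]=\mu\,\IE\!\left[\frac{f(W^s)-f(0)}{W^s}\right].
\ee
Combining the last two displays gives $\IE[\Delta f(W^e)]=\mu^{-1}(\IE[f(W)]-f(0))=\frac{p}{1-p}(\IE[f(W)]-f(0))$, and multiplying through by $1-p$ recovers \eqref{1717}.

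I do not expect any serious obstacle here: the only small subtlety is ensuring that the size-bias identity can legitimately be applied to $g$, which requires checking the integrability hypothesis $\IE|Wg(W)|<\infty$, and verifying that $W^s\geq 1$ a.s. so that the conditional uniform is well defined. Both follow immediately from the hypotheses together with the elementary bound $|f(w)-f(0)|\leq\norm{\Delta f}\,w$.
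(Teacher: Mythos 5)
Your proposal is correct and follows essentially the same route as the paper's proof: condition on $W^s$, telescope the sum of first differences to $f(W^s)-f(0)$, and apply the size-bias identity to $(f(w)-f(0))/w$. The only difference is that you carry $f(0)$ explicitly and verify the integrability and $W^s\geq 1$ details, whereas the paper normalizes $f(0)=0$ and leaves those routine checks implicit.
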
 
\begin{proof}
Let $f$ be such that $\norm{\Delta f}<\infty$ and $f(0)=0$.  If $W^e$ is uniform on $\{0, 1, \ldots, W^s-1\}$ as dictated by the proposition, then
\ba
\IE[\Delta f(W^e)]=\IE\left[\frac{1}{W^s}\sum_{i=0}^{W^s-1}\Delta f(i)\right]=\IE\left[\frac{f(W^s)}{W^s}\right]=\frac{p}{1-p}\IE[f(W)],
\ee
where in the final equality we use the definition of the size-bias distribution.
\end{proof}
\begin{remark}\label{rem175}
This proposition shows that the equilibrium distribution is the same as that from renewal theory - see Remark~\ref{rem171}.
\end{remark}

\begin{theorem}\label{thm177}
Let $N\sim\Geo(p)$ and $W\geq0$ an integer-valued random variable with 
$\IE[W]=(1-p)/p$ for some $0<p\leq 1$ such that $\IE[W^2]<\infty$.  If $W^e$ has the equilibrium distribution with
respect to $W$ and is coupled to $W$, then
\ba
\dtv(W,N)\leq 2(1-p)\IE\abs{W^e-W}.
\ee
\end{theorem}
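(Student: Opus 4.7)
The plan is to mirror the proof of Theorem \ref{thm172} (the exponential analogue), using Theorem \ref{thm178} as the replacement for Theorem \ref{thm171} and the defining relation of the discrete equilibrium distribution as the replacement for \eqref{173}. Specifically, I would start by invoking Theorem \ref{thm178}: for any $f$ with $f(0)=0$ and $\norm{\Delta f}\leq 1$,
\ba
\dtv(W,N)\leq \sup_{f}\left|\IE[(1-p)\Delta f(W)-pf(W)]\right|.
\ee

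The next step is to rewrite the integrand using the equilibrium coupling. Since $f(0)=0$, the defining equation \eqref{1717} yields $p\,\IE[f(W)] = (1-p)\,\IE[\Delta f(W^e)]$, so
\ba
\left|\IE[(1-p)\Delta f(W)-pf(W)]\right| = (1-p)\left|\IE[\Delta f(W)-\Delta f(W^e)]\right|.
\ee
At this point, the problem reduces to estimating $\IE|\Delta f(W)-\Delta f(W^e)|$ uniformly over test functions with $\norm{\Delta f}\leq 1$.

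The key estimate is the discrete analogue of bounding $f'(W)-f'(W^e)$ by $\norm{f''}|W-W^e|$: writing $\Delta f(W)-\Delta f(W^e)$ as a telescoping sum over the integers strictly between $W^e$ and $W$, each summand has the form $\Delta f(k+1)-\Delta f(k)$, which by the triangle inequality is bounded in absolute value by $2\norm{\Delta f}\leq 2$. Hence
\ba
|\Delta f(W)-\Delta f(W^e)|\leq 2\,|W-W^e|.
\ee
Taking expectations and combining with the previous displays yields the claimed bound $\dtv(W,N)\leq 2(1-p)\,\IE|W^e-W|$.

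I do not anticipate any serious obstacle: the argument is entirely parallel to the exponential case, with the telescoping estimate replacing the second-derivative bound. The only subtle point worth double-checking is that the factor $2$ (rather than $1$) is unavoidable because we only control $\norm{\Delta f}$, not $\norm{\Delta^2 f}$; this is analogous to how Lemma \ref{lem31} item \ref{i32} yields $\norm{f''}\leq 2\norm{h'}$ in the continuous setting. The finite second moment hypothesis $\IE[W^2]<\infty$ ensures that $W^s$, and hence $W^e$ via Proposition \ref{prop172}, has finite mean, so all expectations appearing in the argument are well defined.
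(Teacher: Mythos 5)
Your proof is correct and is essentially identical to the paper's: both invoke Theorem \ref{thm178}, use the defining relation of the discrete equilibrium distribution to replace $p\,\IE[f(W)]$ by $(1-p)\,\IE[\Delta f(W^e)]$, and bound $\abs{\IE[\Delta f(W)-\Delta f(W^e)]}$ by the telescoping estimate with each increment at most $2\norm{\Delta f}$. No changes needed.
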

\begin{proof}
If $f(0)=0$ and  $\norm{\Delta f}\leq 1$, then 
\ba
\big|\IE[(1-p)\Delta f(W)-p f(W)]\big|&=(1-p)\abs{\IE[\Delta f(W)-\Delta f(W^e)]} \\
	&\leq 2(1-p)\IE\abs{W^e-W},
	\ee
where the inequality follows after noting that $\Delta f(W)-\Delta f(W^e)$ can be written
as a sum of $\abs{W^e-W}$ terms each of size at most $\abs{\Delta f(W+i+1)- \Delta f(W+i)}\leq2\norm{\Delta f}$.
Applying Theorem \ref{thm178} now proves the desired conclusion. 
\end{proof}

\subsection{Application: Uniform attachment graph model}

Let $G_n$ be a directed random graph on $n$ nodes defined by the following recursive construction.
Initially the graph starts with one node with a single loop where one
end of the loop contributes to the ``in-degree" and the other to the ``out-degree."
Now, for $2\leq m\leq n$, given the graph with $m-1$ nodes, add node
$m$ along with an edge directed from $m$
to a node chosen uniformly at random among
the $m$ nodes present.  Note that this model allows edges connecting a node with
itself.  This random graph model is referred to as uniform attachment.
We will prove the following geometric approximation result (convergence was shown without rate in 
\cite{brst01}), which is weaker than the result of \cite{prr10} but has a slightly simpler proof.
 
\begin{theorem}\label{thm1716} If $W$ is the in-degree of a node chosen uniformly
at random from the random graph $G_n$ generated according to uniform attachment and $N\sim\Geo(1/2)$, then
\bes
    \dtv(W, N)\leq \frac{2(\log(n)+1)}{n}.
\ee
\end{theorem}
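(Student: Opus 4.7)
The plan is to apply Theorem~\ref{thm177}. First, note that the total in-degree in $G_n$ equals the total number of edges, which is $n$, so the average in-degree is $1$, and since $W$ is the in-degree of a uniformly chosen node we have $\IE[W] = 1 = (1-p)/p$ with $p = 1/2$. The prefactor $2(1-p)$ in Theorem~\ref{thm177} equals $1$, so the task reduces to constructing a coupling $(W, W^e)$ with $W^e$ distributed according to the discrete equilibrium distribution of $W$, and bounding $\IE|W - W^e|$.

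For the coupling, sample $G_n$, pick $M$ uniformly on $\{1, \ldots, n\}$, set $W := D_M$ (the in-degree of $M$), and define
\[ W^e := \sum_{m=M+1}^n \I[e_m = M], \]
the number of in-edges of $M$ coming from strictly later nodes. Then $W - W^e = \I[e_M = M]$ is just the indicator of the possible self-loop at $M$, so $|W - W^e| \in \{0, 1\}$, which gives
\[
\IE|W - W^e| = \IP(e_M = M) = \frac{1}{n}\left(1 + \sum_{M=2}^n \frac{1}{M}\right) = \frac{H_n}{n} \le \frac{\log n + 1}{n},
\]
where the ``$1$'' accounts for the forced self-loop at node~$1$ and the $1/M$ factor comes from $e_M$ being uniform on $\{1, \ldots, M\}$ for $M \ge 2$.

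The main obstacle is verifying that this $W^e$ has the discrete equilibrium distribution of $W$. Conditional on $M$, $W^e$ is distributed as $D_{M+1}$ (with the convention $D_{n+1} := 0$), so I need to show
\[ \frac{1}{n}\sum_{M=1}^n \IP(D_{M+1} = k) \;=\; \IP(W > k), \quad k \ge 0. \]
Using the decomposition $D_j = \I[e_j = j] + D_{j+1}$ with $\I[e_j = j] \sim \Be(1/j)$ independent of $D_{j+1}$ for $j \ge 2$, one derives the key identity $j[\IP(D_j > k) - \IP(D_{j+1} > k)] = \IP(D_{j+1} = k)$; Abel summation in $j$, combined with $D_1 \ed D_2 + 1$ (coming from the forced self-loop at node~$1$), then yields the desired relation $\sum_{j=1}^n \IP(D_j > k) = \sum_{j=2}^{n+1} \IP(D_j = k)$. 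Once this verification is in hand, Theorem~\ref{thm177} gives $\dtv(W, N) \le \IE|W - W^e| \le (\log n + 1)/n \le 2(\log n + 1)/n$, completing the proof.
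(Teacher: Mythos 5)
Your proof is correct and uses exactly the same coupling as the paper: your $W^e=\sum_{m=M+1}^n\I[e_m=M]$ is the paper's $W^e=\sum_{i=1}^{N-1}X_i$ under the index reversal $M=n+1-N$, with $W-W^e$ equal to the self-loop indicator at the chosen node, and the resulting bound $H_n/n\leq(\log(n)+1)/n$ matches the paper's. The only difference is bookkeeping in the verification of the equilibrium property: you check it at the level of point probabilities, showing $\IP(W^e=k)=\IP(W>k)$ via the recursion $D_j=\I[e_j=j]+D_{j+1}$ and Abel summation, while the paper checks the defining test-function identity $\IE f(W)=\IE\Delta f(W^e)$ directly using that the equilibrium transform of a Bernoulli is the point mass at zero; both routes are valid.
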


\begin{proof}
Let $X_i$ have a Bernoulli
distribution, independent of all else, with parameter $\mu_i := (n-i+1)^{-1}$,
and let $N$ be an independent random variable that is uniform on the integers
$1,2,\ldots, n$.  If we imagine that node $n+1-N$ is the randomly selected node,
then it's easy to see that we can write $W:=\sum_{i=1}^{N} X_i.$

Next, let us prove that $W^{e} := \sum_{i=1}^{N-1} X_i$ has the discrete
equilibrium distribution w.r.t.\ $W.$ First note
that we have for bounded $f$ and every $m$,
\be
    \mu_m \IE \D f\bbbklr{\,\sum_{i=1}^{m-1} X_i}
    =  \IE\bbbkle{f\bbbklr{ \,\sum_{i=1}^m X_i}
        -   f\bbbklr{\,\sum_{i=1}^{m-1} X_i}},
\ee
where we use
\be
  \IE f(X_m)-f(0)  = \IE X_m \, \IE \D f(0)
\ee
and thus the fact that we can write $X_m^{e} \equiv 0$. Note also that for
any bounded function $g$ with
$g(0)=0$ we have
\be
    \IE\bbbklg{\frac{g(N)}{\mu_N} - \frac{g(N-1)}{\mu_N}}
    =  \IE g(N).
\ee
We now assume that $f(0) = 0$. Hence, using the above two facts and
independence between $N$ and the sequence $X_1,X_2,\dots$, we have
\bes
   \IE W \IE \D f(W^{e_0})=\IE f(W).
\ee
Since $W-W^{e}=X_N,$ we have $\IE\abs{W^e-W} = \frac{1}{n}\sum_{i=1}^n (n-i+1)^{-1}$
and the result follows upon applying Theorem~\ref{thm177}.
\end{proof}

\section{Concentration Inequalities}\label{con}

The techniques we have developed for estimating expectations of characterizing operators (e.g. exchangeable pairs)
can also be used to prove concentration inequalities (or large deviations).  By concentration inequalities, we mean estimates of
$\IP(W\geq t)$ and $\IP(W\leq -t)$, for $t>0$ and some centered random variable $W$.  Of course our previous
work was concerned with such estimates, but here we are after the rate that these quantities
tend to zero as $t$ tends to infinity - in the tails of the distribution.
Distributional error terms are maximized in the body of the distribution and so typically do not
provide information about the tails.  

The study of concentration inequalities have a long history and have also found 
recent use in machine learning and analysis of algorithms - see \cite{bbl04} and
references therein for a flavor of the modern considerations of these types of problems.
Our results will hinge on the following fundamental observation. 

\begin{proposition}\label{prop211}
If $W$ is random variable and there is a $\delta>0$ such that $\IE[e^{\theta W}]<\infty$ for all $\theta\in (-\delta, \delta)$, then for all 
$t>0$ and $0<\theta<\delta$,
\ba
\IP(W\geq t)\leq \frac{ \IE[e^{\theta W}]}{e^{\theta t}} \mbox{\, and \,} \IP(W\leq-t)\leq \frac{ \IE[e^{-\theta W}]}{e^{\theta t}}.
\ee
\end{proposition}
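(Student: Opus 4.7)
The plan is to recognize this as the classical Chernoff bounding trick, which is just Markov's inequality applied to an exponential moment. The hypothesis that $\IE[e^{\theta W}]<\infty$ for $\theta \in (-\delta,\delta)$ is there only to guarantee that the right-hand sides make sense for the range of $\theta$ considered; no further use of it is needed.

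For the first inequality, I would fix $0<\theta<\delta$ and use monotonicity of $x\mapsto e^{\theta x}$ to rewrite the event $\{W\geq t\}$ as $\{e^{\theta W}\geq e^{\theta t}\}$. Since $e^{\theta W}$ is a non-negative random variable and $e^{\theta t}>0$, the ordinary Markov inequality applied to $e^{\theta W}$ immediately yields
\[
\IP(W\geq t)=\IP\klr{e^{\theta W}\geq e^{\theta t}}\leq \frac{\IE[e^{\theta W}]}{e^{\theta t}},
\]
which is the stated bound.

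For the second inequality I would run the same argument on $-W$ in place of $W$: for $0<\theta<\delta$, $e^{-\theta W}$ is again non-negative with finite mean (by the hypothesis applied at $-\theta\in(-\delta,\delta)$), and $\{W\leq -t\}=\{-W\geq t\}=\{e^{-\theta W}\geq e^{\theta t}\}$. Markov's inequality then gives $\IP(W\leq -t)\leq \IE[e^{-\theta W}]/e^{\theta t}$, as required.

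There is essentially no obstacle here; the only thing worth flagging is the (harmless) fact that one should verify $e^{\theta t}>0$ so that the Markov step is legitimate, and that the moment generating function is finite at the specific $\theta$ used, both of which are immediate from the hypotheses. The real content of the proposition is not in its proof but in the way it is used downstream: to turn bounds on $\IE[e^{\theta W}]$ (obtained via Stein-method couplings such as exchangeable pairs or size-biasing) into tail bounds by optimizing over $\theta\in(0,\delta)$.
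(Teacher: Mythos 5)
Your proposal is correct and is exactly the argument in the paper: apply the monotonicity of $x\mapsto e^{\theta x}$ to rewrite the event and then use Markov's inequality, with the second bound obtained by the same argument applied to $-W$. Nothing further is needed.
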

\begin{proof}
Using first that $e^x$ is an increasing function, and then Markov's inequality,
\ba
\IP(W>t) =  \IP\left(e^{\theta W}>e^{\theta t}\right)\leq \frac{ \IE[e^{\theta W}]}{e^{\theta t}},
\ee
which proves the first assertion.  The second assertion follows similarly.
\end{proof}

Before discussing the use of Proposition \ref{prop211} in Stein's method, we first work out a couple of easy examples.

\begin{example}[Normal distribution]\label{ex211}
Let $Z$ have the standard normal distribution and recall
that for $t>0$ we have the Mills ratio bound 
\ba
\IP(Z\geq t)\leq \frac{e^{-t^2/2}}{t\sqrt{2\pi}}.
\ee
A simple calculation implies $\IE[e^{\theta Z}]=e^{\theta^2/2}$ for all $\theta \in \IR$,
so that for $\theta, t>0$ Proposition \ref{prop211} implies
\ba
\IP(Z\geq t)\leq  e^{\theta^2/2-\theta t},
\ee
and choosing the minimizer $\theta=t$ yields 
\ba
\IP(Z\geq t)\leq  e^{ -t^2/2},
\ee
which implies that this is the best behavior we can hope for using Proposition \ref{prop211} in examples
where the random variable is approximately normal (such as sums of independent variables).
\end{example}

\begin{example}[Poisson distribution]\label{ex212}
Let $Z$ have the Poisson distribution with mean $\lambda$.  
A simple calculation implies $\IE[e^{\theta Z}]=\exp\{\lambda(e^\theta-1)\}$ for all $\theta \in \IR$,
so that for $\theta, t>0$ Proposition \ref{prop211} implies
\ba
\IP(Z-\lambda\geq t)\leq  \exp\{\lambda(e^\theta-1)-\theta(t+\lambda)\},
\ee
and choosing the minimizer $\theta=\log(1+t/\lambda)$ yields 
\ba
\IP(Z-\lambda \geq t)\leq  \exp\left\{ -t\left(\log\left(1+\frac{t}{\lambda}\right)-1\right) -\lambda\log\left(1+\frac{t}{\lambda}\right) \right\},
\ee
which is of smaller order than $e^{-ct}$ for $t$ large and fixed $c>0$, but of bigger order than $e^{-t\log(t)}$.
This is the best behavior we can hope for using Proposition \ref{prop211} in examples
where the random variable is approximately Poisson (such as sums of independent indicators, each with a small probability of being one). 
\end{example}

How does Proposition \ref{prop211} help us use the techniques from Stein's method to obtain concentration inequalities?
If $W$ is random variable and there is a $\delta>0$ such that $\IE[e^{\theta W}]<\infty$ for all $\theta\in (-\delta, \delta)$,
then we can define $m(\theta)=\IE[e^{\theta W}]$ for $0<\theta<\delta$, and we also have that
$m'(\theta)=\IE[W e^{\theta W}]$.  Thus $m'(\theta)$ is of the form $\IE[Wf(W)]$, where $f(W)=e^{\theta W}$
so that we can 
use the techniques that we developed to bound the characterizing operator
for the normal and Poisson distribution to obtain a differential inequality for $m(\theta)$.  
Such an inequality will lead 
to bounds on $m(\theta)$ so that we can apply Proposition \ref{prop211} to obtain bounds on the tail probabilities
of $W$.  This observation was first made in \cite{cha05}.

\subsection{Concentration using exchangeable pairs}
Our first formulation using the couplings of Sections \ref{norm} and \ref{poi} for concentration inequalities uses exchangeable pairs.  We follow
the development of \cite{cha07a}.
\begin{theorem}\label{thm211a}
Let $(W, W')$ an $a$-Stein pair with $\var(W)=\sigma^2<\infty$.  If $\IE[e^{\theta W}\abs{W'-W}]<\infty$
for all $\theta\in \IR$ and for some sigma-algebra $\%F\supseteq\sigma(W)$ there are non-negative constants $B$ and $C$ such that 
\ban
\frac{\IE[(W'-W)^2|\%F]}{2a}\leq BW + C, \label{122g}
\ee
then for all $t>0$,
\ba
\IP(W\geq t)\leq \exp\left\{\frac{-t^2}{2C+2Bt}\right\} \mbox{\, and \,} \IP(W\leq-t)\leq \exp\left\{\frac{-t^2}{2C}\right\}.
\ee
\end{theorem}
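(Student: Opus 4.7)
The plan is to combine Proposition \ref{prop211} (the Chernoff--Cram\'er bound) with a Stein-type analysis of the moment generating function $m(\theta):=\IE[e^{\theta W}]$. The idea is to use the exchangeable pair machinery to derive a differential inequality for $m(\theta)$, solve it, and then optimize the resulting Chernoff bound in $\theta$. Note first that an $a$-Stein pair automatically has $\IE[W]=0$ (Proposition \ref{prop51}), so $m(0)=1$ and $m'(0)=0$.

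The first step is to obtain a useful expression for $m'(\theta)=\IE[We^{\theta W}]$. I would apply the standard exchangeable-pair identity: since $F(w,w')=(w'-w)(e^{\theta w'}+e^{\theta w})$ is anti-symmetric, $\IE[F(W,W')]=0$, and the $a$-Stein linearity $\IE[W'-W\mid W]=-aW$ then gives
\bes
\IE[We^{\theta W}] \;=\; \frac{1}{2a}\,\IE\bklel (W'-W)\bklr e^{\theta W'}-e^{\theta W}\bklrr\bkler.
\ee
Because $e^{\theta \cdot}$ is monotone, the integrand is non-negative, and the log-mean inequality $|e^{\theta W'}-e^{\theta W}|\leq |\theta||W'-W|(e^{\theta W'}+e^{\theta W})/2$ together with exchangeability $\IE[(W'-W)^2 e^{\theta W'}]=\IE[(W'-W)^2 e^{\theta W}]$ yields
\bes
\IE[We^{\theta W}]\;\leq\;\frac{|\theta|}{2a}\,\IE\bklel (W'-W)^2 e^{\theta W}\bkler.
\ee

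Next I would use the hypothesis \eqref{122g}. Conditioning on $\%F\supseteq\sigma(W)$ (so $e^{\theta W}$ is $\%F$-measurable) gives
\bes
\IE\bklel(W'-W)^2 e^{\theta W}\bkler\;=\;\IE\bklel\IE[(W'-W)^2\mid\%F]\,e^{\theta W}\bkler\;\leq\;2a\bklr Bm'(\theta)+Cm(\theta)\bklrr.
\ee
Combining the two displays gives the differential inequality $m'(\theta)(1-\theta B)\leq \theta C\,m(\theta)$ for $\theta>0$, equivalently $(\log m)'(\theta)\leq \theta C/(1-\theta B)$. Integrating from $0$ to $\theta\in(0,1/B)$ and applying the elementary bound $-\log(1-x)-x\leq x^2/(2(1-x))$ for $x\in(0,1)$, I will get $m(\theta)\leq \exp\bklr C\theta^2/(2(1-\theta B))\bklrr$. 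Proposition~\ref{prop211} then gives $\IP(W\geq t)\leq \exp\bklr C\theta^2/(2(1-\theta B))-\theta t\bklrr$, and optimizing at $\theta=t/(C+Bt)$ yields the stated right-tail bound. For the left tail I would repeat the argument with $\theta$ replaced by $-\theta$, in which case the integrated inequality becomes $(\log m)'(\theta)\leq \theta C/(1+\theta B)\leq \theta C$, so no singularity appears; the Chernoff optimization at $\theta=t/C$ yields $\exp(-t^2/(2C))$.

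The main obstacle is the transition from the exchangeability-based identity to the differential inequality: one must bound the exponential difference $e^{\theta W'}-e^{\theta W}$ tightly enough that the quadratic variance proxy $(W'-W)^2$ appears cleanly, and the log-mean inequality is the right tool for this. A secondary subtlety is justifying that conditioning on the larger $\sigma$-field $\%F$ in \eqref{122g} still allows us to pull out $e^{\theta W}$; this is exactly why the hypothesis requires $\sigma(W)\subseteq\%F$. Finally, the hypothesis $\IE[e^{\theta W}|W'-W|]<\infty$ is what legitimizes differentiating $m(\theta)$ under the integral sign and all the interchanges of expectation above.
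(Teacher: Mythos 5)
Your proposal is correct and follows essentially the same route as the paper's proof: the exchangeable-pair identity $2a\,\IE[We^{\theta W}]=\IE[(W'-W)(e^{\theta W'}-e^{\theta W})]$, the convexity (log-mean) bound to bring in $(W'-W)^2e^{\theta W}$ via exchangeability, conditioning on $\%F$ to invoke \eqref{122g}, and the resulting differential inequality for $\log m(\theta)$ integrated and optimized at $\theta=t/(C+Bt)$ (resp.\ $\theta=-t/C$ for the left tail). The only differences are cosmetic: you rederive the identity from an explicit antisymmetric function and bound the integral $\int_0^\theta Cu/(1-Bu)\,du$ by a series estimate rather than by monotonicity of the integrand, arriving at the same $C\theta^2/(2(1-B\theta))$.
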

\begin{remark}
The reason the left tail has a better bound stems from condition \eq{122g} which implies that
$BW+C\geq0$.  Thus, the condition essentially forces the centered variable $W$ to be bounded from below
whereas there is no such requirement for large positive values.  As can be understood from the proof of the theorem,
conditions other than \eq{122g} may be substituted to yield different bounds; see \cite{chde10}.
\end{remark}
Before proving the theorem, we apply it in a simple example.
\begin{example}[Sum of independent variables]
Let $X_1, \ldots, X_n$ be independent random variables with $\mu_i:=\IE[X_i]$, $\sigma_i^2:=\var(X_i)<\infty$
and define $W=\sum_i X_i-\mu_i$.
Let $X_1', \ldots, X_n'$ be an independent copy of the $X_i$ and for $I$ independent of all else and uniform on $\{1, \ldots, n\}$,
let $W'=W-X_I+X_I'$ so that as usual, $(W, W')$ is a $1/n$-Stein pair.  We consider two special cases of this setup.
\begin{enumerate}
\item For $i=1, \ldots, n$, assume $\abs{X_i-\mu_i}\leq C_i$.  Then clearly the moment generating function condition of
Theorem \ref{thm211a} is satisfied and we also have
\ba
\IE[(W'-W)^2 | (X_j)_{j\geq 1}]&=\frac{1}{n}\sum_{i=1}^n\IE[(X_i'-X_i)^2 | (X_j)_{j\geq 1}] \\
	&=\frac{1}{n}\sum_{i=1}^n\IE[(X_i'-\mu_i)^2]+\frac{1}{n}\sum_{i=1}^n (X_i-\mu_i)^2  \\
	&\leq \frac{1}{n}\sum_{i=1}^n(\sigma_i^2+C_i^2),
\ee
so that we can apply Theorem \ref{thm211a} with $B=0$ and $2C=\sum_{i=1}^n(\sigma_i^2+C_i^2)$.  We have shown
that for $t>0$,
\ba
\IP\left(\abs{W-\IE[W]}\geq t\right)\leq 2\exp\left\{-\frac{t^2}{\sum_{i=1}^n(\sigma_i^2+C_i^2)}\right\},
\ee
which is some version of Hoeffding's inequality \cite{hoe63}.
\item For $i=1, \ldots, n$, assume  $0\leq X_i \leq 1$. Then the moment generating function condition of the theorem is satisfied and
\ba
\IE[(W'-W)^2 | (X_j)_{j\geq 1}]&=\frac{1}{n}\sum_{i=1}^n\IE[(X_i'-X_i)^2 | (X_j)_{j\geq 1}] \\
	&=\frac{1}{n}\sum_{i=1}^n\IE[(X_i')^2]-2\mu_iX_i+X_i^2  \\
	&\leq \frac{1}{n}\sum_{i=1}^n\mu_i +X_i= \frac{1}{n}(2\mu+W),
\ee
where $\mu:=\IE[W]$ and we have used that $-2\mu_iX_i\leq 0$ and $X_i^2\leq X_i$.
We can now apply Theorem \ref{thm211a} with $B=1/2$ and $C=\mu$ to find that
for $t>0$,
\ban
\IP\left(\abs{W-\mu}\geq t\right)\leq 2\exp\left\{-\frac{t^2}{2\mu+t}\right\}. \label{21198}
\ee
Note that if $\mu$ is constant in $n$, then \eq{21198} is of the order $e^{-t}$ for large $t$, which according to Example \ref{ex212} is
similar to the order of Poisson tails.  However, if $\mu$ and $\sigma^2:=\var(W)$ are both going to infinity at the same rate,
then \eq{21198} implies
\ba
\IP\left(\frac{\abs{W-\mu}}{\sigma}\geq t\right)\leq 2\exp\left\{-\frac{t^2}{2\frac{\mu}{\sigma^2}+\frac{t}{\sigma}}\right\}, 
\ee
so that for $\sigma^2$ large, the tails are of order $e^{-ct^2}$, which according to Example \ref{ex211} is
similar to the order of Gaussian tails.
\end{enumerate} 
\end{example}

\begin{proof}[Proof of Theorem \ref{thm211a}]
Let $m(\theta)=\IE[e^{\theta W}]$ and note that $m'(\theta)=\IE[We^{\theta W}]$.  Since $(W,W')$ is
an $a$-Stein pair, we can use \eq{52} to find that for all $f$ such that $\IE\abs{Wf(W)}<\infty$,
\ba
\IE[(W'-W)(f(W')-f(W))]=2a \IE[Wf(W)].
\ee
In particular,
\ban
m'(\theta)=\frac{\IE[(W'-W)(e^{\theta W'}-e^{\theta W})]}{2a}, \label{2211a}
\ee
and in order to bound this term, we use the convexity of the exponential function to obtain for $x>y$
\ban
\frac{e^x-e^y}{x-y} = \int_0^1 \exp\{tx+(1-t)y\} dt \leq \int_0^1 te^x+(1-t)e^y dt = \frac{e^x+e^y}{2}. \label{2212a}
\ee
Combining \eq{2211a} and \eq{2212a}, we find that for all $\theta\in \IR$, 
\ban
\abs{m'(\theta)}&\leq \abs{\theta}\frac{\IE[(W'-W)^2(e^{\theta W'}+e^{\theta W})]}{4a} \notag \\
		&=\abs{\theta}\frac{\IE[(W'-W)^2 e^{\theta W}]}{2a} \notag\\
		&\leq \abs{\theta}\,\IE[(BW +C) e^{\theta W}] \notag\\
		&\leq B \abs{\theta} m'(\theta)+C \abs{\theta} m(\theta), \label{1222h}
\ee
where the equality is by exchangeability and the penultimate inequality follow from the hypothesis \eq{122g}.
Now, since $m$ is convex and $m'(0)=0$, we find that $m'(\theta)/\theta >0$ for $\theta \not=0$.  We now break the proof into 
two cases, corresponding to the positive and negative tails of the distribution of $W$.
\begin{enumerate}
\item[$\theta>0$.] In this case, our calculation above implies that for $0<\theta<1/B$,
\ba
\frac{d}{d\theta}\log(m(\theta))=\frac{m'(\theta)}{m(\theta)}\leq \frac{C\theta}{1-B\theta},
\ee
which yields that 
\ba
\log(m(\theta))\leq \int_0^\theta \frac{C u}{1-B u} du\leq  \frac{C \theta^2}{2(1-B\theta)},
\ee
and from this point we easily find 
\ba
m(\theta)\leq \exp\left\{\frac{C\theta^2}{2(1-B\theta)}\right\}.
\ee
According to Proposition \ref{prop211} we now have for $t>0$ and $0<\theta<1/B$,
\ba
\IP(W\geq t)\leq  \exp\left\{\frac{C\theta^2}{2(1-B\theta)}-\theta t\right\},
\ee
and choosing $\theta=t/(C+Bt)$ proves the first assertion of the theorem.
\item[$\theta<0$.]  In this case, since $m'(\theta)<0$, \eq{1222h} is bounded above by
$-C \theta m(\theta)$ which implies
\ba
C \theta \leq \frac{d}{d\theta} \log(m(\theta)) < 0.
\ee
From this equation, some minor consideration shows that
\ba
\log(m(\theta))\leq \frac{C \theta^2}{2}
\ee
According to Proposition \ref{prop211} we now have for $t>0$ and $\theta<0$,
\ba
\IP(W\leq -t)\leq  \exp\left\{\frac{C\theta^2}{2}+\theta t\right\},
\ee
and choosing $\theta=-t/C$ proves the second assertion of the theorem.
\end{enumerate}
\end{proof}
\begin{example}[Hoeffding's combinatorial CLT]
Let $(a_{i j})_{1\leq i,j \leq n}$ be an array of real numbers 
and let $\sigma$ be a uniformly chosen
random permutation of $\{1, \ldots, n\}$. Let
\ba
W=\sum_{i=1}^na_{i \sigma_j}-\frac{1}{n}\sum_{i,j}a_{i j},
\ee
and define $\sigma'=\sigma \tau$, where $\tau$ is a
uniformly chosen transposition and
\ba
W'=\sum_{i=1}^na_{i \sigma'_j}-\frac{1}{n}\sum_{i,j}a_{i j}.
\ee
It is a straightforward exercise to show that 
that $\IE[W]=0$ and $(W, W')$ is a $2/(n-1)$-Stein pair so that
it may be possible to apply Theorem \ref{thm211a}.  In fact, we have the following result.
\begin{proposition}
If $W$ is defined as above with $0\leq a_{i j}\leq 1$, then for all $t>0$, 
\ba
\IP(\abs{W}\geq t)\leq 2\exp\left\{\frac{-t^2}{\frac{4}{n}\sum_{i,j}a_{i,j}+2t}\right\}.
\ee
\end{proposition}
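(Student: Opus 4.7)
The plan is to apply Theorem~\ref{thm211a} to the $2/(n-1)$-Stein pair $(W, W')$ with $\%F = \sigma(W)$. Boundedness of $W$ (forced by $0 \le a_{ij} \le 1$) makes the integrability hypothesis $\IE[e^{\theta W}|W'-W|] < \infty$ automatic, so the sole substantive task is to produce a linear bound of the form $\IE[(W'-W)^2 \mid \sigma]/(2a) \le BW + C$ with explicit nonnegative constants $B, C$.

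Writing $\tau = (I, J)$, the elementary identity $W' - W = a_{I\sigma_J} + a_{J\sigma_I} - a_{I\sigma_I} - a_{J\sigma_J}$ together with $0 \le a_{ij} \le 1$ gives the pointwise estimate $|W' - W| \le 2$, hence
\[
(W' - W)^2 \le 2|W'-W| \le 2\bigl(a_{I\sigma_J} + a_{J\sigma_I} + a_{I\sigma_I} + a_{J\sigma_J}\bigr).
\]
Averaging over the uniform choice of $\tau$ and using the permutation identity $\sum_{j\neq i} a_{i\sigma_j} = \sum_k a_{ik} - a_{i\sigma_i}$ yields
\[
\IE[(W'-W)^2 \mid \sigma] \le \frac{4}{n(n-1)} \sum_{i,j} a_{ij} + \frac{4(n-2)}{n(n-1)} \sum_i a_{i\sigma_i}.
\]
Dividing through by $2a = 4/(n-1)$ and substituting $\sum_i a_{i\sigma_i} = W + \tfrac{1}{n}\sum_{i,j} a_{ij}$ produces the key bound
\[
\frac{\IE[(W'-W)^2 \mid \sigma]}{2a} \le \frac{n-2}{n} W + \frac{2(n-1)}{n^2} \sum_{i,j} a_{ij},
\]
so Theorem~\ref{thm211a} applies with $B = (n-2)/n$ and $C = \tfrac{2(n-1)}{n^2}\sum_{i,j} a_{ij}$. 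Nonnegativity of $BW + C$ follows from the trivial lower bound $W \ge -\tfrac{1}{n}\sum_{i,j} a_{ij}$.

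To finish, I would observe that $2B \le 2$ and $2C \le \tfrac{4}{n}\sum_{i,j} a_{ij}$, so the positive-tail conclusion of Theorem~\ref{thm211a} already yields $\IP(W \ge t) \le \exp\{-t^2/(\tfrac{4}{n}\sum a_{ij} + 2t)\}$; the negative-tail bound $\exp\{-t^2/(2C)\}$ is smaller still, and a union bound supplies the factor of $2$. The main obstacle is precisely the combinatorial bookkeeping in the conditional variance estimate: the quadratic $(W'-W)^2$ must first be linearized in the $a$'s (via $|W'-W| \le 2$) and then averaged over $\tau$ cleanly enough that the final upper bound is linear in $W$ itself, not merely in $\sum_i a_{i\sigma_i}$. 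Everything else is a mechanical invocation of the exchangeable-pair concentration machinery.
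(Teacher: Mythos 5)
Your proposal is correct and follows essentially the same route as the paper: the same $2/(n-1)$-Stein pair, the same linearization $(W'-W)^2\leq 2\abs{W'-W}\leq 2(a_{I\sigma_J}+a_{J\sigma_I}+a_{I\sigma_I}+a_{J\sigma_J})$, the same averaging over the transposition and substitution $\sum_i a_{i\sigma_i}=W+\tfrac{1}{n}\sum_{i,j}a_{ij}$, followed by Theorem \ref{thm211a}. The only difference is that you track the off-diagonal counting more carefully, obtaining the slightly sharper constants $B=(n-2)/n$ and $C=\tfrac{2(n-1)}{n^2}\sum_{i,j}a_{ij}$ before relaxing to the paper's $B=1$, $C=\tfrac{2}{n}\sum_{i,j}a_{ij}$.
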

\begin{proof}
Let $(W, W')$ be the $2/(n-1)$-Stein pair as defined in the remarks preceding the statement of the proposition.  We now have
\ba
\IE[(W'-W)^2|\sigma]&=\frac{1}{n(n-1)}\sum_{i,j}\left(a_{i \sigma_i}+a_{j \sigma_j}-a_{i \sigma_j}-a_{j \sigma_i}\right)^2 \\
	&\leq \frac{2}{n(n-1)}\sum_{i,j}\left(a_{i \sigma_i}+a_{j \sigma_j}+a_{i \sigma_j}+a_{j \sigma_i}\right) \\
	&\leq \frac{4}{n-1}W+\frac{8}{n(n-1)}\sum_{i,j}a_{i j},
\ee
so that we can apply Theorem \ref{thm211a} with $B=1$ and $C=(2/n)\sum_{i,j} a_{i j}$, to prove the result.
\end{proof}
\end{example}

\subsection{Application: Magnetization in the Curie-Weiss model}

Let $\beta>0$, $h\in \IR$ and for  $\sigma
\in \{ -1,1 \}^n$ define the Gibbs measure 
\ban
\IP( \sigma ) = Z^{-1} \exp \left\{ \frac{\beta}{n} \sum_{i < j}
  \sigma_i \sigma_j + \beta h \sum_i \sigma_i \right\}, \label{2199}
\ee
where $Z$ is the appropriate normalizing constant (the so-called ``partition function" of statistical physics).

We think of $\sigma$ as a configuration of ``spins" ($\pm 1$) on a system with $n$ sites.  The spin of a site
depends on those at all other sites since for all $i\not=j$, each of the terms $\sigma_i \sigma_j$ appears in the first sum.
Thus, the most likely configurations are those that have many of the spins the same spin ($+1$ if $h>0$ and $-1$ if $h<0$).
This probability model is referred to as the Curie-Weiss model and a quantity of interest is
$m=\frac{1}{n}\sum_{i=1}^n \sigma_i$, the ``magnetization" of the system.  We will show the following result found in \cite{cha07a}.

\begin{proposition} \label{prop2112y}
If $m = \frac{1}{n} \sum_{i=1}^n \sigma_i$, then for all $\beta >
0$, $h \in \mathbb{R}$, and $t \geq 0$,
\[
\IP \left( |m- \tanh (\beta m + \beta h ) | \geq \frac{\beta}{n} +
  \frac{t}{\sqrt{n}} \right) \leq 2 \exp\left\{ - \frac{t^2}{4(1+ \beta)} \right\},
\]
where $\tanh(x):=(e^x-e^{-x})/(e^x+e^{-x})$.
\end{proposition}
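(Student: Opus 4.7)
The plan is to construct an exchangeable pair $(\sigma, \sigma')$ from one step of the heat-bath Glauber dynamics and then apply Theorem~\ref{thm211a} to a statistic tailored so that $m - \tanh(\beta m + \beta h)$ appears as its conditional drift. Given $\sigma$ drawn from the Gibbs measure \eq{2199}, I would pick an index $I$ uniformly on $\{1,\ldots,n\}$ and replace $\sigma_I$ by $\sigma_I'$ sampled from the conditional law of $\sigma_I$ given $\sigma_{-I}$. Since the Glauber chain is reversible with respect to the Gibbs measure and $\sigma$ is stationary, $(\sigma, \sigma')$ is exchangeable. A direct computation from the Hamiltonian in \eq{2199} yields $\IE[\sigma_I'\mid \sigma_{-I}, I] = \tanh(\beta m_I + \beta h)$ where $m_i := (1/n)\sum_{j\neq i}\sigma_j$, and this identity is precisely what makes the nonlinearity in the statement surface.

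First I would consider the scaled statistic $W := n\bigl(m - \tanh(\beta m + \beta h)\bigr)$, writing $T := \tanh(\beta m + \beta h)$ and the Glauber increment as $W' - W = (\sigma_I' - \sigma_I) - n(T'-T)$. Applying the fundamental theorem of calculus to $T' - T$ gives $W' - W = (\sigma_I' - \sigma_I)\bigl(1 - \beta\,\mathrm{sech}^2(\xi_I)\bigr)$ for some $\xi_I$ between $\beta m + \beta h$ and $\beta m' + \beta h$. Averaging $(\sigma_I' - \sigma_I)$ against the heat-bath conditional distribution and then over $I$ produces $\IE[W' - W \mid \sigma] = -W/n + \mathrm{remainder}$, where the remainder is bounded uniformly by $O(\beta/n)$ using $|\tanh(\beta m_i + \beta h) - \tanh(\beta m + \beta h)| \leq \beta/n$. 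Thus $(W, W')$ is a $1/n$-Stein pair up to a $O(\beta/n)$ drift error, which I will absorb into the $\beta/n$ shift in the final event.

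For the variance condition \eq{122g}, the product representation $W' - W = (\sigma_I' - \sigma_I)\bigl(1 - \beta\,\mathrm{sech}^2(\xi_I)\bigr)$ combined with $\IE[(\sigma_I' - \sigma_I)^2 \mid \sigma_{-I}] = 2\bigl(1 - \sigma_I\tanh(\beta m_I + \beta h)\bigr)$ yields, after averaging over $I$ and carefully tracking the factorization, the bound $\IE[(W'-W)^2 \mid \sigma] \leq 4(1+\beta)$. Dividing by $2a = 2/n$ puts the bound in the form $BW + C$ with $B = 0$ and $C = 2n(1+\beta)$. Theorem~\ref{thm211a} then gives $\IP(|W|\geq s) \leq 2\exp\bigl(-s^2/(4n(1+\beta))\bigr)$. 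Setting $s = t\sqrt n$ and dividing by $n$ converts this into a bound on $|m - \tanh(\beta m + \beta h)|$, with the event inflated by $\beta/n$ to accommodate the approximate-linearity error from the previous step; this matches the statement.

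The main obstacle is twofold: (a) the $a$-Stein linearity holds only up to a $O(\beta/n)$ drift correction, and (b) the variance constant needs to come out as $4(1+\beta)$ rather than the crude $4(1+\beta)^2$ that one obtains by squaring $|W'-W| \leq 2(1+\beta)$. Issue (a) is handled cleanly by applying the antisymmetric-function variant of Chatterjee's concentration theorem (which replaces $\IE[W'-W\mid W] = -aW$ by $\IE[F(\sigma,\sigma')\mid \sigma] = f(\sigma)$ for an antisymmetric $F$) so that approximate drift identities translate directly into the $\beta/n$ shift in the event. For issue (b), one must keep the conditional second moment factored as $(\sigma_I'-\sigma_I)^2 \cdot (1-\beta\,\mathrm{sech}^2(\xi_I))^2$ and exploit that the $\sigma_I'-\sigma_I$ factor, after averaging against the heat-bath conditional distribution, reduces the naive bound $4$ to something of order $2$; this is the delicate step that salvages the $(1+\beta)$ (rather than $(1+\beta)^2$) in the denominator of the exponent.
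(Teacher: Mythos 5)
Your setup (the heat-bath/Gibbs-sampler exchangeable pair, the identity $\IE[\sigma_I'\mid\sigma_{-I},I]=\tanh(\beta m_I+\beta h)$, and the eventual appeal to the antisymmetric variant of Chatterjee's theorem) is the right machinery, but the choice of statistic derails the argument. The drift claim in your second paragraph is false: writing $W=n(m-T)$ with $T=\tanh(\beta m+\beta h)$, the contribution of $-n(T'-T)$ to $\IE[W'-W\mid\sigma]$ is $-\beta\,\IE[(\sigma_I'-\sigma_I)\,\mathrm{sech}^2(\xi_I)\mid\sigma]\approx-\beta\,\mathrm{sech}^2(\beta m+\beta h)\,\IE[\sigma_I'-\sigma_I\mid\sigma]$, which is of the \emph{same} order as the leading term $-W/n$ (it is roughly $+\beta\,\mathrm{sech}^2(\beta m+\beta h)\,W/n$), not $\bigo(\beta/n)$. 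So the effective linearity coefficient is $\tfrac1n\bigl(1-\beta\,\mathrm{sech}^2(\beta m+\beta h)\bigr)$, a random quantity that can vanish or go negative as soon as $\beta\geq1$. Switching to Theorem \ref{thm211b} does not repair this: with $F=W-W'$ that theorem concentrates $f(\sigma)=\IE[W-W'\mid\sigma]\approx\bigl(1-\beta\,\mathrm{sech}^2(\beta m+\beta h)\bigr)\bigl(m-\tanh(\beta m+\beta h)\bigr)$, and concentration of this product tells you nothing about $m-\tanh(\beta m+\beta h)$ where the prefactor is small. The proposition claims a bound for all $\beta>0$, so this is a genuine failure of the approach, not a constant-chasing issue.

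The fix, which is what the paper does, is to keep $\tanh$ \emph{out} of the statistic: take $F(\sigma,\sigma')=\sum_i(\sigma_i-\sigma_i')$, so that $f(\sigma)=\IE[F(\sigma,\sigma')\mid\sigma]=m-\frac1n\sum_{i}\tanh(\beta m_i+\beta h)$ and the nonlinearity emerges from the heat-bath conditional law rather than being inserted by hand. The $\beta/n$ shift in the event then comes only at the very end, from $\bigl|\frac1n\sum_i\tanh(\beta m_i+\beta h)-\tanh(\beta m+\beta h)\bigr|\leq\beta/n$, with no approximate Stein identity to justify. This choice also dissolves your worry (b): the hypothesis of Theorem \ref{thm211b} involves the product $\bigl|(f(\sigma)-f(\sigma'))F(\sigma,\sigma')\bigr|$, and one bounds $|F|\leq2$ (no factor of $\beta$) and $|f(\sigma)-f(\sigma')|\leq2(1+\beta)/n$ (one factor of $1+\beta$, using $|\tanh x-\tanh y|\leq|x-y|$), so $C=2(1+\beta)/n$ and the single power of $(1+\beta)$ appears for free, with no delicate averaging of $(\sigma_I'-\sigma_I)^2$ required.
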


In order to the prove the proposition, we need a more general result than that of Theorem \ref{thm211a}; 
the proofs of the two results are very similar.

\begin{theorem}\label{thm211b}
Let $(X, X' )$ an exchangeable pair of random
elements on a Polish space. Let $F$ an antisymmetric function and
define 
\[
f(X) := \IE [ F(X, X') |
X ].
\]
If $\IE [ e^{\theta f(X)} \abs{F(X,X')}] < \infty$ for all $\theta \in
\mathbb{R}$ and there are constants $B, C \geq 0$ such that  
\[
\frac{1}{2} \IE \left[ \left|(f(X)-f(X'))F(X,X') \right| \big|  X \right] \leq Bf(X) + C,
\]
then for all $t>0$,
\ba
\IP( f(X) > t) \leq \exp \left\{ \frac{-t^2}{2C+2Bt}
  \right\} \mbox{\, and \,} \IP( f(X) \leq -t) \leq \exp \left\{ \frac{-t^2}{2C}  \right\}.
\ee
\end{theorem}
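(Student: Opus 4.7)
The plan is to follow the same moment-generating-function strategy used in the proof of Theorem \ref{thm211a}, with the role of the $a$-Stein linearity identity \eq{52} replaced by the antisymmetric-function identity from Item~1 of Proposition \ref{prop51}. Concretely, I would set $m(\theta) = \IE[e^{\theta f(X)}]$ and $m'(\theta) = \IE[f(X) e^{\theta f(X)}]$, and the first step is to express $m'(\theta)$ in a form that involves $F(X,X')$. Applying Item~1 of Proposition \ref{prop51} to the antisymmetric function $(x,y)\mapsto F(x,y)(g(x)+g(y))$ (or, equivalently, using $\IE[F(X,X')g(X')] = -\IE[F(X,X')g(X)]$ by exchangeability together with antisymmetry of $F$), I get the key identity
\[
\IE[f(X)g(X)] \;=\; \IE[F(X,X')g(X)] \;=\; \tfrac{1}{2}\,\IE\bigl[F(X,X')\bigl(g(X)-g(X')\bigr)\bigr]
\]
for every $g$ with the appropriate integrability. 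Specializing to $g(x) = e^{\theta x}$ (applied to $f$) gives
\[
m'(\theta) \;=\; \tfrac{1}{2}\IE\bigl[F(X,X')\bigl(e^{\theta f(X)} - e^{\theta f(X')}\bigr)\bigr].
\]

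Next, I would bound the difference of exponentials by the convexity inequality \eq{2212a}, i.e.\ $|e^x - e^y| \leq \tfrac{|x-y|}{2}(e^x + e^y)$. Inserting this with $x = \theta f(X)$, $y = \theta f(X')$ and then using exchangeability of $(X,X')$ combined with $|F(X,X')| = |F(X',X)|$ to fold the $e^{\theta f(X')}$ term into the $e^{\theta f(X)}$ term, I obtain
\[
|m'(\theta)| \;\leq\; |\theta|\,\IE\Bigl[\tfrac{1}{2}\IE\bigl[|(f(X)-f(X'))F(X,X')|\bigm|X\bigr]\,e^{\theta f(X)}\Bigr].
\]
Now apply the hypothesis to bound the inner conditional expectation by $Bf(X)+C$, producing the differential inequality
\[
|m'(\theta)| \;\leq\; B|\theta|\,m'(\theta) + C|\theta|\,m(\theta),
\]
which is exactly \eq{1222h} with $W$ replaced by $f(X)$.

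From here the remainder is identical to the proof of Theorem \ref{thm211a}. For $\theta>0$ I note that $m'(\theta)\ge 0$ by convexity of $m$ (since $\IE f(X) = 0$ by antisymmetry of $F$ and Item~1 of Proposition \ref{prop51}), rearrange to get $\frac{d}{d\theta}\log m(\theta) \leq C\theta/(1-B\theta)$ for $0<\theta<1/B$, integrate, and then optimize over $\theta = t/(C+Bt)$ in Proposition \ref{prop211}. For $\theta<0$, since $m'(\theta)\le 0$ the inequality collapses to $\frac{d}{d\theta}\log m(\theta) \geq C\theta$, yielding $\log m(\theta) \leq C\theta^2/2$ and then the second tail bound upon choosing $\theta = -t/C$.

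I expect the only genuinely new ingredient, and thus the main thing that needs care, is the derivation of the identity $\IE[f(X)g(X)] = \tfrac{1}{2}\IE[F(X,X')(g(X)-g(X'))]$; everything after that is a direct transcription of the Stein-pair argument. A minor technical point is justifying that the integrability assumption $\IE[e^{\theta f(X)}|F(X,X')|]<\infty$ is enough to make all of the conditional-expectation and exchangeability manipulations rigorous, but this is standard once one observes that $|f(X)|\le \IE[|F(X,X')||X]$ by Jensen's inequality and so finiteness of the moment-generating function of $f(X)$ (in a neighborhood of zero) follows from the hypothesis.
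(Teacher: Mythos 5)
Your proposal is correct and is precisely the argument the paper intends: the paper only sketches this proof by remarking that it is "very similar" to that of Theorem \ref{thm211a}, and your key identity $\IE[f(X)g(X)]=\tfrac12\IE[F(X,X')(g(X)-g(X'))]$ is exactly the generalization of \eq{2211a} that makes the transcription work (as confirmed by the paper's remark that taking $F(W,W')=(W-W')/a$ recovers Theorem \ref{thm211a}). The folding of the $e^{\theta f(X')}$ term via exchangeability, the resulting differential inequality \eq{1222h} with $W$ replaced by $f(X)$, and the two-case optimization are all as in the paper.
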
 
In order to recover Theorem \ref{thm211a} from the result above, 
if 
$(W,W')$
is an $a$-Stein pair,
then we can take $F(W,W') =(W-W')/a$, so that $f(W):=\IE [ F(W,W') | W] = W$.

\begin{proof}[Proof of Proposition \ref{prop2112y}]
In the notation of Theorem \ref{thm211b}, we will set $X = \sigma$ and
$X' = \sigma'$, where $\sigma$ is chosen according to the Gibbs measure
given by \eq{2199} and $\sigma'$ is a step from $\sigma$ in the
following reversible Markov chain: at each step of the chain a site
from the $n$ possible sites is chosen uniformly at random and then the
spin at that site is resampled according to the Gibbs measure \eq{2199} conditional on the value of the spins
at all other sites. This chain is most commonly known as the
\emph{Gibbs sampler}. 
We will take $F(\sigma, \sigma') = \sum_{i=1}^n (\sigma_i - \sigma_i')$
so that we will use Theorem \ref{thm211b} to study $f(\sigma) = \IE[ F(\sigma,\sigma')|\s ]$.

The first thing we need to do is compute the transition probabilities
for the Gibbs sampler chain. Suppose the chosen site is site $i$, then
\[
\begin{split}
\IP(\s'_i = 1 | (\s_j)_{j \neq i} ) & = \frac{ \IP ( \s_i=1,(\s_j)_{j \neq
    i} )}{\IP ( (\s_j)_{j \neq i} )} ,\\
\IP(\s'_i = -1 | (\s_j)_{j \neq i} ) & = \frac{ \IP ( \s_i=-1,(\s_j)_{j \neq
    i} )}{\IP ( (\s_j)_{j \neq i} )} . \\
\end{split}
\]

Note that $\IP ( (\s_j)_{j \neq i} ) = \IP ( \s_i = 1, (\s_j)_{j \neq i}
) + \IP ( \s_i = -1 , (\s_j)_{j \neq i} )$, and that 
\[
\begin{split}
\IP(\s'_i = 1 , (\s_j)_{j \neq i} ) & =  Z^{-1} \exp \left\{
  \frac{\beta}{n} \left( \sum_{k < j, j \neq i} \sigma_j \sigma_k
    + \sum_{j \neq i} \s_j \right) + \beta h \sum_{j \neq i} \sigma_j
  + \beta h \right\}    ,\\
\IP(\s'_i = -1 , (\s_j)_{j \neq i} ) & =  Z^{-1} \exp \left\{
  \frac{\beta}{n} \left( \sum_{k < j, j \neq i} \sigma_j \sigma_k
    - \sum_{j \neq i} \s_j \right) + \beta h \sum_{j \neq i} \sigma_j
  - \beta h \right\}   . \\
\end{split}
\]

Thus
\[
\begin{split}
\IP(\s'_i = 1 | (\s_j)_{j \neq i} ) & = \frac{ \exp 
\left\{ \frac{\beta}{n} \sum_{j \neq i} \s_j + \beta h \right\} }
{ \exp\left\{ \frac{\beta}{n} \sum_{j \neq i} \s_j + \beta h \right\} 
+ \exp\left\{ -\frac{\beta}{n} \sum_{j \neq i} \s_j - \beta h \right\} } ,\\
\IP(\s'_i = -1 | (\s_j)_{j \neq i} ) & = \frac{ \exp 
\left\{ -\frac{\beta}{n} \sum_{j \neq i} \s_j - \beta h \right\} }
{ \exp\left\{ \frac{\beta}{n} \sum_{j \neq i} \s_j + \beta h \right\} 
+ \exp\left\{ -\frac{\beta}{n} \sum_{j \neq i} \s_j - \beta h \right\} } ,
\\
\end{split}
\]
and hence
\begin{equation}
\label{eq1}
\IE[F(\s,\s')|\s] = \frac{1}{n} \sum_{i=1}^n \s_i - \frac{1}{n}
\sum_{i=1}^n \tanh \left( \frac{\beta}{n} \sum_{j \neq i} \s_j + \beta
  h \right) ,
\end{equation}
where the summation over $i$ and the factor of $1/n $ is due
to the fact that the resampled site is chosen uniformly at random
(note also that for $j\neq i$, $\IE
[ \s_j-\s_j'|\sigma \text{ and chose site $i$}] = 0$).

We will give a concentration inequality for \eqref{eq1} using 
Theorem \ref{thm211b}, and then
show that the difference between \eq{eq1} and the quantity of
interest in the proposition is almost surely bounded by a small quantity,
which will prove the result.

If we denote $m_i := \frac{1}{n} \sum_{j \neq i} \sigma_j$, then
\[
f(\s) := \IE[F(\s,\s')|\s] = 
m - \frac{1}{n} \sum_{i=1}^n \tanh \{ \beta m_i + \beta h \},
\]
and we need to check the conditions of Theorem \ref{thm211b}
for $f(\s)$.
The
condition involving the moment generating function is
obvious since all quantities involved are finite, so we 
only need to find constants $B, C>0$ such that  
\begin{equation}
\label{eq2}
\frac{1}{2} \IE \left[ \left|
\left(f(\s)-f(\s')\right)F(\s,\s') \right| \big| \s \right] \leq Bf(\s) + C.
\end{equation}
Since $F(\sigma, \sigma')$ is the difference of the sum of the spins
in one step of the Gibbs sampler and only one spin can change 
in a step of the chain, we have
$|F(\s,\s')|\leq 2$.

Also, if we denote $m' := \frac{1}{n} \sum_{i=1}^n \s'$, then
using that  $| \tanh(x) - \tanh(y) | \leq |x-y|$ (which essentially follows
from the inequality \eq{2212a}: $2|e^x-e^y| \leq
  |x-y| (e^x + e^y) $), we find
\ba
|f(\s)-f(\s')| \leq |m-m'| + \frac{\beta}{n} \sum_{i=1}^n |m_i - m_i'| \leq \frac{2(1+
  \beta)}{n} ,
\ee

Hence, \eqref{eq2} is satisfied with $B=0$ and $C =
\frac{2(1+\beta)}{n}$ and Theorem \ref{thm211b} now yields
\[
\IP \left( \left|m- \frac{1}{n} \sum_{i=1}^n \tanh (\beta m_i + \beta h ) \right| > 
  \frac{t}{\sqrt{n}} \right) \leq 2 \exp\left\{ - \frac{t^2}{4(1+ \beta)} \right\} .
\]

To complete the proof we note that
\[
\bigg| \frac{1}{n} \sum_{i=1}^n \left[ \tanh (\beta m_i + \beta h ) - \tanh
(\beta m + \beta h ) \right] \bigg| \leq \frac{1}{n} \sum_{i=1}^n |
\beta m_i - \beta m | \leq \frac{\beta}{n} ,
\]
and thus an application of  the triangle inequality yields the bound in the
proposition.
\end{proof}

\subsection{Concentration using size-bias couplings}

As previously mentioned, the key step in the proof of Theorem \ref{thm211a} was to rewrite
$m'(\theta):=\IE[We^{\theta W}]$ using exchangeable pairs in order to get a differential inequality
for $m(\theta)$.  We can follow this same program, but with a size-bias coupling in place of 
the exchangeable pair.  We follow the development of \cite{ghgo11}.

\begin{theorem}\label{thm211c}
Let $X\geq 0$ with $\IE [X] = \mu$ and $0<\var(X) = \sigma^2<\infty$ and let $X^s$ be a size-biased coupling of $X$ such that $|X-X^s|\leq C<\infty$.
\begin{enumerate}
\item If $X^s\geq X$, then 
\ba
\IP\left(\frac{X-\mu}{\sigma} \leq -t\right) \leq \exp\left\{\frac{-t^2}{2 \left(\frac{C\mu}{\sigma^2}\right)}\right\}.
\ee
\item If $m(\theta) = \IE[e^{\theta X}] <\infty$ for $\theta = 2/C$, then
\ba
\IP\left(\frac{X-\mu}{\sigma} \geq t\right) \leq  \exp\left\{ \frac{-t^2}{2\left(\frac{C\mu}{\sigma^2} + \frac{C}{2\sigma} t\right)}\right\}.
\ee
\end{enumerate}
\end{theorem}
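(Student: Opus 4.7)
The plan is to mimic the strategy outlined just before the statement of the theorem: exploit the defining identity of the size-bias distribution to convert the moment generating function $m(\theta) = \IE[e^{\theta X}]$ into an object we can bound by a differential inequality, then apply the Chernoff bound of Proposition \ref{prop211}. The key observation is that for any $\theta$ in the strip of convergence we may differentiate under the integral to obtain
\be
m'(\theta) = \IE[X e^{\theta X}] = \mu \, \IE[e^{\theta X^s}] = \mu\, \IE\bkle{e^{\theta X}\, e^{\theta(X^s - X)}},
\ee
and since $|X^s - X| \leq C$, the quantity $e^{\theta(X^s - X)}$ can be controlled deterministically in terms of $\theta C$. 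The hypothesis that $m(2/C) < \infty$ in item 2 is there precisely to justify this differentiation (via Fubini/dominated convergence) on the interval needed.

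For item 1, since $X^s \geq X$, the quantity $Y := X^s - X$ is non-negative and bounded by $C$. Taking $\theta \leq 0$, I would use the elementary estimate $0 \leq 1 - e^{\theta Y} \leq -\theta Y \leq -\theta C$ to obtain
\be
0 \leq \mu m(\theta) - m'(\theta) = \mu \IE\bkle{e^{\theta X}(1 - e^{\theta(X^s - X)})} \leq -\theta C\mu\, m(\theta),
\ee
which rearranges to $(\log m(\theta))' \geq \mu + \mu C\theta$ for $\theta \leq 0$. Integrating from $\theta$ to $0$ and subtracting $\theta\mu$ yields $\log \IE[e^{\theta(X-\mu)}] \leq \mu C \theta^2/2$. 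Applying Proposition \ref{prop211} to the centered variable and optimizing at $\theta = -\sigma t/(\mu C)$ gives exactly the Gaussian-type bound claimed.

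For item 2 the strategy is analogous but uses only $X^s - X \leq C$. For $\theta \geq 0$ one has $e^{\theta(X^s - X)} \leq e^{\theta C}$, so
\be
m'(\theta) \leq \mu e^{\theta C} m(\theta), \qquad \text{i.e.,} \qquad (\log m(\theta))' \leq \mu e^{\theta C}.
\ee
Integrating and centering gives $\log \IE[e^{\theta(X - \mu)}] \leq \mu\bklr{e^{\theta C} - 1 - \theta C}/C$, a Poisson-type cumulant bound. Applying Proposition \ref{prop211} and optimizing at $\theta = C^{-1}\log(1 + \sigma t/\mu)$ produces the Bennett-type bound $\exp\bklr{-(\mu/C) h(\sigma t/\mu)}$ where $h(u) = (1+u)\log(1+u) - u$.

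The last step, which is the principal technical obstacle, is to translate this Bennett bound into the explicitly stated Bernstein form. This amounts to the elementary inequality $h(u) \geq u^2/(2+u)$ for all $u \geq 0$, which I would verify by setting $g(u) := h(u) - u^2/(2+u)$, checking $g(0) = g'(0) = 0$, and computing $g''(u) = (1+u)^{-1} - 8(2+u)^{-3}$ and noting that $(2+u)^3 \geq 8(1+u)$ reduces to $u(u^2 + 6u + 4) \geq 0$. This makes $g$ convex with a double zero at the origin, hence non-negative on $[0,\infty)$. Substituting $u = \sigma t/\mu$ converts $(\mu/C)h(\sigma t/\mu)$ into $\sigma^2 t^2/[C(2\mu + \sigma t)] = t^2/[2(C\mu/\sigma^2 + Ct/(2\sigma))]$, matching the stated right-tail bound.
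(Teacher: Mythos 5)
Your treatment of item 1 is essentially the paper's argument: both reduce to the same differential inequality $m'(\theta)\geq \mu(1+C\theta)\,m(\theta)$ for $\theta\leq 0$ (the paper derives it from the convexity estimate $|e^x-e^y|\leq\tfrac{1}{2}|x-y|(e^x+e^y)$ combined with $e^{\theta X^s}\leq e^{\theta X}$, you from the linearization $1-e^{\theta Y}\leq -\theta Y$ for $Y=X^s-X\geq0$), and the integration and optimization steps coincide. For item 2 you genuinely diverge: the paper applies the symmetric convexity estimate to both sides of the size-bias identity to obtain $m'(\theta)\leq \mu(1+C\theta/2)(1-C\theta/2)^{-1}m(\theta)$, whose integrated form already has the Bernstein shape and whose optimizer automatically stays below $2/C$; you instead use the one-sided bound $e^{\theta(X^s-X)}\leq e^{\theta C}$ to get the Poisson-type cumulant bound $\mu(e^{\theta C}-1-\theta C)/C$, optimize to a Bennett bound, and convert via $h(u)\geq u^2/(2+u)$. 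That inequality and your convexity proof of it are correct (it is weaker than the standard $h(u)\geq 3u^2/(6+2u)$), and the algebra converting $(\mu/C)h(\sigma t/\mu)$ into the stated exponent checks out. Your route is arguably more transparent and yields the stronger Bennett-form bound as a byproduct.

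There is, however, one genuine gap in item 2. Your optimal choice $\theta^*=C^{-1}\log(1+\sigma t/\mu)$ exceeds $2/C$ as soon as $\sigma t>(e^2-1)\mu$, and the hypothesis only guarantees $m(\theta)<\infty$ for $\theta\leq 2/C$; beyond that point neither the identity $m'(\theta)=\mu\IE[e^{\theta X^s}]$ nor Proposition \ref{prop211} is justified, so for large $t$ the intermediate Bennett bound is not established as written. Two repairs are available. (a) In the regime $u:=\sigma t/\mu\geq e^2-1$, take $\theta=2/C$; the Chernoff exponent becomes $\tfrac{\mu}{C}(e^2-3-2u)$, and the required inequality $2u-(e^2-3)\geq u^2/(2+u)$ reduces to $u^2+(7-e^2)u+(6-2e^2)\geq0$, which holds for all $u\geq e^2-1$. (b) More cleanly, skip the Bennett optimization altogether: substitute $\theta_0=2\sigma t/(C(2\mu+\sigma t))$, which always satisfies $\theta_0<2/C$, into your cumulant bound together with $e^x-1-x\leq \tfrac{x^2/2}{1-x/2}$ for $0\leq x<2$; this reproduces the stated exponent $-\sigma^2t^2/(C(2\mu+\sigma t))$ exactly and is in effect how the paper's optimizer behaves.
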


\begin{proof}
To prove the first item,
let $\theta\leq 0$ so that $m(\theta):=\IE[e^{\theta X}] <\infty$ since $X\geq 0$.  As in the proof of Theorem \ref{thm211a}, 
we will need the inequality \eq{2212a}: for all $x,y\in \IR$,
\ba
\left|\frac{e^x-e^y}{x-y}\right| \leq \frac{e^x+e^y}{2}.
\ee
Using this fact and that $X^s\geq X$, we find
\ban
\IE [e^{\theta X} -  e^{\theta X^s}] \leq \frac{C|\theta|}{2} \left( \IE [e^{\theta X}] + \IE [e^{\theta X^s}]\right)
 \leq C|\theta| \IE [e^{\theta X}]. \label{2122c}
\ee

The definition of the size-bias distribution implies that $m'(\theta)=\mu\IE[e^{\theta X^s}]$ so that \eq{2122c} yields
the differential inequality $m'(\theta) \geq \mu(1+C\theta)m(\theta)$, or put otherwise
\ban
\frac{d}{d\theta} \left[\log(m(\theta))-\mu\theta\right] \geq \mu C\theta. \label{2122n}
\ee
Setting $\tilde m(\theta) = \log(m(\theta))-\mu\theta$, \eq{2122n} implies
$\tilde m(\theta)\leq \mu C\theta^2/2$, and it follows that 
\[ \IE\left[ \exp\left\{\theta\left(\frac{X-\mu}{\sigma}\right)\right\} \right]= m\left(\frac{\theta}{\sigma}\right) \exp\left\{-\frac{\mu\theta}{\sigma}\right\}
= \exp\left\{\tilde m\left(\frac{\theta}{\sigma}\right)\right\} \leq \exp\left(\frac{\mu C\theta^2}{2\sigma^2}\right).\]
We can now apply Proposition \ref{prop211} to find that 
\ban
\IP\left(\frac{X-\mu}{\sigma} < -t\right) \leq \exp\left(\frac{\mu C \theta^2}{2\sigma^2} + \theta t\right). \label{2134a}
\ee
The right hand side of this \eq{2134a} is minimized at $\theta  = -\sigma^2 t/\mu C$, and substituting this value into \eq{2134a} yields
the first item of the theorem.

For the second assertion of the theorem, suppose that $0\leq \theta < 2/C$.  A calculation similar to \eq{2122c} above shows that 
\[\frac{m'(\theta)}{\mu} -m(\theta) \leq \frac{C\theta}{2} \left( \frac{m'(\theta)}{\mu} +m(\theta)\right),\]
so that we can write
\[ m'(\theta) \leq \frac{\mu \left(1+\frac{C\theta}{2}\right)}{1-\frac{C\theta}{2}} m(\theta).\]
Again defining $\tilde m(\theta) = \log(m(\theta))-\mu\theta$, we have $\tilde m'(\theta) \leq C\mu\theta/(1-\frac{C\theta}{2})$ so that
\[\tilde m\left(\frac{\theta}{\sigma}\right) \leq  \frac{ C\mu\theta^2}{\sigma^2\left(2-\frac{C\theta}{\sigma}\right)} \quad\textrm{for } \ 0\leq \theta < \min\{2/C,2\sigma/C\}.\]
We can now apply Proposition \ref{prop211} to find that 
\ban
\IP\left(\frac{X-\mu}{\sigma} \geq t\right) \leq \exp\left(\frac{\mu C \theta^2}{\sigma^2\left(2-\frac{C\theta}{\sigma}\right)} - \theta t\right). \label{2134b}
\ee
The right hand side of this \eq{2134b} is minimized at
\[\theta = t\left(\frac{C\mu}{\sigma^2}+\frac{Ct}{2\sigma}\right)^{-1},\]
and substituting this value into \eq{2134b} yields
the second item of the theorem.
\end{proof}

Theorem \ref{thm211c} can be applied in many of the examples we have discussed in the context of
the size-bias transform for normal and Poisson approximation and others.  We content ourselves with 
a short example and refer to \cite{ghgo11a} for many more applications.

\begin{example} 
Let $Y_1,\dots, Y_n$ be i.i.d.\! $Be(p)$, fix $k\geq 1$, and define $X_i = \prod_{j=i}^{i+k-1} Y_j$ with the bounds being modular.  Further define $X=\sum_{i=1}^n X_i$ and define $X^s$ by sampling $X_1,\dots, X_n$ and forcing $X_1=1$.  Conditional on this, the rest of the $Y_i$ are as before.  If 
\[ X^{(k)}_i = \begin{cases} 1 & \textrm{if there exists a head run of length $k$ at $i$ after forcing} \\ 0 & \textrm{otherwise},\end{cases}\]
then $X^s = 1+\sum_{j=2}^n X^{(k)}_j$.  Note that $X^s\geq X$ and $|X^s-X|\leq 2k-1$.  In this case Theorem \ref{thm211c} implies
\[ \IP\left(\left|\frac{X-\mu}{\sigma}\right| \geq t\right) \leq 2 \exp\left( \frac{-t^2}{2(2k-1)\left(\frac{\mu}{\sigma^2} + \frac{t}{2\sigma} \right)}\right),\]
where $\mu = np^k$ and $\sigma^2 = \mu\left(1-p^k+\sum_{i=1}^{k-1} (p^i-p^k)\right)$.
\end{example}

\subsubsection*{Acknowledgments}
The author thanks the students who sat in on the course that this document is based.  Special
thanks go to 
those students who contributed notes, parts of which may appear in some form above:
Josh Abramson, Miklos Racz, Douglas Rizzolo, and Rachel Wang.

\bibliographystyle{abbrv}
\bibliography{Ref.2011.03}

\end{document}